\documentclass[11pt,a4paper]{article}

\usepackage[T1]{fontenc}
\usepackage[utf8]{inputenc}
\usepackage[english]{babel}
\usepackage{lmodern}
\usepackage{amsmath,amssymb,amsthm,mathtools}
\usepackage{microtype}
\usepackage[a4paper,margin=3.1cm]{geometry}
\usepackage{enumitem}
\usepackage{booktabs}
\usepackage{xcolor}

\usepackage{tikz}
\usetikzlibrary{decorations.markings,arrows.meta, calc}
\tikzset{
  orient/.style={
    postaction={decorate},
    decoration={markings,
      mark=at position 0.2 with {\arrow[scale=1.1]{Stealth}},
      mark=at position 0.7 with {\arrow[scale=1.1]{Stealth}}}},
  orientb/.style={
    postaction={decorate},
    decoration={markings,
      mark=at position 0.15 with {\arrow[scale=1.1]{Stealth}},
      mark=at position 0.60 with {\arrow[scale=1.1]{Stealth}}}},
}

\usepackage[font=small,labelfont=bf]{caption}

\usepackage{hyperref}

\allowdisplaybreaks

\hypersetup{
  colorlinks=true,
  linkcolor=blue!55!black,
  citecolor=blue!55!black,
  urlcolor=blue!55!black
}

\newif\ifworknotes
\worknotesfalse

\newcommand{\R}{\mathbb{R}}
\newcommand{\C}{\mathbb{C}}
\newcommand{\Hh}{\mathbb{H}}
\newcommand{\Sphere}{\mathbb{S}}
\newcommand{\imq}{\operatorname{Im}}
\newcommand{\req}{\operatorname{Re}}
\newcommand{\dd}{\,\mathrm{d}}
\newcommand{\dsigma}{\,\mathrm{d}\sigma}
\newcommand{\darea}{\,\mathrm{d}A}
\newcommand{\ci}{\mathrm{i}}
\newcommand{\qi}{\mathbf{i}}
\newcommand{\qj}{\mathbf{j}}
\newcommand{\qk}{\mathbf{k}}

\DeclareMathOperator{\Res}{Res}
\DeclareMathOperator{\PRes}{PRes}
\DeclareMathOperator{\SRes}{SRes}
\DeclareMathOperator{\Area}{Area}

\newcommand{\Flux}[2]{\mathcal F_{#1}(#2)}
\newcommand{\DeltaSphere}[1]{\Delta_{#1}}

\newtheorem{theorem}{Theorem}[section]
\newtheorem{proposition}[theorem]{Proposition}
\newtheorem{lemma}[theorem]{Lemma}
\newtheorem{corollary}[theorem]{Corollary}

\theoremstyle{definition}
\newtheorem{definition}[theorem]{Definition}
\newtheorem{example}[theorem]{Example}

\theoremstyle{remark}
\newtheorem{remark}[theorem]{Remark}

\theoremstyle{plain}
\newtheorem{maintheorem}{Main Theorem}
\mathtoolsset{showonlyrefs}

\title{A four-dimensional residue calculus
for quaternionic slice regular functions}
\author{%
  Amedeo Altavilla\thanks{%
    Dipartimento di Matematica,
    Università degli Studi di Bari Aldo Moro,
    Via Edoardo Orabona 4, 70125 Bari, Italy.
    E-mail:
    \href{mailto:amedeo.altavilla@uniba.it}
      {\texttt{amedeo.altavilla@uniba.it}}.
    ORCID:
    \href{https://orcid.org/0000-0002-3290-371X}
      {\texttt{0000-0002-3290-371X}}.
}}
\date{}

\begin{document}

\maketitle

\begin{abstract}
We develop a four-dimensional residue calculus for quaternionic slice semiregular functions. Its local singularities are isolated real poles and two-dimensional spheres. For the latter we introduce a spherical residue and characterize it as a spherical Laurent coefficient, as the sum of two classical residues on any full complex slice, and by a finite differential procedure involving alternating spherical and slice derivatives. The residue theorem expresses a weighted boundary flux on a circular domain as the sum of the real and spherical residues it contains. Beyond this degree-zero formula, polynomially weighted flux moments recover every coefficient of every finite principal part, including the companion coefficient at a spherical pole that is invisible to ordinary residue summation. Once the polar components and order bounds are fixed, the quotient of the real polynomial ring by the global pole polynomial is the natural finite-dimensional algebra of weights. We prove that the associated residue pairing is well defined and nondegenerate, and identifies formal principal parts with the full quaternion-valued real dual of this algebra. Hermite interpolation and the Chinese remainder theorem provide explicit global weights selecting each local coefficient. We also separate the regularity domain of the function from the geometry of integration: for a smooth noncircular integration domain compactly contained in a circular ambient domain, away from the real axis and with boundary disjoint from the polar set, we establish an exact residue formula with an explicit volume correction governed by the Cauchy--Fueter operator. The theory includes higher-order poles, exceptional points, product domains, and zero divisors. Applications give formulas for rational and oscillatory integrals on the imaginary hyperplane and characterize the rational data defined on the three--dimensional imaginary subspace of the quaternions admitting a canonical slice regular extension.
\end{abstract}
\medskip
\noindent\textbf{2020 Mathematics Subject Classification.}
Primary 30G35; Secondary 26B20, 30D30, 30E20, 41A05.

\smallskip
\noindent\textbf{Keywords.}
Slice regular functions; semiregular functions; quaternionic residue theorem;
spherical residues; principal parts; divided differences; Hermite
interpolation; weighted boundary flux.
\tableofcontents

\section{Introduction}\label{sec:introduction}

The residue theorem is one of the central local-to-global principles of
complex analysis. If \(D\subset\C\) is a bounded domain with positively
oriented boundary and \(h\) is meromorphic near \(\overline D\), with no
pole on \(\partial D\), then
\[
  \frac{1}{2\pi\ci}\int_{\partial D}h(z)\,\dd z
  =\sum_{a\in D}\Res_a h ,
\]
so that a global contour integral is determined by finitely many local
Laurent coefficients. Besides evaluating real and complex integrals, this
identity governs the behaviour of contour integrals under deformation and
underlies the argument principle, the counting of zeros and poles, and
several forms of holomorphic functional calculus; see for instance
\cite[Chapter~4]{Ahlfors} and \cite[Chapter~10]{Rudin}. Beyond the plane,
the notion of residue has taken several forms. On Riemann surfaces it is
attached to meromorphic differentials and obeys the global residue theorem
\cite[Theorem~10.21]{Forster}; in several complex variables it leads to
local and global multidimensional residues and, in algebraic geometry, to
residue duality \cite{Tsikh,Hartshorne}; and in quaternionic and Clifford
analysis the Cauchy--Fueter theory has its own Cauchy kernels and residue
formulas \cite[Section~9]{Sudbery}, 
\cite{BrackxDelangheSommen,DelangheSommenSoucek, GuerlebeckHabethaSprossig}. In
each of these settings the cycles, the kernels and the local singularity
data are dictated by the function theory at hand.

The function theory considered here is that of slice regular functions,
introduced in \cite{GentiliStruppa2007} and developed both in the original
quaternionic setting and, through stem functions, over real alternative
\(*\)-algebras \cite{GentiliStoppatoStruppa,GhiloniPerotti2011}. One of its
characteristic features is that it contains quaternionic polynomials and,
more generally, ordinary power series in the quaternionic variable,
\begin{equation}\label{eq:intro-power-series}
  f(q)=\sum_{n=0}^{\infty}q^na_n ,
  \qquad a_n\in\Hh ,
\end{equation}
with coefficients written on the right. This feature has proved useful well
beyond the function theory itself: slice regular functions have been related
to twistor geometry and orthogonal complex structures
\cite{GentiliSalamonStoppato, AltavillaTwistor,AltavillaSarfatti,LiuMorenoShi}
, to quaternionic manifolds and other questions
in differential geometry \cite{GentiliGoriSarfatti,GentiliGoriSarfattiToric,AngellaBisiHopf,GentiliGoriSarfattiAffine}, and to the continuous slice functional calculus and the
$S$-functional calculus for quaternionic operators
\cite{ColomboSabadiniStruppa, ColomboSabadiniStruppaFunctional,
GhiloniMorettiPerotti,ColomboGantnerKimsey}.

The family of slice regular functions carries a natural product. On
polynomials and formal power series with coefficients on the right it is the
one obtained by letting the variable commute with the coefficients,
\[
  \Bigl(\sum_nq^na_n\Bigr)*\Bigl(\sum_mq^mb_m\Bigr)
  =\sum_k q^k\sum_{n+m=k}a_nb_m ,
\]
and it extends to all slice functions as the pointwise product of their stem
functions \cite{GhiloniPerotti2011,GPS-algebra}. The resulting algebra is
associative and non-commutative; the two factors do commute
whenever one of them is slice preserving, in which case \(*\) agrees with
the pointwise product. It has zero divisors, a feature that will matter
later.
Power series centred at a non-real quaternion display a genuinely
non-commutative phenomenon. The natural expansion
\(\sum_{n\geq0}(q-q_0)^{*n}a_n\) converges on balls for the
\(\sigma\)-distance (see \cite{GentiliStoppatoStruppa}), and such a ball need not be a Euclidean neighbourhood
of its centre: the topology naturally attached to these expansions is finer
than the Euclidean one \cite[Sections~2.1--2.3]{GentiliStoppatoStruppa}. To
recover expansions on Euclidean open sets, Stoppato introduced spherical
series and the corresponding notion of spherical analyticity
\cite{StoppatoSeries}. If \(\Sigma=\alpha+\Sphere\beta\) is the conjugacy
sphere through a real centre (i.e., the set of all \(\alpha+I\beta\in\Hh\) such that \(\alpha,\beta\in\mathbb{R}\) and \(I\) varies in the set of quaternionic imaginary units), and
\[
  \DeltaSphere{\Sigma}(q)=q^2-2\alpha q+\alpha^2+\beta^2 ,
\]
then a regular function is locally represented on an open set (namely, a Cassini neighbourhood), by
a series of the form
\begin{equation}\label{eq:intro-spherical-series}
  f(q)=\sum_{k\geq0}\DeltaSphere{\Sigma}(q)^k\bigl(qu_k+v_k\bigr) ,
  \qquad u_k,v_k\in\Hh .
\end{equation}
The construction was extended to singularities and to real alternative
\(*\)-algebras in \cite{GhiloniPerottiPowerSpherical,GPSSingularities,GhiloniPerottiStoppatoDivision}. A further step, crucial here, is
that the coefficients of \eqref{eq:intro-spherical-series} can be recovered
constructively from a finite jet, by alternating the so--called spherical derivative
\(\partial_s\) and the slice  derivative \(\partial_c\)
\cite[Theorem~4.1]{AltavillaSphericalCoefficients}.
The two operators probe complementary directions. At a non-real point
\(q=\alpha+I\beta\), the tangent space of \(\Hh\) splits orthogonally into
the tangent space of the slice \(\C_I\), spanned by \(1\) and \(I\), and
that of the conjugacy sphere \(\alpha+\Sphere\beta\). The slice derivative \(\partial_c\)
governs the behaviour of \(f\) along the first, being the holomorphic
derivative of the restriction \(f_I\), while the spherical derivative
\(\partial_s\) governs it along the second, measuring the variation of \(f\)
on the sphere through \(q\), on which it is constant. Alternating them is
thus a way of reading the two-variable jet of \(f\) at the sphere one
direction at a time.

This differential access to spherical coefficients raises the question of
whether the complex residue theorem has a four-dimensional counterpart for
slice regular functions, and the question has to be posed in terms of the
actual geometry of their singularities. A semiregular function can have
isolated real poles and non-real polar two-spheres
\(\Sigma=\alpha+\Sphere\beta\); along such a sphere the generic pole order is
constant, and at most one exceptional point may have lower order, possibly
zero \cite[Theorems~9.4--9.5 and Remark~11.2]{GPSSingularities}. Allowing
negative indices in \eqref{eq:intro-spherical-series}, we single out the
coefficient \(u_{-1}\) of the resulting paired Laurent expansion and call it
the \emph{spherical residue} \(\SRes_\Sigma f\). Restricting \(f\) to a slice \(\C_I\) makes the sphere \(\Sigma\) visible as
the conjugate pair \(\alpha\pm I\beta\), and one of the results proved below
is that the two ordinary point residues there always add up to the spherical
residue:
\begin{equation}\label{eq:intro-paired-residue}
  \SRes_\Sigma f
  =\Res_{\alpha+I\beta}^{I}f_I+\Res_{\alpha-I\beta}^{I}f_I
  \qquad\text{for every }I\in\Sphere .
\end{equation}
Each summand depends on the chosen slice, but with our normalization their
sum does not. After
multiplication by a sufficient power of \(\DeltaSphere{\Sigma}\), the same
quantity is a confluent divided difference at alternating conjugate nodes,
and hence an alternating \(\partial_s\)--\(\partial_c\) derivative of the
regularized function.

Each of the ingredients used here is, on its own, well established: the
theory of divided differences and Hermite interpolation
\cite{deBoor,LopezTemme}, two-point expansions, and the differential
recovery of spherical coefficients
\cite[Theorem~4.1]{AltavillaSphericalCoefficients}. What the present
construction contributes is the identification of a single local invariant
in three languages at once. The spherical residue is
\begin{enumerate}[label=\textup{(\roman*)}]
  \item the sum of the two point residues on any full slice, which connects
        it to the classical residue theorem;
  \item a balanced confluent divided difference of a common intrinsic
        regularization, which makes it depend on a finite jet only;
  \item an alternating differential expression in that regularization,
        which makes it computable.
\end{enumerate}
The identification survives when the two slice poles have different orders,
and when the common regularizing exponent overshoots at one of them, and it
is this threefold correspondence that supplies the local mechanism of the
four-dimensional theorem.

The paper belongs to a four-dimensional line of development in slice
analysis, which includes volume Cauchy formulas on circular domains
\cite{GhiloniPerottiVolume}, a global Borel--Pompeiu-type formula associated
with a differential characterization of slice regularity
\cite{GonzalezCervantesGonzalezCampos}, and a local Cauchy formula whose
integration domain need not be axially symmetric \cite{PerottiLocalCauchy}.
Four-dimensional Jensen formulas, in which the integration takes place on a
three-sphere and zeros and poles enter through their quaternionic geometry,
were obtained first for slice-preserving functions
\cite{AltavillaBisiJensen} and then for general slice regular and
semiregular functions \cite{PerottiJensen}. In the same spirit, the left-hand
side of our formula is an integral over a three-dimensional hypersurface of
\(\Hh\), and its right-hand side is indexed by the real points and
two-spheres that form the polar set.

In compact form, the main result reads as follows; the complete statement,
with the differential evaluation of every residue and the precise regularity
assumptions, is Theorem~\ref{thm:4d-residue}.

\begin{maintheorem}\label{thm:intro-main}
Let \(U\) be an admissible circular open set and let \(f\) be semiregular on
a circular neighbourhood of \(\overline U\), with no polar component meeting
\(\partial U\). If \(a_1,\ldots,a_\ell\) are the real poles and
\(\Sigma_1,\ldots,\Sigma_s\) the non-real polar spheres contained in \(U\),
then
\begin{equation}\label{eq:intro-main-residue-formula}
  \boxed{
  \frac{1}{4\pi^2}\int_{\partial U\setminus\R}
    \frac{n(q)f(q)}{|\imq q|^2}\,\dsigma(q)
  =\sum_{i=1}^{\ell}\Res_{a_i}f+\sum_{j=1}^{s}\SRes_{\Sigma_j}f .
  }
\end{equation}
\end{maintheorem}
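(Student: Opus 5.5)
The strategy is to reduce the three-dimensional flux integral to a family of one-dimensional contour integrals on the slices, and then apply the classical residue theorem on each slice. Circularity of \(U\) makes this possible. Write \(U\cap\C_I^+\) for the intersection with the closed upper half-plane of the slice \(\C_I\). Every non-real point of \(\partial U\) is \(q=\alpha+I\beta\) with \(\beta>0\), \(I\in\Sphere\), and \(\alpha+\ci\beta\) on the boundary of the planar section \(D=U\cap\C_{\ci}\), which is symmetric under conjugation.

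\textbf{Step 1: reduce the flux to slice integrals.} I would parametrize \(\partial U\setminus\R\) as \((z,I)\mapsto \alpha+I\beta\), with \(z=\alpha+\ci\beta\) running over \(\partial D\cap\{\beta>0\}\) and \(I\in\Sphere\). In these coordinates the surface measure is \(\dsigma=\beta^2\,|\dd z|\,\dd\sigma_{\Sphere}(I)\), and the factor \(|\imq q|^2=\beta^2\) in the statement exactly cancels the Jacobian. The outward normal at \(\alpha+I\beta\) lies in \(\C_I\). As a quaternion it equals \(-I\) times the unit tangent of the positively oriented planar curve, so \(n(q)\,|\dd z|=-I\,\dd z_I\). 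The left side of the formula therefore becomes
\[
\frac{1}{4\pi^2}\int_{\Sphere}\Bigl(-I\int_{\partial D_I^+}f_I(z)\,\dd z\Bigr)\dd\sigma_{\Sphere}(I),
\]
where \(\partial D_I^+\) denotes the upper-half-plane part of the boundary of \(D_I=U\cap\C_I\).

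\textbf{Step 2: symmetrize in \(I\).} Next I would use the antipodal map \(I\mapsto -I\), which preserves \(\dd\sigma_{\Sphere}\). The upper arc for \(-I\) is the lower arc for \(I\), traversed compatibly with the orientation of \(\partial D_I\). Averaging the integrand at \(I\) and \(-I\) turns the inner integral into \(\tfrac12(-I)\oint_{\partial D_I}f_I\,\dd z\) over the full boundary; the real points form a measure-zero set and contribute nothing. If the admissibility hypothesis allows \(\partial U\) to meet \(\R\), I would need care with the two endpoints, but they are also of measure zero.

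\textbf{Step 3: apply the classical residue theorem on each slice.} Since \(f_I\) is meromorphic near \(\overline{D_I}\) with no poles on \(\partial D_I\), we get \(\oint_{\partial D_I}f_I\,\dd z = 2\pi I\sum\Res\). Because the left \(I\) multiplies on the left while the residues are right coefficients, \(-I\cdot 2\pi I=2\pi\). The poles of \(f_I\) in \(D_I\) are the real poles \(a_i\), where \(\Res^I_{a_i}f_I=\Res_{a_i}f\) is independent of \(I\), and the pairs \(\alpha_j\pm I\beta_j\). By \eqref{eq:intro-paired-residue}, each pair sums to \(\SRes_{\Sigma_j}f\), independently of \(I\). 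Hence the integrand over \(\Sphere\) is the constant \(\pi\bigl(\sum\Res_{a_i}f+\sum\SRes_{\Sigma_j}f\bigr)\). Integrating over \(\Sphere\), of area \(4\pi\), gives \(4\pi^2\) times the right side, which proves the formula.

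\textbf{Main obstacle.} The delicate point is the orientation and normal bookkeeping of Step 1, and the claim that the normal is exactly \(-I\) times the slice tangent with the correct side multiplication. Exceptional points on a sphere also need attention, since there \(f_I\) may have lower order or no pole at all on some slices. This does not break the argument: the identity \eqref{eq:intro-paired-residue} is asserted for every \(I\), and a pole of lower order simply changes which residues appear on that slice. I would also verify integrability near \(\R\), which follows from the cancellation of \(\beta^2\).
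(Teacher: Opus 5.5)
Your proposal is correct and is essentially the paper's proof. Both cancel the \(\beta^2\) Jacobian, identify \(n(q)\) with the outward normal of the slice section so that \(n\,\dd s=-I\,\dd z\), apply the vector-valued residue theorem on a full slice, and group conjugate point residues into \(\SRes_{\Sigma_j}f\).

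The only variation is in the parametrization. You use the upper half of \(\partial D\) times all of \(\Sphere\), symmetrize under \(I\mapsto-I\), and get constancy in \(I\) from the slice-independence of the grouped residues. The paper instead uses the full signed profile times a hemisphere and removes the odd part \(\Theta\) by conjugation parity (Theorem~\ref{thm:weighted-slice-reduction}), which gives the slice reduction for any continuous slice datum.

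One notational correction: write the contour integrals as \(\int\dd z\,f_I(z)\), with \(\dd z\) acting on the left, as \(n(q)f(q)\) dictates. Read with \(\dd z\) on the right, \(\oint f_I(z)\,\dd z\) is not \(2\pi I\sum\Res\) in general (Remark~\ref{rem:complex-left-differential}).
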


Here \(n\) is the outward Euclidean unit normal, regarded as a quaternion,
and \(\dsigma\) is three-dimensional surface measure. The order of the
factors in \(n(q)f(q)\) is part of the statement. On each slice \(\C_I\) the
quaternions form a left \(\C_I\)-vector space, and it is with respect to this
structure that the boundary integral reduces to the contour integral
\(\int\dd z\,f_I(z)\); the opposite order corresponds to the right module
structure and in general gives a different functional, the two agreeing for
slice-preserving functions. The weighted integral converges absolutely, and
both kinds of residue on the right can be evaluated from finite jets after
multiplication by intrinsic polynomial factors, whose exponents need not be
minimal. The theorem covers product domains, domains disjoint from the real
axis, exceptional points on polar spheres, and non-zero slice functions whose
symmetrization vanishes identically: since its proof never divides by a
value of \(f\) or of its symmetrization, the zero divisors of the algebra of
slice functions cost no cases.

Formula \eqref{eq:intro-main-residue-formula} is the first term of a larger
residue calculus. Suppose, for example, that near
\(\Sigma=\alpha+\Sphere\beta\) the principal part is finite,
\[
  \sum_{m=1}^{M}\DeltaSphere{\Sigma}(q)^{-m}(qu_{-m}+v_{-m}) .
\]
If \(T_\Sigma\) is a sufficiently small circular tube around the sphere,
Theorem~\ref{thm:4d-principal-part-recovery} gives, for \(0\leq r<M\),
\begin{equation}\label{eq:intro-principal-part-moments}
  \Flux{T_\Sigma}{\DeltaSphere{\Sigma}^{\,r}f}=u_{-r-1} ,
  \qquad
  \Flux{T_\Sigma}{(q-2\alpha)\DeltaSphere{\Sigma}^{\,r}f}=v_{-r-1} ,
\end{equation}
and there is an analogous formula at a real pole. Finitely many boundary
moments thus recover the \emph{complete} principal part, and each moment has an
explicit finite-jet expression in alternating spherical and slice
derivatives.

So far the weights have been local: each moment in
\eqref{eq:intro-principal-part-moments} is computed on a tube surrounding a
single sphere. On a domain containing several polar components one would
rather integrate once, over \(\partial U\), and still read off a prescribed
coefficient of a prescribed component. This requires a weight that isolates
that coefficient and annihilates every other, and such weights turn out to
form a finite-dimensional algebra determined by the polar data alone.

Indeed, let the pole locations and order bounds determine the real
polynomial
\[
  Q(X)=\prod_i(X-a_i)^{L_i}\prod_j\DeltaSphere{\Sigma_j}(X)^{M_j} ,
\]
and consider the quotient \(\mathcal A_Q:=\R[X]/(Q)\). A \emph{formal
principal part} \(\mathfrak p\) subordinate to these data is a choice of
finitely many quaternionic coefficients, one family at each real pole and
two at each polar sphere, within the orders allowed by \(Q\); we write
\(R_{\mathfrak p}\) for the rational slice function having exactly
\(\mathfrak p\) as principal part and no other singularity. Two polynomial
weights differing by a multiple of \(Q\) produce the same flux, the
difference cancelling the whole principal part, so that \(\mathcal A_Q\)
records exactly the polynomial jets that can interact with such data.
Hermite interpolation and the Chinese remainder theorem split this algebra
into its real and spherical local factors, and thereby produce global
weights isolating any prescribed principal coefficient. The residue pairing
\[
  \bigl\langle[P],\mathfrak p\bigr\rangle_{\mathrm{res}}
  :=\Flux{U}{P(q)R_{\mathfrak p}(q)}
\]
is therefore well defined on \(\mathcal A_Q\), and
Theorem~\ref{thm:4d-residue-duality} states that the induced map from formal
principal parts to quaternion-valued real-linear functionals on
\(\mathcal A_Q\) is an isomorphism: no principal-part information is lost by
passing to fluxes, and none is redundant. Equivalently, writing
\(R_{\mathfrak p}=Q^{-1}N_{\mathfrak p}\) with
\(\deg N_{\mathfrak p}<\deg Q\), the pairing takes the algebraic form
\[
  \bigl\langle[P],\mathfrak p\bigr\rangle_{\mathrm{res}}
  =
  \lambda_Q\bigl([PN_{\mathfrak p}]\bigr),
\]
where \(\lambda_Q\) extracts the coefficient of
\(X^{\deg Q-1}\) from the remainder modulo \(Q\).
It is therefore the perfect Frobenius residue pairing of the Artinian
Gorenstein algebra \(\R[X]/(Q)\), extended to quaternionic coefficients.
This provides a concrete one-variable realization of the algebraic residue
formalism underlying residue duality \cite{Hartshorne}, while the
Hermite--Chinese-remainder selectors give explicit weights recovering each
individual principal coefficient.

The main theorem integrates over the boundary of a circular set, and it is
natural to ask how much of it survives on an arbitrary hypersurface. The
answer is that circular symmetry is used once only, and that its failure can
be measured exactly. Note that only the integration domain is at issue here:
\(f\) remains semiregular on a circular ambient open set \(\Omega\), as it
must be for its polar components to be spheres in the first place.

Let then \(W\) be a bounded smooth domain with
\(\overline W\subset\Omega\setminus\R\), whose boundary avoids every polar
sphere; since a polar sphere is connected, each of them is then either
contained in \(W\) or disjoint from it. Excising small circular tubes around
those contained in \(W\), and calling \(W_{\mathrm{exc}}\) the remaining
domain, Theorem~\ref{thm:4d-noncircular-residue} gives the exact identity
\begin{equation}\label{eq:intro-noncircular-formula}
  \frac{1}{4\pi^2}\int_{\partial W}
    \frac{n(q)f(q)}{|\imq q|^2}\,\dsigma(q)
  +\frac{1}{2\pi^2}\int_{W_{\mathrm{exc}}}
    \frac{I(q)f_s^\circ(q)}{|\imq q|^3}\,\dd V_4(q)
  =\sum_{\Sigma_j\subset W}\SRes_{\Sigma_j}f .
\end{equation}
The second term is a genuine volume integral, governed by the
Cauchy--Fueter operator, and it vanishes when \(W\) is circular because the
integrand is then odd in the angular variable. This is where the hypothesis
of circularity was doing its work, and \eqref{eq:intro-noncircular-formula}
isolates it: everything else in the theorem is geometry that any smooth
hypersurface enjoys. One case remains outside the statement, namely a polar
sphere crossing \(\partial W\), where the boundary integral itself is
singular and some principal-value or indentation prescription would be
needed.

The proof combines three mechanisms. First, the boundary of a circular set
splits into its planar profile and the sphere of imaginary units; the factor
\(|\imq q|^2\) appearing in the boundary Jacobian is cancelled by the weight
of \eqref{eq:intro-main-residue-formula}, and the flux becomes a
vector-valued contour integral on any full slice. Second, the ordinary
complex residue theorem evaluates that contour integral. Third, the residues
at the two conjugate intersection points of each polar sphere are grouped by
\eqref{eq:intro-paired-residue} and identified with a confluent divided
difference, which produces the differential formula. One-variable complex analysis is an essential ingredient: once the weighted
boundary functional has been reduced to a full slice, the planar residue
theorem is applied there without modification. This step alone, however, does
not yield the four-dimensional statement. The construction of the boundary
functional, the organization of the singularities into intrinsic polar
components, and the slice-independent identification of the two point
residues with a single spherical residue all depend on the quaternionic slice
structure and on the spherical Laurent theory developed in this paper. The
genuinely four-dimensional content therefore lies both in the boundary
functional, and in its localization on the intrinsic singularity data of \(f\),
namely real poles and polar two-spheres.

For Cauchy--Fueter regular functions there is a classical theory of Cauchy
and residue formulas \cite{Sudbery,GuerlebeckHabethaSprossig,BrackxDelangheSommen,DelangheSommenSoucek}, whose
singularities are organized differently: the classification by real poles,
conjugacy spheres, exceptional points and spherical Laurent coefficients is
specific to the slice setting. Closer to the present circle of ideas, in what were his last works Dolbeault
proposed residue currents associated with inverses of quaternionic
functions, together with a related local theory
\cite{DolbeaultQuaternionic,DolbeaultLocal}, in the setting of a
Cauchy--Fueter-type hyperholomorphy. The comparison shows how
closely a residue theorem is tied to the singularity geometry of its
regularity class.

As a first application of our theory, tangent four-balls turn
\eqref{eq:intro-main-residue-formula} into formulas for absolutely convergent
integrals on the imaginary hyperplane \(\imq\Hh\simeq\R^3\). Within the class
of rational data considered, the restrictions admitting a slice regular
extension are exactly the spherically compatible functions
\[
  \varphi(v)=\mathsf a(|v|^2)+v\,\mathsf b(|v|^2) ,
  \qquad v\in\imq\Hh ,
\]
with quaternion-valued rational \(\mathsf a,\mathsf b\) satisfying suitable
pole and decay conditions; Remark~\ref{rem:rat-obstruction} shows that some
natural rational data fall outside this class. The family was chosen because
it makes the residue mechanism, and its oscillatory version, transparent. A
natural open problem is to characterize the polynomial or rational functions
on an arbitrary affine hyperplane of \(\Hh\) that arise as restrictions of
slice regular or semiregular functions.

Several further directions would require substantial additional
preliminaries and are left for future work: an operator-valued version
related to the $S$-functional calculus
\cite{GantnerDirect,ColomboGantnerKimsey} and to its spectral projectors
and spectral measures
\cite{AlpayColomboGantnerSabadini,
AlpayColomboKimseySabadiniUnitary,
GhiloniMorettiPerottiPVM}; a comparison with the quaternionic argument
principle \cite{VlacciArgument,GentiliStoppatoStruppa} and with the
four-dimensional Jensen formulas mentioned above; the extension of the
local residue construction and, where a suitable boundary geometry can be
developed, of the weighted flux theorem to other hypercomplex slice
theories, notably slice monogenic functions on Clifford algebras
\cite{ColomboSabadiniStruppaSliceMonogenic} and slice regular functions
over real alternative $*$-algebras, including the octonions
\cite{GhiloniPerotti2011,GhiloniPerottiPowerSpherical,
GhiloniPerottiStoppatoDivision,GPSSingularities}; and a transfer of the
construction from circular stem domains to local or non-axially symmetric
frameworks, such as the local Cauchy theory of
\cite{PerottiLocalCauchy}, the slice-topological approach of
\cite{DouJinRenSabadini}, and the theory beyond slice domains of
\cite{GhiloniStoppatoBeyond}. In the more general algebraic settings, the
available spherical expansions and singularity theory provide the local
ingredients, whereas the geometry of the conjugacy sets, the boundary
weight and, in the nonassociative case, the order and parenthesization of
the products would have to be reconsidered. The boundary regularity and
decay hypotheses used in our proofs are chosen to keep the geometric
arguments, the absolute convergence and the limiting procedures
transparent, and we have not tried to make them minimal.

The paper is organized as follows. The three sections following this
introduction are preparatory and closely linked, each setting up what the
next one needs. Most of their ingredients are classical, and we indicate the
sources as they appear; they are recalled in some detail because the main
proof uses them in a precise form not usually presented as a single package.
For the same reason some standard proofs are retained: they display the
normalizations, orientations, orders of multiplication and interpolation
steps that the quaternionic argument reuses, and they make it clear where the
four-dimensional construction begins.

Section~\ref{sec:complex-model} builds the one-variable blueprint. It
rewrites the complex residue theorem as a normal-flux identity, fixes the
left-sided convention for vector-valued integration, and isolates the data
carried by a pair of conjugate poles, allowing them to have different orders
and to be regularized by one common, possibly nonminimal, exponent.
Reflection, tangent disks and model integrals are included because the same
geometric operations reappear in the four-dimensional reduction.

Section~\ref{sec:divided-differences} provides the algebraic bridge between
the complex and the quaternionic parts. After recalling confluent divided
differences and Hermite interpolation, it specializes them to alternating
conjugate nodes, in which form paired residues become balanced confluent
divided differences and then differential evaluations.

Section~\ref{sec:slice-preliminaries} transports this mechanism to slice
analysis. Besides fixing the conventions on stem functions and derivatives,
it records the features that prevent a formal transcription of complex
analysis: a full slice may be disconnected on a product domain, the
evaluation of a complexified quaternionic stem is complex linear but not
multiplicative, slice functions may be zero divisors, and a polar sphere may
carry an exceptional point of lower pole order. Keeping track of these
features is what allows the later statements to hold simultaneously on
slice and product domains, without inverses and without assuming equal pole
orders at the two slice points.

Section~\ref{sec:spherical-residues} develops the local theory on which the
main theorem rests. Starting from Cassini neighbourhoods, it proves a paired
spherical Laurent expansion that remains meaningful when the two point
singularities visible on a slice have different orders. The real residue is
defined as usual, and the spherical residue is selected from the first of the
two families of paired coefficients. We prove that it equals the sum of the
two point residues on every full slice and that, after multiplication by any
sufficiently high power of the characteristic polynomial, it is both a
balanced confluent divided difference and an alternating expression in the
spherical and slice derivatives, with a value independent of the exponent,
of the representative and of the slice. Removable exceptional points,
regularizations vanishing on the sphere and zero divisors are all covered,
since no division by values of the function or of its symmetrization takes
place. The method is then extended to all paired Laurent coefficients and
read componentwise in terms of the intrinsic meromorphic coordinates of the
stem.

Section~\ref{sec:weighted-flux} constructs the global functional that
detects these local coefficients. It introduces admissible planar profiles
and their circularizations, allowing disconnected sets, product domains and
domains with holes, and studies the regularity and orientation of their
boundaries. The boundary measure factors into a profile part and a spherical
part, and this explains both the constant \(4\pi^2\) and the weight
\(|\imq q|^{-2}\), which cancels the quadratic area factor of the conjugacy
spheres and makes the apparent singularity along the real axis integrable.
The main reduction theorem then identifies the weighted four-dimensional flux
with the vector-valued contour integral on any full slice, a geometric
statement requiring only a continuous slice boundary datum. Direct
computations on balls and on tubes around a non-real sphere check the
normalization independently and show that the flux extracts \(u_{-1}\) (and
not \(v_{-1}\)). A weighted Cauchy--Fueter divergence identity, due in its
unweighted form to Perotti, closes the section and shows where circularity
of the domain enters.

Section~\ref{sec:residue-theorem} assembles the local and the global
constructions. The weighted flux is reduced to a contour integral on a full
slice, the classical residue theorem evaluates it through point residues,
and conjugate point residues are grouped into intrinsic spherical residues;
this proves the four-dimensional residue formula, together with finite
differential evaluations of all its local terms. Local extraction formulas
follow, a sufficiently small admissible boundary around one polar component
recovering exactly its real or spherical residue, and lead to a
local-to-global decomposition and to domain independence. The section then
separates circularity of the function domain from circularity of the
integration domain. On a smooth noncircular integration set away from the
real axis, whose boundary avoids the polar spheres, the failure of angular
cancellation is measured by an explicit Cauchy--Fueter volume correction;
this gives an exact corrected residue formula and also explains why a sphere
cut by the boundary is outside its scope.

The rest of Section~\ref{sec:residue-theorem} goes beyond the degree-zero
sum. Weighting the flux with polynomials recovers every coefficient of a
finite principal part, one family at a real pole and two at a spherical one,
and each of them again has a finite differential formula, valid at
exceptional points and for nonminimal regularizing exponents. These local
selectors are then assembled into a global one. We take as algebra of
weights the quotient of the real polynomial ring by the pole polynomial, use
Hermite interpolation and the Chinese remainder theorem to produce a basis
of selector classes in it, and show that the residue pairing identifies
formal principal parts with the whole quaternion-valued real dual of that
algebra. Two examples close the section: one recovers the companion
coefficient of a simple spherical pole, the other separates a real pole from
a spherical one by means of global weights.

Section~\ref{sec:examples} checks the theorem against computations carried
out independently of it.
Whenever possible, the boundary integrals are computed directly and only then
compared with the theorem's predictions. The first example checks
additivity for two polar spheres carrying non-commuting coefficients; the
second combines spherical order four with an exceptional point, showing why
the alternating differential evaluation is needed; and the third concerns a
non-zero slice function on a product domain whose symmetrization vanishes
identically, on which arguments based on slice reciprocals would be
unavailable. Shorter checks cover a real pole of higher order, a vanishing
spherical residue at a genuine polar sphere, and a function combining
contributions of all three kinds.

Finally, Section~\ref{sec:hyperplane} applies the theorem to integrals over
the imaginary hyperplane \(\imq\Hh\simeq\R^3\). It first characterizes the
rational data on this hyperplane admitting a rational slice extension, the
spherically compatible functions formed by a radial even part and a radial
coefficient of the imaginary variable, and relates the absolute
integrability of the datum to the decay of its extension. A family of
tangent four-balls then exhausts the right half-space, and the residue
theorem, applied to the auxiliary function \(q^2f(q)\), evaluates the
hyperplane integral through the real and spherical poles lying there.
Multiplication by the intrinsic factor \(e^{-tq}\) yields an oscillatory
version whose values are finite quaternionic exponential--polynomial
expressions in the parameter. Explicit families illustrate higher spherical
order, exceptional points, degeneration to a real pole, and the interaction
of several spheres with non-commuting coefficients. Since the imaginary unit
in the phase varies with the point, the oscillatory integral differs from
the three-dimensional Fourier transform with a fixed frequency.
\section{The complex one-variable model}\label{sec:complex-model}

The aim of this section is to work out, in the simpler setting of one complex variable, the mechanism that will later produce the four-dimensional residue theorem.

Throughout the section, $V$ denotes a finite-dimensional complex vector space,
with complex scalars acting on the left. After a basis of $V$ has been chosen,
a function $h\colon\Omega\to V$ can be written as
\[
  h(z)=\sum_{\ell=1}^{N}h_\ell(z)e_\ell .
\]
We say that $h$ is holomorphic, or meromorphic, if all its scalar
components $h_\ell$ are so; derivatives, Laurent coefficients and
integrals are likewise defined componentwise. None of these notions
depends on the choice of basis. Moreover, since $V$ is
finite-dimensional, componentwise convergence coincides with
convergence in any norm on $V$.

If $\gamma\colon[a,b]\to\Omega$ is a piecewise $C^1$ curve, we set
\begin{equation}\label{eq:complex-vector-line-integral}
  \int_\gamma \dd z\,h(z)
  :=\int_a^b \gamma'(t)h(\gamma(t))\,\dd t.
\end{equation}
\begin{remark}
\label{rem:complex-left-differential}
 At a point \(z\), the differential \(\dd z\) is a
complex-valued covector. Since \(V\) is regarded as a left complex
vector space, it determines the \(V\)-valued one-form
\[
  \omega_z(\xi):=(\dd z)_z(\xi)\,h(z),
  \qquad \xi\in T_z\Omega .
\]
Its pullback along \(\gamma\) satisfies
\[
  (\gamma^*\omega)_t(\partial_t)
  =\gamma'(t)h(\gamma(t)),
\]
which is precisely the integrand in
\eqref{eq:complex-vector-line-integral}. Thus the position of
\(\dd z\) records which scalar action is being used.

This becomes essential when \(V=\Hh\), which is the case the whole paper
leads to and which we anticipate here only to show the convention at work.
Fix \(I\in\Sphere\) and regard \(\Hh\) as a left vector space over
\(\C_I\). If
\(J\in\Sphere\) is orthogonal to \(I\) and \(h=h_1+h_2J\), with
\(h_1,h_2\colon\Omega\to\C_I\), then
\[
  \int_\gamma\dd z\,h(z)
  =\int_\gamma\dd z\,h_1(z)
   +\left(\int_\gamma\dd z\,h_2(z)\right)J .
\]
Hence the integral is exactly the componentwise complex integral to
which Cauchy's theorem and the residue theorem apply.

This is also the module convention underlying the Cauchy formulas for
left slice regular functions: in the planar formula the boundary
differential acts on the left of the boundary value
\cite[Theorem~27]{GhiloniPerotti2011}, while the corresponding
four-dimensional volume formula contains the ordered product
\(C_{\Sphere}(x,w)n(w)f(w)\)
\cite[Theorem~2.3 and Corollary~2.7]{GhiloniPerottiVolume}.
A closely related interpretation in terms of the real differential of a
slice regular primitive acting on quaternion-valued differential forms is
given in \cite[Remark~17]{PerottiLocalCauchy}.

The expression \(\int_\gamma h(z)\,\dd z\) would instead use right
multiplication by elements of \(\C_I\). The quaternionic algebra does
provide such an action, but it is different from the left action used
above. For example, if \(J\perp I\), then on the positively oriented
unit circle in \(\C_I\),
\[
  (2\pi I)^{-1}
  \int_{|z|=1}\dd z\,z^{-1}J=J,
  \qquad
  \int_{|z|=1}z^{-1}J\,\dd z=0.
\]
Thus reversing the order can even destroy the residue contribution.
Our convention is the one compatible with power series having
coefficients on the right and with the left slice-holomorphic
structure used throughout the paper.
\end{remark}

\subsection{Complex line integrals and normal flux}
\label{subsec:complex-normal-flux}
\begin{definition}\label{def:admissible-domain}
A bounded open set $D\subset\C$, not necessarily connected, is called
\emph{admissible} if
\begin{enumerate}[label=(\roman*),leftmargin=*]
  \item $\partial D$ is the union of finitely many pairwise disjoint
    piecewise $C^1$ Jordan curves $\gamma_1,\dots,\gamma_n$;
  \item $D$ lies locally on one side of its boundary: every
    $p\in\partial D$ has a neighbourhood $U$ such that exactly one
    connected component of $U\setminus\partial D$ is contained in $D$.
\end{enumerate}
\end{definition}

Condition~(ii) is meaningful because of (i): if $U$ is small enough, it
meets only one of the curves $\gamma_j$, so that $U\setminus\partial D$ has
exactly two components, each of which is contained either in $D$ or in
$\C\setminus\overline{D}$, and at least one of which is contained in $D$.
Condition~(ii) thus rules out the remaining case, namely that both
components lie in $D$.

Under (i) and (ii), each $\gamma_j$ carries a well-defined orientation
\emph{positive relative to $D$}: the one for which $D$ stays on the left as
the curve is traversed, so that an exterior boundary runs counterclockwise
and the boundary of a hole clockwise (see Figure \ref{fig:admissible-domain}). Unless stated otherwise, $\partial D$
is always understood as the cycle $\gamma_1+\dots+\gamma_n$ with these
orientations, and we write
\[
  \int_{\partial D}:=\sum_{j=1}^{n}\int_{\gamma_j}.
\]

\begin{remark}
Condition~(ii) does not follow from~(i). Let
$D=\{|z|<2\}\setminus\{|z|=1\}$ (see Figure \ref{fig:one-sidedness-fails}): its boundary is the union of the two
disjoint smooth Jordan curves $\{|z|=1\}$ and $\{|z|=2\}$, so (i) holds,
whereas (ii) fails at every point of the inner circle, both nearby
components lying in $D$. Indeed, $D$ has the two connected components
$U_1=\{|z|<1\}$ and $U_2=\{1<|z|<2\}$, and $\{|z|=1\}$ occurs once in
$\partial U_1$, counterclockwise, and once in $\partial U_2$, clockwise;
viewing $\partial D$ as a family of curves, each with a single orientation,
therefore discards one of the two copies. The failure is already visible at
the level of boundary values: the function equal to $0$ on $U_1$ and to $1$
on $U_2$ is holomorphic on $D$ and extends continuously to each component
separately, but has no well-defined boundary value on $\{|z|=1\}$.
\end{remark}
\begin{figure}[htbp]
\centering
\begin{tikzpicture}[scale=0.65]
  \fill[even odd rule,black!8]
    plot[smooth cycle,tension=0.7] coordinates
      {(-4,0) (-2.7,-2) (0,-2.7) (2.6,-2.2) (4.2,0) (2.8,1.9) (0,2.6) (-2.6,2.1)}
    (-2.0,-0.3) circle (0.95)
    plot[smooth cycle,tension=0.7] coordinates
      {(1.0,0.4) (1.7,1.3) (2.7,0.9) (2.8,-0.1) (1.8,-0.6)};

  \draw[thick,orient] plot[smooth cycle,tension=0.7] coordinates
      {(-4,0) (-2.7,-2) (0,-2.7) (2.6,-2.2) (4.2,0) (2.8,1.9) (0,2.6) (-2.6,2.1)};

  \draw[thick,orient] (-2.0,-0.3) ++(90:0.95) arc(90:-270:0.95);

  \draw[thick,orient] plot[smooth cycle,tension=0.7] coordinates
      {(1.0,0.4) (1.7,1.3) (2.7,0.9) (2.8,-0.1) (1.8,-0.6)};

  \node at (-2.0,-0.3) {$\gamma_2$};
  \node at (1.95,0.45) {$\gamma_3$};
  \node at (-0.15,2.15) {$\gamma_1$};
  \node at (-0.1,-1.6) {$D$};
\end{tikzpicture}
\caption{An admissible domain $D$ with $\partial D=\gamma_1+\gamma_2+\gamma_3$,
each component positively oriented relative to $D$.}
\label{fig:admissible-domain}
\end{figure}
\begin{figure}[htbp]
\centering
\begin{tikzpicture}[scale=1.25]
  \fill[black!8] (0,0) circle (2);

  \draw[thick,orient] (0,0) ++(0:2) arc(0:360:2);
  \draw[thick] (0,0) circle (1);

  \draw[dashed,orientb] (0,0) ++(0:0.90) arc(0:360:0.90);
  \draw[dashed,orientb] (0,0) ++(0:1.10) arc(0:-360:1.10);

  \fill (0,-1) circle (1.2pt);
  \draw[dotted,thick] (0,-1) circle (0.32);
  \node[right] at (0.30,-1.16) {$U$};
  \node[below left] at (0.02,-0.96) {$p$};

  \node at (0.42,0.42) {$U_1$};
  \node at (1.06,1.06) {$U_2$};
  \node[below] at (0,-2.06) {$\{|z|=2\}$};
  \node[above] at (0,1.14) {$\{|z|=1\}$};
\end{tikzpicture}
\caption{The set $D=\{|z|<2\}\setminus\{|z|=1\}$ satisfies (i) but not (ii).
Both components of $U\setminus\partial D$ lie in $D$, for every neighbourhood
$U$ of every point $p$ of the inner circle; accordingly, $\{|z|=1\}$ has to be
traversed twice, counterclockwise as part of $\partial U_1$ and clockwise as
part of $\partial U_2$ (dashed).}
\label{fig:one-sidedness-fails}
\end{figure}

Let $p\in\partial D$ be a \emph{regular point}, that is, a point at which the
boundary curve through $p$ is differentiable with non-vanishing derivative;
all but finitely many boundary points are regular. At such a $p$ we denote by
\[
  \nu(p)=n_1(p)+\ci n_2(p)
\]
the outward unit normal and by $\tau(p)$ the unit tangent, both viewed as
complex numbers, the latter taken in the direction of the positive
orientation of $\partial D$.

\begin{lemma}\label{lem:complex-tangent-normal}
At every regular point of $\partial D$, we have that
  \(\tau=\ci\nu\).
Consequently, denoting by $s$ the arclength parameter,
\begin{equation}\label{eq:complex-dz-normal}
  \dd z=\ci\nu\dd s .
\end{equation}
\end{lemma}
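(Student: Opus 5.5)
The identity \(\tau=\ci\nu\) is local, so the plan is to fix a regular point \(p\in\partial D\) and work in a small neighbourhood \(U\) of \(p\). By condition~(ii) of Definition~\ref{def:admissible-domain}, \(U\) meets only one boundary curve \(\gamma_j\). Parametrize \(\gamma_j\) near \(p\) by arclength \(s\) in the direction positive relative to \(D\), so that \(\tau=\gamma_j'(s)\) is a unit complex number. Multiplication by \(\ci\) is rotation by \(+\pi/2\), so \(\ci\tau\) is the unit normal pointing to the left of the direction of travel, and \(-\ci\tau\) is the one pointing to the right. There are exactly two unit normals at \(p\), so it suffices to show that the outward normal \(\nu\) is the right-pointing one, \(\nu=-\ci\tau\). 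This is equivalent to \(\tau=\ci\nu\), because \(\ci\cdot(-\ci\tau)=\tau\).

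The key step is to match the convention ``\(D\) stays on the left'' with the definition of outward normal. I would use the first-order expansion \(\gamma_j(s)=p+s\tau+o(s)\): for small \(\varepsilon>0\), the points \(p\pm\varepsilon\ci\tau\) lie in the two distinct components of \(U\setminus\partial D\). Positivity of the orientation means, by definition, that the left component, containing \(p+\varepsilon\ci\tau\), is the one contained in \(D\). Condition~(ii) then forces the other component, containing \(p-\varepsilon\ci\tau\), to lie in \(\C\setminus\overline D\). The outward normal is the unit normal \(\nu\) such that \(p+\varepsilon\nu\notin\overline D\) for small \(\varepsilon>0\), so \(\nu=-\ci\tau\), as required.

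The main obstacle is the second step. Making ``the left component is in \(D\)'' precise depends on how the positive orientation was defined. If it was defined only globally (counterclockwise on exterior curves, clockwise on holes), I would first have to derive the local left/right description from the Jordan curve theorem together with~(ii). I would treat that local description as the content of the stated definition and not reprove it.

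Once \(\tau=\ci\nu\) holds, the differential identity is immediate. In the arclength parametrization \(z=\gamma_j(s)\) we have \(\dd z=\gamma_j'(s)\dd s=\tau\dd s=\ci\nu\dd s\). This holds at every regular point, and the non-regular points are finite in number, so it holds away from a finite set. That finite set is negligible for the line integrals in \eqref{eq:complex-vector-line-integral}.
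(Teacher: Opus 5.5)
Your argument is correct and is essentially the paper's: the paper also uses that multiplication by $\ci$ is the counterclockwise quarter-turn, that $D$ lying on the left forces the outward normal to point to the right of $\tau$ (so $\nu=-\ci\tau$), and then writes $\dd z=\gamma'(s)\dd s=\tau\dd s=\ci\nu\dd s$ for an arclength parametrization; your check with the points $p\pm\varepsilon\ci\tau$ only makes the left/right step explicit. One small slip: a small neighbourhood meets only one boundary curve because the curves in condition~(i) are disjoint, not because of condition~(ii).
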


\begin{proof}
Multiplication by $\ci$ is the rotation by $\pi/2$ in the counterclockwise
direction. Since $D$ lies on the left of its positively oriented boundary,
the outward normal points to the right of $\tau$, that is, $\nu$ is obtained
from $\tau$ by a clockwise rotation by $\pi/2$. Rotating back gives
the first equality (see Figure~\ref{fig:complex-tangent-normal}).

For \eqref{eq:complex-dz-normal}, let $\gamma$ be a parametrization by
arclength inducing the positive orientation. Then $|\gamma'(s)|=1$ and
$\gamma'(s)=\tau(\gamma(s))$, whence
\[
  \dd z=\gamma'(s)\dd s=\tau(\gamma(s))\dd s=\ci\,\nu(\gamma(s))\dd s .
\]
\end{proof}
\begin{figure}[htbp]
\centering
\begin{tikzpicture}[scale=1.5]
  \fill[black!8] (-1.5,-0.80) .. controls (-0.75,0.15) and (-0.35,-0.22) ..
                 (0,0) .. controls (0.45,0.14) and (0.85,0.75) ..
                 (1.5,0.72) -- (1.5,1.25) -- (-1.5,1.25) -- cycle;
  \draw[very thick,-{Stealth}] (-1.5,-0.80) .. controls (-0.75,0.15) and (-0.35,-0.22) ..
                 (0,0) .. controls (0.45,0.14) and (0.85,0.75) .. (1.5,0.72);

  \fill (0,0) circle (0.8pt);
  \draw[-{Stealth},thick] (0,0) -- (0.86,0.26) node[above right] {$\tau$};
  \draw[-{Stealth},thick] (0,0) -- (0.26,-0.86) node[below right] {$\nu$};
  \draw (0.13,-0.43) arc(-73:17:0.45);
  \node[right,font=\small] at (0.47,-0.13) {$\pi/2$};

  \node at (-0.75,0.85) {$D$};
  \node[above left] at (-0.02,0.02) {$p$};
  \node[below,font=\small] at (-1.15,-0.62) {$\partial D$};
\end{tikzpicture}
\caption{At a regular point $p$ of $\partial D$, the domain lies on the left
of the positive direction of travel, so that the outward normal $\nu$ points
to the right of the tangent $\tau$. Multiplication by $\ci$, the
counterclockwise rotation by $\pi/2$, therefore takes $\nu$ to $\tau$.}
\label{fig:complex-tangent-normal}
\end{figure}
\begin{definition}\label{def:complex-flux}
Let $h$ be continuous on $\partial D$. The \emph{normalized (planar) flux} of $h$
through $\partial D$ is
\begin{equation}\label{eq:complex-flux-definition}
  \Flux{D}{h}:=\frac{1}{2\pi}\int_{\partial D}\nu(z)h(z)\dd s(z),
\end{equation}
where $\dd s$ is the arclength element.
\end{definition}

Substituting \eqref{eq:complex-dz-normal} into
\eqref{eq:complex-flux-definition} yields
\[
  \int_{\partial D}\dd z\,h(z)=\ci\int_{\partial D}\nu(z)h(z)\dd s(z),
\]
so that the flux can equivalently be computed as a contour integral:
\begin{equation}\label{eq:complex-flux-contour}
  \Flux{D}{h}=\frac{1}{2\pi\ci}\int_{\partial D}\dd z\,h(z).
\end{equation}
The factor $1/(2\pi)$ in \eqref{eq:complex-flux-definition} is chosen
precisely so that \eqref{eq:complex-flux-contour} carries the same
normalization as the Cauchy integral formula.



Recall the Wirtinger operator
\[
  \partial_{\bar z}
  :=\frac12\left(\frac{\partial}{\partial x}
                  +\ci\frac{\partial}{\partial y}\right),
  \qquad z=x+\ci y.
\]

\begin{proposition}[Green--Cauchy identity]\label{prop:complex-green-cauchy}
If $h$ is of class $C^1$ on a neighbourhood of $\overline D$, then
\begin{equation}\label{eq:complex-green-cauchy}
  \int_{\partial D}\nu(z)h(z)\dd s(z)=2\int_D\partial_{\bar z}h(z)\darea(z).
\end{equation}

In particular, $\Flux{D}{h}=0$ whenever $h$ is holomorphic near
$\overline D$.
\end{proposition}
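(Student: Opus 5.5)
The plan is to reduce the statement to the classical Green theorem applied to the real and imaginary parts, working componentwise in $V$. Fix a basis of $V$ and write $h=\sum_\ell h_\ell e_\ell$. Both sides of \eqref{eq:complex-green-cauchy} are complex-linear in $h$ with scalars acting on the left. It therefore suffices to prove the identity for a scalar $C^1$ function $h\colon\overline D\to\C$, and then multiply on the right by $e_\ell$ and sum.

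For scalar $h$ I will use Lemma~\ref{lem:complex-tangent-normal}, in the form \eqref{eq:complex-flux-contour}, to rewrite the left-hand side as
\[
  \int_{\partial D}\nu h\dd s=-\ci\int_{\partial D}h\dd z .
\]
On each positively oriented boundary component, $\int h\dd z=\int h\dd x+\ci\int h\dd y$. Green's theorem then gives
\[
  \int_{\partial D}(P\dd x+R\dd y)=\int_D(\partial_x R-\partial_y P)\darea ,
\]
applied with $P=h$ and $R=\ci h$, separately to real and imaginary parts. This yields $\int_{\partial D}h\dd z=\int_D(\ci\partial_x h-\partial_y h)\darea=2\ci\int_D\partial_{\bar z}h\darea$. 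Multiplying by $-\ci$ gives the claimed $2\int_D\partial_{\bar z}h\darea$.

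The one point needing care is the validity of Green's theorem for an admissible $D$ in the sense of Definition~\ref{def:admissible-domain}: a possibly disconnected bounded open set with holes and piecewise $C^1$ Jordan boundary curves. Condition~(ii), with the orientation "positive relative to $D$", is exactly what makes the boundary cycle $\gamma_1+\dots+\gamma_n$ the correct oriented boundary, so that Green's theorem holds for piecewise $C^1$ boundaries. If one wants to be explicit, each connected component of $D$ is a region bounded by finitely many of the $\gamma_j$, and the statement is additive over components. The $C^1$ hypothesis on a neighbourhood of $\overline D$ guarantees continuity of $\partial_{\bar z}h$ up to the boundary, so the area integral converges.

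Finally, if $h$ is holomorphic near $\overline D$, then $\partial_{\bar z}h=0$ by the Cauchy--Riemann equations. The right-hand side vanishes, and by Definition~\ref{def:complex-flux} we get $\Flux{D}{h}=0$.
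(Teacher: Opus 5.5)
Your argument is correct and is essentially the paper's: both reduce, componentwise in a basis of $V$, to the classical planar Green/divergence theorem on the admissible set $D$. The only difference is the form of that theorem. The paper applies the divergence theorem directly to the fields $(h,0)$ and $(0,h)$, obtaining $\int_{\partial D}n_1h\dd s=\int_D\partial_xh\darea$ and $\int_{\partial D}n_2h\dd s=\int_D\partial_yh\darea$, and adds them with weights $1$ and $\ci$; you first convert the normal flux into $-\ci\int_{\partial D}h\dd z$ via Lemma~\ref{lem:complex-tangent-normal} and then use the circulation form of Green's theorem, which amounts to the same computation.
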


\begin{proof}
Write $\nu=n_1+\ci n_2$. The divergence theorem, applied
componentwise to the $V$-valued fields $(h,0)$ and $(0,h)$, gives
\[
  \int_{\partial D}n_1h\,\dd s
  =\int_D\frac{\partial h}{\partial x}\,\darea,
  \qquad
  \int_{\partial D}n_2h\,\dd s
  =\int_D\frac{\partial h}{\partial y}\,\darea.
\]
Multiplying the second identity by $\ci$ and adding yields
\[
  \int_{\partial D}(n_1+\ci n_2)h\,\dd s
  =\int_D\left(\frac{\partial h}{\partial x}
          +\ci\frac{\partial h}{\partial y}\right)\darea
  =2\int_D\partial_{\bar z}h\,\darea.
\]
This proves \eqref{eq:complex-green-cauchy}. If $h$ is holomorphic,
the Cauchy--Riemann equations are precisely $\partial_{\bar z}h=0$,
component by component.
\end{proof}

\begin{remark}[The flux as a coefficient selector]
\label{rem:complex-circle-selector}
The normalization of Definition~\ref{def:complex-flux} is already explained
by the case of a disc. Let $a\in\C$, $r>0$, $v\in V$ and $k\in\mathbb Z$,
and consider the function $h(z)=(z-a)^kv$ on the circle
$C_r(a)=\partial B(a,r)$. Parametrizing $C_r(a)$ by $z=a+re^{\ci\theta}$ one
has $\nu=e^{\ci\theta}$ and $\dd s=r\dd\theta$, whence
\begin{equation}\label{eq:complex-circle-selector}
  \Flux{B(a,r)}{h}
  =\frac{r^{k+1}}{2\pi}\int_0^{2\pi}e^{\ci(k+1)\theta}\dd\theta\,v
  =\begin{cases}
     v,& k=-1,\\
     0,& k\neq-1.
   \end{cases}
\end{equation}
The flux thus selects the coefficient of $(z-a)^{-1}$, and does so
independently of $r$. Note that for $k\leq-1$ the function $h$ is not of
class $C^1$ on a neighbourhood of $\overline{B(a,r)}$, so
Proposition~\ref{prop:complex-green-cauchy} does not apply: the value $v$
obtained for $k=-1$ is exactly the obstruction measured by the residue.
\end{remark}

\subsection{The residue theorem in flux form}
\label{subsec:complex-residue-flux}

Suppose that $h$ is meromorphic near a point $a$, so that on a punctured
disc about $a$ it admits a Laurent expansion
\begin{equation}\label{eq:complex-laurent}
  h(z)=\sum_{n\geq-m}(z-a)^nc_n,
  \qquad c_n\in V,
\end{equation}
for some $m\in\mathbb N$. As usual, we set $\Res_ah:=c_{-1}$, and
$\Res_ah:=0$ when $h$ is holomorphic at $a$.

The next statement is the classical residue theorem, applied componentwise
and rewritten through \eqref{eq:complex-flux-contour}; for the scalar case
we refer to \cite[Chapter~10]{Rudin} or \cite[Chapter~4]{Ahlfors}. We
present it as a sum of residues along a contour, following
\cite[Section~9, equations~(50)--(51)]{deBoor}, since this is the form
needed for divided differences, and we include a proof because it is the argument, rather than the statement, that the
quaternionic setting will reuse.

\begin{theorem}[Residue theorem in normal form]
\label{thm:complex-residue-flux}
Let $D$ be an admissible domain in the sense of
Definition~\ref{def:admissible-domain}, and let $h$ be meromorphic on an
open neighbourhood of $\overline D$, with no pole on $\partial D$. Then
\begin{equation}\label{eq:complex-residue-flux}
  \boxed{\;
  \Flux{D}{h}
  =\frac{1}{2\pi\ci}\int_{\partial D}\dd z\,h(z)
  =\sum_{a\in\mathcal P(h)\cap D}\Res_a h,
  \;}
\end{equation}
where $\mathcal P(h)$ denotes the set of poles of $h$.
\end{theorem}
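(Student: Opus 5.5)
The first equality in \eqref{eq:complex-residue-flux} is already \eqref{eq:complex-flux-contour}, so only the second needs proof. Choosing a basis $e_1,\dots,e_N$ of $V$ and writing $h=\sum_\ell h_\ell e_\ell$, the line integral \eqref{eq:complex-vector-line-integral}, the Laurent coefficients and hence $\Res_a h$ all split componentwise. It therefore suffices to treat a scalar meromorphic $h$, reassembling with the $e_\ell$ at the end. I would nevertheless keep the vector notation and run the excision argument directly, because this is the argument reused later.

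First, finiteness of the poles. $\overline D$ is compact and contained in the open set on which $h$ is meromorphic. Poles form a discrete closed subset there, and none lies on $\partial D$, so $\mathcal P(h)\cap D=\{a_1,\dots,a_k\}$ is finite. Choose $r>0$ with the following properties:
\begin{itemize}
\item the closed discs $\overline{B(a_j,r)}$ are pairwise disjoint;
\item each is contained in $D$;
\item each $\overline{B(a_j,r)}\setminus\{a_j\}$ carries the Laurent expansion \eqref{eq:complex-laurent} of $h$.
\end{itemize}
Set $D_r:=D\setminus\bigcup_j\overline{B(a_j,r)}$.

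Next, check that $D_r$ is again admissible in the sense of Definition~\ref{def:admissible-domain}. Its boundary consists of the Jordan curves $\gamma_1,\dots,\gamma_n$ together with the circles $C_r(a_j)$, and these are pairwise disjoint. One-sidedness holds near the old curves because it held for $D$. Near the circles, the interior disc is excluded and the exterior side lies in $D$. The positive orientation relative to $D_r$ runs each circle clockwise, and the outward normal of $D_r$ on $C_r(a_j)$ is $-\nu_{B(a_j,r)}$. Consequently
\[
\Flux{D_r}{h}=\Flux{D}{h}-\sum_{j=1}^k\Flux{B(a_j,r)}{h}.
\]

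Now $h$ is holomorphic, hence $C^1$, on a neighbourhood of $\overline{D_r}$. Proposition~\ref{prop:complex-green-cauchy} then gives $\Flux{D_r}{h}=0$, so $\Flux{D}{h}=\sum_j\Flux{B(a_j,r)}{h}$.

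It remains to evaluate each small flux. On $C_r(a_j)$ the Laurent series of $h$ converges uniformly, since the circle is a compact subset of the annulus of convergence. This allows termwise integration, and Remark~\ref{rem:complex-circle-selector} shows that only the term $k=-1$ survives. Hence $\Flux{B(a_j,r)}{h}=c_{-1}=\Res_{a_j}h$.

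The main obstacle is the divergence theorem for a domain whose boundary is only piecewise $C^1$ and which may have several components and holes. I would take the divergence theorem for such domains as standard, which is what Proposition~\ref{prop:complex-green-cauchy} already presupposes. The admissibility hypothesis~(ii) is precisely what ensures that each boundary curve appears exactly once with a consistent orientation. If more care is wanted, Green's theorem on piecewise $C^1$ Jordan domains can be applied to each component, adding the contributions of the holes with opposite orientation.
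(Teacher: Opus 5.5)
Your proposal is correct and is essentially the paper's own argument. You excise small disjoint discs around the finitely many poles, apply Proposition~\ref{prop:complex-green-cauchy} on the still-admissible excised domain (whose new circles carry the reversed orientation), and evaluate each small flux by termwise integration of the Laurent series via Remark~\ref{rem:complex-circle-selector}. Your single radius $r$ and explicit admissibility check of $D_r$ are harmless refinements of what the paper simply asserts.
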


\begin{proof}
Since $\overline D$ is compact and poles are isolated, $h$ has finitely many
poles $a_1,\dots,a_N$ in $D$. Choose $\varepsilon_j>0$ so small that the
closed discs $\overline{B(a_j,\varepsilon_j)}$ are pairwise disjoint,
contained in $D$, and free of poles other than their centres, and set
\[
  D_\varepsilon:=D\setminus\bigcup_{j=1}^{N}\overline{B(a_j,\varepsilon_j)} .
\]
Then $D_\varepsilon$ is again as in Definition~\ref{def:admissible-domain},
and $h$ is holomorphic on a neighbourhood of $\overline{D_\varepsilon}$.

Each $C_j:=\partial B(a_j,\varepsilon_j)$ is positively oriented relative to
$B(a_j,\varepsilon_j)$, but bounds a hole of $D_\varepsilon$, hence carries
the opposite orientation as a component of $\partial D_\varepsilon$ (see
Figure~\ref{fig:complex-excision}). Since the flux is additive over
boundary components and changes sign with the orientation,
\begin{equation}\label{eq:complex-excision}
  \Flux{D_\varepsilon}{h}
  =\Flux{D}{h}-\sum_{j=1}^{N}\Flux{B(a_j,\varepsilon_j)}{h},
\end{equation}
and the left-hand side vanishes by
Proposition~\ref{prop:complex-green-cauchy}.

Finally, the Laurent series of $h$ at $a_j$ converges uniformly on $C_j$ and
may therefore be integrated term by term; by
Remark~\ref{rem:complex-circle-selector} every term gives $0$ except the one
of exponent $-1$, so that $\Flux{B(a_j,\varepsilon_j)}{h}=\Res_{a_j}h$.
Substituting into \eqref{eq:complex-excision} proves the second equality in
\eqref{eq:complex-residue-flux}; the first one is
\eqref{eq:complex-flux-contour}.
\end{proof}
\begin{figure}[htbp]
\centering
\begin{tikzpicture}[scale=0.65]
  \def\outerpath{
    plot[smooth cycle,tension=0.7] coordinates
      {(-4,0) (-2.7,-2) (0,-2.7) (2.6,-2.2) (4.2,0) (2.8,1.9) (0,2.6)
       (-2.6,2.1)}}

  \begin{scope}[even odd rule]
    \fill[black!8] \outerpath
      (-2.0,-0.3) circle (0.95)
      plot[smooth cycle,tension=0.7] coordinates
        {(1.0,0.4) (1.7,1.3) (2.7,0.9) (2.8,-0.1) (1.8,-0.6)}
      (-0.6,1.15) circle (0.34)
      (0.35,-1.35) circle (0.34)
      (3.05,-0.75) circle (0.34);
  \end{scope}

  \draw[thick,orient] \outerpath;
  \draw[thick,orient] (-2.0,-0.3) ++(90:0.95) arc(90:-270:0.95);
  \draw[thick,orient] plot[smooth cycle,tension=0.7] coordinates
      {(1.0,0.4) (1.7,1.3) (2.7,0.9) (2.8,-0.1) (1.8,-0.6)};

  \foreach \c in {(-0.6,1.15),(0.35,-1.35),(3.05,-0.75)}{
    \draw[thick,orient] \c ++(90:0.34) arc(90:-270:0.34);}
  \fill \pgfextra{} (-0.6,1.15) circle (1.3pt);
  \fill (0.35,-1.35) circle (1.3pt);
  \fill (3.05,-0.75) circle (1.3pt);

  \node[above] at (-0.6,1.54) {$a_1$};
  \node[below] at (0.35,-1.74) {$a_2$};
  \node[right] at (3.44,-0.75) {$a_3$};
  \node at (-0.6,-0.45) {$D_\varepsilon$};
  \node at (-2.0,-0.3) {$\gamma_2$};
  \node at (1.95,0.45) {$\gamma_3$};
  \node at (-0.15,2.15) {$\gamma_1$};
\end{tikzpicture}
\caption{The excised domain $D_\varepsilon$ obtained from the admissible
domain of Figure~\ref{fig:admissible-domain} by removing three closed discs
around the poles $a_1,a_2,a_3$ of $h$. The original boundary components keep
their orientations, while the three new circles bound holes of
$D_\varepsilon$ and are therefore traversed clockwise, which is the sign in
\eqref{eq:complex-excision}.}
\label{fig:complex-excision}
\end{figure}

The following standard formula for the residue at a pole of higher order is
also the one-node confluent case of the divided-difference identities of
\cite[Section~1, equation~(14), and Section~9, equation~(51)]{deBoor}.

\begin{corollary}[Residue at a pole of higher order]
\label{cor:complex-higher-order-residue}
Let $m\geq1$ and suppose that $g(z):=(z-a)^mh(z)$ extends holomorphically
across $a$. Then
\begin{equation}\label{eq:complex-higher-order-residue}
  \Res_ah=\frac{1}{(m-1)!}\,g^{(m-1)}(a).
\end{equation}
\end{corollary}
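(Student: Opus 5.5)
The plan is to compare Laurent coefficients directly. Since \(g(z)=(z-a)^m h(z)\) extends holomorphically across \(a\), it has a Taylor expansion
\[
  g(z)=\sum_{k\geq0}(z-a)^k\,\frac{g^{(k)}(a)}{k!}
\]
on a disc \(B(a,r)\). The series converges componentwise in \(V\), because derivatives and Taylor coefficients of \(V\)-valued functions are defined componentwise with respect to any basis. Dividing by \((z-a)^m\) on the punctured disc gives
\[
  h(z)=\sum_{n\geq-m}(z-a)^n\,\frac{g^{(n+m)}(a)}{(n+m)!}.
\]
This is a Laurent expansion of the form \eqref{eq:complex-laurent}.

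Next I invoke uniqueness of Laurent coefficients. Any convergent Laurent expansion on a punctured disc is the Laurent expansion of \(h\). One can see this concretely with Remark~\ref{rem:complex-circle-selector}: integrating term by term on a small circle \(C_\varepsilon(a)\), which is allowed by uniform convergence, the flux of \(h\) picks out exactly the coefficient of \((z-a)^{-1}\). The coefficient with \(n=-1\) is \(g^{(m-1)}(a)/(m-1)!\). Hence
\[
  \Res_a h=\frac{1}{(m-1)!}\,g^{(m-1)}(a),
\]
which is \eqref{eq:complex-higher-order-residue}.

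The argument also covers the degenerate case, where \(h\) has a pole of order less than \(m\) or is holomorphic at \(a\). In that case the formula still holds, since the same expansion applies and \(g^{(m-1)}(a)\) is simply the corresponding coefficient, possibly zero. The only point needing care is that the placement of \(V\)-coefficients on the right is compatible with the left complex action; this is automatic componentwise, so I expect no real obstacle.

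An alternative route applies Theorem~\ref{thm:complex-residue-flux} on \(B(a,\varepsilon)\) together with the Cauchy integral formula for derivatives of \(g\):
\[
  \frac{1}{2\pi\ci}\int_{C_\varepsilon}\dd z\,(z-a)^{-m}g(z)=\frac{g^{(m-1)}(a)}{(m-1)!}.
\]
This yields the same identity.
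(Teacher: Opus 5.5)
Your proposal is correct and follows the paper's proof: Taylor-expand the holomorphic extension $g$ at $a$, divide by $(z-a)^m$ to obtain the Laurent expansion of $h$, and read off the coefficient of index $-1$ (the term $k=m-1$), which works equally well when $m$ exceeds the exact pole order. Your uniqueness remark via the circle selector and the alternative route through the Cauchy formula for derivatives are harmless additions.
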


Note that $m$ is not required to be the exact order of the pole of $h$ at
$a$: any larger exponent will do, and also the case in which $h$ is
holomorphic at $a$ is included.

\begin{proof}
Expanding $g$ in Taylor series at $a$ and dividing by $(z-a)^m$ gives the
Laurent expansion
\[
  h(z)=\sum_{k\geq0}\frac{1}{k!}(z-a)^{k-m}g^{(k)}(a) ,
\]
in which the exponent $-1$ occurs exactly for $k=m-1$. This proves
\eqref{eq:complex-higher-order-residue}; if $m$ exceeds the exact order of
the pole, the first Taylor coefficients of $g$ vanish, which does not affect
the one of index $m-1$.
\end{proof}

\begin{example}\label{ex:complex-higher-order}
Fix $a\in\C$ and $v\in V$, and let
\[
  h(z)=\frac{e^z}{(z-a)^3}v ,
\]
so that $g(z)=(z-a)^3 h(z)=e^zv$ and $m=3$. By
\eqref{eq:complex-higher-order-residue}, $\Res_ah=\frac{1}{2}e^av$, and
therefore, if $a\in D$ and $h$ has no other pole in $D$,
\[
  \frac{1}{2\pi}\int_{\partial D}\nu(z)\frac{e^z}{(z-a)^3}v\dd s(z)
  =\frac{1}{2}e^av .
\]
\end{example}

\subsection{A conjugate pair with unequal pole orders}
\label{subsec:complex-conjugate-pair}
A conjugate pair of non-real points \(\alpha\pm\ci\beta\) is the trace, on a
complex slice, of the conjugacy sphere \(\alpha+\Sphere\beta\) of
quaternions having real part \(\alpha\) and imaginary part of modulus
\(\beta\): accordingly, the functionals introduced
here are the ones that are symmetric in the pair, and they are the ones that
will later be attached to the sphere.
Let
\[
  p=\alpha+\ci\beta,\qquad \bar p=\alpha-\ci\beta,\qquad \beta>0,
\]
so that $p$ and $\bar p$ are distinct non-real conjugate points, and
introduce the real quadratic polynomial
\begin{equation}\label{eq:complex-characteristic-polynomial}
  \DeltaSphere{p}(z):=(z-p)(z-\bar p)=z^2-2\alpha z+\alpha^2+\beta^2 .
\end{equation}
Its coefficients $2\alpha=2\req p$ and $\alpha^2+\beta^2=|p|^2$ depend on
$p$ only through the conjugacy class $\{p,\bar p\}$.

From now on, $h$ is assumed to be meromorphic near both $p$ and $\bar p$;
the two pole orders need \emph{not} be equal, and a point at which $h$ is
holomorphic is regarded as a pole of order zero.

\begin{definition}[Residue of a conjugate pair]
\label{def:complex-paired-residue}
The \emph{paired residue} of $h$ at $\{p,\bar p\}$ is
\begin{equation}\label{eq:complex-paired-residue}
  \PRes_{\{p,\bar p\}}h:=\Res_ph+\Res_{\bar p}h .
\end{equation}
\end{definition}

The next proposition computes \eqref{eq:complex-paired-residue} by clearing
both poles at once, with a single exponent $M$ at the two nodes. This is the
finite principal-part counterpart of the two-point Laurent expansions of
\cite[Section~3, Theorem~2 and Proposition~3]{LopezTemme}, and it is a
direct application of Corollary~\ref{cor:complex-higher-order-residue}.

\begin{proposition}[A common regularizing exponent]
\label{prop:complex-common-regularization}
Let $M\geq1$ be an integer at least as large as both pole orders, and set
\begin{equation}\label{eq:complex-pair-regularization}
  g_M(z):=\DeltaSphere{p}(z)^Mh(z).
\end{equation}
Then $g_M$ extends holomorphically across $p$ and $\bar p$, and
\begin{equation}\label{eq:complex-pair-derivative-formula}
  \PRes_{\{p,\bar p\}}h
  =\frac{1}{(M-1)!}\left[
      \frac{\dd^{\,M-1}}{\dd z^{M-1}}\frac{g_M(z)}{(z-\bar p)^M}
      \bigg|_{z=p}
     +\frac{\dd^{\,M-1}}{\dd z^{M-1}}\frac{g_M(z)}{(z-p)^M}
      \bigg|_{z=\bar p}
   \right].
\end{equation}
Equivalently, if $\gamma_p$ and $\gamma_{\bar p}$ are small positively
oriented circles about the two nodes, enclosing no other pole of $h$, then
\begin{equation}\label{eq:complex-pair-contour-formula}
  \PRes_{\{p,\bar p\}}h
  =\frac{1}{2\pi\ci}\int_{\gamma_p+\gamma_{\bar p}}
    \dd z\,\frac{g_M(z)}{\DeltaSphere{p}(z)^M},
\end{equation}
or, in terms of fluxes,
\begin{equation}\label{eq:complex-pair-flux-formula}
  \PRes_{\{p,\bar p\}}h
  =\Flux{B(p,\varepsilon)}{h}+\Flux{B(\bar p,\varepsilon)}{h}
\end{equation}
for every $\varepsilon>0$ small enough that the closed discs
$\overline{B(p,\varepsilon)}$ and $\overline{B(\bar p,\varepsilon)}$ are
disjoint and contain no pole of $h$ other than their centres.
\end{proposition}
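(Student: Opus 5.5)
The plan is to prove the three formulas in order, each from the previous one, relying on Corollary~\ref{cor:complex-higher-order-residue} and Theorem~\ref{thm:complex-residue-flux}.

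First, the regularization. Near \(p\) we write
\[
  g_M(z)=(z-p)^M\bigl[(z-\bar p)^Mh(z)\bigr].
\]
The factor \((z-\bar p)^M\) is holomorphic and non-vanishing at \(p\), because \(\beta>0\). Since \(M\) is at least the order of the pole of \(h\) at \(p\), the product \((z-p)^Mh(z)\) extends holomorphically across \(p\). Hence \(g_M\) extends holomorphically across \(p\), and the same argument works at \(\bar p\), with the roles of the two factors exchanged.

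Next, the derivative formula~\eqref{eq:complex-pair-derivative-formula}. Set \(\tilde g_p(z):=g_M(z)/(z-\bar p)^M=(z-p)^Mh(z)\). This function is holomorphic on a disc about \(p\), after removing the removable singularity. Corollary~\ref{cor:complex-higher-order-residue}, applied with exponent \(M\), gives \(\Res_ph=\tilde g_p^{(M-1)}(p)/(M-1)!\). That corollary explicitly allows \(M\) to exceed the exact pole order, including order zero. Symmetrically, \(\tilde g_{\bar p}(z):=g_M(z)/(z-p)^M\) gives \(\Res_{\bar p}h\). Adding the two identities and using Definition~\ref{def:complex-paired-residue} yields~\eqref{eq:complex-pair-derivative-formula}. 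Derivatives of \(V\)-valued functions are componentwise, so nothing changes from the scalar case.

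Then the contour and flux forms. Off the nodes we have \(g_M/\DeltaSphere{p}^M=h\), identically on punctured neighbourhoods. Hence the integrand in~\eqref{eq:complex-pair-contour-formula} is just \(h\) on \(\gamma_p\) and \(\gamma_{\bar p}\). Take \(\gamma_p=\partial B(p,\varepsilon)\) and \(\gamma_{\bar p}=\partial B(\bar p,\varepsilon)\), with \(\varepsilon\) as in the statement. Each disc is admissible and contains exactly one pole, at most. Theorem~\ref{thm:complex-residue-flux} on each disc gives \(\Flux{B(p,\varepsilon)}{h}=\Res_ph\) and \(\Flux{B(\bar p,\varepsilon)}{h}=\Res_{\bar p}h\), and via~\eqref{eq:complex-flux-contour} each flux equals the corresponding normalized contour integral. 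Summing gives both~\eqref{eq:complex-pair-contour-formula} and~\eqref{eq:complex-pair-flux-formula}. For general small circles \(\gamma_p,\gamma_{\bar p}\) enclosing no other pole, the same theorem applies to the discs they bound.

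The only point needing care, hardly an obstacle, is the case of unequal orders or a holomorphic node. There one must note that the non-vanishing factor \((z-\bar p)^{M}\) at \(p\) does not lower the order. The overshoot of \(M\) is absorbed by the remark following Corollary~\ref{cor:complex-higher-order-residue}.
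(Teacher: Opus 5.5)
Your proposal is correct and follows the paper's proof essentially step for step. You clear the pole at each node, apply Corollary~\ref{cor:complex-higher-order-residue} with the common exponent $M$ to $g_M/(z-\bar p)^M$ and to $g_M/(z-p)^M$, and derive the contour and flux forms from Theorem~\ref{thm:complex-residue-flux} on each disc once $g_M/\DeltaSphere{p}^M=h$ off the nodes (only the holomorphy of $(z-\bar p)^M$ at $p$ is used there, not its non-vanishing).
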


\begin{proof}
Away from the two nodes one has
$h(z)=g_M(z)\big/\big((z-p)^M(z-\bar p)^M\big)$, so that multiplication by
$(z-p)^M$ leaves $g_M(z)/(z-\bar p)^M$, which is holomorphic near $p$
because $p\neq\bar p$. Corollary~\ref{cor:complex-higher-order-residue},
applied with the exponent $M$, gives the first summand in
\eqref{eq:complex-pair-derivative-formula}; the same argument at $\bar p$
gives the second. Both \eqref{eq:complex-pair-contour-formula} and
\eqref{eq:complex-pair-flux-formula} follow from
Theorem~\ref{thm:complex-residue-flux} applied to each of the two circles,
once one observes that $g_M/\DeltaSphere{p}^M=h$ off the two nodes.
\end{proof}

The contour in \eqref{eq:complex-pair-contour-formula} is a union of two
curves; the two nodes need not belong to the same connected neighbourhood.
\begin{remark}[Looking ahead: a divided difference]
\label{rem:complex-pair-divided-difference}
Two features of \eqref{eq:complex-pair-contour-formula} will matter later.
The contour is a union of two curves, so the two nodes need not lie in a
common connected neighbourhood on which \(h\) is meromorphic; and the
right-hand side is a confluent divided difference
\[
  g_M\big[\underbrace{p,\ldots,p}_{M},
          \underbrace{\bar p,\ldots,\bar p}_{M}\big] ,
\]
as Section~\ref{sec:divided-differences} will show. It is this
coefficient-extraction formula, and not the Laurent expansion behind it,
that survives in the slice-regular theory.
\end{remark}

For higher-order poles, another elementary paired functional will be useful.
Write the Laurent expansions of $h$ at the two nodes as
\[
  h(z)=\sum_{n\geq-m_+}(z-p)^nc_n^+\ \text{ near }p,
  \qquad
  h(z)=\sum_{n\geq-m_-}(z-\bar p)^nc_n^-\ \text{ near }\bar p,
\]
with the convention that $c_n^{\pm}:=0$ whenever the corresponding term is
absent, and set $r_+:=\Res_ph=c_{-1}^+$ and $r_-:=\Res_{\bar p}h=c_{-1}^-$.

A conjugate pair carries two independent pieces of local data, whereas
\(\PRes\) returns a single quaternion. A second functional is therefore
needed, and it is obtained by weighting \(h\) before taking residues. The
weight must be symmetric in the pair, so that the result belongs to the
sphere rather than to a chosen representative, and the simplest one of
degree one is \(z-2\alpha=z-(p+\bar p)\), whose coefficient, like those of
\(\DeltaSphere{p}\), depends only on \(\{p,\bar p\}\). Its relevance lies in
the two identities
\begin{equation}\label{eq:complex-weight-at-nodes}
  z-2\alpha=(z-p)-\bar p=(z-\bar p)-p ,
\end{equation}
by which the weight takes the value \(-\bar p\) at \(p\) and \(-p\) at
\(\bar p\), exchanging the two nodes.

\begin{proposition}[A first paired moment]\label{prop:complex-first-moment}
Define
\begin{equation}\label{eq:complex-first-moment-definition}
  \mathcal M_{\{p,\bar p\}}(h)
  :=\Res_p\Bigl((z-2\alpha)h(z)\Bigr)
   +\Res_{\bar p}\Bigl((z-2\alpha)h(z)\Bigr).
\end{equation}
Then
\begin{equation}\label{eq:cx-first-moment}
  \mathcal M_{\{p,\bar p\}}(h)
  =-\bar p\,r_+-p\,r_-+c_{-2}^++c_{-2}^- .
\end{equation}
\end{proposition}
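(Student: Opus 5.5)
The plan is to compute each of the two residues in \eqref{eq:complex-first-moment-definition} directly from the Laurent expansions, using the splittings \eqref{eq:complex-weight-at-nodes} of the weight at the two nodes. Since \(z-2\alpha\) is a polynomial, hence holomorphic, multiplying \(h\) by it maps the Laurent series at each node to a Laurent series at that node. So the residue of the product is a finite combination of the coefficients \(c_n^\pm\).

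Near \(p\), write \(z-2\alpha=(z-p)-\bar p\). Then
\[
  (z-2\alpha)h(z)=\sum_{n\geq-m_+}(z-p)^{n+1}c_n^+-\bar p\sum_{n\geq-m_+}(z-p)^nc_n^+ .
\]
The complex scalar \(\bar p\) acts on the left of the vector coefficients, as in Remark~\ref{rem:complex-left-differential}. The coefficient of \((z-p)^{-1}\) receives \(c_{-2}^+\) from the first sum (index \(n=-2\)) and \(-\bar p\,c_{-1}^+\) from the second. Hence \(\Res_p((z-2\alpha)h)=c_{-2}^+-\bar p\,r_+\).

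The convention \(c_n^+=0\) for absent terms covers the cases \(m_+\leq1\), where \(c_{-2}^+=0\), and \(m_+=0\), where both vanish. Termwise manipulation is legitimate because both series converge absolutely on a punctured disc.

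Symmetrically, near \(\bar p\) write \(z-2\alpha=(z-\bar p)-p\); the same computation gives \(\Res_{\bar p}((z-2\alpha)h)=c_{-2}^--p\,r_-\). Adding the two residues yields \eqref{eq:cx-first-moment}.

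There is no real obstacle. The only points requiring care are keeping the complex scalars \(-\bar p\) and \(-p\) on the left of the vector coefficients, consistent with the left module structure fixed in Remark~\ref{rem:complex-left-differential}, and invoking the zero convention so that unequal or vanishing pole orders need no separate treatment.
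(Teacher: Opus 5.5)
Your proposal is correct and follows the paper's proof essentially step for step: you split the weight as $(z-p)-\bar p$ near $p$ and as $(z-\bar p)-p$ near $\bar p$, multiply into the two Laurent series keeping the complex scalars on the left, read off the coefficient of index $-1$ with the convention $c_n^\pm=0$ for absent terms, and add. The appeal to absolute convergence is harmless but not needed, since multiplying by a polynomial only forms a finite linear combination of shifted convergent series.
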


In few words, \(\mathcal M\) is the paired residue of \(h\) weighted by
\(z-2\alpha\): the same operation as \(\PRes\), performed after a
degree-one weighting. In the language of \eqref{eq:complex-pair-flux-formula},
where \(\PRes\) was the total flux through two small circles,
\(\mathcal M\) is the corresponding first moment about the midpoint
\(\alpha\) of the pair, which is why we call it so. Formula
\eqref{eq:cx-first-moment} shows what it returns: the two residues again,
now weighted by the exchanged nodes, together with the two coefficients of
index \(-2\), which \(\PRes\) did not see at all.

\begin{proof}
Near $p$, the first identity in \eqref{eq:complex-weight-at-nodes} gives
\begin{align*}
  (z-2\alpha)h(z)
  &=\bigl((z-p)-\bar p\bigr)\sum_{n\geq-m_+}(z-p)^nc_n^+\\
  &=\sum_{n\geq-m_+}(z-p)^{n+1}c_n^+
   -\bar p\sum_{n\geq-m_+}(z-p)^nc_n^+\\
   &=\sum_{k\geq-m_++1}(z-p)^k\bigl(c_{k-1}^+-\bar p\,c_k^+\bigr),
\end{align*}
where in the last line the first sum has been reindexed by $k=n+1$ and the
second by $k=n$, and where $c_{-m_+-1}^+=0$ by our convention. Reading off
the coefficient of index $k=-1$ we obtain
\[
  \Res_p\bigl((z-2\alpha)h(z)\bigr)=c_{-2}^+-\bar p\,c_{-1}^+
  =-\bar p\,r_++c_{-2}^+ .
\]
The second identity in \eqref{eq:complex-weight-at-nodes} yields in the same
way
\[
  (z-2\alpha)h(z)
  =\sum_{k\geq-m_-+1}(z-\bar p)^k\bigl(c_{k-1}^--p\,c_k^-\bigr)
  \quad\text{near }\bar p,
\]
whence $\Res_{\bar p}\bigl((z-2\alpha)h(z)\bigr)=-p\,r_-+c_{-2}^-$. Adding
the two identities proves \eqref{eq:cx-first-moment}.
\end{proof}

\begin{remark}\label{rem:complex-first-moment-correction}
If both poles are at most simple, then $c_{-2}^+=c_{-2}^-=0$ and the first
moment is determined by the two residues alone. At higher-order poles the
coefficients of index $-2$ are genuine correction terms: unlike the
situation of Example~\ref{ex:complex-higher-order}, where the flux ignored
every coefficient but the one of index $-1$, the weight $z-2\alpha$ makes
one further coefficient visible. This is what will produce the second
coefficient of a spherical Laurent expansion.
\end{remark}

\begin{example}[Orders two and one]\label{ex:complex-unequal-orders}
Consider the scalar rational function
\begin{equation}\label{eq:complex-unequal-orders-function}
  h(z)=\frac{z^2}{(z-\ci)^2(z+\ci)},
\end{equation}
with a pole of order two at $p=\ci$ and a simple pole at $\bar p=-\ci$.
Here $V=\C$.

By Corollary~\ref{cor:complex-higher-order-residue},
\[
  \Res_{\ci}h
  =\frac{\dd}{\dd z}\left(\frac{z^2}{z+\ci}\right)\bigg|_{z=\ci}
  =\left(\frac{2z}{z+\ci}-\frac{z^2}{(z+\ci)^2}\right)\bigg|_{z=\ci}
  =1-\frac14=\frac34 ,
\]
while at the simple pole
\[
  \Res_{-\ci}h=\frac{z^2}{(z-\ci)^2}\bigg|_{z=-\ci}
  =\frac{-1}{(-2\ci)^2}=\frac14 ,
\]
so that
\begin{equation}\label{eq:complex-unequal-paired-residue}
  \PRes_{\{\ci,-\ci\}}h=1 .
\end{equation}

A common regularizing exponent must satisfy $M\geq2$. Since
$\DeltaSphere{\ci}(z)=z^2+1$, the choice $M=2$ gives
\begin{equation}\label{eq:complex-unequal-regularization}
  g_2(z)=(z^2+1)^2h(z)=z^2(z+\ci),
\end{equation}
whence $g_2(\ci)=-2\ci$ and $g_2(-\ci)=0$. The regularization thus vanishes
at the node carrying the lower-order pole, although both $\Res_{-\ci}h$ and
the paired residue are nonzero. With the common
exponent $M=2$ the residue at $-\ci$ is read off from a derivative of $g_2$. Indeed, writing
$g_2(z)/(z-\ci)^2=(z+\ci)\,z^2/(z-\ci)^2$ and differentiating, the term in
which $(z+\ci)$ survives undifferentiated vanishes at $z=-\ci$, so that only
\[
  \frac{\dd}{\dd z}\left(\frac{g_2(z)}{(z-\ci)^2}\right)\bigg|_{z=-\ci}
  =\frac{z^2}{(z-\ci)^2}\bigg|_{z=-\ci}=\frac14
\]
remains, in accordance with
\eqref{eq:complex-pair-derivative-formula}. This is the planar precursor of
an exceptional point on a polar sphere.

For completeness, the coefficient of index $-2$ at $\ci$ is
\[
  c_{-2}^+=\frac{z^2}{z+\ci}\bigg|_{z=\ci}=\frac{\ci}{2},
\]
whereas $c_{-2}^-=0$. Since $\alpha=0$, the weight of
\eqref{eq:complex-first-moment-definition} is $z$ itself, and
\eqref{eq:cx-first-moment} gives
\[
  \mathcal M_{\{\ci,-\ci\}}(h)
  =\frac34\ci-\frac14\ci+\frac{\ci}{2}=\ci ,
\]
the two residues alone accounting for $\ci/2$ and the correction $c_{-2}^+$
for the other half.
\end{example}

\subsection{Reflection}
\label{subsec:complex-reflection}

Pairing two conjugate points does not by itself impose any relation between
the two germs of a meromorphic function. Such a relation appears only after a reflection symmetry has been
specified, and the symmetry involves the target space as much as the
domain.

\begin{definition}[Reflected function]\label{def:complex-reflection}
Let $\Omega\subset\C$ be invariant under complex conjugation, and let
$c\colon V\to V$ be an antilinear involution:
\[
  c(\lambda v)=\bar\lambda\,c(v),
  \qquad
  c(c(v))=v
\]
for $\lambda\in\C$ and $v\in V$. For a function $h\colon\Omega\to V$, define
\begin{equation}\label{eq:complex-reflected-function}
  h^\#(z):=c\bigl(h(\bar z)\bigr).
\end{equation}
\end{definition}

The involution $c$ amounts to a choice of real form of $V$: the fixed set
$V^c:=\{v\in V:c(v)=v\}$ is a real subspace with $V=V^c\oplus\ci V^c$, and
$c$ is the conjugation of $V$ with respect to $V^c$. There is no canonical
choice: specifying $c$ is precisely specifying which vectors of $V$ are to
be regarded as real, and \eqref{eq:complex-reflected-function} is the
Schwarz reflection principle relative to that choice. In the scalar case
$V=\C$, $V^c=\R$, $c(v)=\bar v$ is the familiar one.

\begin{example}
\label{ex:complex-reflection-swap}
Let $V=\C^2$ and $c(v_1,v_2):=(\bar v_2,\bar v_1)$, which is antilinear and
involutive. Its real form is
\[
  V^c=\{(w,\bar w):w\in\C\},
\]
a real two-dimensional subspace meeting the two coordinate lines only at the
origin: none of the coordinates of a vector of $V^c$ is real, in general.
For $h=(h_1,h_2)$ the reflection \eqref{eq:complex-reflected-function} reads
$h^{\#}=\bigl(\overline{h_2(\bar z)},\overline{h_1(\bar z)}\bigr)$, so that
$h^{\#}=h$ means
\[
  h_2(z)=\overline{h_1(\bar z)} ,
\]
that is, the two components are Schwarz reflections of one another. This
constrains neither component separately: $h_1$ may be arbitrary, and then
$h_2$ is determined. By contrast, the componentwise involution
$c'(v_1,v_2):=(\bar v_1,\bar v_2)$ has real form $\R^2$, and $h^{\#}=h$
forces each $h_j$ to be real on $\Omega\cap\R$. The two symmetries are
genuinely different conditions on the same $V$.
\end{example}

In this subsection the Laurent coefficients are denoted by $a_n$, the letter
$c$ being reserved for the involution.
\begin{lemma}[Reflection of Laurent coefficients]
\label{lem:complex-reflection}
If $h$ is meromorphic near $p$, then $h^\#$ is meromorphic near $\bar p$.
If
\(h(z)=\sum_{n=-m}^{\infty}(z-p)^n a_n\),
then
\begin{equation}\label{eq:complex-reflected-laurent}
  h^\#(z)
  =\sum_{n=-m}^{\infty}(z-\bar p)^n c(a_n)
\end{equation}
near $\bar p$. In particular,
\begin{equation}\label{eq:complex-reflected-residue}
  \Res_{\bar p}h^\#=c(\Res_p h).
\end{equation}
\end{lemma}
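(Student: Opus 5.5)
The plan is to work directly from the Laurent series. Set $c$ aside for the moment and use its two defining properties, antilinearity and involutivity, together with its continuity. Continuity is automatic, because $c$ is real-linear on the finite-dimensional space $V$.

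First, suppose $h(z)=\sum_{n\ge -m}(z-p)^n a_n$ converges on a punctured disc $0<|z-p|<\rho$. For $0<|z-\bar p|<\rho$ we have $0<|\bar z-p|<\rho$, so
\[
  h^\#(z)=c\Bigl(\sum_{n\ge -m}(\bar z-p)^n a_n\Bigr).
\]
Since $c$ is continuous and real-linear, it commutes with the convergent series. This gives
\[
  h^\#(z)=\sum_{n\ge -m} c\bigl((\bar z-p)^n a_n\bigr).
\]

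Next, antilinearity with $\lambda=(\bar z-p)^n$ turns each term into $\overline{(\bar z-p)^n}\,c(a_n)$. Conjugation is multiplicative, so this equals $(z-\bar p)^n c(a_n)$, for both positive and negative $n$. That is exactly \eqref{eq:complex-reflected-laurent}.

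It remains to check that the right-hand side is a genuine $V$-valued Laurent series, so that $h^\#$ is meromorphic near $\bar p$. Choose a basis and write $c(a_n)$ in components. Each scalar component series converges absolutely on the punctured disc about $\bar p$, because $\|c(a_n)\|\le C\|a_n\|$ and $\sum_n \|a_n\|\,|w|^n$ converges for $0<|w|<\rho$. Each component is therefore holomorphic on the punctured disc, and it has at most a pole of order $m$ at $\bar p$. Uniqueness of Laurent expansions, applied componentwise, identifies the $c(a_n)$ as the Laurent coefficients of $h^\#$.

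Finally, reading off the coefficient of index $-1$ gives \eqref{eq:complex-reflected-residue}.

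The only step needing care is exchanging $c$ with the infinite sum. Continuity of a real-linear map on a finite-dimensional space settles it, but the point should be stated explicitly, since $c$ is not complex-linear and so is not covered by the componentwise conventions fixed at the start of the section.
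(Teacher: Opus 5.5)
Your proof is correct and follows the paper's argument. You substitute $\bar z$ into the Laurent series at $p$, pass $c$ through the sum by continuity, use antilinearity with $\overline{(\bar z-p)^n}=(z-\bar p)^n$, and read off the coefficient of index $-1$. Your two extra checks make explicit what the paper leaves implicit: continuity of $c$ from its real-linearity on the finite-dimensional $V$, and the componentwise verification that the reflected series is a genuine Laurent series, which gives the meromorphy of $h^\#$.
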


\begin{proof}
Set $w:=z-\bar p$, so that $\bar z-p=\bar w$. Substituting $\bar z$ into the
Laurent series of $h$ at $p$ and applying $c$ term by term, which is
legitimate because $c$ is continuous, we obtain
\[
  h^{\#}(z)=c\bigl(h(\bar z)\bigr)
  =c\Bigl(\sum_{n\geq-m}\bar w^{\,n}a_n\Bigr)
  =\sum_{n\geq-m}w^nc(a_n),
\]
the last equality by antilinearity of $c$, since $\overline{\bar w^{\,n}}
=w^n$ for every $n\in\mathbb Z$. This is
\eqref{eq:complex-reflected-laurent}, and reading off the coefficient of
index $-1$ gives \eqref{eq:complex-reflected-residue}.
\end{proof}

\begin{corollary}[Reflection-symmetric functions]
\label{cor:complex-reflection-symmetric}
If $h^\#=h$, then the pole orders at $p$ and $\bar p$ are equal and
\begin{equation}\label{eq:complex-reflection-pair}
  \Res_{\bar p}h=c(\Res_p h).
\end{equation}
Consequently,
\[
  \PRes_{\{p,\bar p\}}h
  =\Res_p h+c(\Res_p h).
\]
For scalar-valued functions with $c(v)=\bar v$, this is
$2\req(\Res_p h)$.
\end{corollary}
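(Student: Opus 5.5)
The hypothesis $h^\#=h$ says $h(z)=c\bigl(h(\bar z)\bigr)$ near $\bar p$. So Lemma~\ref{lem:complex-reflection} applied to $h$ itself compares the Laurent expansion of $h$ at $\bar p$ with the reflected expansion at $p$. Uniqueness of Laurent coefficients then gives all the conclusions. I would proceed in three steps.

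First, write the Laurent expansion of $h$ at $p$ as $h(z)=\sum_{n\ge -m_+}(z-p)^n a_n$ with $a_{-m_+}\neq0$; a holomorphic point counts as order zero, with $m_+=0$. By Lemma~\ref{lem:complex-reflection}, the function $h^\#$ has near $\bar p$ the expansion $\sum_{n\ge -m_+}(z-\bar p)^n c(a_n)$. Since $h^\#=h$ on a punctured neighbourhood of $\bar p$, uniqueness of Laurent coefficients shows that $h$ has the coefficients $c(a_n)$ at $\bar p$. Because $c$ is an involution, it is injective, so $c(a_{-m_+})\neq0$ and $c(a_n)=0$ for $n<-m_+$. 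Hence the order at $\bar p$ is exactly $m_+$, and the two pole orders coincide.

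Second, read off the coefficient of index $-1$, which gives $\Res_{\bar p}h=c(\Res_p h)$. This is \eqref{eq:complex-reflected-residue} with $h^\#$ replaced by $h$. Substituting into Definition~\ref{def:complex-paired-residue} yields $\PRes_{\{p,\bar p\}}h=\Res_ph+c(\Res_ph)$.

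Finally, in the scalar case $c(v)=\bar v$, we have $w+\bar w=2\req w$, which gives the last claim. The only point needing care is the order statement, which uses the injectivity of $c$; it is immediate from $c\circ c=\mathrm{id}$. There is no genuine obstacle here.
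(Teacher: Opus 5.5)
Your proof is correct and follows the paper's argument. With $h^\#=h$, Lemma~\ref{lem:complex-reflection} identifies the Laurent expansion of $h$ at $\bar p$ with the reflected one, so the leading principal exponent and the coefficient of index $-1$ transfer through $c$. Your explicit appeals to uniqueness of Laurent coefficients and to injectivity of $c$ spell out what the paper leaves implicit. One nit: in the holomorphic case $m_+=0$ you should not require $a_0\neq0$, though the conclusion is unaffected, since no negative-index terms appear at $\bar p$ either.
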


\begin{proof}
If $h=h^\#$, the Laurent expansion of $h$ at $\bar p$ is exactly the
reflected expansion in \eqref{eq:complex-reflected-laurent}. Its first
nonzero principal coefficient occurs at the same exponent as at $p$, and
the residue relation follows from
\eqref{eq:complex-reflected-residue}.
\end{proof}

\begin{remark}\label{rem:complex-reflection-unequal-orders}
Unequal pole orders at $p$ and $\bar p$ are therefore impossible for a
reflection-invariant function; Example~\ref{ex:complex-unequal-orders} is
deliberately not reflection invariant. In the slice-regular setting the
relevant involution acts on the intrinsic complex coordinates of a stem
function and leaves the quaternionic variable untouched. This is why no reflection symmetry will be assumed in
the quaternionic residue theorem, which must allow unequal slice pole
orders and even a removable member of a conjugate pair.
\end{remark}

\subsection{Tangent disks}
\label{subsec:complex-tangent-disks}
The construction of this subsection is used twice. In the plane it turns the
residue theorem into formulas for integrals over the imaginary axis, as
Subsection~\ref{subsec:complex-axis-integrals} will show; and in four
dimensions the circularizations of these disks close the imaginary
hyperplane, giving the integral formulas of
Section~\ref{sec:hyperplane}. The two arguments run in parallel, down to the
sign, so it is worth carrying out the planar one in detail.

Let
\[
  H_+:=\{z\in\C:\req z>0\}
\]
be the open right half-plane. For $R>0$, consider the disk
\begin{equation}\label{eq:complex-tangent-disk}
  D_R:=B(R,R)=\{z\in\C:|z-R|<R\}.
\end{equation}
Its centre is the real point $R$ and its radius is $R$. Hence
$\partial D_R$ is tangent to the imaginary axis at the origin and
$D_R\subset H_+$. Expanding the defining inequality gives
\begin{equation}\label{eq:complex-tangent-disk-cartesian}
  z\in D_R
  \quad\Longleftrightarrow\quad
  |z|^2<2R\req z.
\end{equation}
It follows at once that the disks increase with $R$ and that
\[
  \bigcup_{R>0}D_R=H_+,
\]
(see Figure \ref{fig:complex-tangent-disks}).
Indeed, if $z=x+\ci y$ with $x>0$, then $z\in D_R$ whenever
$R>|z|^2/(2x)$.

\begin{figure}[htbp]
\centering
\begin{tikzpicture}[scale=0.62]
  \def\Ra{1}\def\Rb{2}\def\Rc{3.5}

  \fill[black!4] (0,-4) rectangle (8.4,4);
  \node[black!55] at (7.2,-3.3) {$H_+$};

  \draw[->,gray!70] (-1.2,0) -- (8.6,0) node[below right] {$\R$};
  \draw[->,gray!70] (0,-4.2) -- (0,4.3) node[above left] {$\ci\R$};

  \fill[black!10] (\Rc,0) circle (\Rc);
  \draw[thick,orient] (\Rc,0) ++(0:\Rc) arc(0:360:\Rc);
  \draw[thick] (\Rb,0) circle (\Rb);
  \draw[thick] (\Ra,0) circle (\Ra);

  \fill (\Ra,0) circle (1.3pt);
  \fill (\Rb,0) circle (1.3pt);
  \fill (\Rc,0) circle (1.3pt);
  \fill (0,0) circle (1.7pt);
  \node[below left] at (0.05,-0.05) {$0$};

  \fill (1.2,2.2) circle (1.7pt);
  \node[right] at (1.34,2.3) {$z$};

  \node at (1.45,1.25) {$D_{R_1}$};
  \node at (3.35,2.05) {$D_{R_2}$};
  \node at (6.55,2.85) {$D_{R_3}$};
\end{tikzpicture}
\caption{The tangent disks $D_R=B(R,R)$ for $R_1<R_2<R_3$. Each of them is
tangent to the imaginary axis at the origin and contained in the right
half-plane $H_+$; they increase with $R$ and their union is $H_+$. The point
$z$ lies outside $D_{R_1}$ and $D_{R_2}$ and inside $D_{R_3}$, in accordance
with \eqref{eq:complex-tangent-disk-cartesian}: it belongs to $D_R$ exactly
when $R>|z|^2/(2\req z)$.}
\label{fig:complex-tangent-disks}
\end{figure}

We now prove a limiting statement needed for integrals over the imaginary
axis. The proof is included in detail because the sign and the two different
parts of the circular boundary will have exact analogues in four dimensions.

\begin{lemma}[Tangent-disk limit]\label{lem:complex-tangent-disks}
Let $\Omega$ be an open set containing the closed right half-plane
$\overline{H_+}$, and let $h\colon\Omega\to V$ be meromorphic. Assume that
\begin{enumerate}[label=\textup{(\alph*)}]
  \item $h$ has finitely many poles in $H_+$ and no pole on $\ci\R$;
  \item there exist $C>0$ and $R_0>0$ such that
  \begin{equation}\label{eq:complex-quadratic-decay}
    \|h(z)\|\le \frac{C}{|z|^2}
    \qquad
    (\req z\ge0,\ |z|\ge R_0).
  \end{equation}
\end{enumerate}
Then $y\mapsto h(\ci y)$ belongs to $L^1(\R,V)$. Every pole of $h$ in
$H_+$ lies in $D_R$ for all sufficiently large $R$, and
\begin{equation}\label{eq:complex-tangent-limit}
  \boxed{\;
  \lim_{R\to\infty}\Flux{D_R}{h}
  =-\frac{1}{2\pi}\int_{-\infty}^{\infty}h(\ci y)\,\dd y
  =\sum_{a\in\mathcal P(h)\cap H_+}\Res_a h.
  \;}
\end{equation}
\end{lemma}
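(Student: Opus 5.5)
The proof rests on three ingredients: integrability of the boundary datum on the imaginary axis, the residue theorem on the tangent disks, and a limit in which the boundary circles degenerate into the axis.

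\emph{Integrability and location of the poles.} The function \(y\mapsto h(\ci y)\) is continuous on \(\R\), since there are no poles on \(\ci\R\). By \eqref{eq:complex-quadratic-decay} it is bounded by \(C/y^2\) for \(|y|\ge R_0\), so it lies in \(L^1(\R,V)\). The poles in \(H_+\) form a finite set, and by \eqref{eq:complex-tangent-disk-cartesian} each pole \(a\) lies in \(D_R\) as soon as \(R>|a|^2/(2\req a)\). The decay condition also shows that no pole has \(|z|\ge R_0\). For each fixed \(z\in\partial D_R\setminus\{0\}\) we have \(\req z>0\), so for all large \(R\) the circle \(\partial D_R\) avoids the finitely many poles. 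Theorem~\ref{thm:complex-residue-flux} then gives \(\Flux{D_R}{h}=\sum_{a\in\mathcal P(h)\cap H_+}\Res_a h\) for all large \(R\). This is the second equality, and it holds with the left side eventually constant.

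\emph{The limit.} We work with the contour form \eqref{eq:complex-flux-contour}. On \(\partial D_R\) we write \(z=x+\ci y\) with \(x=R-\sqrt{R^2-y^2}\), and split the circle into two parts. The near part is \(|y|\le\rho(R)\); the far part is the rest, where \(\rho(R)\to\infty\) slowly, say \(\rho(R)=R^{1/2}\). On the near part the curve is the graph \(x=x_R(y)\) over the \(y\)-axis, traversed counterclockwise around \(D_R\). Near the origin, which is the leftmost point of the disk, counterclockwise means \(y\) decreasing, so \(\dd z=(x_R'(y)+\ci)\,\dd y\) with the orientation \(y:\rho\to-\rho\). Moreover \(x_R(y)=O(y^2/R)\) and \(x_R'(y)=O(|y|/R)\) uniformly there. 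This contribution equals \(-\ci\int_{-\rho}^{\rho}(1+O(|y|/R))\,h(x_R(y)+\ci y)\,\dd y\).

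To compare this with \(-\ci\int_{\R}h(\ci y)\,\dd y\), we use dominated convergence. The pointwise convergence \(h(x_R(y)+\ci y)\to h(\ci y)\) follows from continuity of \(h\) near the pole-free axis. The dominating function is \(\min(K,C/y^2)\): the constant \(K\) bounds \(h\) on a compact pole-free neighbourhood of \(\ci[-R_0,R_0]\), and we use \(|z|\ge|y|\) together with \eqref{eq:complex-quadratic-decay} for large \(|y|\). The error factor \(O(|y|/R)\le O(\rho/R)\to0\) is handled by the same bound.

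The far part has length at most \(2\pi R\) and consists of points with \(|z|\ge\rho(R)\to\infty\). Here I expect the \textbf{main obstacle}: the naive bound \(C/|z|^2\cdot 2\pi R\) does not vanish unless \(|z|\) is large compared to \(\sqrt R\) on that whole arc. I would resolve it by parametrizing \(z=R+Re^{\ci\theta}\), so that \(|z|=2R|\cos(\theta/2)|\). The far part is then \(|\cos(\theta/2)|\ge c\,\rho/R\), and the integral is bounded by
\[
\int \frac{C}{4R^2\cos^2(\theta/2)}\,R\,\dd\theta \lesssim \frac{1}{R}\cdot\frac{R}{\rho}=\frac1\rho\to0.
\]

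\emph{Conclusion.} Dividing by \(2\pi\ci\) turns the near-part limit \(-\ci\int_{\R}h(\ci y)\,\dd y\) into \(-\frac1{2\pi}\int_{\R} h(\ci y)\,\dd y\), while the far part tends to \(0\). Combining this with the eventually constant value \(\sum_{a\in\mathcal P(h)\cap H_+}\Res_a h\) yields \eqref{eq:complex-tangent-limit}. The sign is exactly the orientation of the axis seen as part of the boundary of \(D_R\), which is traversed downward.
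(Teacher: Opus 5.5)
Your proof is correct, but you decompose the circle differently from the paper. The paper parametrizes the whole left semicircle (the near arc) and the whole right semicircle (the far arc) as graphs $z_R^\pm(y)=R\pm s_R(y)+\ci y$ over $|y|<R$, and integrates in normal form with density $R/s_R(y)$. It takes the limit on a fixed window $|y|\le A$ and bounds the near-arc tails $A<|y|<R$ by $2C/A$, uniformly in $R$, through the exact integral \eqref{eq:complex-tail-elementary-integral}. It kills the far arc with $|z|^2=2R\req z\ge 2R^2$, and finally lets $R\to\infty$ and then $A\to\infty$.

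You instead work with $\dd z$ and use an $R$-dependent window $|y|\le\rho(R)=\sqrt R$, so that a single application of dominated convergence handles the near piece. You absorb both the left-arc tails and the right semicircle into one estimate based on $|z|=2R|\cos(\theta/2)|$. Writing $\theta=\pi+\phi$ and $\phi_0=\arcsin(\rho/R)\ge\rho/R$, this is indeed $O(1/\rho)$, since $\int_{\phi_0}^{\pi}\sin^{-2}(\phi/2)\,\dd\phi=2\cot(\phi_0/2)\le 4R/\rho$.

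Your version avoids the iterated limit and the separate treatment of tails and far arc. The paper's version buys the normal-flux form together with the graph parametrization (outward normals $\nu_R^\pm$, density $R/s_R$). That is exactly what is reused, almost verbatim, for the tangent balls in Lemma~\ref{lem:rat-tangent-limit}, where there is no $\dd z$ to work with.

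One small slip: your dominating function should be $K$ on $|y|\le R_0$ and $C/y^2$ for $|y|>R_0$, that is, $\max(K,C/y^2)$ rather than $\min(K,C/y^2)$. On $|y|\le R_0$ you only know the bound $K$ there, not $C/y^2$.
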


\begin{proof}
We divide the proof into four steps.

All limits below are understood for sufficiently large $R$ such that
$\partial D_R$ contains no pole. Step~4 shows that, beyond one fixed
threshold, every $R$ has this property.

\smallskip
\noindent\emph{Step 1: integrability on the imaginary axis.}
The function $y\mapsto h(\ci y)$ is continuous on every compact interval,
because there are no poles on $\ci\R$. For $|y|\ge R_0$,
\eqref{eq:complex-quadratic-decay} gives
\(\|h(\ci y)\|\le C/y^2\).
The function $y\mapsto y^{-2}$ is integrable away from the origin. Thus
$h(\ci\,\cdot)\in L^1(\R,V)$.

\smallskip
\noindent\emph{Step 2: the near and far arcs.}
For $|y|<R$, set
\[
  s_R(y):=\sqrt{R^2-y^2}.
\]
The two points of $\partial D_R$ with imaginary part $y$ are
\begin{equation}\label{eq:complex-near-far-parametrization}
  z_R^-(y)=R-s_R(y)+\ci y,
  \qquad
  z_R^+(y)=R+s_R(y)+\ci y.
\end{equation}
The minus sign describes the \emph{near arc}, adjacent to the imaginary
axis, and the plus sign the \emph{far arc}. Since the outward normal to a
circle is the radial unit vector,
\begin{equation}\label{eq:complex-near-far-normals}
  \nu_R^\pm(y)
  =\frac{z_R^\pm(y)-R}{R}
  =\pm\frac{s_R(y)}{R}+\ci\frac{y}{R}.
\end{equation}
Moreover,
\[
  \left|\frac{\dd z_R^\pm}{\dd y}\right|
  =\sqrt{1+\frac{y^2}{s_R(y)^2}}
  =\frac{R}{s_R(y)}.
\]
Hence
\begin{equation}\label{eq:complex-near-far-flux}
  2\pi\Flux{D_R}{h}
  =I_R^-+I_R^+,
\end{equation}
where
\begin{equation}\label{eq:complex-near-far-integrals}
  I_R^\pm
  :=
  \int_{-R}^{R}
  \frac{R}{s_R(y)}
  \nu_R^\pm(y)h(z_R^\pm(y))\,\dd y.
\end{equation}
The endpoints $y=\pm R$, where the two graph parametrizations meet, have
measure zero and do not affect the integrals.

\begin{figure}[htbp]
\centering
\begin{tikzpicture}[scale=1.35]
  \def\Rad{2.4}
  \pgfmathsetmacro{\yy}{1.15}
  \pgfmathsetmacro{\ss}{sqrt(\Rad*\Rad-\yy*\yy)}
  \pgfmathsetmacro{\xm}{\Rad-\ss}
  \pgfmathsetmacro{\xp}{\Rad+\ss}

  \draw[->,gray!70] (-0.9,0) -- (5.4,0) node[below right] {$\R$};
  \draw[->,gray!70] (0,-2.9) -- (0,3.0) node[above left] {$\ci\R$};

  \draw[very thick,gray!55] (\Rad,0) ++(-90:\Rad) arc(-90:90:\Rad);
  \draw[very thick] (\Rad,0) ++(90:\Rad) arc(90:270:\Rad);

  \draw[dashed,gray!70] (-0.55,\yy) -- (5.15,\yy);
  \node[left,gray!70] at (-0.55,\yy) {$\ci y$};

  \fill (\xm,\yy) circle (1.5pt);
  \fill (\xp,\yy) circle (1.5pt);
  \draw[-{Stealth},thick] (\xm,\yy) -- ++({(\xm-\Rad)/\Rad*0.95},{\yy/\Rad*0.95});
  \draw[-{Stealth},thick] (\xp,\yy) -- ++({(\xp-\Rad)/\Rad*0.95},{\yy/\Rad*0.95});
  \node[above left] at (\xm-0.30,\yy+0.5) {$\nu_R^-(y)$};
  \node[above right] at (\xp+0.28,\yy+0.5) {$\nu_R^+(y)$};
  \node[below left] at (\xm-0.2,\yy-0.04) {$z_R^-(y)$};
  \node[below right] at (\xp+0.2,\yy-0.04) {$z_R^+(y)$};

  \fill (\Rad,0) circle (1.3pt);
  \node[below] at (\Rad,-0.06) {$R$};
  \draw[dotted,thick] (\Rad,0) -- (\Rad,\yy);
  \draw[|<->|,gray!80] (\Rad,\yy+0.30) -- node[above,font=\small] {$s_R(y)$} (\xp,\yy+0.30);

  \fill[gray!55] (\Rad,\Rad) circle (1.3pt);
  \fill[gray!55] (\Rad,-\Rad) circle (1.3pt);
  \node[above,font=\small,gray!55] at (\Rad,\Rad+0.06) {$y=R$};
  \node[below,font=\small,gray!55] at (\Rad,-\Rad-0.06) {$y=-R$};

  \node at (-0.2,-1.62) {near arc};
  \node[gray!55] at (5,-1.62) {far arc};
\end{tikzpicture}
\caption{The decomposition of $\partial D_R$ into the near arc (in black)
and the far arc (in grey), each a graph over the segment $|y|<R$. The
horizontal line at height $y$ meets them at the two points
\eqref{eq:complex-near-far-parametrization}, whose distance from the
vertical through the centre is $s_R(y)$; the outward normals
\eqref{eq:complex-near-far-normals} there have opposite real parts. As
$R\to\infty$ the near arc flattens onto the imaginary axis and $\nu_R^-$
tends to $-1$, whence the sign in \eqref{eq:complex-tangent-limit}.}
\label{fig:complex-near-far-arcs}
\end{figure}

\smallskip
\noindent\emph{Step 3: limit of the two arcs.}
Fix $A>0$. For $|y|\le A$ and $R>2A$, one has
\(R-s_R(y)
  =\frac{y^2}{R+s_R(y)}
  \le\frac{A^2}{R}\).
It follows uniformly for $|y|\le A$ that
\[
  z_R^-(y)\longrightarrow\ci y,
  \qquad
  \nu_R^-(y)\longrightarrow-1,
  \qquad
  \frac{R}{s_R(y)}\longrightarrow1.
\]
Because the compact segment $\{\ci y:|y|\le A\}$ contains no pole, $h$ is
uniformly continuous and bounded on a small neighbourhood of that segment.
Therefore
\begin{equation}\label{eq:complex-near-compact-limit}
  \lim_{R\to\infty}
  \int_{-A}^{A}
  \frac{R}{s_R(y)}
  \nu_R^-(y)h(z_R^-(y))\,\dd y
  =-\int_{-A}^{A}h(\ci y)\,\dd y.
\end{equation}
The minus sign has a geometric origin: as the near arc flattens onto the
imaginary axis, its outward normal tends to \(-1\), which is precisely the
outward normal of the right half-plane along its boundary.

We next control the tails of the near arc, uniformly in \(R\). Fix
\(A\geq R_0\). Since \(|z_R^-(y)|\geq|y|\), the decay assumption
\eqref{eq:complex-quadratic-decay} gives
\(\|h(z_R^-(y))\|\leq C/y^2\) for \(A<|y|<R\); taking norms under the
integral sign and using \(|\nu_R^-|=1\),
\begin{equation}\label{eq:complex-near-tail-estimate}
 \left\|
 \int_{A<|y|<R}
 \frac{R}{s_R(y)}\nu_R^-(y)h(z_R^-(y))\dd y
 \right\|
 \leq
 C\int_{A<|y|<R}\frac{R}{y^2\sqrt{R^2-y^2}}\dd y
 \leq\frac{2C}{A} ,
\end{equation}
where the last step rests on the elementary integral
\begin{equation}\label{eq:complex-tail-elementary-integral}
  \int_A^R\frac{R}{y^2\sqrt{R^2-y^2}}\dd y
  =\frac{1}{A}\sqrt{1-\frac{A^2}{R^2}}\leq\frac1A ,
\end{equation}
applied on each of the two half-intervals. Note that the bound
\eqref{eq:complex-near-tail-estimate} does not depend on \(R\), which is
what allows the two limits below to be taken in succession.
The same decay estimate on the imaginary axis gives
\begin{equation}\label{eq:complex-axis-tail-estimate}
  \int_{|y|>A}\|h(\ci y)\|\,\dd y
  \le\frac{2C}{A}.
\end{equation}
Equations \eqref{eq:complex-near-compact-limit},
\eqref{eq:complex-near-tail-estimate}, and
\eqref{eq:complex-axis-tail-estimate} show, first letting $R\to\infty$ and
then $A\to\infty$, that
\begin{equation}\label{eq:complex-near-full-limit}
  \lim_{R\to\infty}I_R^-
  =-\int_{-\infty}^{\infty}h(\ci y)\,\dd y.
\end{equation}

It remains to show that the far arc contributes nothing. For
$z_R^+(y)$, the circle equation gives
\[
  |z_R^+(y)|^2
  =2R\req z_R^+(y)
  =2R\bigl(R+s_R(y)\bigr)
  \ge2R^2.
\]
For large $R$, \eqref{eq:complex-quadratic-decay} therefore yields
\(\|h(z_R^+(y))\|\le\frac{C}{2R^2}\).
Since
\(\int_{-R}^{R}\frac{\dd y}{\sqrt{R^2-y^2}}=\pi\),
we obtain
\begin{equation}\label{eq:complex-far-arc-estimate}
  \|I_R^+\|
  \le\frac{C}{2R}
       \int_{-R}^{R}\frac{\dd y}{\sqrt{R^2-y^2}}
  =\frac{\pi C}{2R}
  \longrightarrow0.
\end{equation}
Combining \eqref{eq:complex-near-far-flux},
\eqref{eq:complex-near-full-limit}, and
\eqref{eq:complex-far-arc-estimate}, we conclude that
\begin{equation}\label{eq:complex-tangent-geometric-limit}
  \lim_{R\to\infty}\Flux{D_R}{h}
  =-\frac{1}{2\pi}\int_{-\infty}^{\infty}h(\ci y)\,\dd y.
\end{equation}

\smallskip
\noindent\emph{Step 4: the residues enclosed by $D_R$.}
Let $a=x+\ci y$ be a pole in $H_+$, so $x>0$. By
\eqref{eq:complex-tangent-disk-cartesian},
\[
  a\in D_R
  \quad\Longleftrightarrow\quad
  R>\frac{|a|^2}{2x}.
\]
There are only finitely many such poles. Consequently, for all sufficiently
large $R$, every pole in $H_+$ lies strictly inside $D_R$ and no pole lies on
$\partial D_R$. Theorem~\ref{thm:complex-residue-flux} then gives
\[
  \Flux{D_R}{h}
  =\sum_{a\in\mathcal P(h)\cap H_+}\Res_a h.
\]
The right-hand side is independent of $R$. Taking the limit and using
\eqref{eq:complex-tangent-geometric-limit} completes the proof.
\end{proof}

\begin{remark}[Orientation of the imaginary axis]
\label{rem:complex-imaginary-orientation}
The positively oriented tangent is $\tau=\ci\nu$. Since the limiting normal
on the near arc is $\nu=-1$, the limiting tangent is $-\ci$: the boundary of
the right half-plane is traversed \emph{downward}, from $+\ci\infty$ to
$-\ci\infty$. Thus an upward-oriented integral on the imaginary axis has the
opposite sign. This is the contour-integral explanation of the minus sign in
\eqref{eq:complex-tangent-limit}.
\end{remark}

\begin{remark}[The two half-planes]
\label{rem:complex-two-half-planes}
No assumption is made on the poles of $h$ in the left half-plane. The disks
$D_R$ are contained in $H_+$, and by
\eqref{eq:complex-tangent-disk-cartesian} a point $a$ with $\req a\leq0$
lies in no $D_R$ at all; such poles are therefore never enclosed, for any
$R$, and the right-hand side of \eqref{eq:complex-tangent-limit} involves
only $\mathcal P(h)\cap H_+$. In particular $h$ may have infinitely many
poles in $\req z<0$, since \eqref{eq:complex-quadratic-decay} is required on
$\req z\geq0$ only.

Suppose, however, that $h$ is meromorphic on $\C$, with finitely many poles
in all, and satisfies \eqref{eq:complex-quadratic-decay} on the whole plane.
Applying the lemma to the reflected family $B(-R,R)$, whose near arcs
approach the imaginary axis from the left, gives
\[
  \frac{1}{2\pi}\int_{-\infty}^{\infty}h(\ci y)\dd y
  =\sum_{a\in\mathcal P(h)\cap H_-}\Res_ah ,
\]
and comparison with \eqref{eq:complex-tangent-limit} yields
\begin{equation}\label{eq:complex-total-residue-vanishes}
  \sum_{a\in\mathcal P(h)}\Res_ah=0 .
\end{equation}
The integral on the imaginary axis can thus be read from either side, and
the choice of $H_+$ is a convention.

Identity \eqref{eq:complex-total-residue-vanishes} is the vanishing of the
residue at infinity. Indeed, since \(h\) is meromorphic on \(\C\) and has only
finitely many finite poles, infinity is an isolated singularity and one may
define
\[
  \Res_{\infty}h
  :=-\frac{1}{2\pi\ci}\int_{|z|=R}\dd z\,h(z),
\]
where \(R\) is larger than the modulus of every finite pole. The ordinary
residue theorem on the disk then gives
\[
  \sum_{a\in\mathcal P(h)}\Res_ah+\Res_{\infty}h=0.
\]
Under the present global decay assumption, the residue at infinity vanishes,
since
\[
  \Bigl\|\int_{|z|=R}\dd z\,h(z)\Bigr\|
  \leq2\pi R\cdot\frac{C}{R^2}
  \xrightarrow[R\to\infty]{}0 .
\]
Equivalently, the meromorphic \(1\)-form \(h(z)\,\dd z\) extends
holomorphically across the point at infinity, and
\eqref{eq:complex-total-residue-vanishes} is the global residue theorem on
the Riemann sphere; see \cite[Theorem~10.21]{Forster}.
The exponent is sharp: a decay of order $|z|^{-1}$ would leave the above
bound constant, and indeed $h(z)=1/z$ has a single pole, of residue $1$.
\end{remark}

\subsection{Rational and trigonometric integrals along the imaginary axis}
\label{subsec:complex-axis-integrals}

The tangent-disk lemma immediately gives a practical residue formula.

\begin{corollary}[Right-half-plane formula]\label{cor:complex-axis-formula}
Under the hypotheses of Lemma~\ref{lem:complex-tangent-disks},
\begin{equation}\label{eq:complex-axis-formula}
  \boxed{\;
  \int_{-\infty}^{\infty}h(\ci y)\,\dd y
  =-2\pi
  \sum_{a\in\mathcal P(h),\,\req a>0}\Res_a h.
  \;}
\end{equation}
Equivalently, if the imaginary axis is oriented upward,
\begin{equation}\label{eq:complex-axis-contour-formula}
  \int_{-\ci\infty}^{\ci\infty}\dd z\,h(z)
  =-2\pi\ci
  \sum_{a\in\mathcal P(h),\,\req a>0}\Res_a h.
\end{equation}
\end{corollary}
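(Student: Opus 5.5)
The first identity \eqref{eq:complex-axis-formula} is the equality of the second and third members of the boxed identity \eqref{eq:complex-tangent-limit} in Lemma~\ref{lem:complex-tangent-disks}, multiplied by $-2\pi$. Under the present hypotheses the lemma already provides three facts. First, $y\mapsto h(\ci y)$ lies in $L^1(\R,V)$, so the left-hand side of \eqref{eq:complex-axis-formula} is an absolutely convergent Bochner (equivalently componentwise Lebesgue) integral. Second, the limit of the fluxes exists. Third, $-\frac{1}{2\pi}\int_{\R}h(\ci y)\,\dd y=\sum_{a\in\mathcal P(h)\cap H_+}\Res_ah$. Since $\mathcal P(h)\cap H_+$ is exactly the set of poles with $\req a>0$, multiplying the last equality by $-2\pi$ gives \eqref{eq:complex-axis-formula} with nothing further to check.

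For \eqref{eq:complex-axis-contour-formula}, the plan is to interpret the improper contour integral by the upward parametrization $z(y)=\ci y$, $y\in\R$, in accordance with \eqref{eq:complex-vector-line-integral}. Then $z'(y)=\ci$, and the left-sided convention gives
\[
\int_{-\ci\infty}^{\ci\infty}\dd z\,h(z)=\int_{-\infty}^{\infty}\ci\,h(\ci y)\,\dd y=\ci\int_{-\infty}^{\infty}h(\ci y)\,\dd y .
\]
Pulling the scalar $\ci$ out of the integral is legitimate because the left multiplication by $\ci$ is a continuous $\R$-linear map of $V$ and the integrand is absolutely integrable. This integrability also makes the improper integral independent of how the two ends are truncated. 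Multiplying \eqref{eq:complex-axis-formula} on the left by $\ci$ then gives $-2\pi\ci\sum\Res_ah$, as claimed.

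There is no real obstacle here. The only point requiring care is the sign and orientation bookkeeping, which Remark~\ref{rem:complex-imaginary-orientation} has already explained: the boundary of $H_+$ is traversed downward, so the upward integral carries the opposite sign. This matches the factor $-2\pi\ci$ in \eqref{eq:complex-axis-contour-formula}. I would also note explicitly that the scalar $\ci$ has to be kept on the left of $h$, since this is the convention of Remark~\ref{rem:complex-left-differential} and the one that remains valid when $V=\Hh$.
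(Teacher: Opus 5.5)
Your proposal is correct and follows the paper's own argument: the first identity is the tangent-disk limit \eqref{eq:complex-tangent-limit} multiplied by $-2\pi$, and the second comes from the upward parametrization $z=\ci y$, $\dd z=\ci\,\dd y$, with the scalar $\ci$ kept on the left. Your extra remarks on absolute integrability and on pulling $\ci$ out of the integral are harmless refinements of the same proof.
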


\begin{proof}
The first identity is just \eqref{eq:complex-tangent-limit} multiplied by
$-2\pi$. For the second, parametrize the upward imaginary axis by
$z=\ci y$, $-\infty<y<\infty$. Then $\dd z=\ci\,\dd y$, and
\eqref{eq:complex-axis-contour-formula} follows from
\eqref{eq:complex-axis-formula}.
\end{proof}

In particular, let $r=P/Q$ be a scalar rational function with no pole on
$\ci\R$. If
\[
  \deg Q\ge\deg P+2,
\]
then $r(z)=O(|z|^{-2})$ at infinity and
\eqref{eq:complex-axis-formula} applies. The same conclusion holds for a
$V$-valued rational function with a common scalar denominator and the
corresponding componentwise degree bound.

\begin{example}\label{ex:complex-rational-axis}
Let $a>0$ and first take
\[
  h_a(z)=\frac{1}{a^2-z^2}.
\]
The poles are $\pm a$, and only $a$ lies in the right half-plane. Since
\(a^2-z^2=-(z-a)(z+a)\),
we have
\[
  \Res_a h_a
  =\lim_{z\to a}\frac{z-a}{a^2-z^2}
  =-\frac{1}{2a}.
\]
On the imaginary axis,
\(h_a(\ci y)=\frac{1}{a^2+y^2}\).
Formula \eqref{eq:complex-axis-formula} therefore yields
\begin{equation}\label{eq:complex-rational-integral-one}
  \int_{-\infty}^{\infty}\frac{\dd y}{a^2+y^2}
  =-2\pi\left(-\frac{1}{2a}\right)
  =\frac{\pi}{a}.
\end{equation}

For a higher-order example, set
\[
  k_a(z)=\frac{1}{(a^2-z^2)^2}
        =\frac{1}{(z-a)^2(z+a)^2}.
\]
The point $a$ is now a pole of order two. After regularization,
\((z-a)^2k_a(z)=\frac{1}{(z+a)^2}\),
so
\[
  \Res_a k_a
  =
  \left.\frac{\dd}{\dd z}(z+a)^{-2}\right|_{z=a}
  =-\frac{2}{(2a)^3}
  =-\frac{1}{4a^3}.
\]
Since $k_a(\ci y)=(a^2+y^2)^{-2}$, we obtain
\begin{equation}\label{eq:complex-rational-integral-two}
  \int_{-\infty}^{\infty}
  \frac{\dd y}{(a^2+y^2)^2}
  =\frac{\pi}{2a^3}.
\end{equation}
\end{example}

Oscillatory factors can be treated without changing the closing domains.

\begin{proposition}\label{prop:complex-exponential-factor}
Let $h$ satisfy the hypotheses of Lemma~\ref{lem:complex-tangent-disks}, and
let $t\ge0$. Then
\begin{equation}\label{eq:complex-fourier-residue-formula}
  \boxed{\;
  \int_{-\infty}^{\infty}
  e^{-\ci ty}h(\ci y)\,\dd y
  =
  -2\pi
  \sum_{a\in\mathcal P(h),\,\req a>0}
  \Res_a\bigl(e^{-tz}h(z)\bigr).
  \;}
\end{equation}
If the Laurent principal part of $h$ at a pole $a$ of order $m$ is
\(\sum_{j=1}^{m}(z-a)^{-j}c_{-j}\),
then
\begin{equation}\label{eq:complex-exponential-residue-principal-part}
  \Res_a(e^{-tz}h(z))
  =
  e^{-ta}\sum_{k=0}^{m-1}\frac{(-t)^k}{k!}c_{-k-1}.
\end{equation}
\end{proposition}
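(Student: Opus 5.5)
The plan is to apply Corollary~\ref{cor:complex-axis-formula}, or more precisely Lemma~\ref{lem:complex-tangent-disks}, to the auxiliary function \(k_t(z):=e^{-tz}h(z)\). This function is meromorphic on \(\Omega\) and has exactly the same poles as \(h\), because \(e^{-tz}\) is entire and never vanishes. Hypothesis (a) of the lemma is therefore inherited verbatim. For hypothesis (b), the key observation is that \(|e^{-tz}|=e^{-t\req z}\le1\) whenever \(\req z\ge0\) and \(t\ge0\). Hence \(\|k_t(z)\|\le\|h(z)\|\le C/|z|^2\) on the same region \(\req z\ge0\), \(|z|\ge R_0\), with the same constants. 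This is the only place where the sign condition \(t\ge0\) enters, and it is exactly why the closing domains \(D_R\subset H_+\) need not change.

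Applying the corollary to \(k_t\) gives
\[
\int_{-\infty}^{\infty}k_t(\ci y)\,\dd y=-2\pi\sum_{\req a>0}\Res_a k_t .
\]
On the imaginary axis \(k_t(\ci y)=e^{-\ci ty}h(\ci y)\), which proves \eqref{eq:complex-fourier-residue-formula}. Since \(e^{-\ci ty}\) is a complex scalar acting on the left of \(V\), no ordering issue arises.

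For \eqref{eq:complex-exponential-residue-principal-part}, I will expand \(e^{-tz}=e^{-ta}e^{-t(z-a)}=e^{-ta}\sum_{k\ge0}\frac{(-t)^k}{k!}(z-a)^k\) near \(a\). I then multiply this by the Laurent series \(h(z)=\sum_{n\ge-m}(z-a)^nc_n\). Both series converge absolutely on a punctured disc, so the Cauchy product is legitimate, and the coefficients of \(V\) sit on the right of scalar powers throughout. The coefficient of \((z-a)^{-1}\) collects the pairs \(k+n=-1\). Only \(n=-k-1\) with \(0\le k\le m-1\) contributes, since the terms with \(n\ge0\) would require \(k<0\). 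This yields the stated finite sum.

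There is no real obstacle. The only point to check is that the decay estimate holds uniformly with the constant \(C\) unchanged, which the bound \(|e^{-tz}|\le1\) on the closed right half-plane guarantees.
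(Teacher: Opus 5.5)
Your proposal is correct and follows the paper's proof essentially step for step. The paper also applies Corollary~\ref{cor:complex-axis-formula} to $e^{-tz}h(z)$, using $|e^{-tz}|=e^{-t\req z}\le1$ on $\overline{H_+}$ to inherit the hypotheses of Lemma~\ref{lem:complex-tangent-disks} with the same constant $C$, and it obtains the local formula by multiplying the Taylor expansion of $e^{-tz}$ at $a$ with the Laurent series of $h$ and reading off the coefficient of $(z-a)^{-1}$.
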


\begin{proof}
Set $H_t(z):=e^{-tz}h(z)$. Since $t\geq0$, one has
$|e^{-tz}|=e^{-t\req z}\leq1$ on the whole closed half-plane
$\overline{H_+}$, so that $\|H_t(z)\|\leq\|h(z)\|$ there. Hence $H_t$ has
the same poles as $h$, none of them on $\ci\R$, and satisfies
\eqref{eq:complex-quadratic-decay} with the same constant $C$: the
hypotheses of Lemma~\ref{lem:complex-tangent-disks} hold for $H_t$ as they
do for $h$. Since $H_t(\ci y)=e^{-\ci ty}h(\ci y)$, applying
Corollary~\ref{cor:complex-axis-formula} to $H_t$ gives
\eqref{eq:complex-fourier-residue-formula}.

For the local formula, expand the holomorphic factor at $a$ as
\[
  e^{-tz}=e^{-ta}e^{-t(z-a)}
         =e^{-ta}\sum_{k\geq0}\frac{(-t)^k}{k!}(z-a)^k ,
\]
and multiply the Laurent series of $h$ at $a$ on the left by this
expansion. Since both series converge absolutely on a punctured disc about
$a$, the product may be rearranged, and the coefficient of $(z-a)^{-1}$
collects the pairs of exponents $k$ and $-j$ with $k-j=-1$; as $j$ ranges
over $1,\dots,m$ only, the sum is finite and equals
\[
  e^{-ta}\sum_{k=0}^{m-1}\frac{(-t)^k}{k!}\,c_{-k-1},
\]
which is \eqref{eq:complex-exponential-residue-principal-part}.
\end{proof}

At this point we would usually appeal to a Jordan-type lemma, which
compensates an insufficient decay of the integrand with the decay of the
phase along the closing arc. We do not need one: the decay
\eqref{eq:complex-quadratic-decay} suffices on its own, and since
\(|e^{-tz}|\leq1\) throughout \(\overline{H_+}\) the function \(H_t\)
inherits the hypotheses of Lemma~\ref{lem:complex-tangent-disks} with the
same constants as \(h\), so that the same tangent disks close the contour.

\begin{remark}\label{rem:complex-exponential-higher-order}
At a simple pole, \eqref{eq:complex-exponential-residue-principal-part}
reduces to
\(\Res_a(e^{-tz}h(z))=e^{-ta}\Res_a h\).
At a higher-order pole this shortcut is false: all coefficients in the
principal part can contribute. Equivalently, one must differentiate the
entire regularized product $e^{-tz}(z-a)^m h(z)$.
\end{remark}

Since
\begin{equation}\label{eq:complex-euler-oscillation}
  e^{-\ci ty}=\cos(ty)-\ci\sin(ty),
\end{equation}
formula \eqref{eq:complex-fourier-residue-formula} evaluates cosine and sine
integrals. The following elementary observation makes precise when the two
parts separate.

\begin{lemma}[Reflection and parity on the axis]
\label{lem:complex-axis-parity}
Let $h$ be scalar-valued and reflection invariant in the sense of
Definition~\ref{def:complex-reflection}, with $V=\C$ and $c(v)=\bar v$;
equivalently,
\begin{equation}\label{eq:complex-axis-reflection}
  h(-\ci y)=\overline{h(\ci y)},\qquad y\in\R.
\end{equation}
Write $h(\ci y)=u(y)+\ci v(y)$ with $u,v$ real valued. Then $u$ is even and
$v$ is odd, and, whenever the integrals converge absolutely,
\begin{equation}\label{eq:complex-axis-parity-formula}
  \int_{\R}e^{-\ci ty}h(\ci y)\dd y
  =\int_{\R}\bigl(u(y)\cos(ty)+v(y)\sin(ty)\bigr)\dd y\in\R .
\end{equation}
In particular, the transform reduces to a pure cosine integral when
$h(\ci\,\cdot\,)$ is real, and to a pure sine integral when it is purely
imaginary.
\end{lemma}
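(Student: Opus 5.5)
The plan is to read the reflection hypothesis directly on the imaginary axis and then separate real and imaginary parts. Definition~\ref{def:complex-reflection} with \(c(v)=\bar v\) gives \(h^\#(z)=\overline{h(\bar z)}\), so \(h^\#=h\) means \(h(\bar z)=\overline{h(z)}\). At \(z=\ci y\) one has \(\bar z=-\ci y\), and this is exactly \eqref{eq:complex-axis-reflection}; on the axis the two formulations agree.

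Next I will derive the parities. Writing \(h(\ci y)=u(y)+\ci v(y)\), the left side of \eqref{eq:complex-axis-reflection} is \(u(-y)+\ci v(-y)\), while the right side is \(u(y)-\ci v(y)\). Comparing real and imaginary parts gives \(u(-y)=u(y)\) and \(v(-y)=-v(y)\).

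Then I will expand the integrand using \eqref{eq:complex-euler-oscillation}:
\[
  e^{-\ci ty}\bigl(u+\ci v\bigr)=\bigl(u\cos(ty)+v\sin(ty)\bigr)+\ci\bigl(v\cos(ty)-u\sin(ty)\bigr).
\]
Absolute convergence of \(\int|h(\ci y)|\dd y\) makes each of the four real integrals absolutely convergent, since \(|u|,|v|\le|h(\ci y)|\) and \(|\cos|,|\sin|\le1\). This is the only analytic point, and it is what lets me split the integral term by term.

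Finally, \(v\cos(ty)\) is odd and \(u\sin(ty)\) is odd, so both integrate to zero over \(\R\). Hence the imaginary part of the integral vanishes, which gives \eqref{eq:complex-axis-parity-formula} and shows the value is real. For the special cases: if \(h(\ci\cdot)\) is real then \(v=0\) and only the cosine integral remains; if it is purely imaginary then \(u=0\) and only the sine integral remains. I expect no real obstacle; the step needing the most care is justifying the split into separately convergent integrals, and absolute integrability handles it.
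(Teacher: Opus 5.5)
Your proposal is correct and follows the paper's proof essentially step for step. You compare real and imaginary parts in \eqref{eq:complex-axis-reflection} to get the parities, expand with \eqref{eq:complex-euler-oscillation}, and note that the imaginary bracket is odd and so integrates to zero; your remark that $|u|,|v|\le|h(\ci y)|$ justifies splitting the integral is a small piece of care that the paper leaves implicit.
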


\begin{proof}
Comparing real and imaginary parts in \eqref{eq:complex-axis-reflection},
which reads $u(-y)+\ci v(-y)=u(y)-\ci v(y)$, gives the stated parities.
Multiplying $h(\ci y)$ by \eqref{eq:complex-euler-oscillation} and
separating,
\[
  e^{-\ci ty}h(\ci y)
  =\bigl(u(y)\cos(ty)+v(y)\sin(ty)\bigr)
   +\ci\bigl(v(y)\cos(ty)-u(y)\sin(ty)\bigr),
\]
the second bracket is an odd function of $y$, being in each summand the
product of an even and an odd factor. It therefore integrates to zero over
$\R$, which leaves \eqref{eq:complex-axis-parity-formula}.
\end{proof}

\begin{example}\label{ex:complex-cosine-integral}
Let $a>0$ and $t\geq0$, and apply
Proposition~\ref{prop:complex-exponential-factor} to
\[
  h_a(z)=\frac{1}{a^2-z^2},
\]
which decays as $|z|^{-2}$ and has the two simple poles $\pm a$, only $a$
lying in $H_+$. From $a^2-z^2=-(z-a)(z+a)$ we get
\[
  \Res_a\bigl(e^{-tz}h_a(z)\bigr)=-\frac{e^{-at}}{2a},
\]
whence
\[
  \int_{-\infty}^{\infty}\frac{e^{-\ci ty}}{a^2+y^2}\dd y
  =\frac{\pi}{a}e^{-at}.
\]
Since $h_a(\ci y)=(a^2+y^2)^{-1}$ is real,
Lemma~\ref{lem:complex-axis-parity} leaves the cosine integral alone:
\begin{equation}\label{eq:complex-cosine-integral}
  \int_{-\infty}^{\infty}\frac{\cos(ty)}{a^2+y^2}\dd y
  =\frac{\pi}{a}e^{-at},
  \qquad a>0,\ t\geq0 .
\end{equation}
The left-hand side being even in $t$, the case of an arbitrary real $t$ is
obtained by replacing $t$ with $|t|$; at $t=0$ one recovers
\eqref{eq:complex-rational-integral-one}.
\end{example}

\begin{example}
\label{ex:complex-cosine-double-pole}
Let now $k_a(z)=(a^2-z^2)^{-2}$, with a pole of order two at $a$. As in
Remark~\ref{rem:complex-exponential-higher-order}, the exponential factor
must be regularized together with $k_a$:
\[
  (z-a)^2e^{-tz}k_a(z)=\frac{e^{-tz}}{(z+a)^2},
\]
so that, by Corollary~\ref{cor:complex-higher-order-residue},
\[
  \Res_a\bigl(e^{-tz}k_a(z)\bigr)
  =\frac{\dd}{\dd z}\left(\frac{e^{-tz}}{(z+a)^2}\right)\bigg|_{z=a}
  =e^{-at}\left(-\frac{t}{4a^2}-\frac{2}{8a^3}\right)
  =-\frac{e^{-at}}{4a^3}(1+at).
\]
The axis value $k_a(\ci y)=(a^2+y^2)^{-2}$ is again real, and therefore
\begin{equation}\label{eq:complex-cosine-double-pole}
  \int_{-\infty}^{\infty}\frac{\cos(ty)}{(a^2+y^2)^2}\dd y
  =\frac{\pi}{2a^3}(1+at)e^{-at},
  \qquad t\geq0 ,
\end{equation}
which reduces to \eqref{eq:complex-rational-integral-two} at $t=0$. Note
that the two summands of the derivative contribute with the same sign: the
naive substitution $e^{-at}\Res_ak_a$ would produce only the second of them.
\end{example}

\begin{example}\label{ex:complex-sine-integral}
Take finally
\[
  h(z)=\frac{z}{(a^2-z^2)^2},\qquad a>0 ,
\]
whose only pole in $H_+$ is again $a$, of order two. Since
$h(\ci y)=\ci\,y(a^2+y^2)^{-2}$ is purely imaginary,
Lemma~\ref{lem:complex-axis-parity} guarantees that only the sine integral
survives. Setting
\[
  \phi_t(z):=(z-a)^2e^{-tz}h(z)=\frac{ze^{-tz}}{(z+a)^2}
\]
and differentiating,
\(\phi_t'(z)=e^{-tz}\left(\frac{1}{(z+a)^2}-\frac{tz}{(z+a)^2}
                          -\frac{2z}{(z+a)^3}\right)\),
where at $z=a$ the first and the third summand cancel, leaving
\(\Res_a\bigl(e^{-tz}h(z)\bigr)=\phi_t'(a)=-\frac{t}{4a}e^{-at}\).
Formula \eqref{eq:complex-fourier-residue-formula} therefore gives
\begin{equation}\label{eq:complex-sine-integral}
  \int_{-\infty}^{\infty}\frac{y\sin(ty)}{(a^2+y^2)^2}\dd y
  =\frac{\pi t}{2a}e^{-at},
  \qquad t\geq0 .
\end{equation}
Consistently with the parity of the left-hand side, this vanishes at $t=0$,
where \eqref{eq:complex-sine-integral} is the trivial identity.
\end{example}



\section{Divided differences and Hermite interpolation}
\label{sec:divided-differences}

Proposition~\ref{prop:complex-common-regularization} showed that, after a
common regularization at a conjugate pair $p,\bar p$, the sum of the two
residues is a contour integral of the form
\[
  \frac{1}{2\pi\ci}\int_\Gamma
  \frac{\dd\zeta}{(\zeta-p)^M(\zeta-\bar p)^M}\,g_M(\zeta).
\]
The purpose of the present section is to recognize this integral as a
classical \emph{confluent divided difference}.  This reformulation is useful
for two reasons.  First, a divided difference depends only on finitely many
derivatives of $g_M$ at the two nodes.  Second, when the nodes alternate
between $p$ and $\bar p$, Newton--Hermite interpolation produces exactly the
two polynomial families that occur in the quaternionic slice regular spherical expansions.

The final part of the section translates the same divided difference into an
alternating differential operator.  Later, in the quaternionic setting, this
operator becomes
\[
  \partial_s(\partial_c\partial_s)^{M-1}.
\]
Thus divided differences form the local analytic bridge between the ordinary
complex residue theorem and the differential evaluation of spherical
residues.

We include the necessary proofs so that the section requires only standard
 complex analysis.  General background on divided differences
and Hermite interpolation can be found in
\cite[Sections~1, 2 and~9]{deBoor}; for two-point Taylor expansions and their
Hermite interpretation, see \cite[Sections~2.1--2.3]{LopezTemme}.

The following conventions are in force throughout the section. $V$ is a
finite-dimensional complex vector space, with complex scalars acting on the
left, and $W\subset\C$ is open and not necessarily connected. A map
$g\colon W\to V$ is holomorphic when it is holomorphic componentwise in any
basis of $V$, and, as in Section~\ref{sec:complex-model}, all derivatives
and integrals are understood componentwise. Only
Subsection~\ref{subsec:dd-conjugate-operators} requires more, namely that
$W$ be invariant under complex conjugation; this is stated again there.

\subsection{Distinct and repeated nodes}
\label{subsec:dd-nodes}

We first define divided differences at pairwise distinct nodes
$x_0,\ldots,x_n\in W$, and then extend the definition to repeated nodes by
continuity.

\begin{definition}[Divided differences at distinct nodes]
\label{def:dd-distinct}
The divided differences of $g$ are defined recursively by
\begin{align}
  g[x_0]&:=g(x_0), \label{eq:dd-order-zero}\\
  g[x_0,\ldots,x_n]
  &:=
  (x_n-x_0)^{-1}
  \bigl(
    g[x_1,\ldots,x_n]
    -g[x_0,\ldots,x_{n-1}]
  \bigr),
  \qquad n\ge1.
  \label{eq:dd-recursion}
\end{align}
\end{definition}

This is the standard recursive convention (compare
\cite[Section~1, equations~(5), (6) and~(10)]{deBoor}). The denominators in
\eqref{eq:dd-recursion} are complex scalars and act on the left of the
vectors in $V$. The first divided difference is
\begin{equation}\label{eq:dd-first}
  g[x_0,x_1]
  =\frac{g(x_1)-g(x_0)}{x_1-x_0},
\end{equation}
and the second is
\begin{equation}  \label{eq:dd-second}
  g[x_0,x_1,x_2]
  =
  \frac{g[x_1,x_2]-g[x_0,x_1]}{x_2-x_0}
=
  \frac{1}{x_2-x_0}
  \left(
    \frac{g(x_2)-g(x_1)}{x_2-x_1}
    -
    \frac{g(x_1)-g(x_0)}{x_1-x_0}
  \right).
\end{equation}
A divided difference on $n+1$ nodes is said to have \emph{order $n$}.

Although the recursive formula singles out the first and last nodes, the
result is symmetric in all the nodes.  This fact is well known but not obvious from
\eqref{eq:dd-recursion}; in this paper we will see that it follows immediately from Theorem~\ref{thm:dd-contour}.

Suppose now that two nodes approach each other. By \eqref{eq:dd-first},
\[
  \lim_{y\to x}g[x,y]
  =\lim_{y\to x}\frac{g(y)-g(x)}{y-x}
  =g'(x),
\]
so that divided differences at repeated nodes can be defined by continuous
extension: holomorphy guarantees that all such limits exist and do not
depend on the way in which the nodes coalesce, as
Theorem~\ref{thm:dd-contour} will show. This confluent convention agrees
with \cite[Section~1, equation~(14), and Proposition~21]{deBoor}, where a
block of equal nodes likewise yields normalized derivatives.

\begin{definition}[Repeated-node notation]\label{def:dd-repeated}
For $m\geq1$, the symbol $x^{(m)}$ inside a divided difference means that
the node $x$ occurs $m$ times; a block of multiplicity zero is omitted.
\end{definition}

With this notation, the confluent limit of a single block of nodes is a
normalized derivative,
\begin{equation}\label{eq:dd-repeated-block}
  g[x^{(m)}]=\frac{g^{(m-1)}(x)}{(m-1)!},\qquad m\geq1,
\end{equation}
as will be proved at the end of Theorem~\ref{thm:dd-contour}. The first two
cases, $g[x]=g(x)$ and $g[x,x]=g'(x)$, are the ones already met above.
For a $V$-valued map, the derivative is taken componentwise.

\begin{remark}
Mixed repetitions combine values and derivatives.  For example, if $x\ne y$,
then
\begin{equation}\label{eq:dd-mixed-example}
  g[x,x,y]=
  \frac{g[x,y]-g[x,x]}{y-x}
  =
  \frac{g(y)-g(x)-(y-x)g'(x)}{(y-x)^2}.
\end{equation}
The numerator in the last expression is the error after subtracting the
first-order Taylor polynomial of $g$ at $x$.  This is the first indication of
the relation between repeated nodes and Hermite interpolation.
\end{remark}


\subsection{The contour formula}
\label{subsec:dd-contour}

The contour representation simultaneously proves symmetry, explains
confluence, and connects divided differences to residues.  We formulate it so
that the nodes may belong to different connected components of $W$.
In the scalar case this is the Cauchy contour representation of
\cite[Section~9, equation~(51)]{deBoor}, which also yields the confluent
case; the vector-valued version below is obtained componentwise.

\begin{theorem}[Cauchy formula for confluent divided differences]
\label{thm:dd-contour}
Let $g\colon W\to V$ be holomorphic and let $x_0,\ldots,x_n\in W$, with
repetitions allowed. Let $B$ be an admissible domain in the sense of
Definition~\ref{def:admissible-domain} with
\(\overline B\subset W\)
and
\(x_0,\ldots,x_n\in B\),
and let $\Gamma:=\partial B$ carry the orientation positive relative to $B$.
Then the divided difference $g[x_0,\ldots,x_n]$ is well defined, in the
confluent sense of Subsection~\ref{subsec:dd-nodes}, and
\begin{equation}\label{eq:cx-hermite}
  \boxed{\;
  g[x_0,\ldots,x_n]
  =
  \frac{1}{2\pi\ci}
  \int_\Gamma
  \frac{\dd\zeta}{\prod_{j=0}^{n}(\zeta-x_j)}\,g(\zeta) .
  \;}
\end{equation}
In particular the right-hand side does not depend on the choice of $B$, and
$g[x_0,\ldots,x_n]$ is a symmetric function of the nodes. Denoting by
$\xi_1,\ldots,\xi_s$ the distinct nodes among $x_0,\ldots,x_n$, one has
equivalently
\begin{equation}\label{eq:dd-contour-residues}
  g[x_0,\ldots,x_n]
  =\sum_{i=1}^{s}
   \Res_{\xi_i}\left(\frac{g(z)}{\prod_{j=0}^{n}(z-x_j)}\right).
\end{equation}
\end{theorem}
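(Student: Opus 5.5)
The plan is to define the right-hand side of \eqref{eq:cx-hermite} as a function of the nodes and check that it satisfies the recursion and the confluent limits. Set
\[
  G(x_0,\ldots,x_n):=\frac{1}{2\pi\ci}\int_\Gamma\frac{\dd\zeta}{\prod_{j=0}^{n}(\zeta-x_j)}\,g(\zeta),
\]
for nodes in $B$. The integrand is meromorphic on $W$, with poles only at the nodes. So Theorem~\ref{thm:complex-residue-flux}, via \eqref{eq:complex-flux-contour}, shows that $G$ equals the residue sum \eqref{eq:dd-contour-residues}. In particular $G$ does not depend on the choice of $B$ among admissible domains containing the nodes. Symmetry of $G$ in the nodes is evident, because the product in the denominator is symmetric.

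The first step is the case of distinct nodes, proved by induction on $n$. For $n=0$, Corollary~\ref{cor:complex-higher-order-residue} with $m=1$ gives $G(x_0)=g(x_0)$. For $n\ge1$, use the partial-fraction identity
\[
  \frac{1}{\prod_{j=0}^{n}(\zeta-x_j)}=\frac{1}{x_n-x_0}\left(\frac{1}{\prod_{j=1}^{n}(\zeta-x_j)}-\frac{1}{\prod_{j=0}^{n-1}(\zeta-x_j)}\right).
\]
The identity is exact, since the difference of the two reciprocals of the factors $\zeta-x_0$ and $\zeta-x_n$ equals $(x_n-x_0)/((\zeta-x_0)(\zeta-x_n))$. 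The scalar $(x_n-x_0)^{-1}$ comes out on the left of the integral, in agreement with the left convention of Remark~\ref{rem:complex-left-differential}. Integrating therefore gives \eqref{eq:dd-recursion} for $G$. Here the same contour $\Gamma$ serves for the subsets of nodes, because independence of $B$ was already established. Hence $G=g[x_0,\ldots,x_n]$ at distinct nodes.

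The second step, and the main point, is confluence. On a compact subset of $B$ that stays at positive distance from $\Gamma$, the integrand is jointly continuous, and in fact holomorphic, in $(x_0,\ldots,x_n)$, uniformly for $\zeta\in\Gamma$. By dominated convergence $G$ is therefore continuous on $B^{n+1}$, including on the diagonals. The divided difference at repeated nodes is defined as the limit of $g[x_0,\ldots,x_n]$ over distinct nodes, which coincide with $G$ there. Since distinct tuples are dense, this limit exists, equals $G$, and is independent of how the nodes coalesce. So $g[x_0,\ldots,x_n]$ is well defined and \eqref{eq:cx-hermite} holds, and symmetry passes to the limit.

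Finally, for a single block, \eqref{eq:cx-hermite} reads
\[
  g[x^{(m)}]=\frac{1}{2\pi\ci}\int_\Gamma\frac{\dd\zeta}{(\zeta-x)^m}\,g(\zeta),
\]
which is $\Res_x\bigl((z-x)^{-m}g\bigr)$. Corollary~\ref{cor:complex-higher-order-residue}, applied with $h=(z-x)^{-m}g$, evaluates this as $g^{(m-1)}(x)/(m-1)!$, proving \eqref{eq:dd-repeated-block}. The only delicate point is bookkeeping: the nodes must stay in a fixed compact subset of $B$ during the limits, which is possible because $B$ is open.
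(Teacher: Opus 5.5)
Your proposal is correct and follows the paper's proof essentially step by step: induction on distinct nodes via the kernel recursion, continuity of the contour integral by dominated convergence, density of distinct tuples for confluence, and the residue theorem for \eqref{eq:dd-contour-residues} and the single-block case. The only cosmetic difference is that you get independence of $B$ up front from the residue-sum identity, whereas the paper reads it off at the end from the fact that the left-hand side of \eqref{eq:cx-hermite} does not involve $B$.
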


\begin{proof}
Write, for $x=(x_0,\ldots,x_n)\in B^{n+1}$,
\[
  \Psi(x):=\frac{1}{2\pi\ci}\int_\Gamma
           \frac{\dd\zeta}{\prod_{j=0}^{n}(\zeta-x_j)}\,g(\zeta) ,
\]
the integral being well defined because no $x_j$ lies on $\Gamma$. We prove
that $\Psi$ is continuous, that it agrees with the divided difference at
pairwise distinct nodes, and that these two facts together give the
statement.

\emph{Step 1: $\Psi$ is continuous on $B^{n+1}$.} Continuity is a local
property, so fix $\hat x\in B^{n+1}$ and choose a compact set $K\subset B$
containing a neighbourhood of each $\hat x_j$; it suffices to prove that
$\Psi$ is continuous at $\hat x$ among the tuples whose entries all lie in
$K$. Set $\delta:=\operatorname{dist}(K,\Gamma)>0$. For such tuples and for
$\zeta\in\Gamma$ each factor of the denominator has modulus at least
$\delta$, so the integrand is bounded in norm by
$\delta^{-n-1}\max_\Gamma\|g\|$, a constant, and depends continuously on
$(x,\zeta)$. Dominated convergence gives the continuity of $\Psi$ at
$\hat x$. The same estimate, applied with one node varying and the others
fixed, shows that $\Psi$ is holomorphic in each $x_j$ separately; this is
used in Proposition~\ref{prop:dd-node-derivative}.

\emph{Step 2: pairwise distinct nodes.} We argue by induction on $n$. For
$n=0$ the integrand has the single pole $x_0$ in $B$, with residue $g(x_0)$,
so Theorem~\ref{thm:complex-residue-flux} gives
$\Psi(x_0)=g(x_0)=g[x_0]$.

Let $n\geq1$ and assume the claim for $n-1$ nodes. Multiplying numerator and
denominator, the two kernels of order $n-1$ obtained by deleting
respectively the first and the last node can be written over the common
denominator $\prod_{j=0}^{n}(\zeta-x_j)$ as
\[
  \frac{1}{\prod_{j=1}^{n}(\zeta-x_j)}
  =\frac{\zeta-x_0}{\prod_{j=0}^{n}(\zeta-x_j)} ,
  \qquad
  \frac{1}{\prod_{j=0}^{n-1}(\zeta-x_j)}
  =\frac{\zeta-x_n}{\prod_{j=0}^{n}(\zeta-x_j)} ,
\]
whence, subtracting and using $(\zeta-x_0)-(\zeta-x_n)=x_n-x_0$,
\begin{equation}\label{eq:dd-kernel-recursion}
  \frac{1}{x_n-x_0}
  \left(
    \frac{1}{\prod_{j=1}^{n}(\zeta-x_j)}
    -\frac{1}{\prod_{j=0}^{n-1}(\zeta-x_j)}
  \right)
  =\frac{1}{\prod_{j=0}^{n}(\zeta-x_j)} .
\end{equation}
For $n=1$, for instance, this is the partial-fraction identity
\[
  \frac{1}{x_1-x_0}\left(\frac{1}{\zeta-x_1}-\frac{1}{\zeta-x_0}\right)
  =\frac{1}{(\zeta-x_0)(\zeta-x_1)} .
\]
Multiplying \eqref{eq:dd-kernel-recursion} by $\dd\zeta\,g(\zeta)/(2\pi\ci)$
and integrating over $\Gamma$ shows that $\Psi$ obeys the recursion
\eqref{eq:dd-recursion}; by the inductive hypothesis its two terms are
$g[x_1,\ldots,x_n]$ and $g[x_0,\ldots,x_{n-1}]$, so that
$\Psi(x)=g[x_0,\ldots,x_n]$.

\emph{Step 3: confluent nodes.} The tuples with pairwise distinct entries
form a dense subset of $B^{n+1}$, on which $\Psi$ agrees with the divided
difference by Step~2. Since $\Psi$ is continuous by Step~1, the divided
difference admits a continuous extension to all of $B^{n+1}$, this extension
is unique, it is independent of the way in which the nodes coalesce, and it
equals $\Psi$. This is precisely the confluent convention of
Subsection~\ref{subsec:dd-nodes}, and \eqref{eq:cx-hermite} is proved. The
kernel in $\Psi$ is symmetric in $x_0,\ldots,x_n$, hence so is the divided
difference; and since the left-hand side of \eqref{eq:cx-hermite} does not
involve $B$, neither does the right-hand side.

\emph{Step 4: the two consequences.} Applying
Theorem~\ref{thm:complex-residue-flux} to the meromorphic function
$z\mapsto g(z)\big/\prod_{j=0}^{n}(z-x_j)$, whose poles in $B$ are exactly
$\xi_1,\ldots,\xi_s$, gives \eqref{eq:dd-contour-residues}. When the nodes
are pairwise distinct each pole is simple, with residue
$\bigl(\prod_{k\neq j}(x_j-x_k)\bigr)^{-1}g(x_j)$, so that
\eqref{eq:dd-contour-residues} becomes (the Lagrange formula):
\begin{equation}\label{eq:dd-lagrange-form}
  g[x_0,\ldots,x_n]
  =\sum_{j=0}^{n}
   \left(\prod_{\substack{0\leq k\leq n\\k\neq j}}(x_j-x_k)\right)^{-1}
   g(x_j) ;
\end{equation}
when instead all $m=n+1$ nodes are equal to $x$, formula
\eqref{eq:cx-hermite} reduces to
\[
  g[x^{(m)}]=\frac{1}{2\pi\ci}\int_\Gamma
             \frac{\dd\zeta}{(\zeta-x)^m}\,g(\zeta)
           =\frac{g^{(m-1)}(x)}{(m-1)!}
\]
by the Cauchy formula for derivatives, which proves
\eqref{eq:dd-repeated-block}.
\end{proof}

\begin{remark}
\label{rem:dd-disconnected}
If two nodes lie in different connected
components of $W$, one takes a small Jordan domain around each and lets
$\Gamma$ be the union of their boundaries. This is why $B$ was allowed to be
an arbitrary admissible domain. What makes it work is that
\eqref{eq:cx-hermite} evaluates $g$ only on $\Gamma$, so that a divided
difference is determined by the germs of $g$ at the nodes and by nothing
else; the recursion \eqref{eq:dd-recursion}, which manipulates values at
distinct points as if they belonged to one expansion, gives no hint of this.

This point will be essential later in the quaternionic setting for circular product domains, where the two
conjugate nodes representing a non-real sphere may lie in two disconnected
half-slices: the paired residue is then computed from the two local germs,
with no analytic continuation along a path from one node to the other.
\end{remark}

The contour formula also makes differentiation with respect to a node
transparent. The identity below is the repeated-block version of
\cite[Proposition~22]{deBoor}; by symmetry, the moving block may be placed
first, as we do here.

\begin{proposition}[Differentiating a repeated block]
\label{prop:dd-node-derivative}
Let $g\colon W\to V$ be holomorphic, let $m\geq1$, and let
$y_1,\ldots,y_r\in W$ be fixed nodes, repetitions allowed. Then
$x\mapsto g[x^{(m)},y_1,\ldots,y_r]$ is holomorphic on $W$ and
\begin{equation}\label{eq:cx-dd-derivative}
  \boxed{\;
  \frac{\partial}{\partial x}g[x^{(m)},y_1,\ldots,y_r]
  =m\,g[x^{(m+1)},y_1,\ldots,y_r] .
  \;}
\end{equation}
\end{proposition}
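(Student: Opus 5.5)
The plan is to work entirely with the contour representation of Theorem~\ref{thm:dd-contour}. Fix $x_0\in W$ and choose an admissible domain $B$ with $\overline B\subset W$ that contains $x_0$ and all the fixed nodes $y_1,\ldots,y_r$. If these lie in different components of $W$, we take a union of small disks, as in Remark~\ref{rem:dd-disconnected}. Then there is $\rho>0$ such that $\overline{B(x_0,\rho)}\subset B$. For every $x$ in this disk, \eqref{eq:cx-hermite} gives
\[
  g[x^{(m)},y_1,\ldots,y_r]
  =\frac{1}{2\pi\ci}\int_\Gamma
   \frac{\dd\zeta}{(\zeta-x)^m\prod_{k=1}^{r}(\zeta-y_k)}\,g(\zeta),
\]
with the same contour $\Gamma=\partial B$ for all such $x$. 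It therefore suffices to differentiate this integral in $x$ under the integral sign.

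For the justification, observe that for $\zeta\in\Gamma$ and $|x-x_0|\le\rho/2$ we have $|\zeta-x|\ge\delta>0$. The integrand is holomorphic in $x$, and its $x$-derivative
\[
  m(\zeta-x)^{-m-1}\prod_k(\zeta-y_k)^{-1}g(\zeta)
\]
is bounded uniformly in $(x,\zeta)$. Standard differentiation under the integral sign (or, equivalently, Morera together with uniform convergence of difference quotients) then shows that the left-hand side is holomorphic near $x_0$, with derivative
\[
  \frac{m}{2\pi\ci}\int_\Gamma\frac{\dd\zeta}{(\zeta-x)^{m+1}\prod_k(\zeta-y_k)}\,g(\zeta).
\]
Since $x_0$ was arbitrary, holomorphy holds on all of $W$. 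Everything is understood componentwise in a basis of $V$, so the scalar argument suffices.

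Finally, we apply \eqref{eq:cx-hermite} once more, now to the node family $(x^{(m+1)},y_1,\ldots,y_r)$ with the same $B$. This identifies the last integral with $m\,g[x^{(m+1)},y_1,\ldots,y_r]$, which is \eqref{eq:cx-dd-derivative}.

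The only point requiring care is that the representation uses one fixed contour for all nodes in a neighbourhood. This is guaranteed by the $B$-independence asserted in Theorem~\ref{thm:dd-contour}, and it includes the cases where $x$ coincides with some $y_k$, because the contour formula already covers confluent nodes.
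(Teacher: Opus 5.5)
Your proposal is correct and is essentially the paper's own proof: you fix one admissible $B$ containing a neighbourhood of the moving node and all the $y_j$, and differentiate the contour formula of Theorem~\ref{thm:dd-contour} under the integral sign, which is justified by the uniform positive distance between $x$ and $\Gamma$. You then identify $m(\zeta-x)^{-m-1}$ as the kernel of $m\,g[x^{(m+1)},y_1,\ldots,y_r]$ via \eqref{eq:cx-hermite}, with confluences with the $y_k$ already covered by that formula, exactly as the paper does.
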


\begin{proof}
Fix $x\in W$ and choose an admissible $B$ as in
Theorem~\ref{thm:dd-contour} containing $x$, a neighbourhood of $x$, and all
the $y_j$; set $\Gamma=\partial B$ and
$Q(\zeta):=\prod_{j=1}^{r}(\zeta-y_j)$, so that \eqref{eq:cx-hermite} reads
\[
  g[x^{(m)},y_1,\ldots,y_r]
  =\frac{1}{2\pi\ci}\int_\Gamma
   \frac{\dd\zeta}{(\zeta-x)^mQ(\zeta)}\,g(\zeta) .
\]
As $x$ varies in that neighbourhood the contour stays fixed and at a
positive distance from $x$, so the integrand and its $x$-derivative are
continuous in $(x,\zeta)$ and bounded on $\Gamma$; differentiation under the
integral sign is therefore permitted, and gives, by
$\partial_x(\zeta-x)^{-m}=m(\zeta-x)^{-m-1}$,
\[
  \frac{\partial}{\partial x}g[x^{(m)},y_1,\ldots,y_r]
  =\frac{m}{2\pi\ci}\int_\Gamma
   \frac{\dd\zeta}{(\zeta-x)^{m+1}Q(\zeta)}\,g(\zeta)
  =m\,g[x^{(m+1)},y_1,\ldots,y_r] ,
\]
the last equality being \eqref{eq:cx-hermite} again, now with $m+1$ copies
of $x$. 
\end{proof}

\subsection{Newton--Hermite interpolation}
\label{subsec:dd-newton-hermite}

Divided differences are the coefficients of the Newton interpolation
polynomial, and repeating a node turns interpolation of values into
interpolation of derivatives, that is, Hermite interpolation. To state this
precisely, fix nodes $a_0,\ldots,a_{N-1}\in W$, repetitions allowed, and
associate with them the Newton products
\begin{equation}\label{eq:dd-newton-polynomials}
  P_0(z):=1,
  \qquad
  P_j(z):=\prod_{\ell=0}^{j-1}(z-a_\ell),
  \quad 1\leq j\leq N ,
\end{equation}
so that $P_j$ collects the factors of the first $j$ nodes and has degree
exactly $j$.The next theorem is the holomorphic Newton--Hermite expansion with
remainder. In the scalar case, the algebraic remainder formula appears in
\cite[Section~1, Example~6, equation~(16)]{deBoor}, while the
contour-integral derivation used below, based on the same telescoping
identity and Cauchy's formula, is given in
\cite[Section~9, equations~(49)--(51)]{deBoor}. The \(V\)-valued version
follows componentwise.

\begin{theorem}[Newton formula with remainder]
\label{thm:dd-newton}
Let $g\colon W\to V$ be holomorphic, let $a_0,\ldots,a_{N-1}\in W$ be nodes,
with repetitions allowed, and let $P_0,\ldots,P_N$ be the associated Newton
products \eqref{eq:dd-newton-polynomials}. Then, for every $z\in W$,
\begin{equation}\label{eq:sres-newton}
  \boxed{\;
  g(z)
  =
  \sum_{j=0}^{N-1}
  P_j(z)\,g[a_0,\ldots,a_j]
  +
  P_N(z)\,g[a_0,\ldots,a_{N-1},z],
  \;}
\end{equation}
the divided difference in the remainder being interpreted confluently when
$z$ coincides with one of the nodes.
\end{theorem}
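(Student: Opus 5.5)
The proof will go through the contour formula of Theorem~\ref{thm:dd-contour}. The algebraic input is a telescoping identity for the Cauchy kernel. For $\zeta$ different from all nodes and from $z$, we have
\[
  \frac{1}{\zeta-z}
  =\sum_{j=0}^{N-1}\frac{P_j(z)}{P_{j+1}(\zeta)}
   +\frac{P_N(z)}{P_N(\zeta)(\zeta-z)} .
\]
This is proved by induction on $N$. The case $N=0$ is trivial. The inductive step rests on the elementary identity
\[
  \frac{1}{P_N(\zeta)(\zeta-z)}
  =\frac{1}{P_{N+1}(\zeta)}
   +\frac{z-a_N}{P_{N+1}(\zeta)(\zeta-z)} ,
\]
which follows from $(\zeta-a_N)-(\zeta-z)=z-a_N$. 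Multiplying by $P_N(z)$ then gives the new summand $P_N(z)/P_{N+1}(\zeta)$ and the new remainder $P_{N+1}(z)\big/\bigl(P_{N+1}(\zeta)(\zeta-z)\bigr)$. The identity is purely a scalar rational identity in $\zeta$, so no vector issues arise.

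Next, fix $z\in W$ and choose an admissible $B$ with $\overline B\subset W$ containing $z$ and all the nodes. This is possible by taking a union of small disjoint discs around the distinct points among $a_0,\ldots,a_{N-1},z$, which handles disconnected $W$ as in Remark~\ref{rem:dd-disconnected}. Multiply the identity on the left by $\dd\zeta\, g(\zeta)/(2\pi\ci)$ and integrate over $\Gamma=\partial B$. The left-hand side gives $g(z)$ by Theorem~\ref{thm:dd-contour} with the single node $z$, that is, by the Cauchy formula. The $j$-th summand gives $P_j(z)\,g[a_0,\ldots,a_j]$, again by \eqref{eq:cx-hermite}; the scalar $P_j(z)$ is independent of $\zeta$ and factors out on the left. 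The remainder term gives $P_N(z)\,g[a_0,\ldots,a_{N-1},z]$ by \eqref{eq:cx-hermite} with the $N+1$ nodes $a_0,\ldots,a_{N-1},z$.

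The only delicate point is the case in which $z$ coincides with a node, where the telescoping identity is applied with $\zeta\ne z$ only. This causes no trouble, since $\zeta$ ranges over $\Gamma$, which avoids all the nodes and $z$. Theorem~\ref{thm:dd-contour} also allows repeated nodes, so the remainder is automatically the confluent divided difference. Alternatively, one can prove \eqref{eq:sres-newton} for $z$ distinct from the nodes and pass to the limit using the continuity from Step~1 of that proof, but the direct argument already covers this case. I expect no genuine obstacle; the main care needed is keeping the scalars on the left, consistently with the convention of Section~\ref{sec:complex-model}.
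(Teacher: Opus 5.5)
Your proposal is correct and follows the paper's proof essentially step for step. The paper uses the same telescoping identity for the Cauchy kernel: it iterates $R_j(\zeta)=P_j(z)/P_{j+1}(\zeta)+R_{j+1}(\zeta)$, which is exactly your inductive step. It then integrates against $\dd\zeta\,g(\zeta)/(2\pi\ci)$ over a contour enclosing the nodes and $z$, and uses Theorem~\ref{thm:dd-contour} to identify every term, confluent cases included.
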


\begin{proof}
Fix \(z\in W\) and choose a contour \(\Gamma\) as in
Theorem~\ref{thm:dd-contour}, enclosing the nodes and \(z\); if \(W\) is
disconnected, \(\Gamma\) is allowed to be the boundary of a finite union of
Jordan domains. Set
\[
  R_j(\zeta):=
  \frac{P_j(z)}{(\zeta-z)P_j(\zeta)},
  \qquad 0\leq j\leq N .
\]
Since \(P_{j+1}(w)=(w-a_j)P_j(w)\), a direct computation gives
\(R_j(\zeta)
  =
  \frac{P_j(z)}{P_{j+1}(\zeta)}
  +R_{j+1}(\zeta)\).
Iterating this identity from \(j=0\) to \(j=N-1\), and using \(P_0=1\),
yields
\[
  \frac{1}{\zeta-z}
  =
  \sum_{j=0}^{N-1}
  \frac{P_j(z)}{P_{j+1}(\zeta)}
  +
  \frac{P_N(z)}{(\zeta-z)P_N(\zeta)}.
\]
Multiplying by \(\dd\zeta\,g(\zeta)/(2\pi\ci)\) and integrating over
\(\Gamma\), the Cauchy formula gives \(g(z)\) on the left, while
Theorem~\ref{thm:dd-contour} identifies the terms on the right with the
divided differences appearing in \eqref{eq:sres-newton}. The same contour
formula already includes repeated nodes and the case in which \(z\)
coincides with one of them.
\end{proof}

The truncation \(H_{N-1}\) is the Hermite interpolant associated with the
repeated nodes. The standard scalar characterization in terms of prescribed
jets, together with the divisibility argument proving uniqueness, can be
found in
\cite[Proposition~7, equations~(8)--(9), and Definition~37]{deBoor}
(see also \cite{Hermite}).

\begin{corollary}[Hermite interpolation of jets]
\label{cor:dd-hermite-jets}
Let $\xi_1,\ldots,\xi_s$ be the distinct nodes among $a_0,\ldots,a_{N-1}$,
occurring with multiplicities $m_1,\ldots,m_s$, so that
$m_1+\cdots+m_s=N$. Then $H_{N-1}$ is the unique $V$-valued polynomial of
degree at most $N-1$ such that
\begin{equation}\label{eq:dd-hermite-conditions}
  H_{N-1}^{(r)}(\xi_i)=g^{(r)}(\xi_i),
  \qquad 0\leq r\leq m_i-1,\quad 1\leq i\leq s .
\end{equation}
\end{corollary}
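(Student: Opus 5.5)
We have to prove two things about $H_{N-1}:=\sum_{j=0}^{N-1}P_j\,g[a_0,\ldots,a_j]$: that it satisfies the jet conditions \eqref{eq:dd-hermite-conditions}, and that no other $V$-valued polynomial of degree at most $N-1$ does. Both parts are componentwise in a basis of $V$, so it suffices to treat scalar $g$. Existence will come from the remainder term of Theorem~\ref{thm:dd-newton}, and uniqueness from the usual divisibility argument.

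\emph{Existence.} Theorem~\ref{thm:dd-newton} gives, for every $z\in W$,
\[
  g(z)-H_{N-1}(z)=P_N(z)\,E(z),
  \qquad E(z):=g[a_0,\ldots,a_{N-1},z].
\]
First I check that $E$ is holomorphic on $W$, including at the nodes. By Theorem~\ref{thm:dd-contour}, for $z$ in a neighbourhood of a fixed point, $E(z)$ is given by a contour integral over a fixed contour. This is exactly the holomorphy in one node established in Step~1 of that proof, and used again in Proposition~\ref{prop:dd-node-derivative}.

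Next, $P_N(z)=\prod_{i}(z-\xi_i)^{m_i}$ vanishes to order $m_i$ at $\xi_i$. Hence $g-H_{N-1}$ is holomorphic and vanishes to order at least $m_i$ at $\xi_i$, so its derivatives of orders $0,\ldots,m_i-1$ vanish there. This is precisely \eqref{eq:dd-hermite-conditions}. Finally, $H_{N-1}$ has degree at most $N-1$ because $\deg P_j=j$.

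\emph{Uniqueness.} Suppose $K$ is another polynomial of degree at most $N-1$ satisfying \eqref{eq:dd-hermite-conditions}. Then $D:=H_{N-1}-K$ has a zero of order at least $m_i$ at each $\xi_i$. So each $(z-\xi_i)^{m_i}$ divides $D$, and since the $\xi_i$ are distinct these factors are pairwise coprime. Therefore $P_N$, of degree $N$, divides $D$, and because $\deg D\le N-1$ this forces $D=0$.

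The only delicate point is the holomorphy of $E$ at the confluent nodes, so that vanishing order can be read off from the factorization. This is already covered by the contour representation, so no step should cause a real obstacle.
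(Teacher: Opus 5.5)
Your proof is correct and follows the paper's argument step by step. Existence comes from the Newton remainder $P_N(z)\,g[a_0,\ldots,a_{N-1},z]$, with the contour formula guaranteeing that the last factor is holomorphic at the confluent nodes, and uniqueness comes from divisibility of the difference, componentwise, by the degree-$N$ polynomial $\prod_i(z-\xi_i)^{m_i}$.
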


\begin{proof}
By Theorems~\ref{thm:dd-newton} and~\ref{thm:dd-contour},
\[
  g(z)-H_{N-1}(z)
  =
  \prod_{i=1}^{s}(z-\xi_i)^{m_i}\,
  g[a_0,\ldots,a_{N-1},z],
\]
where the last factor is holomorphic also at the confluent nodes. Hence
\(g-H_{N-1}\) vanishes at \(\xi_i\) to order at least \(m_i\), which gives
the prescribed jets. If two polynomials of degree at most \(N-1\) have
these jets, their difference is divisible, componentwise, by the
degree-\(N\) polynomial
\(\prod_i(z-\xi_i)^{m_i}\), and therefore vanishes identically.
\end{proof}

\subsection{Alternating conjugate nodes}
\label{subsec:dd-alternating}

We now specialize the Newton construction to the conjugate pair
\(p=\alpha+\ci\beta, \bar p=\alpha-\ci\beta\), with \(\beta>0\),
of Subsection~\ref{subsec:complex-conjugate-pair}, taking the nodes in
alternating order,
\begin{equation}\label{eq:dd-alternating-sequence}
  p,\ \bar p,\ p,\ \bar p,\ \ldots
\end{equation}
The reason for this choice is that consecutive pairs of factors then
assemble into the characteristic polynomial
\begin{equation}\label{eq:dd-delta}
  \DeltaSphere{p}(z):=(z-p)(z-\bar p) ,
\end{equation}
the Newton products \eqref{eq:dd-newton-polynomials} being
\[
  1,\quad z-p,\quad \DeltaSphere{p}(z),\quad
  \DeltaSphere{p}(z)(z-p),\quad \DeltaSphere{p}(z)^2,\ \ldots
\]
Only two shapes occur, $\DeltaSphere{p}^n$ and $\DeltaSphere{p}^n(z-p)$,
and the expansion is therefore organized by powers of a single polynomial
symmetric in the pair. For the equivalent two-point Taylor organization and
its Hermite interpretation, compare \cite[Section~2, Theorem~1, and
Remark~1]{LopezTemme}.

\begin{corollary}[Newton formula at two alternating nodes]
\label{cor:dd-alternating-newton}
Let $g\colon W\to V$ be holomorphic with $p,\bar p\in W$, fix $M\geq1$, and
set, for $0\leq n\leq M-1$,
\begin{align}
  A_n&:=g[p^{(n+1)},\bar p^{(n)}], \label{eq:dd-A-coefficients}\\
  B_n&:=g[p^{(n+1)},\bar p^{(n+1)}]. \label{eq:dd-B-coefficients}
\end{align}
Then, for every $z\in W$,
\begin{equation}\label{eq:dd-alternating-newton}
  \boxed{\;
  g(z)
  =\sum_{n=0}^{M-1}\DeltaSphere{p}(z)^n\bigl(A_n+(z-p)B_n\bigr)
   +\DeltaSphere{p}(z)^Mg[p^{(M)},\bar p^{(M)},z] .
  \;}
\end{equation}
\end{corollary}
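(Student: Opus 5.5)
This is a direct specialization of Theorem~\ref{thm:dd-newton}. We take $N=2M$ nodes in the alternating order \eqref{eq:dd-alternating-sequence}, namely $a_{2k}=p$ and $a_{2k+1}=\bar p$ for $0\le k\le M-1$. The proof then amounts to identifying the Newton products and the divided differences that appear in \eqref{eq:sres-newton}.

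\emph{Newton products.} By induction on $n$, using $P_{j+1}(z)=(z-a_j)P_j(z)$, one gets
\[
P_{2n}(z)=\DeltaSphere{p}(z)^n,\qquad P_{2n+1}(z)=\DeltaSphere{p}(z)^n(z-p).
\]
The reason is that the factors come in consecutive pairs $(z-p)(z-\bar p)$. In particular $P_{2M}=\DeltaSphere{p}^M$.

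\emph{Divided differences.} The coefficient of $P_{2n}$ is $g[a_0,\ldots,a_{2n}]$. Its node list contains $p$ exactly $n+1$ times and $\bar p$ exactly $n$ times. The coefficient of $P_{2n+1}$ is $g[a_0,\ldots,a_{2n+1}]$, with $n+1$ copies of each node. Theorem~\ref{thm:dd-contour} makes divided differences symmetric in their nodes, confluent case included. Hence the nodes may be regrouped into blocks, and these coefficients are exactly $A_n$ and $B_n$ as defined in \eqref{eq:dd-A-coefficients}--\eqref{eq:dd-B-coefficients}. The remainder $g[a_0,\ldots,a_{2M-1},z]$ becomes $g[p^{(M)},\bar p^{(M)},z]$ in the same way.

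\emph{Assembly.} The sum over $j=0,\ldots,2M-1$ in \eqref{eq:sres-newton} is split into its even terms $j=2n$ and odd terms $j=2n+1$, for $0\le n\le M-1$. Grouping each even term with the following odd term gives $\DeltaSphere{p}^n\bigl(A_n+(z-p)B_n\bigr)$. Here the scalar polynomial factors act on the left of the vectors, consistently with the convention of Section~\ref{sec:complex-model}.

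There is no genuine obstacle. The only point needing care is that the confluent symmetry, and the validity of the remainder when $z$ equals $p$ or $\bar p$, are both already contained in Theorems~\ref{thm:dd-contour} and~\ref{thm:dd-newton}. This includes the case where $p$ and $\bar p$ lie in different components of $W$, by Remark~\ref{rem:dd-disconnected}.
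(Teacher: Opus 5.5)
Your proposal is correct and follows the paper's proof essentially step for step. You specialize Theorem~\ref{thm:dd-newton} to the $2M$ alternating nodes and identify $P_{2n}=\DeltaSphere{p}^n$, $P_{2n+1}=\DeltaSphere{p}^n(z-p)$; you then use node symmetry to recognize the accompanying divided differences as $A_n$ and $B_n$, and group consecutive terms, with $P_{2M}=\DeltaSphere{p}^M$ producing the remainder.
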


\begin{proof}
Apply Theorem~\ref{thm:dd-newton} with $N=2M$ and the nodes
$a_0=p,a_1=\bar p,a_2=p,\ldots$ of \eqref{eq:dd-alternating-sequence}. The
first $2n$ of them consist of $n$ copies of each node and the first $2n+1$
of $n+1$ copies of $p$ and $n$ of $\bar p$, so that
\begin{equation}\label{eq:dd-alternating-products}
  P_{2n}(z)=\DeltaSphere{p}(z)^n,
  \qquad
  P_{2n+1}(z)=\DeltaSphere{p}(z)^n(z-p) ,
\end{equation}
and the divided differences accompanying them are those on the first $2n+1$
and $2n+2$ nodes, namely $A_n$ and $B_n$. Grouping the terms of
\eqref{eq:sres-newton} in consecutive pairs gives the sum in
\eqref{eq:dd-alternating-newton}, while the remainder term, with
$P_{2M}=\DeltaSphere{p}^M$, is the last one.
\end{proof}

Each power of $\DeltaSphere{p}$ carries two coefficients, the interpolant on
a pair of distinct nodes being affine rather than constant, and the Newton
construction produces them on the basis $\{1,z-p\}$. That basis involves the
choice of $p$ as first node; passing to $\{1,z\}$ by
\begin{equation}\label{eq:dd-change-two-families}
  A_n+(z-p)B_n=zB_n+(A_n-pB_n)
\end{equation}
removes the dependence, and is the form used in
Proposition~\ref{prop:dd-two-point-principal-part}.

The interpolation formula can now be applied directly to a meromorphic
function, and it extracts its paired principal part in finite terms: no
infinite series is involved, and, by Remark~\ref{rem:dd-disconnected}, no
connectedness assumption either. The two-point Laurent framework behind this
is classical, see \cite[Section~3, Theorem~2 and
Proposition~3]{LopezTemme}; what the proposition below adds is the use of a
single regularizing exponent at the two nodes, which makes it insensitive to
unequal pole orders.

\begin{proposition}
\label{prop:dd-two-point-principal-part}
Let $h$ be $V$-valued and meromorphic near $\{p,\bar p\}$, and let $M\geq1$
be at least as large as both pole orders, so that
\[
  g_M(z):=\DeltaSphere{p}(z)^Mh(z)
\]
extends holomorphically across the two nodes. Let $A_n,B_n$ be the divided
differences \eqref{eq:dd-A-coefficients} and \eqref{eq:dd-B-coefficients}
of $g_M$, and set
\begin{equation}\label{eq:dd-C-coefficients}
  C_n:=A_n-pB_n .
\end{equation}
Then, on a punctured neighbourhood of the two nodes,
\begin{equation}\label{eq:dd-two-point-principal-part}
  \boxed{\;
  h(z)
  =\sum_{n=0}^{M-1}\DeltaSphere{p}(z)^{n-M}\bigl(zB_n+C_n\bigr)+R_M(z) ,
  \;}
\end{equation}
where
\begin{equation}\label{eq:dd-two-point-remainder}
  R_M(z):=g_M[p^{(M)},\bar p^{(M)},z]
\end{equation}
is holomorphic near both $p$ and $\bar p$. In particular the coefficient of
$z$ carried by $\DeltaSphere{p}^{-1}$ is
\begin{equation}\label{eq:dd-balanced-coefficient}
  B_{M-1}=g_M[p^{(M)},\bar p^{(M)}] .
\end{equation}
\end{proposition}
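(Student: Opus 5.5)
The plan is to apply Corollary~\ref{cor:dd-alternating-newton} to the regularized function $g_M$ and then divide by $\DeltaSphere{p}^M$. First I check that $g_M$ is holomorphic on a neighbourhood $W$ of $\{p,\bar p\}$. This $W$ may be taken to be the union of two small disjoint discs, since by Remark~\ref{rem:dd-disconnected} connectedness is irrelevant. Holomorphy holds because $M$ dominates both pole orders: near $p$ one has $g_M=(z-\bar p)^M\cdot(z-p)^Mh$, the factor $(z-p)^M h$ is holomorphic, and similarly near $\bar p$. Corollary~\ref{cor:dd-alternating-newton} with this $M$ then gives, for every $z\in W$,
\[
  g_M(z)=\sum_{n=0}^{M-1}\DeltaSphere{p}(z)^n\bigl(A_n+(z-p)B_n\bigr)
   +\DeltaSphere{p}(z)^M g_M[p^{(M)},\bar p^{(M)},z].
\]

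Next I rewrite each affine coefficient in the basis $\{1,z\}$ by \eqref{eq:dd-change-two-families}, so that $A_n+(z-p)B_n=zB_n+C_n$. Note that $z$ is a scalar acting on the left of $B_n$, consistent with the conventions. On the punctured neighbourhood $W\setminus\{p,\bar p\}$ the scalar $\DeltaSphere{p}(z)^{-M}$ is nonzero, and left multiplication by it gives $h=\DeltaSphere{p}^{-M}g_M$. This yields \eqref{eq:dd-two-point-principal-part} with $R_M$ as in \eqref{eq:dd-two-point-remainder}.

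It remains to see that $R_M$ is holomorphic near both nodes, including at $z=p$ and $z=\bar p$ themselves. I would use the contour formula of Theorem~\ref{thm:dd-contour}. Fix an admissible $B$ with $\overline B\subset W$ consisting of two discs around $p$ and $\bar p$. For $z\in B$,
\[
  R_M(z)=\frac{1}{2\pi\ci}\int_{\partial B}\frac{\dd\zeta}{(\zeta-p)^M(\zeta-\bar p)^M(\zeta-z)}\,g_M(\zeta).
\]
This integrand depends holomorphically on $z$ and stays uniformly bounded for $z$ in compact subsets of $B$, so differentiation under the integral sign applies, as in Step~1 of that theorem or Proposition~\ref{prop:dd-node-derivative}. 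The confluent cases $z\in\{p,\bar p\}$ are covered automatically by the same formula.

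Finally, the $n=M-1$ term is $\DeltaSphere{p}^{-1}(zB_{M-1}+C_{M-1})$, and by definition $B_{M-1}=g_M[p^{(M)},\bar p^{(M)}]$. This gives \eqref{eq:dd-balanced-coefficient}. The only delicate point is the holomorphy of the remainder at the confluent nodes, which the contour representation settles. Everything else is bookkeeping on left scalar multiplication.
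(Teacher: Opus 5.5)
Your proposal is correct and follows the paper's proof essentially verbatim. You apply Corollary~\ref{cor:dd-alternating-newton} to $g_M$, change basis via \eqref{eq:dd-change-two-families}, divide by $\DeltaSphere{p}^M$ on the punctured neighbourhood, and obtain holomorphy of the remainder, confluent nodes included, from the contour formula of Theorem~\ref{thm:dd-contour}; your explicit differentiation under the integral sign only spells out what the paper cites in one line.
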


\begin{proof}
Apply \eqref{eq:dd-alternating-newton} to $g_M$, rewrite each summand by
means of \eqref{eq:dd-change-two-families}, and divide throughout by $\DeltaSphere{p}(z)^M$, a legitimate operation
on the punctured neighbourhood, where that polynomial has no zero. The
resulting identity is \eqref{eq:dd-two-point-principal-part}, with the
remainder term of \eqref{eq:dd-alternating-newton} becoming exactly
$R_M(z)$; the latter is holomorphic near the nodes by
Theorem~\ref{thm:dd-contour}, confluences included. Finally, only the
summand with $n=M-1$ carries the power $\DeltaSphere{p}^{-1}$, and its
coefficient of $z$ is $B_{M-1}$, which is \eqref{eq:dd-B-coefficients} with
$n=M-1$.
\end{proof}

\begin{corollary}[Paired residue as a divided difference]
\label{cor:dd-paired-residue}
Under the hypotheses of
Proposition~\ref{prop:dd-two-point-principal-part},
\begin{equation}\label{eq:cx-sres-dd}
  \boxed{\;
  \PRes_{\{p,\bar p\}}h=g_M[p^{(M)},\bar p^{(M)}] .
  \;}
\end{equation}
\end{corollary}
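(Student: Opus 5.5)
The proof combines the contour formula of Theorem~\ref{thm:dd-contour} with the residue form of Proposition~\ref{prop:complex-common-regularization}; the two-point expansion of Proposition~\ref{prop:dd-two-point-principal-part} then gives a consistency check. Take $B$ to be the union of two small disjoint discs $B(p,\varepsilon)$ and $B(\bar p,\varepsilon)$. Choose $\varepsilon$ so small that $g_M$ is holomorphic on a neighbourhood of $\overline B$ and that $h$ has no pole in $\overline B$ other than $p,\bar p$. This $B$ is admissible, and no connectedness is needed, by Remark~\ref{rem:dd-disconnected}.

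Apply \eqref{eq:cx-hermite} with the $2M$ nodes $p^{(M)},\bar p^{(M)}$. Since $\prod_j(\zeta-x_j)=(\zeta-p)^M(\zeta-\bar p)^M=\DeltaSphere{p}(\zeta)^M$, this gives
\[
  g_M[p^{(M)},\bar p^{(M)}]
  =\frac{1}{2\pi\ci}\int_{\partial B}\dd\zeta\,\frac{g_M(\zeta)}{\DeltaSphere{p}(\zeta)^M}.
\]
On $\partial B$ one has $g_M/\DeltaSphere{p}^M=h$, because $\partial B$ avoids the nodes. The right-hand side is therefore exactly the contour expression \eqref{eq:complex-pair-contour-formula}, with $\gamma_p+\gamma_{\bar p}=\partial B$. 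By Proposition~\ref{prop:complex-common-regularization} it equals $\PRes_{\{p,\bar p\}}h$.

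An equivalent route is to apply \eqref{eq:dd-contour-residues} directly. The residues of $g_M/\DeltaSphere{p}^M=h$ at the two distinct nodes are $\Res_ph$ and $\Res_{\bar p}h$, and their sum is $\PRes_{\{p,\bar p\}}h$ by Definition~\ref{def:complex-paired-residue}.

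As a cross-check, and to connect with \eqref{eq:dd-balanced-coefficient}, compute the paired residue from \eqref{eq:dd-two-point-principal-part}. The remainder $R_M$ is holomorphic and contributes nothing. For $n<M-1$, each term $\DeltaSphere{p}^{n-M}(zB_n+C_n)$ is rational with poles only at $p,\bar p$ and decays at least like $|z|^{-3}$. The sum of its two residues is therefore minus its residue at infinity, which is zero; this is the argument of Remark~\ref{rem:complex-two-half-planes}. For $n=M-1$ the term is $(zB_{M-1}+C_{M-1})/\DeltaSphere{p}$. The constant part decays like $|z|^{-2}$ and contributes $0$. The part $z/((z-p)(z-\bar p))$ has residues $p/(p-\bar p)$ and $\bar p/(\bar p-p)$, which sum to $1$, so this term contributes $B_{M-1}$.

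There is no genuine obstacle. The only point needing care is that $M$ may exceed both pole orders, possibly unequally. Both routes are insensitive to this: the contour formula only uses that $g_M$ is holomorphic near the nodes, and the coefficients $B_n$ are defined for any admissible $M$.
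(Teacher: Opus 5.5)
Your argument is correct and is essentially the paper's proof: apply the Hermite contour formula \eqref{eq:cx-hermite} on the boundary of two small disjoint discs about $p$ and $\bar p$, cancel the kernel $(\zeta-p)^M(\zeta-\bar p)^M$ against $g_M=\DeltaSphere{p}^Mh$ to obtain $\frac{1}{2\pi\ci}\int_{\partial B}\dd\zeta\,h(\zeta)$, and evaluate this by the residue theorem. The paper cites Theorem~\ref{thm:complex-residue-flux} directly, whereas you pass through \eqref{eq:complex-pair-contour-formula} or \eqref{eq:dd-contour-residues}, both of which rest on that theorem; your cross-check via \eqref{eq:dd-two-point-principal-part} and the vanishing residue at infinity is also correct, and in fact gives an independent proof by identifying $\PRes_{\{p,\bar p\}}h$ with the coefficient $B_{M-1}$ of \eqref{eq:dd-balanced-coefficient}.
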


\begin{proof}
Let $B$ be the union of two disjoint discs, one about each node, small
enough to contain no other pole of $h$, and let $\Gamma=\partial B$. Since
$g_M=\DeltaSphere{p}^Mh$, the kernel of \eqref{eq:cx-hermite} with $M$
copies of each node cancels against $g_M$, leaving
\[
  g_M[p^{(M)},\bar p^{(M)}]
  =\frac{1}{2\pi\ci}\int_\Gamma
   \frac{\dd\zeta}{(\zeta-p)^M(\zeta-\bar p)^M}\,g_M(\zeta)
  =\frac{1}{2\pi\ci}\int_\Gamma\dd\zeta\,h(\zeta) ,
\]
and the last integral is $\Res_ph+\Res_{\bar p}h$ by
Theorem~\ref{thm:complex-residue-flux}.
\end{proof}

\begin{remark}
\label{rem:dd-nonminimal}
As in the previous section, the exponent in \eqref{eq:cx-sres-dd} need not be minimal: the computation
just made returns $\int_\Gamma\dd\zeta\,h(\zeta)$ whatever the exponent,
since the kernel always cancels against the regularization, so that
\[
  g_{M'}[p^{(M')},\bar p^{(M')}]=g_M[p^{(M)},\bar p^{(M)}]
  \qquad\text{for every }M'\geq M .
\]
Note also that the argument divides by $\DeltaSphere{p}$ only away from its
two zeros, and never by a value of $g_M$; it therefore remains valid when
$g_M(p)=0$ or $g_M(\bar p)=0$, in particular when the pole order drops at
one of the two nodes.
\end{remark}

\subsection{Conjugate-node differential operators}
\label{subsec:dd-conjugate-operators}

The contour formula expresses a balanced divided difference through the jets
of $g$ at the two nodes. We now show that the same quantity is produced by
alternating two elementary operations: taking a difference between conjugate
nodes, and differentiating with respect to the moving one.

Throughout this subsection $W$ is assumed to be invariant under complex
conjugation. For $F\colon W\to V$ and $z\in W\setminus\R$, define the
conjugate-node operator
\begin{equation}\label{eq:dd-conjugate-operator}
  (\mathcal SF)(z):=(z-\bar z)^{-1}\bigl(F(z)-F(\bar z)\bigr).
\end{equation}
 On holomorphic
functions $\mathcal S$ is nothing but the first divided difference at the
conjugate pair,
\begin{equation}\label{eq:dd-S-as-divided-difference}
  (\mathcal Sg)(z)=g[z,\bar z] .
\end{equation}

Its output, however, depends on $z$ and $\bar z$ alike and is no longer
holomorphic, so that differentiating it calls for the Wirtinger operator
conjugate to the one used in Section~\ref{sec:complex-model},
\begin{equation}\label{eq:dd-wirtinger}
  \partial_z:=\frac12\left(\frac{\partial}{\partial x}
                           -\ci\frac{\partial}{\partial y}\right),
  \qquad z=x+\ci y ,
\end{equation}
which reduces to the ordinary complex derivative on holomorphic functions
but makes sense for every real-differentiable one. We set
\begin{equation}\label{eq:dd-D-operator}
  \mathcal D:=\partial_z\circ\mathcal S ,
\end{equation}
compositions being read from right to left. The theorem below identifies the
alternating compositions of $\mathcal S$ and $\partial_z$ with the balanced
divided differences of Subsection~\ref{subsec:dd-alternating}; it rests on
the differentiation rule of \cite[Proposition~22]{deBoor}, and is the
complex one-variable model for the spherical-coefficient formula of
\cite[Theorem~4.1]{AltavillaSphericalCoefficients}.

\begin{theorem}[Operator form of balanced divided differences]
\label{prop:cx-operators}
Let $g\colon W\to V$ be holomorphic. For every $z\in W\setminus\R$ and every
$n\geq0$,
\begin{align}
  (\mathcal D^ng)(z)
  &=n!\,g[z^{(n+1)},\bar z^{(n)}], \label{eq:sres-operator-even}\\
  (\mathcal S\mathcal D^ng)(z)
  &=n!\,g[z^{(n+1)},\bar z^{(n+1)}]. \label{eq:sres-operator-odd}
\end{align}
As usual, the block $\bar z^{(0)}$ in the first formula is omitted.
\end{theorem}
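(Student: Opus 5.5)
We argue by induction on \(n\), proving \eqref{eq:sres-operator-even} and \eqref{eq:sres-operator-odd} in alternation. The scheme is: \eqref{eq:sres-operator-even} for \(n\) gives \eqref{eq:sres-operator-odd} for \(n\) by one application of \(\mathcal S\). Then \eqref{eq:sres-operator-odd} for \(n\) gives \eqref{eq:sres-operator-even} for \(n+1\) by one application of \(\partial_z\). The base case is \(\mathcal D^0g=g=g[z]\), which is \eqref{eq:dd-order-zero}.

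The key device is to introduce, for fixed integers \(a,b\geq0\) not both zero, the two-variable function
\[
  \Phi_{a,b}(x,y):=g[x^{(a)},y^{(b)}],
\]
defined via the contour formula \eqref{eq:cx-hermite} for \(x,y\) near \(z,\bar z\). By Step~1 of the proof of Theorem~\ref{thm:dd-contour} it is holomorphic in each variable separately, hence jointly holomorphic by Hartogs (or directly, by differentiating under the integral sign). The functions produced by the operators are restrictions to the anti-diagonal \((x,y)=(z,\bar z)\). For such a restriction \(F(z)=\Phi(z,\bar z)\), the chain rule for Wirtinger derivatives gives
\[
  \partial_zF(z)=(\partial_x\Phi)(z,\bar z),
\]
because \(\bar z\) is antiholomorphic and so is annihilated by \(\partial_z\). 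Combining this with Proposition~\ref{prop:dd-node-derivative}, which is applicable because the contour is fixed, yields \(\partial_x\Phi_{a,b}=a\,\Phi_{a+1,b}\).

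\emph{Step \(\mathcal S\).} Assume \((\mathcal D^ng)(z)=n!\,\Phi_{n+1,n}(z,\bar z)\). We need the symmetric statement at the conjugate point, namely \((\mathcal D^ng)(\bar z)=n!\,g[\bar z^{(n+1)},z^{(n)}]\). This holds because the formula \((\mathcal D^ng)(w)=n!\,g[w^{(n+1)},\bar w^{(n)}]\) is valid at every \(w\in W\setminus\R\), and \(W\) is conjugation invariant; it applies in particular at \(w=\bar z\). Then
\[
  \mathcal S\mathcal D^ng(z)=n!\,(z-\bar z)^{-1}\bigl(g[z^{(n+1)},\bar z^{(n)}]-g[z^{(n)},\bar z^{(n+1)}]\bigr).
\]
By symmetry of divided differences and the recursion \eqref{eq:dd-recursion}, applied with first node \(\bar z\) and last node \(z\), this equals \(n!\,g[z^{(n+1)},\bar z^{(n+1)}]\). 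The recursion at confluent nodes is justified through the contour representation: the kernel identity \eqref{eq:dd-kernel-recursion} holds whenever \(x_0\neq x_n\), even with other repetitions present, and here \(z\neq\bar z\).

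\emph{Step \(\partial_z\).} Assume \(\mathcal S\mathcal D^ng(z)=n!\,\Phi_{n+1,n+1}(z,\bar z)\). The Wirtinger chain rule and Proposition~\ref{prop:dd-node-derivative} give
\[
  \mathcal D^{n+1}g(z)=n!\,(n+1)\,\Phi_{n+2,n+1}(z,\bar z)=(n+1)!\,g[z^{(n+2)},\bar z^{(n+1)}],
\]
which is \eqref{eq:sres-operator-even} at \(n+1\).

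The main point of care is this chain-rule step, i.e.\ justifying that \(\partial_z\) of the restriction equals the \(x\)-partial of a jointly smooth function. We handle it by writing the contour integral with \(x\) and \(y\) varying independently in small discs, with \(\Gamma\) fixed. The integrand is then smooth in \((x,y)\) and holomorphic in each variable, so it is real-differentiable with \(\partial_{\bar x}\Phi=\partial_{\bar y}\Phi=0\). Consequently \(\partial_z[\Phi(z,\bar z)]=\Phi_x\cdot 1+\Phi_y\cdot\partial_z\bar z=\Phi_x\), since \(\partial_z\bar z=0\).
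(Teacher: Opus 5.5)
Your proposal is correct and follows the paper's proof essentially step by step. The structure is the same alternating induction, the $\mathcal S$-step uses the same balanced recursion (recursion with first node $\bar z$ and last node $z$, plus symmetry), and the $\partial_z$-step uses the same two-variable divided difference (your $\Phi_{n+1,n+1}$ is the paper's $\Theta_n$), differentiated through the Wirtinger chain rule and Proposition~\ref{prop:dd-node-derivative}; your added remarks on joint differentiability and on the validity of the recursion at confluent nodes via the kernel identity \eqref{eq:dd-kernel-recursion} only make explicit points the paper passes over quickly.
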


\begin{proof}
The two identities are proved together, by an induction that alternates
between them: \eqref{eq:sres-operator-even} for $n$ yields
\eqref{eq:sres-operator-odd} for $n$ by applying $\mathcal S$, and the
latter yields \eqref{eq:sres-operator-even} for $n+1$ by applying
$\partial_z$. Each of the two steps has its own difficulty, namely that
$\mathcal S$ evaluates at $\bar z$, thereby interchanging the multiplicities
of the two nodes, and that $\partial_z$ must treat $z$ and $\bar z$ as
independent variables.

\emph{A recursion for balanced blocks.} Let $z\neq w$ be two nodes and
$n\geq0$. Arrange the $2n+2$ nodes $z^{(n+1)},w^{(n+1)}$ so that the first
is $w$ and the last is $z$: deleting the first leaves $n+1$ copies of $z$
and $n$ of $w$, deleting the last leaves $n$ copies of $z$ and $n+1$ of $w$,
so that the recursion \eqref{eq:dd-recursion}, combined with the symmetry
granted by Theorem~\ref{thm:dd-contour}, reads
\begin{equation}\label{eq:dd-balanced-recursion}
  \frac{g[z^{(n+1)},w^{(n)}]-g[z^{(n)},w^{(n+1)}]}{z-w}
  =g[z^{(n+1)},w^{(n+1)}] .
\end{equation}

\emph{Base of the induction.} For $n=0$ the first identity is trivial, since
$(\mathcal D^0g)(z)=g(z)=g[z]$.

\emph{From \eqref{eq:sres-operator-even} to \eqref{eq:sres-operator-odd}.}
Assume \eqref{eq:sres-operator-even} for some $n\geq0$. Evaluating it at
$\bar z$ interchanges the two nodes, and symmetry rewrites the result as
$g[z^{(n)},\bar z^{(n+1)}]$; hence
\[
  (\mathcal S\mathcal D^ng)(z)
  =\frac{(\mathcal D^ng)(z)-(\mathcal D^ng)(\bar z)}{z-\bar z}
  =\frac{n!}{z-\bar z}
   \Bigl(g[z^{(n+1)},\bar z^{(n)}]-g[z^{(n)},\bar z^{(n+1)}]\Bigr) ,
\]
which is $n!\,g[z^{(n+1)},\bar z^{(n+1)}]$ by
\eqref{eq:dd-balanced-recursion} with $w=\bar z$. This is
\eqref{eq:sres-operator-odd} for $n$.

\emph{From \eqref{eq:sres-operator-odd} to \eqref{eq:sres-operator-even}.}
Differentiating the identity just proved requires separating the two
conjugate nodes, so introduce the function of two independent variables
\[
  \Theta_n(z,w):=g[z^{(n+1)},w^{(n+1)}] .
\]
By Theorem~\ref{thm:dd-contour}, $\Theta_n$ is holomorphic in each variable
separately, confluences included, and continuous in the pair; in particular
$\partial_{\bar z}\Theta_n=\partial_{\bar w}\Theta_n=0$. For its diagonal
restriction $z\mapsto\Theta_n(z,\bar z)$, which is the function appearing in
\eqref{eq:sres-operator-odd}, the Wirtinger chain rule therefore gives
\begin{equation}\label{eq:dd-wirtinger-diagonal}
  \partial_z\bigl(\Theta_n(z,\bar z)\bigr)
  =(\partial_1\Theta_n)(z,\bar z)\,\partial_zz
   +(\partial_2\Theta_n)(z,\bar z)\,\partial_z\bar z
  =(\partial_1\Theta_n)(z,\bar z) ,
\end{equation}
since $\partial_zz=1$ and $\partial_z\bar z=0$; it is the holomorphy of
$\Theta_n$ in the second variable that removes the second summand
altogether, rather than replacing it by a $\partial_{\bar w}$ term. Now
Proposition~\ref{prop:dd-node-derivative}, applied to the block of $n+1$
copies of the first node, gives
$\partial_1\Theta_n(z,w)=(n+1)g[z^{(n+2)},w^{(n+1)}]$, whence
\[
  (\mathcal D^{n+1}g)(z)
  =\partial_z(\mathcal S\mathcal D^ng)(z)
  =n!\,(n+1)\,g[z^{(n+2)},\bar z^{(n+1)}]
  =(n+1)!\,g[z^{(n+2)},\bar z^{(n+1)}] ,
\]
which is \eqref{eq:sres-operator-even} with $n+1$ in place of $n$. The
induction is complete.
\end{proof}

\begin{remark}
\label{rem:dd-first-operator-identities}
The first few cases make the alternating pattern visible:
\[
 \begin{aligned}
  g(z)&=g[z], &\qquad
  \mathcal S\mathcal Dg(z)&=g[z,z,\bar z,\bar z],\\
  \mathcal Sg(z)&=g[z,\bar z], &\qquad
  \mathcal D^2g(z)&=2!\,g[z,z,z,\bar z,\bar z].\\
  \mathcal Dg(z)&=g[z,z,\bar z],
 \end{aligned}
\]
The multiplicities of the two nodes run through
$(1,0),(1,1),(2,1),(2,2),(3,2),\ldots$: each $\mathcal S$ balances them,
each $\partial_z$ adds one copy of the moving node and, with it, the
integer factor of Proposition~\ref{prop:dd-node-derivative}. The factorials
therefore accumulate only at every other step.
\end{remark}

\begin{remark}[Extension to a real node]\label{rem:dd-real-extension}
The denominator $z-\bar z$ in \eqref{eq:dd-conjugate-operator} vanishes on
$\R$, but the singularity is removable for functions arising from
holomorphic data. Indeed, the right-hand sides of
\eqref{eq:sres-operator-even} and \eqref{eq:sres-operator-odd} are divided
differences, hence continuous at confluent nodes by
Theorem~\ref{thm:dd-contour}; so if $g$ is holomorphic near a real point
$x$, letting $z\to x$ makes all their nodes coalesce at $x$, and
\eqref{eq:dd-repeated-block} applies to the resulting blocks of $2n+1$ and
$2n+2$ equal nodes:
\[
  \lim_{z\to x}(\mathcal D^ng)(z)=\frac{n!}{(2n)!}g^{(2n)}(x),
  \qquad
  \lim_{z\to x}(\mathcal S\mathcal D^ng)(z)
  =\frac{n!}{(2n+1)!}g^{(2n+1)}(x) .
\]
The non-real case is the one needed for conjugate polar nodes, but these
limits will matter on circular domains that do meet the real axis.
\end{remark}

Combining the operator theorem with
Corollary~\ref{cor:dd-paired-residue} gives the precise complex formula that
will be transferred to slice regular functions.

\begin{corollary}
\label{cor:dd-residue-operator}
Let $h,p,\bar p,M$ and $g_M=\DeltaSphere{p}^Mh$ be as in
Proposition~\ref{prop:dd-two-point-principal-part}. Then
\begin{equation}\label{eq:dd-paired-residue-operator}
  \boxed{\;
  \PRes_{\{p,\bar p\}}h
  =g_M[p^{(M)},\bar p^{(M)}]
  =\frac{[(\mathcal S\mathcal D^{M-1})g_M](p)}{(M-1)!} ,
  \;}
\end{equation}
and the same value is obtained by evaluating the differential expression at
$\bar p$ instead.
\end{corollary}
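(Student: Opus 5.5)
The first equality in \eqref{eq:dd-paired-residue-operator} is exactly Corollary~\ref{cor:dd-paired-residue}, so only the second equality and the final claim about evaluating at $\bar p$ need an argument. Throughout, $g_M$ is holomorphic on a conjugation-invariant neighbourhood $W$ of $\{p,\bar p\}$ with $p\notin\R$. If the neighbourhood on which $h$ is meromorphic is not conjugation invariant, I first shrink it to the union of two small conjugate discs. This is harmless, since by Remark~\ref{rem:dd-disconnected} every divided difference depends only on the germs of $g_M$ at the nodes.

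For the second equality, I apply Theorem~\ref{prop:cx-operators}, identity \eqref{eq:sres-operator-odd}, with $n=M-1$ and $z=p$. Since $\bar z=\bar p$, this gives
\[
  (\mathcal S\mathcal D^{M-1}g_M)(p)=(M-1)!\,g_M[p^{(M)},\bar p^{(M)}] .
\]
Dividing by $(M-1)!$ yields the claim. The operator $\mathcal D^{M-1}$ involves Wirtinger derivatives in $z$, so it must be evaluated as a function on a neighbourhood of $p$ before setting $z=p$. The theorem is stated pointwise for every $z\in W\setminus\R$, and $p\in W\setminus\R$, so nothing further is needed.

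For evaluation at $\bar p$, I apply the same identity at $z=\bar p$. Its right-hand side becomes $(M-1)!\,g_M[\bar p^{(M)},p^{(M)}]$. By the symmetry of divided differences in their nodes, granted by Theorem~\ref{thm:dd-contour}, this equals $(M-1)!\,g_M[p^{(M)},\bar p^{(M)}]$, which gives the same value.

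The only point requiring care is that $W$ contain both nodes together with neighbourhoods of each, so that the operators are defined near $p$ and near $\bar p$. This is ensured by the shrinking step above, so I expect no genuine obstacle. By Remark~\ref{rem:dd-nonminimal}, the value is moreover independent of the choice of $M$ above the pole orders.
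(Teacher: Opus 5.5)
Your proof is correct and follows the paper's argument exactly: the first equality is Corollary~\ref{cor:dd-paired-residue}, the second is \eqref{eq:sres-operator-odd} with $n=M-1$ at $z=p$ divided by $(M-1)!$, and the value at $\bar p$follows from the symmetry of divided differences. Shrinking to a conjugation-invariant union of two conjugate discs, so that the standing hypothesis of Subsection~\ref{subsec:dd-conjugate-operators} holds, is a harmless piece of extra care that the paper leaves implicit.
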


\begin{proof}
The first equality is \eqref{eq:cx-sres-dd}, and the second is
\eqref{eq:sres-operator-odd} with $n=M-1$ and $z=p$, divided by $(M-1)!$.
Evaluating at $\bar p$ turns the divided difference into
$g_M[\bar p^{(M)},p^{(M)}]$, which has the same value by symmetry.
\end{proof}

\begin{remark}
\label{rem:dd-representative-invariance}
Corollary~\ref{cor:dd-residue-operator} proves invariance under exchanging
the two nodes of one conjugate pair. In the
quaternionic setting, replacing $p$ by another point of the same sphere
changes the complex slice, and with it the complex structure, so that
independence of that wider choice does not follow from the symmetry of a
two-node divided difference. It will be proved in
Section~\ref{sec:spherical-residues}, by identifying every slice computation
with one and the same quaternionic spherical Laurent coefficient.
\end{remark}

\begin{example}[The unequal-order model revisited]
\label{ex:dd-unequal-orders}
Return to the function of Example~\ref{ex:complex-unequal-orders},
\[
  h(z)=\frac{z^2}{(z-\ci)^2(z+\ci)} ,
\]
whose conjugate pair is $p=\ci$, $\bar p=-\ci$. A common regularizing
exponent is $M=2$, and
\[
  g_2(z)=(z^2+1)^2h(z)=z^2(z+\ci)=z^3+\ci z^2 ,
\]
so that the balanced divided difference $g_2[\ci^{(2)},(-\ci)^{(2)}]$ is one
of order three. Now the divided difference of order $N$ of a polynomial of degree $N$ equals
its leading coefficient, whatever the nodes. Indeed, by
Corollary~\ref{cor:dd-hermite-jets} the Hermite interpolant on $N+1$ nodes
is the polynomial itself, so that
\eqref{eq:sres-newton} has vanishing remainder and expresses it as
$\sum_{j=0}^{N}P_j(z)\,g[a_0,\ldots,a_j]$; of the Newton products only the
last, monic of degree $N$, has degree $N$, so the coefficient of $z^N$ is
$g[a_0,\ldots,a_N]$. Since $g_2$ is monic of degree three,
\begin{equation}\label{eq:dd-unequal-balanced}
  g_2[\ci^{(2)},(-\ci)^{(2)}]=1 ,
\end{equation}
in agreement with the paired residue
\eqref{eq:complex-unequal-paired-residue}.

The differential formula gives the same value. For non-real $z$,
\[
  (\mathcal Sg_2)(z)
  =\frac{z^3-\bar z^3+\ci(z^2-\bar z^2)}{z-\bar z}
  =z^2+z\bar z+\bar z^2+\ci(z+\bar z) ,
\]
and, treating $z$ and $\bar z$ as independent variables,
\[
  (\mathcal Dg_2)(z)=\partial_z(\mathcal Sg_2)(z)=2z+\bar z+\ci .
\]
Applying $\mathcal S$ once more means evaluating this last expression at
$\bar z$, that is, interchanging $z$ and $\bar z$ in it rather than
conjugating its value:
\[
  (\mathcal S\mathcal Dg_2)(z)
  =\frac{(2z+\bar z+\ci)-(2\bar z+z+\ci)}{z-\bar z}=1 ,
\]
identically in $z$, so that in particular
$[(\mathcal S\mathcal D)g_2](\pm\ci)=1$ as
\eqref{eq:dd-paired-residue-operator} predicts. Observe that
$g_2(-\ci)=0$ while the differential expression evaluated at $-\ci$ returns
the nonzero paired residue: the computation uses differences and derivatives
of $g_2$, and at no point divides by one of its values.

The example also illustrates Remark~\ref{rem:dd-nonminimal}. The nonminimal
exponent $M'=3$ gives $g_3(z)=(z^2+1)g_2(z)$, again monic, of degree five,
whence by the same argument
\[
  g_3[\ci^{(3)},(-\ci)^{(3)}]=1 ,
  \qquad
  \frac{[(\mathcal S\mathcal D^2)g_3](\pm\ci)}{2!}=1 .
\]
\end{example}

\section{Preliminaries on slice regular functions}
\label{sec:slice-preliminaries}

This section collects the part of slice regular function theory used in the
rest of the paper, in the form in which it will be used. Three requirements
shape the presentation: the language must work equally well on circular sets
meeting the real axis and on circular sets without real points; quaternionic
statements must reduce to vector-valued complex statements on one full slice;
and the differential operators must be the ones that turn the
divided-difference formula of Section~\ref{sec:divided-differences} into an
intrinsic quaternionic identity.

The stem-function approach meets all three, and we adopt it from the outset.
The basic construction is in \cite{GhiloniPerotti2011,GPS-algebra} and the
theory of singularities in \cite{GPSSingularities}; for the classical theory
on slice domains we refer to
\cite{GentiliStruppa2007,GentiliStoppatoStruppa}, and for the phenomena
specific to domains without real points to \cite{AltavillaNoReal}. Throughout
the paper, functions are \emph{left} slice regular, quaternionic coefficients
are written on the right, and complex scalar kernels and differentials
therefore act from the left.

\subsection{Quaternions, slices, and circular sets}
\label{subsec:slice-circular-sets}

Our algebraic conventions are standard; see
\cite[Chapter~1]{GentiliStoppatoStruppa} or
\cite[Section~2.1]{Sudbery}. The quaternion algebra is the
four-dimensional real vector space
\[
  \Hh=\R+\R\qi+\R\qj+\R\qk ,
\]
whose multiplication is determined by \(\qi^2=\qj^2=\qk^2=-1\), by
\(\qi\qj=\qk\), \(\qj\qk=\qi\), \(\qk\qi=\qj\), and by the sign change that
follows when two distinct imaginary units are interchanged, as in
\(\qj\qi=-\qk\). For \(q=x_0+x_1\qi+x_2\qj+x_3\qk\) we write
\[
  \bar q=x_0-x_1\qi-x_2\qj-x_3\qk,
  \qquad
  \req q=\frac{q+\bar q}{2}=x_0,
  \qquad
  \imq q=\frac{q-\bar q}{2},
\]
and \(|q|^2=q\bar q=\bar qq=\sum_{j=0}^3x_j^2\), so that every nonzero
quaternion is invertible, with \(q^{-1}=\bar q/|q|^2\). Conjugation reverses
the order of a product: \(\overline{pq}=\bar q\bar p\).

The purely imaginary unit quaternions form the two-sphere
\[
  \Sphere=\{I\in\Hh:I^2=-1\}\subset \imq\Hh\simeq\R^3 ,
\]
 and each \(I\in\Sphere\)
spans a copy of the complex plane
\[
  \C_I=\R+I\R ,
\]
in which multiplication is commutative and is identified with complex
multiplication through \(x+\ci r\mapsto x+Ir\). Here and throughout, \(\ci\)
is the ordinary complex imaginary unit. We underline the difference between the complex unit \(\ci\) with the quaternionic one \(\qi\).

A non-real quaternion can be written in two ways. It has a unique expression
\begin{equation}\label{eq:slice-positive-coordinates}
  q=x+Ir,
  \qquad
  x=\req q,\quad r=|\imq q|>0,\quad I=\frac{\imq q}{|\imq q|} ,
\end{equation}
but once a slice \(\C_I\) has been fixed it is more convenient to allow
\emph{signed slice coordinates}
\begin{equation}\label{eq:slice-signed-coordinates}
  q=x+Ir,\qquad x,r\in\R ,
\end{equation}
where the sign matters: the pairs \((I,r)\) and \((-I,-r)\) describe the
same quaternion, and quaternionic conjugation sends \(x+Ir\) to \(x-Ir\).

For \(x\in\R\) and \(r\geq0\), the set
\begin{equation}\label{eq:slice-conjugacy-sphere}
  x+\Sphere r:=\{x+Jr:J\in\Sphere\}
\end{equation}
consists of the quaternions with real part \(x\) and imaginary part of
modulus \(r\); it is a two-sphere in the affine space \(x+\imq\Hh\) when
\(r>0\), and the single real point \(x\) when \(r=0\). We call it the
\emph{conjugacy sphere} through any of its points.

Circularization is the standard construction of
\cite[Section~3.2, Definition~5]{GhiloniPerotti2011} (see also
\cite[Section~2]{GPSSingularities}). Let \(E\subset\C\) be open and invariant
under complex conjugation. Its \emph{circularization} is
\begin{equation}\label{eq:slice-circularization}
  \Omega_E:=\{x+Ir:x+\ci r\in E,\ I\in\Sphere\} ,
\end{equation}
the variable \(r\) being signed; since \(E\) is conjugation invariant, one
may equally restrict to \(r\geq0\),
\begin{equation}\label{eq:slice-circularization-positive}
  \Omega_E=\{x+Ir:x+\ci r\in E,\ r\geq0,\ I\in\Sphere\} .
\end{equation}
A subset of \(\Hh\) of this form is called \emph{circular}; equivalently, it
contains the whole conjugacy sphere through each of its points.

\begin{proposition}
\label{prop:slice-circularization-properties}
Let \(E\subset\C\) be open and invariant under conjugation. Then
\(\Omega_E\) is open in \(\Hh\) and, for every \(I\in\Sphere\),
\begin{equation}\label{eq:slice-full-section}
  \Omega_E\cap\C_I=E_I:=\{x+Ir:x+\ci r\in E\} ,
\end{equation}
where \(r\) ranges over all of \(\R\), so that \(E_I\) is the \emph{full}
slice section and not merely its half with \(r>0\). Moreover a conjugacy sphere
\(x+\Sphere r\) with \(r>0\) meets \(\C_I\) exactly at the two conjugate
points \(x\pm Ir\).
\end{proposition}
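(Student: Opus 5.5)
The plan is to deduce all three claims from one continuous map. Let \(\Phi\colon \C\times\Sphere\to\Hh\) be \(\Phi(x+\ci r,J)=x+Jr\), which is continuous and surjective.

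\emph{Openness.} I will check that \(\Omega_E\) is open directly. Fix \(q_0\in\Omega_E\).

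If \(q_0\) is non-real, write \(q_0=x_0+I_0r_0\) with \(r_0>0\) as in \eqref{eq:slice-positive-coordinates}. The map \(q\mapsto(\req q,|\imq q|,\imq q/|\imq q|)\) is continuous on \(\Hh\setminus\R\). Hence every \(q\) near \(q_0\) has positive coordinates \((x,r)\) near \((x_0,r_0)\). Since \(E\) is open, \(x+\ci r\in E\), so \(q\in\Omega_E\) by \eqref{eq:slice-circularization-positive}.

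If \(q_0=x_0\) is real, then \(x_0\in E\), so some disc \(B(x_0,\varepsilon)\) lies in \(E\). For \(q\) with \(|q-x_0|<\varepsilon\) write \(q=x+Jr\) with \(r=|\imq q|\geq 0\). Then \(|x+\ci r-x_0|=|q-x_0|<\varepsilon\), so \(x+\ci r\in E\) and \(q\in\Omega_E\). In fact this Euclidean-norm identity \(|x+Jr-x_0|=|x+\ci r-x_0|\) disposes of both cases at once: it gives \(\Omega_E\supset\Omega_{B(x_0,\varepsilon)}\) around any real point.

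\emph{The full slice section.} The inclusion \(E_I\subset\Omega_E\cap\C_I\) is immediate from \eqref{eq:slice-circularization} with \(J=I\) and signed \(r\).

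For the converse, take \(q\in\Omega_E\cap\C_I\). Then \(q=x+Jr\) with \(x+\ci r\in E\), and also \(q=x'+Ir'\) for some \(x',r'\in\R\). Comparing real parts gives \(x'=x\). Comparing imaginary parts gives \(Jr=Ir'\), so \(|r'|=|r|\).

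If \(r=0\), then \(r'=0\) and there is nothing to prove. Otherwise \(J=\pm I\) and \(r'=\pm r\). So \(x+\ci r'\) equals either \(x+\ci r\) or \(\overline{x+\ci r}\). Both lie in \(E\) because \(E\) is invariant under conjugation. Hence \(q\in E_I\).

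This is the only step where conjugation invariance is essential. It is also exactly why the section is the \emph{full} one, with \(r\) ranging over all of \(\R\).

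\emph{The sphere meets \(\C_I\) in two points.} A point \(x+Jr\) with \(r>0\) lies in \(\C_I\) exactly when \(J\in\C_I\cap\Sphere\). The elements \(a+bI\) of \(\C_I\) square to \(-1\) only for \(a=0\) and \(b=\pm1\). So \(J=\pm I\), and the intersection is \(\{x\pm Ir\}\). These two points are distinct because \(r>0\).

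I do not expect a real obstacle here. The only care needed is the real-axis case of openness, where the spherical coordinates degenerate, and the norm identity above handles it without using them.
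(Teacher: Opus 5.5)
Your proposal is correct and follows essentially the paper's proof. The full-section claim rests on the same two signed representations $(J,r)$ and $(-J,-r)$ combined with conjugation invariance of $E$, and the two-point claim on the same fact that $Jr\in\C_I$ with $r>0$ forces $J=\pm I$, while your pointwise openness check is the paper's identity $\Omega_E=\pi^{-1}(E)$, with $\pi(q)=\req q+\ci|\imq q|$ continuous, unpacked case by case; note only that the identity $|x+Jr-x_0|=|x+\ci r-x_0|$ is available at real centres alone, so at non-real $q_0$ it is your separate continuity argument (equivalently the bound $|\pi(q)-\pi(q_0)|\leq|q-q_0|$) that does the work.
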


\begin{proof}
Openness follows from \(\Omega_E=\pi^{-1}(E)\), where
\(\pi(q):=\req q+\ci|\imq q|\) is continuous and the equality holds by
\eqref{eq:slice-circularization-positive} together with the conjugation
invariance of \(E\). For \eqref{eq:slice-full-section}, one inclusion is the
definition; conversely, a point \(q=x+Ir\in\Omega_E\cap\C_I\) with signed
\(r\) is represented by the pair \((I,r)\) or by \((-I,-r)\), and in either
case \(x+\ci r\in E\) because \(E\) is conjugation invariant, so that
\(q\in E_I\). The last assertion holds because \(x+Jr\in\C_I\) forces
\(Jr\in I\R\), hence \(J=\pm I\) when \(r>0\).
\end{proof}

Among connected circular open sets, two types are standard. A
\emph{slice domain} is one which meets \(\R\) and whose intersection with
every slice is a standard complex domain; a \emph{product domain} is one disjoint from \(\R\).
The planar profile of a product domain splits as
\begin{equation}\label{eq:slice-product-profile}
  E=E^+\sqcup E^-,
  \qquad
  E^+\subset\{z\in\C:\imq z>0\},
  \qquad
  E^-=\{\bar z:z\in E^+\} ,
\end{equation}
with \(E^+\) connected, and \(\Omega_E\) is then homeomorphic to
\(E^+\times\Sphere\). No boundary regularity is assumed at this stage.

\begin{remark}[Full slices on a product domain]
\label{rem:slice-two-components}
The splitting \eqref{eq:slice-product-profile} has a consequence that will
matter repeatedly. On a product domain the full slice
\(E_I=\Omega_E\cap\C_I\) is disconnected: it has one component over \(E^+\)
and one over \(E^-\), and quaternionic conjugation exchanges them. Data given
on one component alone do not determine a slice function, so that statements
which on a slice domain involve a single connected region here require both
conjugate components of one full slice. This is one of the reasons for the care taken
in Section~\ref{sec:divided-differences} to allow disconnected domains, and
it is why a conjugate pair, rather than a point, is the basic local object of
this paper.
\end{remark}

\subsection{Stem functions and slice regularity}
\label{subsec:slice-stems}

The complexified quaternion algebra is defined as
\[
  \Hh_\C:=\Hh\otimes_\R\C=\{a+\ci b:a,b\in\Hh\} ,
\]
where the imaginary unit \(\ci\) is central, that is, it commutes with every
quaternion. A bar denotes conjugation with respect to this central complex
structure,
\begin{equation}\label{eq:slice-central-conjugation}
  \overline{a+\ci b}=a-\ci b ,
\end{equation}
an operation not to be confused with quaternionic conjugation, whose
extension to \(\Hh_\C\) is given in
Subsection~\ref{subsec:slice-product-conjugation}.

The next definitions are those of
\cite[Definitions~4--5]{GhiloniPerotti2011}.

\begin{definition}[Stem and slice functions]
\label{def:slice-stem-function}
Let \(E\subset\C\) be open and invariant under conjugation. A map
\[
  F=A+\ci B:E\longrightarrow\Hh_\C,
  \qquad A,B:E\longrightarrow\Hh ,
\]
is a \emph{stem function} if
\begin{equation}\label{eq:slice-stem-symmetry}
  F(\bar z)=\overline{F(z)} ,
\end{equation}
or equivalently if
\begin{equation}\label{eq:slice-stem-parity}
  A(\bar z)=A(z),\qquad B(\bar z)=-B(z) .
\end{equation}
The \emph{slice function induced by \(F\)} is the map
\(f=\mathcal I(F):\Omega_E\to\Hh\) given by
\begin{equation}\label{eq:slice-induction}
  f(x+Ir)=A(x+\ci r)+IB(x+\ci r) .
\end{equation}
\end{definition}

The parity relations \eqref{eq:slice-stem-parity} are exactly what makes
\eqref{eq:slice-induction} well posed: the only ambiguity in signed
coordinates is \(x+Ir=x+(-I)(-r)\), and the second representation gives
\(A(x-\ci r)-IB(x-\ci r)=A(x+\ci r)+IB(x+\ci r)\), the same value as the
first; on the real axis \(B\) is odd, hence \(B(x)=0\), and the value
\(A(x)\) does not depend on \(I\).

The construction can be reversed on each conjugacy sphere. The reconstruction
identities and the representation formula below are classical (a detailed proof can be found in 
\cite[Propositions~5--6]{GhiloniPerotti2011}).

\begin{proposition}[Reconstruction and representation formula]
\label{prop:slice-reconstruction}
Let \(f=\mathcal I(A+\ci B)\), let \(I\in\Sphere\), and let \(x+\ci r\in E\).
Then
\begin{align}
  A(x+\ci r)&=\frac{f(x+Ir)+f(x-Ir)}{2},
  \label{eq:slice-reconstruct-A}\\
  B(x+\ci r)&=-\frac{I}{2}\bigl(f(x+Ir)-f(x-Ir)\bigr),
  \label{eq:slice-reconstruct-B}
\end{align}
and consequently, for every \(J\in\Sphere\),
\begin{equation}\label{eq:slice-representation-formula}
  f(x+Jr)=\frac{1-JI}{2}f(x+Ir)+\frac{1+JI}{2}f(x-Ir) .
\end{equation}
In particular a slice function is determined by its restriction to one full
slice.
\end{proposition}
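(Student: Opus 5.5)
The plan is to work entirely from the definition \eqref{eq:slice-induction} and the parity relations \eqref{eq:slice-stem-parity}, evaluating $f$ at the two conjugate points $x\pm Ir$ of the slice $\C_I$. Write $z=x+\ci r\in E$. Using the signed coordinates of \eqref{eq:slice-signed-coordinates}, the point $x-Ir$ is represented by the pair $(I,-r)$, and therefore corresponds to $\bar z$. Since $E$ is conjugation invariant, $\bar z\in E$, so both evaluations make sense. Definition~\ref{def:slice-stem-function} then gives
\[
  f(x+Ir)=A(z)+IB(z),\qquad f(x-Ir)=A(\bar z)+IB(\bar z)=A(z)-IB(z),
\]
the second equality by \eqref{eq:slice-stem-parity}.

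Solving this linear system in $A(z)$ and $B(z)$ proves the first two identities. Adding the two equations gives \eqref{eq:slice-reconstruct-A}. Subtracting them gives $f(x+Ir)-f(x-Ir)=2IB(z)$, and multiplying on the left by $-I/2$ yields \eqref{eq:slice-reconstruct-B}, because $-I\cdot I=1$. The only point to watch is the order of the factors: $I$ multiplies $B$ on the left in \eqref{eq:slice-induction}, so it must be cancelled on the left.

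For the representation formula, take any $J\in\Sphere$. Definition~\ref{def:slice-stem-function} gives $f(x+Jr)=A(z)+JB(z)$, where the same $z$ occurs because $x+Jr$ has signed coordinates $(J,r)$. Substituting the two reconstruction formulas gives
\[
  f(x+Jr)=\tfrac12\bigl(f(x+Ir)+f(x-Ir)\bigr)-\tfrac{JI}{2}\bigl(f(x+Ir)-f(x-Ir)\bigr),
\]
and regrouping the coefficients of $f(x+Ir)$ and $f(x-Ir)$ produces \eqref{eq:slice-representation-formula}. The real case $r=0$ is consistent with this: then $B(x)=0$, both sides reduce to $A(x)$, and the coefficients sum to $1$.

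For the final claim, every point of $\Omega_E$ has the form $x+Jr$ with $x+\ci r\in E$, by \eqref{eq:slice-circularization}. The points $x\pm Ir$ lie in $E_I=\Omega_E\cap\C_I$ by Proposition~\ref{prop:slice-circularization-properties}. Hence the right-hand side of \eqref{eq:slice-representation-formula} depends only on $f|_{E_I}$.

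There is no real obstacle here. The only delicate step is checking that the signed-coordinate bookkeeping makes $x-Ir$ correspond to $\bar z$ rather than to $z$ with the unit $-I$. Both readings are admissible by the well-posedness remark following Definition~\ref{def:slice-stem-function}, and they give the same value. It is also worth noting that one needs the full slice $E_I$, not merely its upper half, which is what the representation formula genuinely uses.
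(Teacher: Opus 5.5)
Your proof is correct. The paper does not prove this proposition itself but defers to \cite[Propositions~5--6]{GhiloniPerotti2011}, and your argument is the standard one used there: evaluate at the conjugate pair $x\pm Ir$, use the parity of the stem, cancel $I$ on the left, and substitute into $f(x+Jr)=A+JB$; the same bookkeeping, $f(q)=A+IB$ and $f(\bar q)=A-IB$, is what the paper later uses to derive \eqref{eq:slice-spherical-stem-coordinates}.
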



\begin{remark}
\label{rem:slice-full-slice-needed}
On a slice domain, the two points \(x+Ir\) and \(x-Ir\) lie in the same
connected slice section, so there is no ambiguity. On a product domain, they
belong to two disconnected components. Formula
\eqref{eq:slice-representation-formula} requires one value from each
component. 
\end{remark}

We next introduce regularity. A map into \(\Hh_\C\) is called holomorphic if
all its components in a real quaternionic basis are complex holomorphic.

This definition and its Cauchy--Riemann characterization agree with
\cite[Definitions~7--8 and Proposition~8]{GhiloniPerotti2011}.

\begin{definition}[Slice regularity]
\label{def:slice-regularity}
A slice function \(f=\mathcal I(F)\) on \(\Omega_E\) is \emph{slice
regular} if its stem function \(F\) is holomorphic on \(E\).
\end{definition}

Write \(F=A+\ci B\), with \(A,B\) of class \(C^1\). The condition
\(\partial_{\bar z}F=0\) is equivalent, component by component, to the
Cauchy--Riemann equations
\begin{equation}\label{eq:slice-stem-CR}
  \partial_xA=\partial_rB,
  \qquad
  \partial_rA=-\partial_xB.
\end{equation}
These are ordinary Cauchy--Riemann equations for four complex-valued
coordinates. Quaternionic multiplication does not enter them.

The simplest examples are polynomials with coefficients on the right. If
\(a_0,\ldots,a_N\in\Hh\), then
\[
  F(z)=\sum_{n=0}^N z^na_n
\]
is a holomorphic stem, because the central variable \(z\) commutes with all
quaternionic coefficients and
\(F(\bar z)=\overline{F(z)}\). Its induced function is
\[
  f(q)=\sum_{n=0}^N q^na_n,
\]
which is therefore slice regular on \(\Hh\). 

To relate this definition to usual complex analysis, fix \(I\in\Sphere\) and
endow the real vector space \(\Hh\) with the complex structure
\[
  L_I(a):=Ia.
\]
We denote this two-dimensional complex vector space by \((\Hh,L_I)\).
Complex scalars act on it from the left:
\begin{equation}\label{eq:slice-complex-scalar-action}
  (u+\ci v)\cdot a:=(u+Iv)a,
  \qquad u,v\in\R,\quad a\in\Hh.
\end{equation}
Define the evaluation map
\begin{equation}\label{eq:slice-evaluation-map}
  \Phi_I:\Hh_\C\longrightarrow(\Hh,L_I),
  \qquad
  \Phi_I(a+\ci b)=a+Ib.
\end{equation}

The complex reduction to a fixed slice, and the associated splitting into
complex-valued components, is the standard splitting mechanism; compare
\cite[Lemma~2.7]{GPSSingularities}.

\begin{lemma}
\label{lem:slice-evaluation-linear}
The map \(\Phi_I\) is complex linear. If \(f=\mathcal I(F)\), then
\begin{equation}\label{eq:slice-restriction-as-evaluation}
  f(x+Ir)=\Phi_I\bigl(F(x+\ci r)\bigr) ,
\end{equation}
so that if \(f\) is slice regular, its restriction \(f_I\) to the full slice
\(E_I\) is a holomorphic map with values in \((\Hh,L_I)\).
\end{lemma}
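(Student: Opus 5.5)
The lemma has three parts: complex linearity of \(\Phi_I\), the evaluation identity \eqref{eq:slice-restriction-as-evaluation}, and holomorphy of \(f_I\). Everything reduces to unwinding the definitions \eqref{eq:slice-complex-scalar-action}, \eqref{eq:slice-evaluation-map} and \eqref{eq:slice-induction}.

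\emph{Complex linearity.} Additivity and real linearity of \(\Phi_I\) are evident, so it suffices to check compatibility with multiplication by the central unit \(\ci\). For \(a,b\in\Hh\) we have \(\ci(a+\ci b)=-b+\ci a\). Hence
\[
  \Phi_I\bigl(\ci(a+\ci b)\bigr)=-b+Ia=I(a+Ib)=L_I\bigl(\Phi_I(a+\ci b)\bigr),
\]
using \(I^2=-1\). This is exactly \(\ci\cdot\Phi_I(a+\ci b)\) in \((\Hh,L_I)\). The point worth stressing is that \(I\) multiplies on the left, which matches the left action \eqref{eq:slice-complex-scalar-action}.

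\emph{Evaluation identity.} Write \(F=A+\ci B\). Then \(\Phi_I(F(x+\ci r))=A(x+\ci r)+IB(x+\ci r)\), and this is \(f(x+Ir)\) by \eqref{eq:slice-induction}. The identity holds for every signed \(r\), since \eqref{eq:slice-induction} is well posed by the parity relations \eqref{eq:slice-stem-parity}. Consequently it holds on the whole full slice \(E_I\) of Proposition~\ref{prop:slice-circularization-properties}, and not only on the half with \(r>0\).

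\emph{Holomorphy.} Identify \(E\) with \(E_I\) via \(x+\ci r\mapsto x+Ir\). Under this identification \(f_I=\Phi_I\circ F\). Choose a real basis \(e_0,\dots,e_3\) of \(\Hh\), so that \(F=\sum_\ell F_\ell e_\ell\) with complex-valued holomorphic components \(F_\ell\). By complex linearity,
\[
  f_I=\sum_\ell F_\ell\cdot\Phi_I(e_\ell),
\]
where \(F_\ell\) acts through \eqref{eq:slice-complex-scalar-action}. This is a holomorphic map into the finite-dimensional complex space \((\Hh,L_I)\) in the componentwise sense of Section~\ref{sec:complex-model}, because complex linear maps preserve holomorphy.

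There is no real obstacle here. The only care needed is the ordering convention: \(I\) must multiply on the left, and the full signed slice must be used.
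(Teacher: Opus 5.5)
Your proposal is correct and follows essentially the paper's proof: complex linearity is checked directly (the paper uses a general scalar \(\lambda=u+\ci v\), while you reduce to multiplication by \(\ci\) after real linearity), the evaluation identity is the definition \eqref{eq:slice-induction} itself, and holomorphy follows because composing the holomorphic stem with the complex-linear \(\Phi_I\) preserves holomorphy. The paper additionally shows that \(\Phi_I\) is not multiplicative, but that remark is not part of the statement.
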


\begin{proof}
For \(\lambda=u+\ci v\in\C\) and \(w=a+\ci b\in\Hh_\C\), centrality of
\(\ci\) gives \(\lambda w=(ua-vb)+\ci(va+ub)\), whence
\[
  \Phi_I(\lambda w)=ua-vb+I(va+ub)=(u+Iv)(a+Ib)=\lambda\cdot\Phi_I(w) ,
\]
the middle equality using only \(u,v\in\R\) and \(I^2=-1\). Multiplication
is not preserved: if \(J\in\Sphere\) is orthogonal to \(I\), so that
\(JI=-IJ\), then \(\ci\) commutes with \(J\) in \(\Hh_\C\), yet
\[
  \Phi_I(J\ci)=IJ,
  \qquad
  \Phi_I(J)\Phi_I(\ci)=JI=-IJ .
\]
Finally, \eqref{eq:slice-restriction-as-evaluation} is
\eqref{eq:slice-induction} itself, and the composition of a holomorphic \(F\)
with the complex-linear \(\Phi_I\) is holomorphic.
\end{proof}

The failure of multiplicativity just observed is the reason why the product
of slice functions, introduced in
Subsection~\ref{subsec:slice-product-conjugation}, cannot be the pointwise
quaternionic one. As for the last assertion of the lemma, it can also be
verified in coordinates: from \(f_I=A+IB\),
\[
  (\partial_x+I\partial_r)f_I
  =(\partial_xA-\partial_rB)+I(\partial_xB+\partial_rA) ,
\]
which vanishes by \eqref{eq:slice-stem-CR} — precisely the Cauchy--Riemann
equation for a map with values in \((\Hh,L_I)\).

When a classical complex theorem has to be applied componentwise, the
following scalar decomposition is convenient. Write
\((e_0,e_1,e_2,e_3)=(1,\qi,\qj,\qk)\); every stem then has a unique
expression
\begin{equation}\label{eq:slice-component-decomposition}
  F=\sum_{\ell=0}^3F_\ell e_\ell,
  \qquad F_\ell:E\longrightarrow\C ,
\end{equation}
and the stem symmetry \eqref{eq:slice-stem-symmetry} amounts to
\(F_\ell(\bar z)=\overline{F_\ell(z)}\) for each \(\ell\), that is, to the
reflection symmetry of Definition~\ref{def:complex-reflection} for the four
scalar coordinates separately. Setting \(f_\ell=\mathcal I(F_\ell)\) gives
\begin{equation}\label{eq:slice-function-component-decomposition}
  f=\sum_{\ell=0}^3f_\ell e_\ell ,
\end{equation}
with the quaternionic basis elements on the right.

\begin{definition}[Slice-preserving functions]\label{def:slice-preserving}
A slice function is \emph{slice preserving} if it maps every slice
\(\C_I\) into itself, or equivalently if its stem is complex valued, that
is, of the form \(F=A+\ci B\) with \(A,B\) real valued.
\end{definition}

Examples of slice preserving (regular) functions are polynomials and power series with real coefficients.

Each \(f_\ell\) in \eqref{eq:slice-function-component-decomposition} is
therefore slice preserving. If \(F\) is holomorphic, so are all the
\(F_\ell\), and all the \(f_\ell\) are slice regular; consequently every
complex integral, derivative, Laurent expansion or residue used below may be
computed componentwise in
\eqref{eq:slice-function-component-decomposition}.

\subsection{Slice product, conjugation, and symmetrization}
\label{subsec:slice-product-conjugation}

The pointwise product of two slice functions need not be a slice function,
and the natural product is instead transported from the complexified
algebra. It is the standard slice product of \cite{GentiliStoppatoStruppa},
\cite[Section~5, Definition~9]{GhiloniPerotti2011} and
\cite[Section~1.2]{GPS-algebra}.

\begin{definition}[Slice product]
\label{def:slice-product}
If \(f=\mathcal I(F)\) and \(g=\mathcal I(G)\) are slice functions on the
same circular set, their \emph{slice product} is
\begin{equation}\label{eq:slice-product}
  f*g:=\mathcal I(FG) ,
\end{equation}
the product \(FG\) being computed pointwise in \(\Hh_\C\).
\end{definition}

Since multiplication in \(\Hh_\C\) is associative, so is \(*\); it is
distributive and real bilinear, but not commutative in general. Holomorphy
of \(F\) and \(G\) passes to \(FG\), so the slice product of two slice
regular functions is slice regular. The simplification used throughout this
paper occurs when one factor is slice preserving, in which case the slice
product reduces to the pointwise one.

\begin{proposition}
\label{prop:slice-preserving-factor}
Let \(h=\mathcal I(H)\) be slice preserving and let \(f=\mathcal I(F)\) be
any slice function on the same circular set. Then
\begin{equation}\label{eq:slice-intrinsic-factor}
  h*f=f*h,
  \qquad
  (h*f)(q)=h(q)f(q) .
\end{equation}
\end{proposition}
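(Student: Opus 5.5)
The whole argument runs at the level of stem functions, using Definition~\ref{def:slice-product}. Write \(H=\alpha+\ci\beta\) with \(\alpha,\beta\) real valued, as in Definition~\ref{def:slice-preserving}, and \(F=A+\ci B\) with \(A,B\) quaternion valued. I will establish both identities in \eqref{eq:slice-intrinsic-factor} by computing \(HF\) and \(FH\) pointwise in \(\Hh_\C\) and then applying the induction map \(\mathcal I\) of \eqref{eq:slice-induction}.

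First, commutativity. In \(\Hh_\C\) the unit \(\ci\) is central, and real scalars commute with every quaternion. Hence every element \(\alpha(z)+\ci\beta(z)\), with \(\alpha(z),\beta(z)\in\R\), lies in the centre of \(\Hh_\C\). It follows that \(H(z)F(z)=F(z)H(z)\) for every \(z\in E\), so \(HF=FH\) as stems and \(h*f=\mathcal I(HF)=\mathcal I(FH)=f*h\).

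Second, the pointwise formula. Expand the product:
\[
  HF=(\alpha A-\beta B)+\ci(\alpha B+\beta A).
\]
At \(q=x+Ir\), with all functions evaluated at \(z=x+\ci r\), the definition \eqref{eq:slice-induction} gives
\[
  (h*f)(q)=\alpha A-\beta B+I(\alpha B+\beta A).
\]
On the other side,
\[
  h(q)f(q)=(\alpha+I\beta)(A+IB)=\alpha A+\alpha IB+\beta IA+\beta I^2B,
\]
and since \(\alpha,\beta\) are real they commute with \(I\). Using \(I^2=-1\), the two expressions coincide.

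A more conceptual route is also available: \(HF\) is a complex scalar multiple of \(F\), and Lemma~\ref{lem:slice-evaluation-linear} says that \(\Phi_I\) is complex linear. This gives \(\Phi_I(HF)=(\alpha+I\beta)\Phi_I(F)\), which is exactly \(h(q)f(q)\) by \eqref{eq:slice-restriction-as-evaluation}.

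There is no real obstacle in this proof. The one point needing care is that \(I\) sits to the left of \(B\) in the induction formula, so the factor \(h(q)\) must multiply \(f(q)\) on the left. The commutativity already proved then also gives \((f*h)(q)=h(q)f(q)\), so the left placement of \(h(q)\) is the one that holds for both products. Note that this expression is not in general equal to \(f(q)h(q)\) pointwise, since \(I\) need not commute with \(A\) and \(B\).
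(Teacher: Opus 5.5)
Your proof is correct and complete. The centrality of \(\alpha+\ci\beta\) (with \(\alpha,\beta\) real) in \(\Hh_\C\) gives \(HF=FH\), and either the direct expansion or the complex linearity of \(\Phi_I\) from Lemma~\ref{lem:slice-evaluation-linear} gives \((h*f)(q)=h(q)f(q)\), with \(h(q)\) rightly kept on the left.

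The paper states this proposition without proof, treating it as a standard fact of the slice algebra. Your stem-level computation is the standard argument behind it, and the identity \(\Phi_I(\lambda w)=\lambda\cdot\Phi_I(w)\) proved in that lemma is exactly your ``conceptual route''.
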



Two conjugations are at work here, and it is worth keeping their roles
apart. Complex conjugation \eqref{eq:slice-central-conjugation} changes the
sign of the central unit \(\ci\) and is what the stem symmetry is about;
quaternionic conjugation changes the signs of \(\qi,\qj,\qk\). We extend the
latter to \(\Hh_\C\) \emph{complex linearly}, setting
\begin{equation}\label{eq:slice-stem-conjugate}
  F^c:=\sum_{\ell=0}^3F_\ell\bar e_\ell
\end{equation}
for \(F=\sum_{\ell=0}^3F_\ell e_\ell\), so that the complex coefficients
\(F_\ell\) are left untouched. The operation is an anti-involution,
\((FG)^c=G^cF^c\), and it maps stems to stems: the two conjugations commute,
whence
\[
  F^c(\bar z)=\bigl(F(\bar z)\bigr)^c
             =\bigl(\overline{F(z)}\bigr)^c=\overline{F^c(z)} .
\]

\begin{definition}[Slice conjugate and symmetrization]
\label{def:slice-conjugate-symmetrization}
For \(f=\mathcal I(F)\), the \emph{slice conjugate} and the
\emph{symmetrization} of \(f\) are
\begin{equation}\label{eq:slice-conjugate-symmetrization}
  f^c:=\mathcal I(F^c),
  \qquad
  f^s:=f*f^c .
\end{equation}
\end{definition}

These are the standard operations of the slice algebra. Note that \(f^c\) is not the pointwise
quaternionic conjugate: at \(q=x+Ir\), where \(f(q)=A+IB\),
\[
  f^c(q)=\bar A+I\bar B
  \qquad\text{while}\qquad
  \overline{f(q)}=\bar A-\bar BI ,
\]
the discrepancy being due to the reversal of order under quaternionic
conjugation.

The symmetrization has a particularly simple stem. Multiplying out
\eqref{eq:slice-component-decomposition}, the mixed terms cancel because
distinct quaternionic units anticommute, leaving
\begin{equation}\label{eq:slice-null-quadric}
  FF^c=F^cF=Q(F),
  \qquad
  Q(F):=\sum_{\ell=0}^3F_\ell^2 .
\end{equation}
This is complex scalar valued and satisfies the stem symmetry, so that
\(f^s=f^c*f\) is slice preserving. What matters for us is that \(Q(F)\) is a
\emph{complex} quadratic form and not a Hermitian norm: the
equation \(w_0^2+w_1^2+w_2^2+w_3^2=0\) defines the null quadric of
\(\C^4\), and a stem may take all its values there without ever vanishing.

\begin{example}[A zero divisor on a product domain]
\label{ex:slice-zero-divisor-preliminary}
Let \(\Omega_E\) be a product domain with profile
\(E=E^+\sqcup E^-\) as in \eqref{eq:slice-product-profile}, fix
\(K\in\Sphere\), and consider the locally constant, hence holomorphic, map
\begin{equation}\label{eq:slice-zero-divisor-stem}
  H(z)=
  \begin{cases}
    1+\ci K,&z\in E^+,\\
    1-\ci K,&z\in E^- .
  \end{cases}
\end{equation}
Its two values are exchanged by central complex conjugation, so \(H\) is a
stem, and evaluating it at \(q=x+Ir\) with \(r>0\) gives \(1+IK\); the
induced slice regular function is therefore
\[
  h(q)=1+\mathcal J(q)K ,
  \qquad \mathcal J(q):=\frac{\imq q}{|\imq q|} ,
\]
which vanishes nowhere except on one slice, as we now see. On \(E^+\) the
conjugate stem is \(H^c=1-\ci K\), and since \((\ci K)^2=\ci^2K^2=1\),
\[
  HH^c=(1+\ci K)(1-\ci K)=0 ;
\]
the same computation holds on \(E^-\). Hence
\[
  h^s=h*h^c\equiv0
  \qquad\text{while}\qquad
  h\not\equiv0,\quad h^c\not\equiv0 ,
\]
so that \(h\) is a genuine zero divisor of the slice algebra. Concretely, on
the single slice \(\C_K\) the function \(h\) vanishes identically on the
component over \(E^+\) and equals \(2\) on the component over \(E^-\),
whereas on every other slice it vanishes nowhere.

This  example is revisited geometrically in
Section~\ref{sec:examples}.
\end{example}

\subsection{Slice and spherical derivatives}
\label{subsec:slice-derivatives}

Let \(f=\mathcal I(A+\ci B)\) be a slice function induced by a \(C^1\)
stem. On a fixed slice \(\C_I\), with signed coordinates \(q=x+Ir\), define
the \emph{slice derivative} by
\begin{equation}\label{eq:slice-derivative-definition}
  (\partial_cf)_I(x+Ir)
  :=\frac12(\partial_x-I\partial_r)f_I(x+Ir).
\end{equation}
A direct calculation gives
\begin{equation}\label{eq:slice-derivative-stem-coordinates}
  \partial_cf=\mathcal I(\partial_zF),
\end{equation}
where
\begin{equation}\label{eq:slice-dz-stem}
  \partial_zF
  =\frac12\bigl(\partial_xA+\partial_rB\bigr)
   +\frac{\ci}{2}\bigl(\partial_xB-\partial_rA\bigr).
\end{equation}
In particular, the
definition is compatible with changing the signed representation of a
non-real quaternion.

If \(f\) is slice regular, the Cauchy--Riemann equations
\eqref{eq:slice-stem-CR} reduce \eqref{eq:slice-dz-stem} to
\(\partial_zF=\partial_xF\).
Consequently,
\begin{equation}\label{eq:slice-derivative-holomorphic}
  (\partial_cf)_I=\partial_xf_I
\end{equation}
on every slice, and \(\partial_cf\) is again slice regular.

The second operator compares  two conjugate points on the same sphere.
Both points belong to \(\Omega_E\), because this set is circular.

The spherical value and spherical derivative were introduced in this form in
\cite[Definition~6]{GhiloniPerotti2011} (compare
\cite{GentiliStoppatoStruppa}).

\begin{definition}[Spherical value and spherical derivative]
\label{def:slice-spherical-derivative}
For \(q\in\Omega_E\setminus\R\), define
\begin{align}
  f_s^\circ(q)
  &:=\frac{f(q)+f(\bar q)}2,
  \label{eq:slice-spherical-value}\\
  \partial_sf(q)
  &:=(q-\bar q)^{-1}\bigl(f(q)-f(\bar q)\bigr).
  \label{eq:slice-spherical-derivative}
\end{align}
\end{definition}

The notation \(f_s^\circ\) for the spherical value should not be confused
with the superscript in the symmetrization \(f^s\). If \(q=x+Ir\), with
signed \(r\neq0\), then
\[
  f(q)=A(x+\ci r)+IB(x+\ci r),
  \qquad
  f(\bar q)=A(x+\ci r)-IB(x+\ci r).
\]
It follows that
\begin{equation}\label{eq:slice-spherical-stem-coordinates}
  f_s^\circ(x+Ir)=A(x+\ci r),
  \qquad
  \partial_sf(x+Ir)=\frac{B(x+\ci r)}{r}.
\end{equation}
For the second equality, observe that
\[
  q-\bar q=2Ir,
  \qquad
  f(q)-f(\bar q)=2IB(x+\ci r),
\]
and multiply the latter expression on the left by
\((2Ir)^{-1}=-I/(2r)\).

Formula \eqref{eq:slice-spherical-stem-coordinates} has three immediate
consequences. First, both \(f_s^\circ\) and \(\partial_sf\) are constant
on each conjugacy sphere. Second,
\begin{equation}\label{eq:slice-spherical-decomposition}
  f(q)=f_s^\circ(q)+\imq q\,\partial_sf(q).
\end{equation}
Indeed, \(\imq q=Ir\), so the right-hand side is \(A+IB\). Third, although
\(\partial_sf\) is defined by a quotient, it has no singularity forced by
the real axis. Since the stem symmetry gives \(B(x)=0\), differentiability
yields
\[
  \lim_{r\to0}\frac{B(x+\ci r)}r=\partial_rB(x).
\]
For a slice regular function, \(\partial_rB(x)=\partial_xA(x)\) by
\eqref{eq:slice-stem-CR}; hence the continuous real-axis extension satisfies
\begin{equation}\label{eq:slice-derivatives-real-axis}
  \partial_sf(x)=\partial_cf(x),
  \qquad x\in E\cap\R.
\end{equation}
We shall mainly use \(\partial_s\) on nonreal spheres, where
\eqref{eq:slice-spherical-derivative} is unambiguous.

The corresponding properties of the slice and spherical derivatives are
standard; see \cite[Definitions~7--8 and
Proposition~8]{GhiloniPerotti2011} and
\cite{GentiliStoppatoStruppa}.

\begin{proposition}
\label{prop:slice-derivative-properties}
The operators \(\partial_c\) and \(\partial_s\) are additive and right
\(\Hh\)-linear. More precisely, for a constant \(a\in\Hh\),
\begin{equation}\label{eq:slice-derivative-right-linearity}
  \partial_c(fa)=(\partial_cf)a,
  \qquad
  \partial_s(fa)=(\partial_sf)a.
\end{equation}
If \(f\) is slice regular, then \(\partial_cf\) is slice regular. In
contrast, \(\partial_sf\) need not be slice regular.
\end{proposition}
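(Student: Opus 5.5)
The plan is to work throughout with stem coordinates. Write \(f=\mathcal I(F)\) with \(F=A+\ci B\), so that \(f_I=A+IB\) on the slice \(\C_I\) in signed coordinates. Additivity of both operators is immediate from the definitions \eqref{eq:slice-derivative-definition} and \eqref{eq:slice-spherical-derivative}, since both are built from real-linear operations: differentiation, and differences of values.

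For right \(\Hh\)-linearity, note first that \(fa=\mathcal I(Fa)\) with stem \(Aa+\ci Ba\), because \(\ci\) is central and \(a\) is a constant. For \(\partial_c\), I apply \eqref{eq:slice-derivative-definition} directly: the operator \(\frac12(\partial_x-I\partial_r)\) multiplies on the left by \(I\) and differentiates in real variables, so it commutes with right multiplication by \(a\). For \(\partial_s\), I use \eqref{eq:slice-spherical-derivative}: the factor \((q-\bar q)^{-1}\) acts on the left, and \(f(q)a-f(\bar q)a=(f(q)-f(\bar q))a\). Equivalently, \eqref{eq:slice-spherical-stem-coordinates} gives \(\partial_s(fa)=Ba/r\).

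Slice regularity of \(\partial_cf\) was already observed after \eqref{eq:slice-derivative-holomorphic}. By \eqref{eq:slice-derivative-stem-coordinates}, \(\partial_cf=\mathcal I(\partial_zF)\), and the derivative of a holomorphic map is holomorphic. The only point to check is that \(\partial_zF\) is again a stem. This follows by differentiating \(F(\bar z)=\overline{F(z)}\): in the holomorphic case it gives \(\partial_xF(\bar z)=\overline{\partial_xF(z)}\), because the conjugation \(z\mapsto\bar z\) preserves \(x\).

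For the negative statement about \(\partial_s\), I give a counterexample. Take \(f(q)=q^2\), with stem \(z^2=(x^2-r^2)+\ci\,2xr\). Then \eqref{eq:slice-spherical-stem-coordinates} gives \(\partial_sf(x+Ir)=2x=2\req q\). This function is slice (its stem is the constant \(2x+\ci0\) on spheres), but the stem \(2x\) is not holomorphic in \(z=x+\ci r\), since \(\partial_{\bar z}(2x)=1\neq0\). Hence \(\partial_sf\) is not slice regular. There is no real obstacle here; the only care needed is the stem-symmetry check for \(\partial_zF\).
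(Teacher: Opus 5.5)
Your proof is correct and matches the paper. The paper treats linearity and the regularity of \(\partial_cf\) as standard consequences of \(\partial_cf=\mathcal I(\partial_zF)\), and it settles the last claim with the same example \(f(q)=q^2\), \(\partial_sf=2\req q\); the only cosmetic difference is that it detects the failure of holomorphy on the slice restriction \(z+\bar z\), whereas you detect it on the stem \(2x\), which is equivalent because the stem is determined by the function.
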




The contrast in the last assertion is already visible for \(f(q)=q^2\):
\[
  \partial_sf(q)=q+\bar q=2\req q.
\]
Its restriction to a slice is \(z+\bar z\), which is not holomorphic since
its \(\partial_{\bar z}\)-derivative equals \(1\).

The preceding example explains a convention needed for the differential
residue formula. In an expression such as
\[
  \partial_s(\partial_c\partial_s)^nf,
\]
operators act from right to left. Thus one first applies \(\partial_s\), then
\(\partial_c\), and repeats. The intermediate functions need not be slice
regular, but they remain differentiable slice functions wherever the formula
is evaluated, hence \(\partial_c\) is then understood through the Wirtinger
formula \eqref{eq:slice-derivative-definition}.

We can now make the connection with Section~\ref{sec:divided-differences}
precise. On a fixed full slice \(\C_I\), set \(z=x+Ir\) and
\(\bar z=x-Ir\). Then
\begin{equation}\label{eq:slice-spherical-as-divided-difference}
  (\partial_sf)_I(z)
  =(z-\bar z)^{-1}\bigl(f_I(z)-f_I(\bar z)\bigr).
\end{equation}
The right-hand side is exactly the conjugate-node operator \(\mathcal S\) of
Section~\ref{sec:divided-differences}. Moreover,
\begin{equation}\label{eq:slice-cullen-as-wirtinger}
  (\partial_ch)_I=\partial_z h_I
  :=\frac12(\partial_x-I\partial_r)h_I
\end{equation}
for every differentiable slice function \(h\). Therefore, on a fixed slice,
\begin{equation}\label{eq:slice-operator-translation}
  \bigl[\partial_s(\partial_c\partial_s)^nf\bigr]_I
  =\mathcal S(\partial_z\mathcal S)^nf_I.
\end{equation}
The variables \(z\) and \(\bar z\) in the calculation of
Section~\ref{sec:divided-differences} are treated as independent Wirtinger
variables; equation \eqref{eq:slice-cullen-as-wirtinger} is exactly the same
convention in signed slice coordinates. We follow the operator conventions
used for spherical coefficients in \cite{AltavillaSphericalCoefficients}.

\subsection{Semiregular functions and restrictions to slices}
\label{subsec:slice-semiregular}

It remains to describe the singular functions to which the residue theorem
will apply. We define them at the level of the stem: this avoids any use of
slice reciprocals, and it continues to make sense on product domains, where
a symmetrization may vanish identically as in
Example~\ref{ex:slice-zero-divisor-preliminary}.

\begin{definition}[Meromorphic stems and semiregular functions]
\label{def:slice-semiregular}
Let \(E\subset\C\) be open and invariant under conjugation. Write a stem as
in \eqref{eq:slice-component-decomposition}:
\[
  F=\sum_{\ell=0}^3F_\ell e_\ell.
\]
We call \(F\) \emph{meromorphic} if every complex scalar coordinate \(F_\ell\) is
meromorphic on \(E\) and the stem symmetry holds wherever \(F\) is finite.
The induced slice function \(f=\mathcal I(F)\), defined away from the
circularization of the poles of \(F\), is called \emph{semiregular}.
\end{definition}

This is the standard notion of semiregularity; for its local interpretation
see \cite{GentiliStoppatoStruppa} and \cite[Definition~11.1 and Remark~11.2]{GPSSingularities}.

The union \(P_F\) of the poles of the four scalar coordinates is a
conjugation-invariant subset of \(E\) that is locally finite, in the sense
that every compact subset of \(E\) contains only finitely many of its
points. Its circularization is easy to describe: a real pole
\(a\in P_F\cap\R\) contributes the isolated real point \(a\), while a
non-real conjugate pair
\[
  p=\alpha+\ci\beta,\qquad \bar p=\alpha-\ci\beta,\qquad \beta>0 ,
\]
contributes the two-sphere
\begin{equation}\label{eq:slice-polar-sphere}
  \Sigma=\alpha+\Sphere\beta ,
\end{equation}
which we call a \emph{polar sphere} of the meromorphic stem. The name is
kept even though the restriction of \(f\) to a particular slice may have a
smaller pole order, or a removable singularity, at one exceptional point of
\(\Sigma\); this is the phenomenon analysed in
Proposition~\ref{prop:slice-exceptional-points} below.

Attached to \(\Sigma\) is the intrinsic characteristic polynomial
\begin{equation}\label{eq:slice-characteristic-polynomial}
  \DeltaSphere{\Sigma}(q):=q^2-2\alpha q+\alpha^2+\beta^2 ,
\end{equation}
slice preserving because its coefficients are real, and factoring as a slice
product
\begin{equation}\label{eq:slice-characteristic-star-factorization}
  \DeltaSphere{\Sigma}(q)=(q-y)*(q-\bar y)
\end{equation}
for every \(y=\alpha+J\beta\in\Sigma\): the stem variable \(z\) is central,
so the right-hand side expands to
\(z^2-z(y+\bar y)+y\bar y=z^2-2\alpha z+\alpha^2+\beta^2\), independently of
which representative \(y\) is chosen. Both the polynomial and the
description of its zero set in
Lemma~\ref{lem:slice-characteristic-zero-set} below are standard (see for instance
\cite[Example~1.22]{GPS-algebra} and
\cite[Section~2, equation~(4)]{GPSSingularities}).

\begin{lemma}
\label{lem:slice-characteristic-zero-set}
The zero set of \(\DeltaSphere{\Sigma}\) is exactly \(\Sigma\).
\end{lemma}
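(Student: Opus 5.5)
The plan is to evaluate \(\DeltaSphere{\Sigma}\) pointwise, using that it is slice preserving with real coefficients. On each slice it therefore acts by ordinary complex arithmetic, and the question reduces to locating the roots of a real quadratic. Write \(q=x+Ir\) with \(I\in\Sphere\) and signed \(r\), so that \(q\in\C_I\). Since \(\C_I\) is a commutative field isomorphic to \(\C\) and the coefficients \(-2\alpha\) and \(\alpha^2+\beta^2\) are real, the value \(\DeltaSphere{\Sigma}(q)\) is the image of the complex polynomial \(\DeltaSphere{p}(z)=(z-p)(z-\bar p)\), with \(p=\alpha+\ci\beta\), under the identification \(x+\ci r\mapsto x+Ir\). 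Concretely,
\[
  \DeltaSphere{\Sigma}(x+Ir)=(x+Ir-\alpha-I\beta)(x+Ir-\alpha+I\beta),
\]
the factorization being legitimate because all factors lie in the commutative algebra \(\C_I\).

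This product vanishes if and only if one factor does, since \(\C_I\) has no zero divisors. That happens exactly when \(x=\alpha\) and \(r=\pm\beta\), that is, when \(q=\alpha\pm I\beta\).

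Both such points lie in \(\Sigma=\alpha+\Sphere\beta\), because \(-I\in\Sphere\). Conversely, every point \(\alpha+J\beta\) of \(\Sigma\) lies in \(\C_J\) and is annihilated by the same computation with \(I=J\). Since every quaternion belongs to some slice \(\C_I\) (real ones to all of them), this gives equality of the two sets. Equivalently, one can use \eqref{eq:slice-characteristic-star-factorization} and the representation of \(\DeltaSphere{\Sigma}\) through its stem \(z^2-2\alpha z+\alpha^2+\beta^2\), whose zeros in \(\C\) are exactly \(p,\bar p\); by \eqref{eq:slice-induction}, for a complex-valued (slice preserving) stem the value at \(x+Ir\) is zero iff the stem vanishes at \(x+\ci r\).

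I expect no real obstacle here. The only point requiring care is justifying the factorization pointwise rather than merely as a \(*\)-product. This is exactly where slice preservation, via Proposition~\ref{prop:slice-preserving-factor}, or simply the commutativity of \(\C_I\), is used.
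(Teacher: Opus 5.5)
Your argument is correct. The paper does not prove this lemma itself: it calls the statement standard and cites \cite[Example~1.22]{GPS-algebra} and \cite[Section~2, equation~(4)]{GPSSingularities}, so your slice-by-slice factorization is a self-contained replacement for that citation.

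Your approach restricts \(\DeltaSphere{\Sigma}\) to \(\C_I\), factors it there as \((q-\alpha-I\beta)(q-\alpha+I\beta)\), and uses that \(\C_I\) is a field. This is the same mechanism the paper relies on later, in \eqref{eq:sres-delta-slice-factorization} and in the last assertion of Proposition~\ref{prop:slice-circularization-properties}. It has the advantage of showing directly that each slice meets the zero set in exactly the conjugate pair \(\alpha\pm I\beta\), which is what the later contour arguments exploit.

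If you only want the set equality, there is a slice-free one-line alternative. Write \(q-\alpha=t+v\) with \(t\in\R\) and \(v\in\imq\Hh\). Then
\[
  \DeltaSphere{\Sigma}(q)=(q-\alpha)^2+\beta^2=(t^2-|v|^2+\beta^2)+2tv .
\]
Its imaginary part \(2tv\) vanishes only if \(t=0\) or \(v=0\). The case \(v=0\) is excluded by \(\beta>0\), since the real part would then be \(t^2+\beta^2>0\). So the value vanishes exactly when \(t=0\) and \(|v|=\beta\), that is, when \(q\in\Sigma\).
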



The polynomial \(\DeltaSphere{\Sigma}\) is thus the natural device for
removing the two stem poles at \(p\) and \(\bar p\) at once. The
regularization that follows is the local form of the singularity results of
\cite[Theorems~9.4--9.5 and Remark~11.2]{GPSSingularities}.

\begin{proposition}[Regularization of real points and polar spheres]
\label{prop:slice-regularization}
Let \(f=\mathcal I(F)\) be semiregular.
\begin{enumerate}[label=\textup{(\roman*)}]
   \item If \(a\in\R\) is a pole of \(F\), there exists an integer
        \(L\geq1\) such that
         \begin{equation}\label{eq:slice-real-regularization}
          h_L(q):=(q-a)^L f(q)
        \end{equation}
        extends slice regularly across \(a\).
  \item If \(\Sigma=\alpha+\Sphere\beta\) is the circularization of the
        conjugate stem poles \(p=\alpha+\ci\beta\) and \(\bar p\), there
        exists an integer \(M\geq1\) such that
        \begin{equation}\label{eq:slice-spherical-regularization}
          g_M(q):=\DeltaSphere{\Sigma}(q)^M f(q)
        \end{equation}
        extends slice regularly across the whole sphere \(\Sigma\).
\end{enumerate}

\end{proposition}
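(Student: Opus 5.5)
The plan is to work entirely at the level of the stem, where semiregularity was defined. Write \(F=\sum_{\ell=0}^3F_\ell e_\ell\) as in \eqref{eq:slice-component-decomposition}, with each \(F_\ell\) meromorphic on \(E\) and \(F(\bar z)=\overline{F(z)}\). Both multipliers \((q-a)^L\) and \(\DeltaSphere{\Sigma}(q)^M\) have real coefficients, so they are slice preserving. By Proposition~\ref{prop:slice-preserving-factor} their pointwise products with \(f\) coincide with slice products, and are therefore induced by the stems \((z-a)^LF(z)\) and \(\DeltaSphere{p}(z)^MF(z)\), where \(\DeltaSphere{p}(z)=(z-p)(z-\bar p)\) is the complex polynomial \eqref{eq:complex-characteristic-polynomial}. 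Since the stem of a real polynomial is the same polynomial in the central variable \(z\), it suffices to prove the following: these stems have removable singularities at the relevant points, and their holomorphic extensions are still stems.

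For (i), I take \(L\) to be the maximum over \(\ell\) of the pole orders of \(F_\ell\) at \(a\), with \(L\geq1\) since \(a\) is a pole. Then each \((z-a)^LF_\ell\) extends holomorphically across \(a\). For (ii), I take \(M\) to be the maximum of the pole orders of the \(F_\ell\) at \(p\) and at \(\bar p\). By the reflection relation \(F_\ell(\bar z)=\overline{F_\ell(z)}\) and Lemma~\ref{lem:complex-reflection}, these two orders in fact coincide, although the argument does not need this. Since \(\bar p\neq p\) and the factor \((z-\bar p)^M\) is holomorphic and nonvanishing near \(p\), the product \(\DeltaSphere{p}^MF_\ell\) is holomorphic near \(p\); the same holds near \(\bar p\). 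The pole set is locally finite, so both extensions are holomorphic on a conjugation-invariant neighbourhood of the point or pair.

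The stem symmetry survives by continuity. The multipliers satisfy \(\overline{(\bar z-a)^L}=(z-a)^L\) and \(\overline{\DeltaSphere{p}(\bar z)}=\DeltaSphere{p}(z)\), because both have real coefficients. Hence the products satisfy \eqref{eq:slice-stem-symmetry} off the poles, and therefore also at the removed points, since a holomorphic function is determined by its values on a punctured neighbourhood. The extended maps are thus holomorphic stems on \(E'=(E\setminus P_F)\cup\{a\}\), respectively on \(E'=(E\setminus P_F)\cup\{p,\bar p\}\).

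By Definition~\ref{def:slice-regularity}, these stems induce slice regular functions on \(\Omega_{E'}\). This set contains the point \(a\), respectively the whole sphere \(\Sigma\), by \eqref{eq:slice-circularization}. Off the polar set, the induced functions agree with \(h_L\) and \(g_M\). The only step needing care is the identification of the pointwise product with the induced function, which is where Proposition~\ref{prop:slice-preserving-factor} is essential. Everything else is componentwise one-variable complex analysis; in particular, exceptional points on \(\Sigma\) cause no difficulty, since the argument never evaluates \(f\) itself on \(\Sigma\).
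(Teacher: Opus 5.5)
Your proposal is correct and follows the same route as the paper: take the maximum of the componentwise pole orders of the \(F_\ell\), clear them at the stem level by \((z-a)^L\), respectively \((z-p)^M(z-\bar p)^M\), and identify the induced function with \(h_L\), respectively \(g_M\), through Proposition~\ref{prop:slice-preserving-factor}. Your explicit check that the stem symmetry persists across the removed points, and your remark that the orders at \(p\) and \(\bar p\) coincide by reflection, are details the paper's proof leaves implicit.
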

The whole point is that the statement is about the stem, not about \(f\).
Through \eqref{eq:slice-component-decomposition}, a meromorphic stem is a
quadruple of scalar meromorphic functions, that is, a meromorphic curve in
\(\C^4\), and the orders of its poles are four ordinary integers; clearing
them all at once is then a matter of taking a maximum, with no quaternionic
multiplication involved anywhere. The slice-preserving character of the
clearing factor, guaranteed by
Proposition~\ref{prop:slice-preserving-factor}, is what carries the
conclusion back to \(f\).
\begin{proof}
Write \(F=\sum F_\ell e_\ell\). Near a real pole \(a\), each scalar
meromorphic function \(F_\ell\) has a pole of some finite order
\(L_\ell\), with \(L_\ell=0\) if it is holomorphic at \(a\). Choose
\[
  L\geq\max_{0\leq\ell\leq3}L_\ell.
\]
Then \((z-a)^LF_\ell(z)\) is holomorphic at \(a\) for every \(\ell\),
so \((z-a)^LF(z)\) is a holomorphic stem near \(a\). Its induced slice
function is \((q-a)^Lf(q)\), because \((q-a)^L\) is slice preserving. This
proves part~\textup{(i)}.

For part~\textup{(ii)}, choose two small conjugate disks around \(p\) and
\(\bar p\) containing no other stem pole. Let \(m_{\ell,p}\) and
\(m_{\ell,\bar p}\) be the pole orders of \(F_\ell\) at the two points,
taking the value zero for a removable point. Choose
\[
  M\geq
  \max_{0\leq\ell\leq3}
  \{m_{\ell,p},m_{\ell,\bar p}\}.
\]
 The scalar polynomial
\[
  \delta(z):=(z-p)(z-\bar p)=z^2-2\alpha z+\alpha^2+\beta^2 ,
\]
which is the \(\DeltaSphere{p}\) of \eqref{eq:complex-characteristic-polynomial},
has a simple zero at each of \(p\) and \(\bar p\).
Therefore
\(\delta^MF_\ell\) is holomorphic at both points for every \(\ell\).
It follows that \(\delta^MF\) extends to a holomorphic stem on a
neighbourhood of the pair \(\{p,\bar p\}\). The induced slice function is
exactly \(\DeltaSphere{\Sigma}^Mf\), again by
Proposition~\ref{prop:slice-preserving-factor}. Its domain contains the
circularization of that neighbourhood and hence the entire sphere \(\Sigma\).
This proves the claim.
\end{proof}

\begin{remark}
\label{rem:slice-nonminimal-regularization}
As usual in these notes, neither \(L\) nor \(M\) in Proposition~\ref{prop:slice-regularization} is
required to be minimal. If \(M\) works, then every \(M+k\), \(k\geq0\),
also works, because
\[
  g_{M+k}=\DeltaSphere{\Sigma}^k g_M.
\]
For \(k\geq1\), this new regularization vanishes identically on \(\Sigma\).
Even a minimal regularization may vanish at an exceptional point. Thus no
later argument may divide by \(g_M(y)\), assume that it is nonzero, or infer
information from the invertibility of a symmetrization. The residue formulas
will instead use derivatives and divided differences of \(g_M\).
\end{remark}

The following local analysis explains why exceptional points occur, and why
there is at most one of them on each polar sphere.

\begin{proposition}[Exceptional points of a polar sphere]
\label{prop:slice-exceptional-points}
Let \(f=\mathcal I(F)\) be semiregular and suppose that \(F\) has a pole of
order \(m\) at \(p=\alpha+\ci\beta\), with leading Laurent term
\begin{equation}\label{eq:slice-stem-leading-laurent}
  F(z)=(z-p)^{-m}(c_1+\ci c_2)+O\bigl((z-p)^{-m+1}\bigr),
  \qquad c_1,c_2\in\Hh,\ c_1+\ci c_2\neq0 .
\end{equation}
Then, at the point \(y=\alpha+I\beta\) of \(\Sigma=\alpha+\Sphere\beta\),
the restriction \(f_I\) has leading coefficient
\begin{equation}\label{eq:slice-exceptional-leading-coefficient}
  \Phi_I(c_1+\ci c_2)=c_1+Ic_2 ,
\end{equation}
and at the conjugate point \(\alpha-I\beta\) it has leading coefficient
\(c_1-Ic_2\). The coefficient
\eqref{eq:slice-exceptional-leading-coefficient} vanishes for at most one
\(I\in\Sphere\), namely for \(I=-c_1c_2^{-1}\) when \(c_2\neq0\) and
\(|c_1|=|c_2|\) with \(c_1c_2^{-1}\) purely imaginary, and for no \(I\)
otherwise. In particular the two coefficients never vanish simultaneously,
unless \(c_1=c_2=0\).
\end{proposition}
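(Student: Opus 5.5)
The plan is to read the leading Laurent coefficient of \(f_I\) directly off the stem, using the evaluation map \(\Phi_I\) of Lemma~\ref{lem:slice-evaluation-linear}, and then to reduce the vanishing question to a quaternionic linear equation in \(I\).

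\emph{Leading coefficient at \(y=\alpha+I\beta\).} By \eqref{eq:slice-restriction-as-evaluation}, near \(y\) we have \(f_I(x+Ir)=\Phi_I(F(x+\ci r))\). Since \(\Phi_I\) is complex linear from \(\Hh_\C\) to \((\Hh,L_I)\), and the scalar \((z-p)^{-m}\) corresponds under \(x+\ci r\mapsto x+Ir\) to \((w-y)^{-m}\) acting on the left, applying \(\Phi_I\) termwise to \eqref{eq:slice-stem-leading-laurent} gives
\[
f_I(w)=(w-y)^{-m}\,\Phi_I(c_1+\ci c_2)+O\bigl((w-y)^{-m+1}\bigr).
\]
This is \eqref{eq:slice-exceptional-leading-coefficient}. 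At \(\bar y=\alpha-I\beta\) I would use the stem symmetry \(F(\bar z)=\overline{F(z)}\): near \(\bar p\) it gives \(F(z)=(z-\bar p)^{-m}(c_1-\ci c_2)+\dots\), and evaluating with \(\Phi_I\) yields the leading coefficient \(c_1-Ic_2\).

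\emph{Vanishing analysis.} First, if \(c_2=0\), then \(c_1\neq0\), so \(c_1+Ic_2=c_1\neq0\) for every \(I\). If \(c_2\neq0\), then \(c_1+Ic_2=0\) is equivalent to \(I=-c_1c_2^{-1}\), so at most one \(I\) works. That value is admissible precisely when \(-c_1c_2^{-1}\in\Sphere\), that is, when it is purely imaginary of modulus one. Since \(|c_1c_2^{-1}|=|c_1|/|c_2|\), the modulus condition is \(|c_1|=|c_2|\), which gives the stated conditions.

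\emph{No simultaneous vanishing.} Suppose \(c_1+Ic_2=0\) and \(c_1-Ic_2=0\). Adding gives \(c_1=0\), and subtracting gives \(Ic_2=0\), hence \(c_2=0\). Note that vanishing at \(I\) and at \(-I\) describes the same conjugate pair, consistent with the statement that each sphere carries at most one exceptional point.

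The main obstacle is the first step: the termwise transfer of the Laurent expansion through \(\Phi_I\). This works because \(\Phi_I\) is complex linear even though it is not multiplicative; the complex scalar factor \((z-p)^{-m}\) therefore passes through as left multiplication by \((w-y)^{-m}\) in \(\C_I\).
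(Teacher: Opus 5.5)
Your proposal is correct and follows essentially the same route as the paper. You push the leading stem term through the complex-linear (but non-multiplicative) evaluation \(\Phi_I\), get the conjugate node from the stem symmetry, solve \(c_1+Ic_2=0\) as \(I=-c_1c_2^{-1}\) subject to the unit-norm and purely-imaginary conditions, and add and subtract the two equations for the simultaneous case; you only spell out the termwise transfer of the Laurent expansion and the reflection at \(\bar p\) in somewhat more detail than the paper does.
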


\begin{proof}
The leading coefficient of \(f_I\) at \(y\) is the image under \(\Phi_I\) of
the leading coefficient of \(F\) at \(p\), by
Lemma~\ref{lem:slice-evaluation-linear}; the stem symmetry turns
\(c_1+\ci c_2\) into \(c_1-\ci c_2\) at \(\bar p\), whence the second
assertion. If \(c_2=0\) then \(c_1\neq0\) and \(c_1+Ic_2=c_1\) never
vanishes. If \(c_2\neq0\), the equation \(c_1+Ic_2=0\) determines
\(I=-c_1c_2^{-1}\) uniquely, and this quaternion lies in \(\Sphere\)
precisely when it is purely imaginary of unit norm, that is, when
\(|c_1|=|c_2|\) and \(c_1c_2^{-1}\in\imq\Hh\); otherwise no point of
\(\Sigma\) is exceptional. Finally, \(c_1+Ic_2=c_1-Ic_2=0\) forces
\(Ic_2=0\) and \(c_1=0\), hence \(c_2=0\) as well.
\end{proof}

\begin{example}[A polar sphere with one exceptional point]
\label{ex:slice-exceptional-point}
Fix \(K\in\Sphere\) and consider, on \(E=\C\setminus\{\pm\ci\}\), the stem
\[
  F(z):=\frac{z-K}{(z^2+1)^2} .
\]
It is a stem because \(z\mapsto z-K\) is one, its components being
\(A(x+\ci r)=x-K\) and \(B(x+\ci r)=r\), and because the scalar factor
\((z^2+1)^{-2}\) has real coefficients. Its poles are \(\pm\ci\), both of
order two, so the polar sphere is the unit sphere
\(\Sigma=\Sphere\), with \(\DeltaSphere{\Sigma}(q)=q^2+1\), and the induced
semiregular function is
\[
  f(q)=(q^2+1)^{-2}(q-K) .
\]

Restricting to a slice \(\C_I\) and factoring
\((q^2+1)^2=(q-I)^2(q+I)^2\) there, the order of the pole of \(f_I\) at
\(\pm I\) is two minus the order of vanishing of the numerator. Now
\(q-K\) vanishes only at \(q=K\). Hence \(f_I\) has two poles of order two,
at \(I\) and at \(-I\), for every \(I\) with \(\C_I\neq\C_K\); whereas on
the single slice \(\C_K\) it has a pole of order two at \(-K\) and only a
simple pole at \(K\) (see Figure~\ref{fig:slice-exceptional-point}). The point \(K\) is exceptional, and
Proposition~\ref{prop:slice-exceptional-points} confirms that it is the only
one: the leading coefficient of \(F\) at \(\ci\) is
\((2\ci)^{-2}(\ci-K)=(K-\ci)/4\), that is \(c_1=K/4\) and \(c_2=-1/4\), so
that \(c_1+Ic_2=(K-I)/4\) vanishes exactly for \(I=K\).

Two features are worth recording. First, the drop is invisible on the stem,
whose two poles are both of order two, and invisible on \(\Sigma\) as a set,
which is a polar sphere all the same: what varies from point to point is the
order of the pole of the restriction. Second, the minimal regularization
\(g_2=\DeltaSphere{\Sigma}^2f=q-K\) vanishes precisely at the exceptional
point, which is the situation anticipated in
Remark~\ref{rem:slice-nonminimal-regularization}: no argument may divide by
\(g_2(K)\). On the slice \(\C_K\), the conjugate pair \(\{K,-K\}\) carries
poles of orders one and two, exactly as the pair \(\{\ci,-\ci\}\) does in
Example~\ref{ex:complex-unequal-orders}.
\end{example}

\begin{figure}[htbp]
\centering
\begin{tikzpicture}[scale=1.2]
  \def\bet{1.4}
  \def\jx{-0.55}\def\jy{-0.45}
  \pgfmathsetmacro{\Kx}{\bet*\jx*sin(40)}
  \pgfmathsetmacro{\Ky}{\bet*cos(40)+\bet*\jy*sin(40)}
  \def\halfw{2.6}   

  \fill[black!5]
      ({-\halfw+\Kx},{\Ky}) -- ({\halfw+\Kx},{\Ky})
   -- ({\halfw-\Kx},{-\Ky}) -- ({-\halfw-\Kx},{-\Ky}) -- cycle;
  \draw[gray!45]
      ({-\halfw+\Kx},{\Ky}) -- ({\halfw+\Kx},{\Ky})
   -- ({\halfw-\Kx},{-\Ky}) -- ({-\halfw-\Kx},{-\Ky}) -- cycle;
  \node[gray!70] at ({-\halfw+\Kx+0.5},{\Ky-0.22}) {$\C_K$};

  \draw[->,gray!70] (-2.9,0) -- (3.1,0) node[below right] {$\R$};
  \draw[->,gray!70] (0,-2.0) -- (0,2.15) node[above right] {$\qi\R$};
  \draw[->,gray!70] (0,0) -- (-1.35,-1.10) node[below left] {$\qj\R$};

  \draw[thick] plot[domain=180:360,samples=60]
    ({\bet*\jx*sin(\x)},{\bet*cos(\x)+\bet*\jy*sin(\x)});
  \draw[thick] plot[domain=0:180,samples=60]
    ({\bet*\jx*sin(\x)},{\bet*cos(\x)+\bet*\jy*sin(\x)});
  \node at (0.92,1.30) {$\Sigma$};

  \draw[dotted,thick] (\Kx,\Ky) -- (-\Kx,-\Ky);
  \fill (\Kx,\Ky) circle (1.6pt);
  \fill (-\Kx,-\Ky) circle (1.6pt);
  \node[left] at (\Kx,\Ky+0.15) {$K$};
  \node[right] at (-\Kx+0.12,-\Ky+0.1) {$-K$};

  \node[right,align=left,font=\small] at ({\Kx-2},{\Ky+0.30})
    {simple pole};
  \node[left,align=right,font=\small] at ({-\Kx+2.5},{-\Ky-0.30})
    {pole of order two};
\end{tikzpicture}
\caption{The polar sphere $\Sigma=\Sphere$ of
Example~\ref{ex:slice-exceptional-point}, drawn in $\R+\qi\R+\qj\R$ with the
fourth coordinate suppressed, so that the two-sphere appears as the circle
$\Sigma\cap(\qi\R+\qj\R)$. The shaded
parallelogram is the slice $\C_K$, spanned by the real axis and the
direction of $K$; its horizontal edges pass through the conjugate pair
$\pm K$, where the restriction $f_K$ has a simple pole and a pole of order
two respectively. On every other slice both intersection points carry a pole
of order two.}
\label{fig:slice-exceptional-point}
\end{figure}

When the leading coefficient vanishes at \(y\), the pole order of \(f_I\)
there drops below \(m\), and the whole principal part may cancel, leaving a
removable singularity; Example~\ref{ex:slice-exceptional-point} realizes the
first of these two behaviours. 

It remains to record the slice-wise meromorphic reduction, which agrees with
\cite[Lemma~2.7 and Theorem~4.9]{GPSSingularities}, and to fix the
point-residue convention.

\begin{proposition}
\label{prop:slice-meromorphic-restriction}
Let \(f\) be semiregular on \(\Omega_E\) and fix \(I\in\Sphere\). Then the
restriction \(f_I\) is meromorphic on the full slice \(E_I\), with values in
\((\Hh,L_I)\), and for every \(J\in\Sphere\) orthogonal to \(I\) it has a
unique splitting
\begin{equation}\label{eq:slice-meromorphic-splitting}
  f_I=u+vJ ,
\end{equation}
with \(u,v\) meromorphic on \(E_I\) and \(\C_I\) valued.
\end{proposition}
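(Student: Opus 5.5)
The plan is to reduce everything to the stem and then apply Lemma~\ref{lem:slice-evaluation-linear} componentwise. Write \(F=\sum_{\ell=0}^3F_\ell e_\ell\) with each \(F_\ell\) meromorphic on \(E\), as in Definition~\ref{def:slice-semiregular}. Let \(P_F\) be the locally finite union of their poles, and let \(P_I:=\{x+Ir: x+\ci r\in P_F\}\subset E_I\) be its image under the identification \(x+\ci r\mapsto x+Ir\). This image is again locally finite. Away from \(P_I\), formula \eqref{eq:slice-restriction-as-evaluation} gives \(f_I(x+Ir)=\Phi_I(F(x+\ci r))\). By complex linearity of \(\Phi_I\) this equals
\[
  f_I(x+Ir)=\sum_{\ell=0}^3 \Phi_I(F_\ell(x+\ci r))\,e_\ell ,
\]
where \(\Phi_I(F_\ell)\) is just the scalar \(F_\ell\) transported to \(\C_I\). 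Each summand is therefore a \(\C_I\)-valued meromorphic function times a fixed quaternion on the right. Right multiplication by \(e_\ell\) commutes with the left \(L_I\)-action, so the sum is a meromorphic map into \((\Hh,L_I)\), in the componentwise sense fixed in Section~\ref{sec:complex-model}. Its poles lie in \(P_I\), and near each point of \(P_I\) a finite power of \((z-z_0)\) clears all four components, exactly as in the proof of Proposition~\ref{prop:slice-regularization}. The domain issue is settled by \eqref{eq:slice-full-section}: \(E_I\) is the full slice and need not be connected, but meromorphy is local, so this causes no difficulty.

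For the splitting, fix \(J\perp I\) and set \(K=IJ\). Then \(\{1,J\}\) is a basis of \((\Hh,L_I)\) over \(\C_I\), because \(\Hh=\C_I\oplus\C_IJ\), with \(\qi,\qj,\qk\) replaced by the orthonormal frame \(I,J,K\). Every quaternion \(a\) therefore decomposes uniquely as \(a=a_1+a_2J\) with \(a_1,a_2\in\C_I\). The maps \(a\mapsto a_1\) and \(a\mapsto a_2\) are \(\C_I\)-linear for the left action, since \(\lambda(a_1+a_2J)=\lambda a_1+(\lambda a_2)J\). I apply this decomposition to each constant \(e_\ell\), writing \(e_\ell=\epsilon_\ell+\eta_\ell J\). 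Then
\[
  u:=\sum_\ell \Phi_I(F_\ell)\,\epsilon_\ell,
  \qquad
  v:=\sum_\ell \Phi_I(F_\ell)\,\eta_\ell .
\]
Both are \(\C_I\)-linear combinations of the meromorphic scalars \(\Phi_I(F_\ell)\) with \(\C_I\)-valued coefficients. Because \(\C_I\) is commutative, the order of the factors does not matter, so \(u\) and \(v\) are meromorphic and \(\C_I\)-valued. They satisfy \(f_I=u+vJ\) off \(P_I\). Uniqueness follows from the uniqueness of the pointwise decomposition \(a=a_1+a_2J\), applied on the dense set \(E_I\setminus P_I\). Two meromorphic functions that agree there agree as meromorphic functions.

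The only step needing care, and the one I expect to be the main obstacle, is the bookkeeping of sides. The coefficients \(\epsilon_\ell,\eta_\ell\) must multiply the scalars in a way compatible with the left \(\C_I\)-structure, and \(J\) must sit on the right so that \(u\) and \(v\) come out holomorphic and not antiholomorphic. This works because quaternionic constants act on the right, commuting with left multiplication by \(\C_I\), as in Remark~\ref{rem:complex-left-differential}. I would verify explicitly that \(\Phi_I(c)e_\ell\) with \(c\in\C\) lies in \(\C_I e_\ell\) and decomposes as stated. No division by values of \(f\) occurs anywhere, so zero divisors and exceptional points need no separate treatment.
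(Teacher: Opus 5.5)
Your proof is correct and follows the same route as the paper: push the stem through the complex-linear map \(\Phi_I\), then split using \(\Hh=\C_I\oplus\C_IJ\). The only difference is that you work globally with the four scalar coordinates \(F_\ell\), so that \(u\) and \(v\) come out as explicit \(\C_I\)-combinations of the transported \(F_\ell\), whereas the paper applies \(\Phi_I\) to local Laurent expansions of the stem.
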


\begin{proof}
This is the standard meromorphic splitting on a complex slice; see
\cite[Lemma~2.7 and Theorem~4.9]{GPSSingularities}. Indeed, applying the
complex-linear map \(\Phi_I\) to the Laurent expansions of the stem gives
\((\Hh,L_I)\)-valued Laurent expansions of \(f_I\). Some coefficients may
vanish, thereby lowering the pole order, but no essential singularity can
be created. The unique decomposition \eqref{eq:slice-meromorphic-splitting}
then follows from the complex direct sum
\(\Hh=\C_I\oplus\C_IJ \).
\end{proof}

Let \(p\in E_I\) be an isolated singular point of \(f_I\). By
Proposition~\ref{prop:slice-meromorphic-restriction}, \(f_I\) is a
meromorphic map with values in the two-dimensional complex vector space
\((\Hh,L_I)\), so it is exactly of the kind considered in
Section~\ref{sec:complex-model} with \(V=(\Hh,L_I)\): on a punctured disc
around \(p\) it admits a Laurent expansion
\begin{equation}\label{eq:slice-point-laurent-expansion}
  f_I(z)=\sum_{n\geq-m}(z-p)^na_n,
  \qquad a_n\in\Hh ,
\end{equation}
with the powers \((z-p)^n\in\C_I\) acting from the left and the quaternionic
coefficients on the right. Convergence and termwise integration hold
componentwise in any complex basis of \((\Hh,L_I)\), as in
\eqref{eq:complex-laurent}. If the singularity is removable we set \(a_n=0\) for
\(n<0\).

\begin{definition}
\label{def:slice-point-residue}
The point residue of \(f_I\) at \(p\) is
\begin{equation}\label{eq:slice-point-residue}
  \Res_p^If_I:=a_{-1} .
\end{equation}
\end{definition}

 Equivalently, if \(\gamma_p\) is
a sufficiently small positively oriented circle of \(\C_I\) around \(p\),
\begin{equation}\label{eq:slice-point-residue-integral}
  \Res_p^If_I=(2\pi I)^{-1}\int_{\gamma_p}\dd z\,f_I(z) ,
\end{equation}
both \(\dd z\) and \((2\pi I)^{-1}\) acting from the left. Indeed,
integrating \eqref{eq:slice-point-laurent-expansion} term by term on a
circle inside the annulus of convergence, and using
\[
  \int_{\gamma_p}\dd z\,(z-p)^n
  =\begin{cases}
     2\pi I,& n=-1,\\
     0,& n\neq-1,
   \end{cases}
\]
leaves \((2\pi I)a_{-1}\), the coefficients having stayed on the right
throughout. The factor \((2\pi I)^{-1}\) is an element of \(\C_I\) and must
be applied on the left, as a complex scalar: applying it on the right would
return \(a_{-1}\) conjugated by \(2\pi I\), a different quaternion in
general.

At a real pole \(a\) the point residue does not depend on the slice: the
Laurent expansion of the stem at a real centre has coefficients fixed by
central complex conjugation, hence lying in \(\Hh\), and every \(\Phi_I\)
evaluates them identically. We may therefore write \(\Res_af\) with no slice
label. At a non-real polar sphere, by contrast, a single point residue can
depend on the slice and on the representative; the invariant quantity will
be the sum of the two point residues at \(\alpha\pm I\beta\), a removable
point contributing zero. Section~\ref{sec:spherical-residues} identifies
that sum with the spherical residue.

All the ingredients of that construction are now in place: the
regularization \(g_M=\DeltaSphere{\Sigma}^Mf\) is slice regular across the
whole sphere, its restriction to a full slice is vector-valued holomorphic,
and \eqref{eq:slice-operator-translation} turns the conjugate-node
divided-difference formula into
\(\partial_s(\partial_c\partial_s)^{M-1}g_M\). None of this requires an
inverse of \(f\), of \(g_M\), or of a symmetrization.
\section{Spherical residues and their differential evaluation}
\label{sec:spherical-residues}
Since a semiregular function is induced by a stem, its singular set is
circular, and a non-real singularity belongs 
to a whole conjugacy sphere
\[
  \Sigma=\alpha+\Sphere\beta,\qquad \alpha\in\R,\ \beta>0 ,
\]
which every full slice \(\C_I\) meets in the conjugate pair
\(\alpha\pm I\beta\). This section identifies the single Laurent coefficient
that measures such a pair as a whole, and evaluates it by differentiating a
slice regular function.

That coefficient will be obtained in three equivalent forms, each doing a
different job. It is
\begin{enumerate}[label=\textup{(\roman*)}]
  \item the coefficient of \(q\DeltaSphere{\Sigma}^{-1}\) in the paired
        spherical Laurent expansion, which shows that it belongs to the
        sphere and not to a chosen slice;
  \item on any full slice, the sum of the two ordinary residues at the
        conjugate intersection points, which connects it to the classical
        residue theorem;
  \item after multiplication by a sufficiently high power of
        \(\DeltaSphere{\Sigma}\), a balanced Hermite divided difference,
        equivalently an alternating expression in \(\partial_s\) and
        \(\partial_c\), which makes it computable from a finite jet.
\end{enumerate}

\subsection{Cassini neighbourhoods and spherical Laurent expansions}
\label{subsec:sres-laurent}

Recall from \eqref{eq:slice-characteristic-polynomial} that the
characteristic polynomial of \(\Sigma=\alpha+\mathbb{S}\beta\) is
\begin{equation}\label{eq:sres-characteristic-polynomial}
  \DeltaSphere{\Sigma}(q)
  =q^2-2\alpha q+\alpha^2+\beta^2.
\end{equation}

Cassini balls and the expansions they support are well established in slice
analysis. The spherical series were introduced by Stoppato
\cite{StoppatoSeries} and extended to singularities and to real alternative
\(*\)-algebras in \cite[Sections~7 and~9]{GPSSingularities}; the form used
here, together with the sets on which it converges, is the one of
\cite[Theorem~1.7 and Definition~1.8]{AltavillaSphericalCoefficients}.

\begin{definition}[Cassini balls and shells]
\label{def:sres-cassini-sets}
For \(R>0\) and \(0\leq\rho<R\), define
\begin{align}
  \mathcal U(\Sigma,R)
  &:=\{q\in\Hh:|\DeltaSphere{\Sigma}(q)|<R^2\},
  \label{eq:sres-cassini-ball}\\
  \mathcal U(\Sigma;\rho,R)
  &:=\{q\in\Hh:\rho^2<|\DeltaSphere{\Sigma}(q)|<R^2\}.
  \label{eq:sres-cassini-shell}
\end{align}
The punctured Cassini ball is
\(\mathcal U(\Sigma;0,R)=\mathcal U(\Sigma,R)\setminus\Sigma\).
Here \(|\cdot|\) is the Euclidean norm of a quaternion. Since
\(\DeltaSphere{\Sigma}\) is slice preserving, these sets are circular.
\end{definition}

\begin{figure}[htbp]
\centering
\begin{tikzpicture}[scale=1.15]
  \def\Rq{0.0625}          
  \pgfmathsetmacro{\Dl}{acos(sqrt(1-\Rq))/2}
  \draw[->,gray!70] (-1.1,0) -- (1.15,0) node[below right] {$\R$};
  \draw[->,gray!70] (0,-1.8) -- (0,1.9) node[above left] {$I\R$};
  \foreach \c in {90,270}{
    \filldraw[black!10,draw=black,thick]
      plot[domain={\c-\Dl}:{\c+\Dl},samples=90,variable=\t]
        ({sqrt(max(0,-cos(2*\t)+sqrt(max(0,cos(2*\t)*cos(2*\t)-1+\Rq))))*cos(\t)},
         {sqrt(max(0,-cos(2*\t)+sqrt(max(0,cos(2*\t)*cos(2*\t)-1+\Rq))))*sin(\t)})
   -- plot[domain={\c+\Dl}:{\c-\Dl},samples=90,variable=\t]
        ({sqrt(max(0,-cos(2*\t)-sqrt(max(0,cos(2*\t)*cos(2*\t)-1+\Rq))))*cos(\t)},
         {sqrt(max(0,-cos(2*\t)-sqrt(max(0,cos(2*\t)*cos(2*\t)-1+\Rq))))*sin(\t)})
   -- cycle;}
  \fill (0,1) circle (1.3pt);  \node[right] at (0.10,1.0) {$p$};
  \fill (0,-1) circle (1.3pt); \node[right] at (0.10,-1.0) {$\bar p$};
  \node[below] at (0,-1.85) {$R=0.5$};
\end{tikzpicture}
\hfill
\begin{tikzpicture}[scale=1.15]
  \def\Rq{1.0}
  \pgfmathsetmacro{\Dl}{acos(sqrt(max(0,1-\Rq)))/2}
  \draw[->,gray!70] (-1.1,0) -- (1.15,0) node[below right] {$\R$};
  \draw[->,gray!70] (0,-1.8) -- (0,1.9) node[above left] {$I\R$};
  \foreach \c in {90,270}{
    \filldraw[black!10,draw=black,thick]
      plot[domain={\c-\Dl}:{\c+\Dl},samples=120,variable=\t]
        ({sqrt(max(0,-cos(2*\t)+sqrt(max(0,cos(2*\t)*cos(2*\t)-1+\Rq))))*cos(\t)},
         {sqrt(max(0,-cos(2*\t)+sqrt(max(0,cos(2*\t)*cos(2*\t)-1+\Rq))))*sin(\t)})
   -- plot[domain={\c+\Dl}:{\c-\Dl},samples=120,variable=\t]
        ({sqrt(max(0,-cos(2*\t)-sqrt(max(0,cos(2*\t)*cos(2*\t)-1+\Rq))))*cos(\t)},
         {sqrt(max(0,-cos(2*\t)-sqrt(max(0,cos(2*\t)*cos(2*\t)-1+\Rq))))*sin(\t)})
   -- cycle;}
  \fill (0,1) circle (1.3pt);  \node[right] at (0.10,1.0) {$p$};
  \fill (0,-1) circle (1.3pt); \node[right] at (0.10,-1.0) {$\bar p$};
  \node[below] at (0,-1.85) {$R=1$};
\end{tikzpicture}
\hfill
\begin{tikzpicture}[scale=1.15]
  \def\Rq{2.0736}          
  \draw[->,gray!70] (-1.1,0) -- (1.15,0) node[below right] {$\R$};
  \draw[->,gray!70] (0,-1.8) -- (0,1.9) node[above left] {$I\R$};
  \filldraw[black!10,draw=black,thick]
    plot[domain=0:360,samples=240,variable=\t]
      ({sqrt(max(0,-cos(2*\t)+sqrt(max(0,cos(2*\t)*cos(2*\t)-1+\Rq))))*cos(\t)},
       {sqrt(max(0,-cos(2*\t)+sqrt(max(0,cos(2*\t)*cos(2*\t)-1+\Rq))))*sin(\t)})
   -- cycle;
  \fill (0,1) circle (1.3pt);  \node[right] at (0.10,1.0) {$p$};
  \fill (0,-1) circle (1.3pt); \node[right] at (0.10,-1.0) {$\bar p$};
  \node[below] at (0,-1.85) {$R=1.2$};
\end{tikzpicture}
\caption{The Cassini ball $\mathcal U(\Sigma,R)\cap\C_I$ for
$\Sigma=\Sphere$, so that $\DeltaSphere{\Sigma}(z)=z^2+1$ and the nodes are
$p=I$, $\bar p=-I$. For $R<\beta=1$ the sublevel set has two components,
one around each node; at $R=\beta$ they meet at the origin; for $R>\beta$ it
is connected, and a contour drawn in it no longer separates the two nodes.
}
\label{fig:cassini-balls}
\end{figure}

The square on \(R\) is a convenient normalization, \(\DeltaSphere{\Sigma}\)
having degree two. On a slice \(\C_I\), writing \(p=\alpha+I\beta\) and
\(\bar p=\alpha-I\beta\) for the two nodes, the characteristic polynomial
factors as
\begin{equation}\label{eq:sres-delta-slice-factorization}
  \DeltaSphere{\Sigma}(z)=(z-p)(z-\bar p),
  \qquad z\in\C_I ,
\end{equation}
so that a Cassini ball is the circular analogue of a two-centre
neighbourhood.

For \(0<\rho<\beta\) the planar set \(\mathcal U(\Sigma,\rho)\cap\C_I\) has
exactly two connected components, one containing each node. It has at most
two, because every component must contain a zero of
\(\DeltaSphere{\Sigma}\) — otherwise the maximum-modulus principle applied
to \(1/\DeltaSphere{\Sigma}\) on that component would be violated — and the
only zeros are \(p\) and \(\bar p\); and it has at least two, because a path
joining the two nodes has to cross the real axis, where
\[
  |\DeltaSphere{\Sigma}(x)|=(x-\alpha)^2+\beta^2\geq\beta^2>\rho^2 ,
\]
so that no such path stays inside the set. We write
\begin{equation}\label{eq:sres-cassini-contour}
  \Gamma_{I,\rho}:=\partial\bigl(\mathcal U(\Sigma,\rho)\cap\C_I\bigr) ,
\end{equation}
each component of the boundary being positively oriented in \(\C_I\) (see
Figure~\ref{fig:cassini-balls}).

Before proving the expansion we isolate the contour computation that
extracts its two coefficient families, and which explains where the
apparently special weights \(1\) and \(z-2\alpha\) come from.

\begin{lemma}[The two coefficient-extraction moments]
\label{lem:sres-two-moments}
Fix \(I\in\Sphere\), let \(p=\alpha+I\beta\) and \(\bar p=\alpha-I\beta\),
and let \(\Gamma_{I,\rho}\) be as in \eqref{eq:sres-cassini-contour}. For
\(c,d\in\Hh\) and \(j,k\in\mathbb Z\), set
\(\Lambda_j(z):=\DeltaSphere{\Sigma}(z)^j(zc+d)\). Then
\begin{align}
  (2\pi I)^{-1}\int_{\Gamma_{I,\rho}}
    \dd z\,\DeltaSphere{\Sigma}(z)^{-k-1}\Lambda_j(z)
  &=\begin{cases} c,&j=k,\\ 0,&j\neq k,\end{cases}
  \label{eq:sres-first-moment}\\
  (2\pi I)^{-1}\int_{\Gamma_{I,\rho}}
    \dd z\,(z-2\alpha)\DeltaSphere{\Sigma}(z)^{-k-1}\Lambda_j(z)
  &=\begin{cases} d,&j=k,\\ 0,&j\neq k.\end{cases}
  \label{eq:sres-second-moment}
\end{align}
The integrands have no poles other than \(p\) and \(\bar p\), and all
complex scalars, \(\dd z\) and \((2\pi I)^{-1}\) included, act from the
left.
\end{lemma}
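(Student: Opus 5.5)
The plan is to reduce both identities to a single scalar-weighted contour integral of \(\DeltaSphere{\Sigma}^{\,m}\) against the weights \(1\) and \(z\), and then read off the coefficients. Since \(\DeltaSphere{\Sigma}\) has real coefficients, its restriction to \(\C_I\) is a \(\C_I\)-valued scalar. The integrand of \eqref{eq:sres-first-moment} is therefore
\[
\DeltaSphere{\Sigma}(z)^{j-k-1}\bigl(zc+d\bigr),
\]
with the complex factor acting on the left of the constant quaternions \(c,d\). Setting \(m:=j-k-1\), left-linearity of the integral gives
\[
(2\pi I)^{-1}\Bigl(\int_{\Gamma_{I,\rho}}\dd z\,z\,\DeltaSphere{\Sigma}(z)^{m}\Bigr)c
+(2\pi I)^{-1}\Bigl(\int_{\Gamma_{I,\rho}}\dd z\,\DeltaSphere{\Sigma}(z)^{m}\Bigr)d .
\]
So it suffices to compute the two scalar integrals
\[
S_m:=\frac{1}{2\pi I}\int_{\Gamma}\dd z\,\DeltaSphere{\Sigma}^{m},
\qquad
T_m:=\frac{1}{2\pi I}\int_{\Gamma}\dd z\,z\,\DeltaSphere{\Sigma}^{m},
\]
and, for the second moment, the analogous integral with the extra factor \(z-2\alpha\).

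\emph{Reduction to residues.} For \(m\ge0\) both integrands are polynomials, so \(S_m=T_m=0\) by Cauchy's theorem on each component of \(\Gamma_{I,\rho}\). For \(m\le-1\) the only poles are \(p\) and \(\bar p\), one in each component, and Theorem~\ref{thm:complex-residue-flux} turns \(S_m\) and \(T_m\) into sums of the two point residues. Each of these sums is a balanced divided difference, by Theorem~\ref{thm:dd-contour} with \(|m|\) copies of each node: \(S_m=1[p^{(n)},\bar p^{(n)}]\) and \(T_m=z[p^{(n)},\bar p^{(n)}]\) with \(n=-m\). A divided difference of order \(2n-1\) of a polynomial of degree \(<2n-1\) vanishes. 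One of order \(2n-1\) applied to a polynomial of degree exactly \(2n-1\) returns its leading coefficient, as shown in Example~\ref{ex:dd-unequal-orders}. Hence \(S_m=0\) for all \(m\), while \(T_m=0\) unless \(n=1\), that is \(m=-1\), where \(T_{-1}=z[p,\bar p]=1\). This is the route I would take.

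An alternative avoiding divided differences is a change of variables \(w=\DeltaSphere{\Sigma}(z)\). On each component \(\Gamma\) winds once around \(w=0\) with \(|w|=\rho^2\). Writing \(\dd z=\dd w/(2z-2\alpha)\), one obtains
\[
S_m+\ldots=\sum_{\pm}\frac{1}{2\pi I}\oint \frac{w^m\,\dd w}{2(z_\pm(w)-\alpha)},
\]
which is less clean. I would keep the divided-difference argument.

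\emph{First moment.} Since \(m=-1\) exactly when \(j=k\), the first moment equals \(T_{-1}c+S_{-1}d=c\) when \(j=k\), and \(0\) otherwise, which is \eqref{eq:sres-first-moment}.

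\emph{Second moment.} The extra factor gives
\[
(z-2\alpha)(zc+d)=z^2c+z(d-2\alpha c)+(-2\alpha d).
\]
Reduce \(z^2=\DeltaSphere{\Sigma}+2\alpha z-(\alpha^2+\beta^2)\). The \(z\)-coefficient then becomes \(2\alpha c+d-2\alpha c=d\), and what remains is
\[
\DeltaSphere{\Sigma}\,c+\text{(constant)}.
\]
The integrand is thus \(\DeltaSphere{\Sigma}^{m+1}c+z\DeltaSphere{\Sigma}^{m}d+\DeltaSphere{\Sigma}^{m}e\) for some \(e\in\Hh\). Applying the computation of \(S\) and \(T\) leaves \(T_m d=d\) exactly when \(m=-1\), that is \(j=k\), and \(0\) otherwise. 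This is \eqref{eq:sres-second-moment}.

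The main obstacle is only bookkeeping of the order of multiplication. One must check that all scalars \(\DeltaSphere{\Sigma}(z)\), \(z\), \(\dd z\) and \((2\pi I)^{-1}\) lie in \(\C_I\) and stay to the left of \(c\), \(d\) and \(e\), so that the vector-valued residue theorem applies componentwise in \((\Hh,L_I)\). The degree count for \(T_m\) also needs the leading-coefficient identity at the two-node confluent level, which Example~\ref{ex:dd-unequal-orders} already justifies.
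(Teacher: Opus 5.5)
Your proof is correct, but it is organized differently from the paper's. The paper treats the two moments in parallel with a weight $w\in\{1,\,z-2\alpha\}$ and splits into three cases. For $j>k$ it uses Cauchy's theorem. For $j=k$ there are two simple poles, and the relations $w(p)=-\bar p$, $w(\bar p)=-p$ produce $d$ by a direct residue computation. For $j<k$ the integrands are $O(|z|^{-3})$ and $O(|z|^{-2})$ at infinity, so the residue at infinity vanishes and the two finite residues cancel.

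You instead first reduce everything to the two scalar families $S_m,T_m$. You then obtain the second moment from the first through the exact identity $(z-2\alpha)(zc+d)=\DeltaSphere{\Sigma}(z)c+zd+e$, with $e=-(\alpha^2+\beta^2)c-2\alpha d$, so no separate residue computation with the weight $z-2\alpha$ is needed. You dispose of the case $j<k$ by reading $S_m,T_m$ as balanced divided differences of the polynomials $1$ and $z$, whose degree is too low, rather than by decay at infinity.

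Your reduction has the merit of showing why $z-2\alpha$ is the right weight. It is exactly the identity \eqref{eq:4d-modified-paired-block}, which the paper only uses later, in the proof of Theorem~\ref{thm:4d-principal-part-recovery}, to show that this weight moves $v_n$ into the $u$-slot. The price is that your vanishing argument for $j<k$ rests on the leading-coefficient property of divided differences from Example~\ref{ex:dd-unequal-orders}, and hence on the Newton and Hermite results of Section~\ref{sec:divided-differences}. The paper's degree count at infinity is a self-contained one-line argument. The two are the same fact in different form, and your $m=-1$ step, $z[p,\bar p]=1$ and $1[p,\bar p]=0$, is the paper's simple-pole computation in divided-difference notation.
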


\begin{proof}
Both integrands carry the factor
\(\DeltaSphere{\Sigma}^{j-k-1}\), so three cases arise.

If \(j>k\) the exponent is non-negative and the integrands are polynomials
with values in \((\Hh,L_I)\); their integrals vanish by Cauchy's theorem.

If \(j=k\), the two integrands are \(w(z)(zc+d)\big/(z-p)(z-\bar p)\) with
\(w\equiv1\) and \(w(z)=z-2\alpha\) respectively, by
\eqref{eq:sres-delta-slice-factorization}. Both poles being simple, the sum
of the residues is
\[
  (p-\bar p)^{-1}\bigl[w(p)(pc+d)-w(\bar p)(\bar pc+d)\bigr] ,
\]
and the residue theorem, applied componentwise in \((\Hh,L_I)\), identifies
the normalized integral with it. For \(w\equiv1\) the bracket is
\((p-\bar p)c\), giving \(c\). For \(w(z)=z-2\alpha\), the relation
\(p+\bar p=2\alpha\) turns \(w(p)\) into \(-\bar p\) and \(w(\bar p)\) into
\(-p\), so that the bracket becomes
\[
  -\bar p(pc+d)+p(\bar pc+d)=(p\bar p-\bar pp)c+(p-\bar p)d=(p-\bar p)d ,
\]
the first term vanishing because \(p\) and \(\bar p\) commute in the plane
\(\C_I\); the integral is then \(d\).

If \(j<k\), put \(r=k+1-j\geq2\). The first integrand is
\((zc+d)\DeltaSphere{\Sigma}(z)^{-r}=O(|z|^{1-2r})=O(|z|^{-3})\) at
infinity and the second, whose numerator is quadratic,
\(O(|z|^{2-2r})=O(|z|^{-2})\). Either decay makes the residue at infinity
vanish, as in Remark~\ref{rem:complex-two-half-planes}; since \(p\) and
\(\bar p\) are the only finite poles, the residues there sum to zero, and
the integrals vanish.
\end{proof}

The local expansion below is known: it is the quaternionic counterpart of a
two-point Laurent series, and in the form stated it can be found in \cite{StoppatoSeries} and
\cite[Theorem~7.2 and Remark~7.4]{GPSSingularities}. We reprove it because
the argument, rather than the statement, is what the rest of the paper
needs: the coefficients are produced by the confluent Newton--Hermite
expansion of Section~\ref{sec:divided-differences}, so that the differential
formula of Subsection~\ref{subsec:sres-differential} comes out of the same
construction instead of having to be matched to it afterwards.

\begin{theorem}[Paired spherical Laurent expansion]
\label{thm:sres-paired-laurent}
Let \(f\) be semiregular on a punctured circular neighbourhood of the
non-real sphere \(\Sigma=\alpha+\Sphere\beta\), and let \(M\geq1\) be an
integer for which
\begin{equation}\label{eq:sres-regularization}
  g_M(q):=\DeltaSphere{\Sigma}(q)^Mf(q)
\end{equation}
extends slice regularly through the whole sphere. Then there is an \(R>0\)
such that
\begin{equation}\label{eq:sres-paired-laurent}
  \boxed{\;
  f(q)=\sum_{k=-M}^{\infty}\DeltaSphere{\Sigma}(q)^k(qu_k+v_k),
  \qquad u_k,v_k\in\Hh ,
  \;}
\end{equation}
on \(\mathcal U(\Sigma;0,R)\), the series converging normally on compact
subsets and its coefficients being unique. For every \(I\in\Sphere\), every
\(0<\rho<\min\{R,\beta\}\) and every \(k\geq-M\),
\begin{align}
  u_k&=(2\pi I)^{-1}\int_{\Gamma_{I,\rho}}
       \dd z\,\DeltaSphere{\Sigma}(z)^{-k-1}f_I(z),
  \label{eq:sres-u-contour}\\
  v_k&=(2\pi I)^{-1}\int_{\Gamma_{I,\rho}}
       \dd z\,(z-2\alpha)\DeltaSphere{\Sigma}(z)^{-k-1}f_I(z) .
  \label{eq:sres-v-contour}
\end{align}
In particular the coefficients depend neither on \(\rho\), nor on the slice,
nor on the representative chosen in \(\Sigma\), and a larger admissible
exponent only prefixes the expansion with vanishing pairs.
\end{theorem}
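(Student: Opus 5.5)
The plan is to build the expansion at the level of the stem and to read off all uniqueness and invariance claims from Lemma~\ref{lem:sres-two-moments}. Let \(G_M=\delta^MF\) be the holomorphic stem of \(g_M\), where \(\delta(z)=(z-p)(z-\bar p)\) with \(p=\alpha+\ci\beta\). It is defined on a conjugation-invariant neighbourhood \(W\) of \(\{p,\bar p\}\); I choose \(R\le\beta\) so small that \(\{|\delta|<R^2\}\subset W\). For every \(N\geq M\), Corollary~\ref{cor:dd-alternating-newton}, applied to \(G_M\) (which is \(\Hh_\C\)-valued, treated componentwise), combined with \eqref{eq:dd-change-two-families}, gives
\[
  G_M(z)=\sum_{n=0}^{N-1}\delta(z)^n\bigl(zB_n+C_n\bigr)+\delta(z)^N G_M[p^{(N)},\bar p^{(N)},z].
\]
Here \(B_n,C_n\in\Hh_\C\). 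Dividing by \(\delta^M\) and setting \(k=n-M\) produces a candidate expansion of \(F\) with coefficients \(B_{k+M},C_{k+M}\).

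\textbf{Quaternionic coefficients.} A priori these coefficients lie in \(\Hh_\C\), so the next step is to show they are fixed by central conjugation. The coefficients are determined by the stem: they are the unique coefficients in the expansion, which can be recovered by the contour moments. Applying the stem symmetry \(\overline{F(\bar z)}=F(z)\) and the fact that \(\delta\) has real coefficients, I obtain a second expansion with coefficients \(\overline{B_n},\overline{C_n}\). Uniqueness then forces \(B_n,C_n\in\Hh\). Writing \(u_k,v_k\) for these quaternions and applying \(\Phi_I\), which is complex linear and fixes both \(\Hh\) and real polynomials, turns the stem identity into \eqref{eq:sres-paired-laurent} on every slice, hence on \(\mathcal U(\Sigma;0,R)\).

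\textbf{Convergence.} I expect the main obstacle to be the convergence claim, that is, showing the remainder \(\delta^{N-M}G_M[\dots,z]\) tends to zero. To handle it I use the contour formula \eqref{eq:cx-hermite} on \(\Gamma=\{|\delta|=R'^2\}\) with \(R'<R\) close to \(R\); when \(R'>\beta\) this contour can be taken connected or enclosing both nodes. This gives
\[
  \|\delta(z)^N G_M[p^{(N)},\bar p^{(N)},z]\|\le C\,(|\delta(z)|/R'^2)^N/\operatorname{dist}(z,\Gamma).
\]
This bound gives geometric decay on compact subsets of \(\{|\delta|<R'^2\}\), which is exactly normal convergence, transported to quaternions by the representation formula. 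Uniqueness and the contour formulas \eqref{eq:sres-u-contour}--\eqref{eq:sres-v-contour} then follow by integrating the slice expansion term by term over \(\Gamma_{I,\rho}\). Term-by-term integration is legitimate because the convergence is uniform on the compact set \(\Gamma_{I,\rho}\). By Lemma~\ref{lem:sres-two-moments}, only the pair \(j=k\) survives the integration.

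\textbf{Invariance.} Independence of \(\rho\) follows from Cauchy's theorem. Independence of \(I\) and of the representative follows because the coefficients were defined from the stem before any slice was chosen. Finally, if \(M'>M\) is a larger admissible exponent, the identity \(g_{M'}=\DeltaSphere{\Sigma}^{M'-M}g_M\) together with uniqueness shows that the extra coefficients \(u_k,v_k\) for \(-M'\le k<-M\) vanish.
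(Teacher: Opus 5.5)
Your argument is correct, but it works at the level of the stem, whereas the paper works on one slice.

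\textbf{The paper's route.} The paper fixes $I$ and applies the alternating Newton formula to the $(\Hh,L_I)$-valued restriction $(g_M)_I$ at the nodes $\alpha\pm I\beta$. The coefficients $B_n$ and $A_n-pB_n$ are then quaternions from the start. The price is two further steps: the representation formula, to pass from one full slice to $\mathcal U(\Sigma;0,R)$, and uniqueness via Lemma~\ref{lem:sres-two-moments}, to show that the construction on another slice gives the same coefficients.

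\textbf{Your route.} You run the same Newton expansion and the same contour remainder bound for the $\Hh_\C$-valued stem $\delta^MF$ at $p,\bar p\in\C$. This costs an extra step: showing the coefficients are fixed by central conjugation. As written, that step needs a stem-level analogue of the two-moment lemma (the same computation with complex scalars). It also relies on convergence, which your write-up establishes only afterwards; this is harmless, since the remainder bound does not use reality of the coefficients.

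A shorter way to get reality is directly from the stem symmetry, which turns conjugation of a divided difference into conjugation of its nodes. Hence $\overline{B_n}=G_M[\bar p^{(n+1)},p^{(n+1)}]=B_n$. The identity $\overline{C_n}=C_n$ is equivalent to $A_n-\overline{A_n}=(p-\bar p)B_n$, which is exactly the balanced recursion \eqref{eq:dd-balanced-recursion}.

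\textbf{What each route buys.} Once reality is known, $\Phi_I$ gives the expansion on every slice simultaneously, so independence of the slice and of the representative is built in rather than deduced. Your route also explains why the slice divided differences $(g_M)_I[y^{(n+1)},\bar y^{(n+1)}]$ of Theorem~\ref{thm:sres-differential} do not depend on $I$: each is $\Phi_I$ of one fixed quaternion. The paper's route instead produces those slice formulas directly, in the form used later.

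\textbf{A small slip.} Having chosen $R\le\beta$, the case $R'>\beta$ never occurs. The contour $\{|\delta|=R'^2\}$ is always a pair of Cassini ovals, which the contour formula allows by Remark~\ref{rem:dd-disconnected}.
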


\begin{proof}
\emph{Step 1: expansion on one full slice.} Fix \(I\in\Sphere\), put
\(p=\alpha+I\beta\), and regard \(G:=(g_M)_I\) as a holomorphic map with
values in \((\Hh,L_I)\). Shrinking the neighbourhood if necessary, choose
\(R<\beta\) with \(G\) holomorphic near the closure of
\(\mathcal U(\Sigma,R)\cap\C_I\), and set
\[
  A_n:=G[p^{(n+1)},\bar p^{(n)}],
  \qquad
  B_n:=G[p^{(n+1)},\bar p^{(n+1)}],
\]
a block of zero repetitions being omitted. For every truncation order
\(N\geq1\) — an index unrelated to the fixed exponent \(M\) —
Corollary~\ref{cor:dd-alternating-newton}, applied to \(G\) with the first
\(2N\) nodes of the alternating sequence
\eqref{eq:dd-alternating-sequence}, gives
\begin{equation}\label{eq:sres-finite-newton}
  G(z)=\sum_{n=0}^{N-1}\DeltaSphere{\Sigma}(z)^n\bigl(A_n+(z-p)B_n\bigr)
       +\DeltaSphere{\Sigma}(z)^NG[p^{(N)},\bar p^{(N)},z] .
\end{equation}

The remainder is controlled by the contour formula \eqref{eq:cx-hermite}.
Let \(K\subset\mathcal U(\Sigma,R)\cap\C_I\) be compact and choose \(\tau\)
with \(\max_K|\DeltaSphere{\Sigma}|<\tau^2<R^2\) and \(\Gamma_{I,\tau}\)
disjoint from \(K\). Then
\[
  \DeltaSphere{\Sigma}(z)^NG[p^{(N)},\bar p^{(N)},z]
  =\frac{1}{2\pi I}\int_{\Gamma_{I,\tau}}
   \frac{\DeltaSphere{\Sigma}(z)^N\dd\zeta}
        {\DeltaSphere{\Sigma}(\zeta)^N(\zeta-z)}\,G(\zeta) ,
\]
division by slice scalars meaning left multiplication by their inverses.
As \(|\DeltaSphere{\Sigma}(\zeta)|=\tau^2\) on the contour, which is compact
and at positive distance from \(K\), there is a \(C_K\) independent of
\(N\) with
\begin{equation}\label{eq:sres-newton-remainder-bound}
  \sup_{z\in K}\bigl\|\DeltaSphere{\Sigma}(z)^N
    G[p^{(N)},\bar p^{(N)},z]\bigr\|
  \leq C_K\left(\frac{\max_K|\DeltaSphere{\Sigma}|}{\tau^2}\right)^N ,
\end{equation}
and the ratio in round brackets is smaller than one. Letting \(N\to\infty\) in
\eqref{eq:sres-finite-newton} therefore gives
\begin{equation}\label{eq:sres-G-two-point-series}
  G(z)=\sum_{n\geq0}\DeltaSphere{\Sigma}(z)^n\bigl(A_n+(z-p)B_n\bigr) ,
\end{equation}
normally convergent on compact subsets. Rewriting each summand as
\(A_n+(z-p)B_n=zB_n+(A_n-pB_n)\), setting
\begin{equation}\label{eq:sres-coefficients-from-newton}
  u_{n-M}:=B_n,\qquad v_{n-M}:=A_n-pB_n ,
\end{equation}
and dividing \eqref{eq:sres-G-two-point-series} by
\(\DeltaSphere{\Sigma}^M\) yields \eqref{eq:sres-paired-laurent} on the two
components of the punctured full slice; normal convergence survives the
division because \(|\DeltaSphere{\Sigma}|\) is bounded below on compact
subsets of the punctured ball.

\emph{Step 2: from the slice to the circular set.} Each summand of
\eqref{eq:sres-paired-laurent} is a slice function, \(\DeltaSphere{\Sigma}\)
being slice preserving, the variable \(q\) slice
regular, and the coefficients acting on the right. For \(q=x+Jr\), the
representation formula \eqref{eq:slice-representation-formula} recovers the
value of every partial sum from its values at \(x\pm Ir\), with coefficients
\((1\mp JI)/2\) of norm at most \(1\). Uniform convergence on a compact
subset of the full slice thus implies uniform convergence on its
circularization. Since the series and \(f\) agree at \(x\pm Ir\), the same
formula makes them agree at \(x+Jr\), which proves
\eqref{eq:sres-paired-laurent} on all of \(\mathcal U(\Sigma;0,R)\).

\emph{Step 3: extraction and uniqueness.} Fix \(k\geq-M\) and a contour
\(\Gamma_{I,\rho}\) inside the region of convergence. Normal convergence
allows termwise integration, and Lemma~\ref{lem:sres-two-moments}
annihilates every term of \eqref{eq:sres-paired-laurent} but the one of
index \(k\), returning \(u_k\) from the first moment and \(v_k\) from the
second: this is
\eqref{eq:sres-u-contour}--\eqref{eq:sres-v-contour}. Applying the same
two moments to the difference of two expansions of \(f\) shows that they
have the same coefficients.

\emph{Step 4: the auxiliary choices.} Independence of \(\rho\) is now
automatic, both integrals having been identified with a coefficient that
does not involve \(\rho\). Repeating the construction on another slice
produces another series of the form \eqref{eq:sres-paired-laurent}; Step~2
lifts both to the same quaternionic function \(f\), and the uniqueness just
proved forces the two families of coefficients to agree. As
\(\DeltaSphere{\Sigma}\) depends only on \(\Sigma\), no representative of
the sphere survives in the pairs \((u_k,v_k)\). Finally, for an admissible
\(M'\geq M\) the function \(f\) still has only one expansion of the
displayed form, so reindexing the Taylor expansion of
\(g_{M'}=\DeltaSphere{\Sigma}^{M'-M}g_M\) returns the same coefficients and
sets \(u_k=v_k=0\) for \(-M'\leq k<-M\).
\end{proof}

It is worth recording how the paired coefficients relate to the standard
spherical basis. Fix \(y\in\Sigma\) and set, for \(k\in\mathbb Z\),
\begin{equation}\label{eq:sres-spherical-basis}
  \mathcal S_{y,2k}(q):=\DeltaSphere{\Sigma}(q)^k ,
  \qquad
  \mathcal S_{y,2k+1}(q):=\DeltaSphere{\Sigma}(q)^k(q-y).
\end{equation}
 Writing
\(f(q)=\sum_{n\in\mathbb Z}\mathcal S_{y,n}(q)s_n\) and splitting each
paired summand as \(qu_k+v_k=(q-y)u_k+(v_k+yu_k)\), with all coefficients on
the right, gives the equalities
\begin{equation}\label{eq:sres-coefficient-dictionary}
  \boxed{\;
  s_{2k}=v_k+yu_k,
  \qquad
  s_{2k+1}=u_k .
  \;}
\end{equation}
The asymmetry is the point: \(s_{2k+1}\) coincides with \(u_k\) and is
therefore attached to the sphere, whereas \(s_{2k}\) moves with the chosen
representative. The pair \((u_k,v_k)\) is free of that choice.

\begin{remark}[Spherical order and regularizing exponents]
\label{rem:sres-spherical-order}
The spherical order is conventionally the smallest \emph{even} integer
\(2M_0\) such that \(s_n=0\) for every \(n<-2M_0\), equivalently the
smallest \(M_0\) with \((u_k,v_k)=(0,0)\) for every \(k<-M_0\); so the least
exponent for which \(\DeltaSphere{\Sigma}^{M_0}f\) extends regularly
corresponds to spherical order \(2M_0\), as in
\cite[Definition~9.2]{GPSSingularities}.

This even order is not the order of the pole at one point of one slice.
Generically the latter equals \(M_0\), but at an exceptional point it drops,
possibly to zero, as in Example~\ref{ex:slice-exceptional-point}. 
\end{remark}

\subsection{Real and spherical residues}
\label{subsec:sres-definitions}

We can now define the two residues. Each is a Laurent coefficient, read off
with the normalization already fixed for point residues in
Definition~\ref{def:slice-point-residue}.

\begin{definition}[Real and spherical residues]
\label{def:sres-residues}
Let \(f\) be semiregular.
\begin{enumerate}[label=\textup{(\roman*)}]
  \item At a real pole \(a\), write
        \(f(q)=\sum_{n\geq-L}(q-a)^nc_n\) with \(c_n\in\Hh\) on the right,
        and set
        \begin{equation}\label{eq:sres-real-residue-definition}
          \Res_af:=c_{-1} .
        \end{equation}
  \item At a non-real polar sphere \(\Sigma\), take the unique paired
        expansion \eqref{eq:sres-paired-laurent} and set
        \begin{equation}\label{eq:sres-spherical-residue-definition}
          \boxed{\;\SRes_\Sigma f:=u_{-1} .\;}
        \end{equation}
\end{enumerate}
Both are declared to be zero at a regular point, respectively a regular
sphere.
\end{definition}

At a real pole nothing new happens: the residue is the familiar
Taylor coefficient of the regularized function.

\begin{proposition}[Differential evaluation at a real pole]
\label{prop:sres-real-differential}
Let \(a\in\R\) and let \(L\geq1\) be any exponent for which
\begin{equation}\label{eq:sres-real-regularized-function}
  h_L(q):=(q-a)^Lf(q)
\end{equation}
extends slice regularly across \(a\). Then, for every \(I\in\Sphere\),
\begin{equation}\label{eq:sres-real-differential}
  \boxed{\;
  \Res_af=\frac{\partial_c^{L-1}h_L(a)}{(L-1)!}=\Res_a^If_I .
  \;}
\end{equation}
In particular the middle expression depends neither on \(L\) nor on the
slice.
\end{proposition}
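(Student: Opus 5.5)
The plan is to reduce everything to the stem at the real centre \(a\), where the Laurent coefficients are genuine quaternions, and then read them off through the slice evaluation maps \(\Phi_I\).

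First, the meaning of the expansion in Definition~\ref{def:sres-residues}(i). By Proposition~\ref{prop:slice-regularization}(i), the stem \(H_L(z)=(z-a)^LF(z)\) of \(h_L\) is holomorphic near \(a\), and we expand it in Taylor series,
\[
  H_L(z)=\sum_{k\ge0}(z-a)^kd_k,\qquad d_k\in\Hh_\C .
\]
The stem symmetry \(H_L(\bar z)=\overline{H_L(z)}\), together with the fact that \((z-a)^k\) has real coefficients, gives \(\overline{d_k}=d_k\). Hence \(d_k\in\Hh\).

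Applying \(\mathcal I\) termwise, which is legitimate since \((q-a)^k\) is slice preserving and the series converges normally on the circularized disc, gives \(h_L(q)=\sum_k(q-a)^kd_k\). Dividing by the slice-preserving factor \((q-a)^L\) off \(a\) yields
\[
  f(q)=\sum_{n\ge-L}(q-a)^nc_n,\qquad c_n=d_{n+L}.
\]
This expansion is unique, because on any slice it is an ordinary \((\Hh,L_I)\)-valued Laurent series. So \(\Res_af=c_{-1}=d_{L-1}\).

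Second, the slice identification. Restricting to \(\C_I\), the expansion just obtained is exactly the point Laurent expansion \eqref{eq:slice-point-laurent-expansion} of \(f_I\) at \(a\), with the same coefficients (the powers of \(z-a\in\C_I\) act on the left). Hence \(\Res_a^If_I=c_{-1}\) for every \(I\).

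Alternatively, Corollary~\ref{cor:complex-higher-order-residue} applied to \(h=f_I\) and \(g=(h_L)_I\) gives
\[
  \Res_a^If_I=\frac{(h_L)_I^{(L-1)}(a)}{(L-1)!}.
\]
By \eqref{eq:slice-derivative-holomorphic}, iterated slice derivatives satisfy \((\partial_c^{L-1}h_L)_I=\partial_x^{L-1}(h_L)_I\), which is the complex derivative of \((h_L)_I\). Thus \(\partial_c^{L-1}h_L(a)/(L-1)!=\Res_a^If_I\), and the middle expression of \eqref{eq:sres-real-differential} is identified.

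Independence of \(L\) follows because the left side \(\Res_af=c_{-1}\) does not involve \(L\); this is the non-minimality remark after Corollary~\ref{cor:complex-higher-order-residue}. Independence of \(I\) follows because \(c_{-1}\) is a fixed quaternion. The one point needing care, and the likely obstacle, is justifying that the coefficients \(d_k\) are real-fixed, i.e.\ lie in \(\Hh\), so that \(\Phi_I\) acts trivially on them. This is the stem-symmetry argument above. It also shows that \(\partial_c^{L-1}h_L(a)\), being \(\Phi_I\) of \((L-1)!\,d_{L-1}\), is unambiguous at the real point.
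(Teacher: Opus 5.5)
Your proof is correct and follows essentially the same route as the paper: Taylor-expand the regularization at the real centre, observe that the coefficients are genuine quaternions, divide by the slice-preserving factor \((q-a)^L\), and read the same series on every slice. The differences are minor. You derive the reality of the coefficients from the stem symmetry instead of citing Subsection~\ref{subsec:slice-semiregular}. You identify the middle term through Corollary~\ref{cor:complex-higher-order-residue} on a slice rather than through the Taylor formula \(b_n=\partial_c^nh_L(a)/n!\). Finally, holomorphy of \((z-a)^LF\) for the given \(L\) comes from the hypothesis itself, not from Proposition~\ref{prop:slice-regularization}(i), which only asserts that some exponent works.
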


\begin{proof}
Being slice regular near the real point \(a\), the function \(h_L\) has a
Taylor expansion
\begin{equation}\label{eq:sres-real-Taylor}
  h_L(q)=\sum_{n\geq0}(q-a)^nb_n,
  \qquad b_n=\frac{\partial_c^nh_L(a)}{n!} ,
\end{equation}
whose coefficients are quaternions independent of the slice, the centre
being real; this was already observed in Subsection~\ref{subsec:slice-semiregular}.
Dividing by the slice-preserving factor \((q-a)^L\) away from \(a\) turns
\eqref{eq:sres-real-Taylor} into the Laurent expansion
\(f(q)=\sum_{n\geq0}(q-a)^{n-L}b_n\) of
Definition~\ref{def:sres-residues}(i), so that \(c_{n-L}=b_n\); the
exponent \(-1\) occurs for \(n=L-1\), whence the first equality in
\eqref{eq:sres-real-differential}. On any slice the same series is the
vector-valued Laurent expansion of \(f_I\), which gives the second. Both
independence assertions follow, every admissible \(L\) and every slice
extracting the same coefficient of one and the same expansion.
\end{proof}

At a non-real sphere the corresponding statement cannot involve one point
alone: it needs both intersection points of a full slice.

\begin{proposition}[Spherical residue as a pair of point residues]
\label{prop:sres-paired-point-residues}
Let \(\Sigma=\alpha+\Sphere\beta\) be a non-real polar sphere of \(f\) and
let \(I\in\Sphere\). With \(p=\alpha+I\beta\) and \(\bar p=\alpha-I\beta\),
\begin{equation}\label{eq:sres-paired-point-residues}
  \boxed{\;
  \SRes_\Sigma f=\Res_p^If_I+\Res_{\bar p}^If_I ,
  \;}
\end{equation}
a removable point contributing zero to the sum.
\end{proposition}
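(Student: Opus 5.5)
The plan is to read the spherical residue off the contour formula \eqref{eq:sres-u-contour} of Theorem~\ref{thm:sres-paired-laurent} with \(k=-1\), and then to evaluate that contour integral by the planar residue theorem on the slice \(\C_I\). With \(k=-1\) the kernel \(\DeltaSphere{\Sigma}(z)^{-k-1}\) is identically \(1\), so the formula reads
\[
  \SRes_\Sigma f=u_{-1}=(2\pi I)^{-1}\int_{\Gamma_{I,\rho}}\dd z\,f_I(z),
  \qquad 0<\rho<\min\{R,\beta\}.
\]
Here \(\Gamma_{I,\rho}\) consists of two positively oriented Jordan curves in \(\C_I\), one around \(p\) and one around \(\bar p\). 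The two components are separated by the real-axis argument given before Lemma~\ref{lem:sres-two-moments}.

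Next I apply the residue theorem to each component. Regard \(f_I\) as a meromorphic map with values in \((\Hh,L_I)\), as in Proposition~\ref{prop:slice-meromorphic-restriction}. Inside the Cassini ball, its only possible singularities are \(p\) and \(\bar p\), since \(f\) is semiregular on a punctured circular neighbourhood of \(\Sigma\). Theorem~\ref{thm:complex-residue-flux} then applies with \(V=(\Hh,L_I)\): the complex scalar \(\ci\) acts as left multiplication by \(I\), and \(\dd z\) acts from the left, following Remark~\ref{rem:complex-left-differential}. For the component around \(p\) it gives
\[
  (2\pi I)^{-1}\int\dd z\,f_I=\Res_p^If_I,
\]
which is exactly \eqref{eq:slice-point-residue-integral}, and likewise at \(\bar p\). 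Adding the two components yields \eqref{eq:sres-paired-point-residues}.

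If one of the two points is removable for \(f_I\), for instance at an exceptional point, then \(f_I\) is holomorphic inside that component. Cauchy's theorem (Proposition~\ref{prop:complex-green-cauchy}) makes the corresponding integral vanish, which matches the convention that \(\Res=0\) there.

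The main obstacle is technical rather than conceptual: the contours \(\Gamma_{I,\rho}\) must be admissible, that is, level curves that are piecewise \(C^1\) Jordan curves. I would handle this in one of two ways. First, the points \(p\) and \(\bar p\) are the only critical points of \(\DeltaSphere{\Sigma}\) in the relevant range besides \(z=\alpha\), and \(z=\alpha\) lies outside the set for \(\rho<\beta\), so the level set is smooth. Alternatively, one avoids the issue altogether by using the flux form \eqref{eq:complex-pair-flux-formula}. In that version, each Cassini component is replaced by a small disc around its node; the integrand \(f_I\) is holomorphic on the region between the Cassini component and the disc, so Cauchy's theorem lets the contour be deformed from one to the other.

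As a cross-check, the statement also follows from Corollary~\ref{cor:dd-paired-residue} applied to \(h=f_I\) with \(V=(\Hh,L_I)\). That corollary gives \(B_{M-1}=\PRes_{\{p,\bar p\}}f_I\), and \(B_{M-1}\) equals \(u_{-1}\) by \eqref{eq:sres-coefficients-from-newton}.
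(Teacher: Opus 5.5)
Your proposal is correct and is essentially the paper's own proof: take \(k=-1\) in \eqref{eq:sres-u-contour} so that the kernel is \(1\), apply the vector-valued residue theorem separately on the two components of \(\Gamma_{I,\rho}\) with the normalization \eqref{eq:slice-point-residue-integral}, and let Cauchy's theorem dispose of a removable node. Your regularity remark is sound if phrased as ``the level set \(|\DeltaSphere{\Sigma}|=\rho^2\) avoids \(p,\bar p,\alpha\), the only points where \(\nabla|\DeltaSphere{\Sigma}|^2\) vanishes'' (the paper already assumes this in Lemma~\ref{lem:sres-two-moments}); your second workaround does not actually bypass it, because deforming the Cassini curve to small circles by Cauchy's theorem needs the same regularity, and the clean bypass is simply that the extraction of \(u_{-1}\) works directly on two small circles around the nodes.
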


\begin{proof}
Taking \(k=-1\) in \eqref{eq:sres-u-contour}, where the kernel becomes
\(\DeltaSphere{\Sigma}^0=1\), gives
\begin{equation}\label{eq:sres-u-minus-one-contour}
  \SRes_\Sigma f
  =(2\pi I)^{-1}\int_{\Gamma_{I,\rho}}\dd z\,f_I(z) .
\end{equation}
The contour consists of two positively oriented curves, one around each
node, and applying the vector-valued residue theorem to each of them
separately produces, with the normalization
\eqref{eq:slice-point-residue-integral}, the two point residues. If \(f_I\)
happens to extend holomorphically at one node, the corresponding integral
vanishes by Cauchy's theorem and that node contributes zero, so exceptional
points need no separate treatment.
\end{proof}

\begin{remark}[Exceptional points redistribute]
\label{rem:sres-exceptional-redistribution}
Formula \eqref{eq:sres-paired-point-residues} holds for every slice, the
exceptional one included, and it is worth being explicit about what this
does and does not say. The individual summands \(\Res_p^If_I\) and
\(\Res_{\bar p}^If_I\) genuinely depend on \(I\), and on an exceptional
slice one of them may vanish while on every other slice neither does; what
does not depend on \(I\) is their sum. The spherical residue is defined by
\eqref{eq:sres-spherical-residue-definition}, before any slice is chosen,
and Proposition~\ref{prop:sres-paired-point-residues} asserts that every
slice recovers it: an exceptional point merely redistributes the value
between the two nodes of its own slice, as
Example~\ref{ex:sres-exceptional-point} will show.
\end{remark}

\begin{remark}[A vanishing residue does not remove the sphere]
\label{rem:sres-zero-does-not-remove}
A residue records one Laurent coefficient and no more, here as in one
complex variable. So \(\SRes_\Sigma f=0\) says only that the two point
residues cancel on every full slice, and leaves the other pairs of the
principal part untouched: \(f\) need not extend through \(\Sigma\). In the
other direction, a removable exceptional point does not make \(\Sigma\)
removable either, the conjugate point being free to carry the singularity —
which is exactly what happens in
Example~\ref{ex:slice-exceptional-point}.
\end{remark}

\subsection{Differential evaluation of the spherical residue}
\label{subsec:sres-differential}

We now reach the main local statement. The proof is short because the conjugate-node calculus of
Section~\ref{sec:divided-differences} was built for exactly this purpose:
the balanced divided difference of \eqref{eq:cx-sres-dd} and its operator
form \eqref{eq:dd-paired-residue-operator} give the formula on any one
fixed slice. What they do not give is that the value is the same on all of
them — the invariance left open in
Remark~\ref{rem:dd-representative-invariance} — and that comes from the
paired expansion of Theorem~\ref{thm:sres-paired-laurent}, which is
quaternionic from the start. The differential evaluation of the
spherical coefficients of a slice regular function goes back to
\cite[Theorem~4.1]{AltavillaSphericalCoefficients}; the theorem below
identifies the relevant coefficient of the regularization \(g_M\) with the
spherical residue, for the class of domains considered here.

\begin{theorem}[Differential formula for the spherical residue]
\label{thm:sres-differential}
Let \(f\) be semiregular near the non-real polar sphere
\(\Sigma=\alpha+\Sphere\beta\), let \(M\geq1\) be any exponent for which
\(g_M=\DeltaSphere{\Sigma}^Mf\) extends slice regularly through \(\Sigma\),
and let \(y=\alpha+I\beta\) be any point of \(\Sigma\), with conjugate
\(\bar y=\alpha-I\beta\). Then
\begin{equation}\label{eq:sres-main-differential}
  \boxed{\;
  \SRes_\Sigma f
  =(g_M)_I[y^{(M)},\bar y^{(M)}]
  =\frac{[\partial_s(\partial_c\partial_s)^{M-1}g_M](y)}{(M-1)!} .
  \;}
\end{equation}
The repeated-node divided difference is taken in \((\Hh,L_I)\), with complex
scalars acting from the left; for \(M=1\) the last expression reduces to
\(\partial_sg_1(y)\).
\end{theorem}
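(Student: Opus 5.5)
The plan is to chain together results already established, reducing everything to one fixed full slice \(\C_I\) containing \(y\). First, by Proposition~\ref{prop:sres-paired-point-residues}, \(\SRes_\Sigma f=\Res_y^If_I+\Res_{\bar y}^If_I\). The restriction \(f_I\) is meromorphic with values in \(V=(\Hh,L_I)\) by Proposition~\ref{prop:slice-meromorphic-restriction}. Moreover \((g_M)_I=\DeltaSphere{\Sigma}^M f_I\) is holomorphic near both nodes, since \(g_M\) is slice regular through \(\Sigma\) and \(\DeltaSphere{\Sigma}\) is slice preserving, so that on \(\C_I\) it is the scalar polynomial \((z-y)(z-\bar y)\) by \eqref{eq:sres-delta-slice-factorization}. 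Hence \(M\) dominates the pole orders of \(f_I\) at \(y\) and \(\bar y\); this holds even when one node is exceptional and has lower or zero order. Corollary~\ref{cor:dd-paired-residue}, applied with \(h=f_I\) and \(p=y\), then gives \(\PRes_{\{y,\bar y\}}f_I=(g_M)_I[y^{(M)},\bar y^{(M)}]\), which is the first equality. Remark~\ref{rem:dd-nonminimal} ensures that nothing depends on \(M\) being minimal, and that no value of \(g_M\) is ever divided by.

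For the second equality I would invoke Corollary~\ref{cor:dd-residue-operator} on the slice: the balanced divided difference equals \([(\mathcal S\mathcal D^{M-1})(g_M)_I](y)/(M-1)!\). It remains to translate the operators through \eqref{eq:slice-operator-translation}, namely \(\bigl[\partial_s(\partial_c\partial_s)^{M-1}g_M\bigr]_I=\mathcal S(\partial_z\mathcal S)^{M-1}(g_M)_I\). This coincides with \(\mathcal S\mathcal D^{M-1}\) because \(\mathcal D=\partial_z\circ\mathcal S\) and composition is read from right to left. Evaluating at \(y\) gives the last expression. For \(M=1\) only \(\mathcal S\) survives, so the formula reduces to \(\partial_s g_1(y)\).

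The main obstacle I expect is justifying this operator translation rigorously. The intermediate functions \((\partial_c\partial_s)^n g_M\) are not slice regular. The operator \(\mathcal S\) of Subsection~\ref{subsec:dd-conjugate-operators} is defined on a conjugation-invariant planar set, and \(\partial_z\) is the Wirtinger derivative \(\frac12(\partial_x-I\partial_r)\). So I need to check that at each stage the slice-wise restriction of the quaternionic slice function agrees with the planar function produced by Theorem~\ref{prop:cx-operators}. The point to verify is that the full slice \(E_I\) near \(\{y,\bar y\}\) is conjugation invariant, which holds because \(\Omega\) is circular (Proposition~\ref{prop:slice-circularization-properties}). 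On this set, \(\partial_s\) restricts exactly to \(\mathcal S\) by \eqref{eq:slice-spherical-as-divided-difference}, and \(\partial_c\) restricts exactly to \(\partial_z\) by \eqref{eq:slice-cullen-as-wirtinger}, for every differentiable slice function, not only regular ones. Smoothness of each intermediate function near \(y\) follows because Theorem~\ref{prop:cx-operators} identifies it with a divided difference, which is real-analytic in \((z,\bar z)\) away from \(\R\).

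Finally, the left-hand side \(\SRes_\Sigma f\) is intrinsic by Theorem~\ref{thm:sres-paired-laurent}. Therefore both right-hand expressions are independent of the slice \(I\) and of the representative \(y\), which settles the invariance left open in Remark~\ref{rem:dd-representative-invariance}.
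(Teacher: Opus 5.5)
Your proposal is correct and follows the paper's proof essentially step for step: Proposition~\ref{prop:sres-paired-point-residues} together with Corollary~\ref{cor:dd-paired-residue} on the full slice \(\C_I\) gives the first equality, and the operator form \eqref{eq:dd-paired-residue-operator} combined with the translation \eqref{eq:slice-operator-translation} gives the second. Your extra remarks on the conjugation invariance of the slice section and the smoothness of the intermediate functions only make explicit what the paper takes for granted in that translation.
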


\begin{proof}
On the full slice \(\C_I\), the restriction \(f_I\) is vector-valued
meromorphic by
Proposition~\ref{prop:slice-meromorphic-restriction}, whereas
\(G:=(g_M)_I\) is holomorphic at both nodes and satisfies
\begin{equation}\label{eq:sres-slice-regularization-identity}
  G(z)=\DeltaSphere{\Sigma}(z)^Mf_I(z)=(z-y)^M(z-\bar y)^Mf_I(z) .
\end{equation}
This is the setting of
Corollary~\ref{cor:dd-paired-residue}, applied to \(f_I\) with nodes \(y\)
and \(\bar y\), which gives
\[
  (g_M)_I[y^{(M)},\bar y^{(M)}]=\Res_y^If_I+\Res_{\bar y}^If_I ,
\]
a quantity identified with \(\SRes_\Sigma f\) by
Proposition~\ref{prop:sres-paired-point-residues}. This is the first
equality.

For the second, \eqref{eq:dd-paired-residue-operator} applied to \(G\) gives
\[
  G[y^{(M)},\bar y^{(M)}]
  =\frac{[\mathcal S(\partial_z\mathcal S)^{M-1}G](y)}{(M-1)!} ,
\]
and on \(\C_I\) the conjugate-node operator \(\mathcal S\) is the
restriction of \(\partial_s\), while \(\partial_z\) is the restriction of
\(\partial_c\); the translation \eqref{eq:slice-operator-translation}
therefore rewrites the numerator as
\([\partial_s(\partial_c\partial_s)^{M-1}g_M](y)\).
\end{proof}

Theorem~\ref{thm:sres-paired-laurent} already grants that the paired
coefficients survive the auxiliary choices. The three independences at work
in \eqref{eq:sres-main-differential} are nevertheless worth isolating, their
logical sources being different, and one of them closing a gap deliberately
left open in Section~\ref{sec:divided-differences}.

\begin{proposition}[Independence of the auxiliary choices]
\label{prop:sres-three-independences}
The value in \eqref{eq:sres-main-differential} does not depend on the
admissible regularizing exponent \(M\), on the representative
\(y\in\Sigma\), or on the slice \(\C_I\) through it. This remains true if
\(g_M(y)=0\), if one of the two point singularities is removable, or if
\(f^s\) or \(g_M^s\) vanishes identically.
\end{proposition}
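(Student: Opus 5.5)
The plan is to reduce all three independences to the single fact that the left-hand side of \eqref{eq:sres-main-differential} is \(u_{-1}\), the coefficient of the paired Laurent expansion \eqref{eq:sres-paired-laurent}. By Theorem~\ref{thm:sres-paired-laurent} this coefficient is determined by \(f\) and \(\Sigma\) alone. Theorem~\ref{thm:sres-differential} identifies both the divided difference and the differential expression with \(\SRes_\Sigma f=u_{-1}\) for every admissible \(M\), every \(I\) and every \(y=\alpha+I\beta\). So the value cannot vary with these choices. What remains is to make sure each input in that chain holds in the degenerate situations listed, and to record the separate logical source of each independence.

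I would treat the three choices one at a time.

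\emph{Exponent.} For \(M'\geq M\) the kernel in the contour formula \eqref{eq:cx-hermite} cancels against the regularization, exactly as in Remark~\ref{rem:dd-nonminimal}. Hence
\[
(g_{M'})_I[y^{(M')},\bar y^{(M')}]=(2\pi I)^{-1}\int_{\Gamma_{I,\rho}}\dd z\,f_I(z)=(g_M)_I[y^{(M)},\bar y^{(M)}].
\]
Equivalently, Step~4 of Theorem~\ref{thm:sres-paired-laurent} shows that a larger exponent only prefixes the expansion with vanishing pairs, so \(u_{-1}\) is unchanged.

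\emph{Representative within one slice.} Replacing \(y\) by \(\bar y\) is the node-exchange symmetry of Corollary~\ref{cor:dd-residue-operator}, which follows from the symmetry of divided differences in Theorem~\ref{thm:dd-contour}.

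\emph{Slice.} Changing \(I\) is the genuinely quaternionic part, the gap flagged in Remark~\ref{rem:dd-representative-invariance}. Here I would invoke \eqref{eq:sres-u-contour} with \(k=-1\). The integral \((2\pi I)^{-1}\int_{\Gamma_{I,\rho}}\dd z\,f_I\) equals \(u_{-1}\) on every slice, by the uniqueness argument of Step~4: the expansions built on two slices both lift via the representation formula \eqref{eq:slice-representation-formula} to the same function \(f\), so their coefficients coincide.

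For the degenerate cases I would check that no step divides by a value of \(g_M\), of \(f\), or of a symmetrization.
\begin{enumerate}[label=\textup{(\alph*)}]
\item The case \(g_M(y)=0\) is harmless: the divided difference is defined through the contour integral \eqref{eq:cx-hermite}, and the operators \(\partial_s,\partial_c\) only take differences and Wirtinger derivatives.
\item A removable node contributes zero by Cauchy's theorem, as in the proof of Proposition~\ref{prop:sres-paired-point-residues}.
\item If \(f^s\equiv0\) or \(g_M^s\equiv0\), nothing changes, because the construction of the paired expansion uses only the componentwise meromorphic stem (Definition~\ref{def:slice-semiregular}), Newton--Hermite interpolation, and left multiplication by the slice-preserving scalar \(\DeltaSphere{\Sigma}^{-M}\). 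The only division is by \(\DeltaSphere{\Sigma}\), and only off \(\Sigma\).
\end{enumerate}

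I expect the main obstacle to be the slice independence on product domains, where the full slice \(E_I\) is disconnected. There the Step~2 lift must use one value from each component, per Remark~\ref{rem:slice-full-slice-needed}. I would handle this by noting that the contour \(\Gamma_{I,\rho}\) with \(\rho<\beta\) already consists of one curve in each half-slice (see the Cassini discussion before Lemma~\ref{lem:sres-two-moments}). The Newton expansion on one slice therefore only needs the two local germs, as in Remark~\ref{rem:dd-disconnected}, and the lift then goes through unchanged.
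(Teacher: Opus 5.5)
Your proposal is correct and follows the paper's proof essentially step for step. The exponent is handled by kernel cancellation in the Hermite contour formula, the exchange $y\leftrightarrow\bar y$ by node symmetry, and the change of slice by the uniqueness of the paired coefficient $u_{-1}$ in Theorem~\ref{thm:sres-paired-laurent}; the degenerate cases follow from the observation that nothing divides by values of $g_M$, $f^s$ or $g_M^s$. Your extra worry about disconnected half-slices is harmless but is not a separate obstacle: near any non-real sphere the Cassini contour with $\rho<\beta$ splits into one curve around each node, and Theorem~\ref{thm:sres-paired-laurent} already works in that setting.
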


\begin{proof}
\emph{The exponent.} On a small union \(\Gamma\) of two contours around the
nodes, the Hermite contour formula \eqref{eq:cx-hermite} gives
\begin{equation}\label{eq:sres-exponent-independent-integrand}
  (g_M)_I[y^{(M)},\bar y^{(M)}]
  =\frac{1}{2\pi I}\int_\Gamma
   \frac{\dd z\,(g_M)_I(z)}{(z-y)^M(z-\bar y)^M}
  =\frac{1}{2\pi I}\int_\Gamma\dd z\,f_I(z) ,
\end{equation}
the kernel cancelling against the regularization whatever the exponent, as
in Remark~\ref{rem:dd-nonminimal}; any other admissible \(M'\) therefore
leaves the same final integrand. This is why a nonminimal regularization
does no harm even when it vanishes identically on \(\Sigma\).

\emph{The representative and the slice.} Within one slice, exchanging \(y\)
with \(\bar y\) swaps the two node blocks, and divided differences are
symmetric. Another point of \(\Sigma\) generally lies on another slice,
where node symmetry no longer suffices — the gap recorded in
Remark~\ref{rem:dd-representative-invariance} — and there
Theorem~\ref{thm:sres-paired-laurent} closes it: whatever \(y\) and \(I\),
the quantity computed is the coefficient \(u_{-1}\), unique and attached to
\(\Sigma\) alone.

Observe finally that the whole argument used nothing but multiplication by
the intrinsic polynomial \(\DeltaSphere{\Sigma}\), differentiation, divided
differences and contour integration, and never divided by a value of
\(g_M\), by \(f^s\) or by \(g_M^s\). Vanishing regularizations, removable
exceptional points and zero divisors are therefore all covered.
\end{proof}

The same method evaluates every paired coefficient, and not only the one of
index \(-1\).

\begin{corollary}
\label{cor:sres-all-coefficients}
Under the hypotheses of Theorem~\ref{thm:sres-differential}, let
\(k\geq-M\), put \(n=M+k\geq0\), and let \(y\in\Sigma\). Then
\begin{align}
  u_k&=(g_M)_I[y^{(n+1)},\bar y^{(n+1)}]
      =\frac{[\partial_s(\partial_c\partial_s)^ng_M](y)}{n!} ,
  \label{eq:sres-all-u}\\
  v_k&=(g_M)_I[y^{(n+1)},\bar y^{(n)}]-yu_k
      =\frac{[(\partial_c\partial_s)^ng_M](y)}{n!}-yu_k .
  \label{eq:sres-all-v}
\end{align}
For \(n=0\) the block \(\bar y^{(0)}\) is omitted, so that the first term of
\eqref{eq:sres-all-v} is simply \(g_M(y)\).
\end{corollary}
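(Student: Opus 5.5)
The plan is to read both formulas off the proof of Theorem~\ref{thm:sres-paired-laurent} rather than from new contour computations. Fix \(I\) with \(y=\alpha+I\beta\) and put \(G:=(g_M)_I\). Step~1 of that proof expanded \(G\) by the alternating Newton formula of Corollary~\ref{cor:dd-alternating-newton}, with node \(p=y\) first. It then identified the coefficients through \eqref{eq:sres-coefficients-from-newton}:
\[
  u_{n-M}=B_n=G[y^{(n+1)},\bar y^{(n+1)}],
  \qquad
  v_{n-M}=A_n-yB_n=G[y^{(n+1)},\bar y^{(n)}]-y\,u_{n-M}.
\]
Setting \(n=M+k\) gives the first equalities in \eqref{eq:sres-all-u} and \eqref{eq:sres-all-v}. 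Uniqueness of the paired expansion, from Step~3 of that theorem, guarantees that these are the coefficients \(u_k,v_k\) themselves, whatever the slice or the representative. For \(n=0\), the divided difference \(G[y]\) equals \(G(y)=g_M(y)\), which is the stated convention.

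The differential expressions then come from Theorem~\ref{prop:cx-operators} applied to the \((\Hh,L_I)\)-valued holomorphic map \(G\) at the non-real point \(z=y\). By \eqref{eq:sres-operator-odd} we have \((\mathcal S\mathcal D^nG)(y)=n!\,G[y^{(n+1)},\bar y^{(n+1)}]\), and by \eqref{eq:sres-operator-even} we have \((\mathcal D^nG)(y)=n!\,G[y^{(n+1)},\bar y^{(n)}]\). Here \(\mathcal D=\partial_z\mathcal S\), so \(\mathcal D^n=(\partial_z\mathcal S)^n\) and \(\mathcal S\mathcal D^n=\mathcal S(\partial_z\mathcal S)^n\). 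The translation \eqref{eq:slice-operator-translation} turns the first of these into \([\partial_s(\partial_c\partial_s)^ng_M]_I\). For the second, the same identifications \(\mathcal S\leftrightarrow\partial_s\) from \eqref{eq:slice-spherical-as-divided-difference} and \(\partial_z\leftrightarrow\partial_c\) from \eqref{eq:slice-cullen-as-wirtinger}, applied factor by factor, give \([(\partial_c\partial_s)^ng_M]_I\). Dividing by \(n!\) yields the stated formulas.

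One point needs care, and I expect it to be the main (if modest) obstacle: the intermediate functions \(\partial_s g_M,\ \partial_c\partial_s g_M,\dots\) are not slice regular. The translation must therefore be justified on the slice, with \(\partial_c\) understood through the Wirtinger formula \eqref{eq:slice-derivative-definition}. Theorem~\ref{prop:cx-operators} already works in exactly this Wirtinger sense, treating \(z\) and \(\bar z\) as independent variables, so the two operator calculi agree step by step. All of this takes place on a conjugation-invariant neighbourhood of the two nodes inside the full slice \(E_I\), where \(G\) is holomorphic. Finally, no division by values of \(g_M\) occurs, so the argument covers exceptional points and nonminimal \(M\), exactly as in Proposition~\ref{prop:sres-three-independences}.
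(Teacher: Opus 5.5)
Your proposal is correct and follows the paper's proof essentially step for step. You read off the first equalities from \eqref{eq:sres-coefficients-from-newton} with \(n=M+k\), get slice- and representative-independence from the uniqueness in Theorem~\ref{thm:sres-paired-laurent}, and obtain the differential forms from the two identities of Theorem~\ref{prop:cx-operators} transported to the slice. Your factor-by-factor justification of the translation for \((\partial_c\partial_s)^n\), which \eqref{eq:slice-operator-translation} does not literally state, makes explicit a point the paper leaves implicit.
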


\begin{proof}
Formula \eqref{eq:sres-coefficients-from-newton} in the proof of
Theorem~\ref{thm:sres-paired-laurent}, written on the slice through \(y\)
and with \(n=M+k\), reads
\[
  u_k=B_n=(g_M)_I[y^{(n+1)},\bar y^{(n+1)}] ,
  \qquad
  v_k=A_n-yB_n=(g_M)_I[y^{(n+1)},\bar y^{(n)}]-yu_k ,
\]
the coefficients on the left being the same for every choice of \(y\) and
of the slice, by Theorem~\ref{thm:sres-paired-laurent}. The two operator
identities of Proposition~\ref{prop:cx-operators}, transported by
\eqref{eq:slice-operator-translation}, then give
\[
  [\partial_s(\partial_c\partial_s)^ng_M](y)
   =n!\,(g_M)_I[y^{(n+1)},\bar y^{(n+1)}] ,
  \qquad
  [(\partial_c\partial_s)^ng_M](y)
   =n!\,(g_M)_I[y^{(n+1)},\bar y^{(n)}] ,
\]
whence both formulas. Note that in \eqref{eq:sres-all-v} each of the two
terms on the right may depend on \(y\), while their difference does not.
\end{proof}


A single example brings all of this together. It shows the two point
residues changing from slice to slice while their sum does not, and a
regularization that vanishes identically on the sphere without preventing
the differential formula from returning the residue.

\begin{example}[One removable point on a polar sphere]
\label{ex:sres-exceptional-point}
Let \(\Sigma=2\Sphere\), so that \(\DeltaSphere{\Sigma}(q)=q^2+4\), and
consider
\begin{equation}\label{eq:sres-exceptional-function}
  f(q)=(q^2+4)^{-1}(q\qj+2\qk) .
\end{equation}
 Its paired expansion has the
single term \(\DeltaSphere{\Sigma}(q)^{-1}(q\qj+2\qk)\), whence
\begin{equation}\label{eq:sres-exceptional-residue-value}
  u_{-1}=\qj,\qquad v_{-1}=2\qk,\qquad \SRes_\Sigma f=\qj .
\end{equation}
With the minimal exponent \(M=1\) the regularization is
\(g_1(q)=q\qj+2\qk\), and right linearity of \(\partial_s\) gives
\(\partial_sg_1(y)=\partial_s(q)(y)\,\qj=\qj\) at every \(y\in\Sigma\), in
agreement with Theorem~\ref{thm:sres-differential}.

Now the two point residues on an arbitrary slice. Fix \(I\in\Sphere\) and
factor \(z^2+4=(z-2I)(z+2I)\) on \(\C_I\); both poles being simple,
\begin{equation}\label{eq:sres-exceptional-residues}
  \Res_{2I}^If_I=(4I)^{-1}(2I\qj+2\qk)=\tfrac12(\qj-I\qk),
  \qquad
  \Res_{-2I}^If_I=(-4I)^{-1}(-2I\qj+2\qk)=\tfrac12(\qj+I\qk) .
\end{equation}
Each summand moves with \(I\); their sum is \(\qj\) for every \(I\), as
Proposition~\ref{prop:sres-paired-point-residues} requires.

The slice \(\C_{\qi}\) is the exceptional one. There \(\qi\qj=\qk\) forces
the factorization
\begin{equation}\label{eq:sres-exceptional-cancellation}
  z\qj+2\qk=(z+2\qi)\qj ,
  \qquad z\in\C_{\qi} ,
\end{equation}
so that the factor \(z+2\qi\) cancels and
\begin{equation}\label{eq:sres-exceptional-reduced-slice}
  f_{\qi}(z)=(z-2\qi)^{-1}\qj .
\end{equation}
The singularity at \(-2\qi\) is therefore removable, and the two residues
are \(\qj\) and \(0\) instead of the generic pair
\eqref{eq:sres-exceptional-residues} — a redistribution of the same total,
as in Remark~\ref{rem:sres-exceptional-redistribution}. Note also that
\(g_1(-2\qi)=0\) while \(\partial_sg_1(-2\qi)=\qj\): the exceptional point
is a perfectly legitimate evaluation point for the differential formula,
which reads differences and derivatives of \(g_1\) and never its values.

The nonminimal exponent \(M=2\) makes the last point sharper, since
\[
  g_2(q)=(q^2+4)(q\qj+2\qk)
\]
vanishes at every point of \(\Sigma\). We verify
\eqref{eq:sres-main-differential} directly. Treating the two conjugate nodes
\(z,w\) as independent variables on a fixed slice,
\[
  (\mathcal Sg_2)(z,w)=(z-w)^{-1}\bigl(g_2(z)-g_2(w)\bigr)
  =(z^2+zw+w^2+4)\qj+2(z+w)\qk ,
\]
and differentiating with respect to the first node leaves
\((2z+w)\qj+2\qk\). Restricting to \(w=\bar z\) and applying \(\mathcal S\)
once more,
\[
  (\mathcal S\partial_z\mathcal Sg_2)(z)
  =(z-\bar z)^{-1}\bigl[(2z+\bar z)-(2\bar z+z)\bigr]\qj=\qj ,
\]
identically in \(z\). Hence
\(\partial_s\partial_c\partial_sg_2\equiv\qj\), as \(M=2\) requires, even
though \(g_2\) vanishes on all of \(\Sigma\).
\end{example}
\subsection{Componentwise interpretation}
\label{subsec:sres-componentwise}

The stem description yields a scalar formula for the two residues which, like
everything else in this section, survives on the null quadric. Recall the
component decomposition \eqref{eq:slice-component-decomposition},
\[
  F=\sum_{\ell=0}^3F_\ell e_\ell,
  \qquad (e_0,e_1,e_2,e_3)=(1,\qi,\qj,\qk) .
\]

\begin{corollary}[Residues of the intrinsic meromorphic coordinates]
\label{cor:sres-componentwise}
Let \(f=\mathcal I(F)\) and let \(w=\alpha+\ci\beta\), \(\beta>0\),
correspond to \(\Sigma=\alpha+\Sphere\beta\). Then
\begin{equation}\label{eq:sres-componentwise}
  \boxed{\;
  \SRes_\Sigma f
  =\sum_{\ell=0}^3\bigl(\Res_wF_\ell+\Res_{\bar w}F_\ell\bigr)e_\ell
  =2\sum_{\ell=0}^3\req\bigl(\Res_wF_\ell\bigr)e_\ell ,
  \;}
\end{equation}
while at a real pole \(a\),
\begin{equation}\label{eq:sres-componentwise-real}
  \Res_af=\sum_{\ell=0}^3\bigl(\Res_aF_\ell\bigr)e_\ell ,
\end{equation}
every scalar residue \(\Res_aF_\ell\) being real. Both formulas hold even
when \(Q(F)=\sum_{\ell=0}^3F_\ell^2\) vanishes identically.
\end{corollary}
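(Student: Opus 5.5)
The plan is to pass through the evaluation map \(\Phi_I\) and the scalar decomposition \eqref{eq:slice-component-decomposition}. Everything then reduces to Proposition~\ref{prop:sres-paired-point-residues} and Lemma~\ref{lem:complex-reflection}, applied to each scalar coordinate.

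First fix \(I\in\Sphere\) and set \(p=\alpha+I\beta\). By \eqref{eq:slice-restriction-as-evaluation} and the \(\C\)-linearity of \(\Phi_I\) (Lemma~\ref{lem:slice-evaluation-linear}), one has \(f_I(x+Ir)=\sum_\ell \Phi_I(F_\ell(x+\ci r))e_\ell\). Writing \(\phi_I\colon\C\to\C_I\), \(u+\ci v\mapsto u+Iv\), this reads \(f_I(z)=\sum_\ell F_\ell^I(z)e_\ell\), where \(F_\ell^I:=\phi_I\circ F_\ell\circ\phi_I^{-1}\) is a scalar meromorphic function on \(E_I\) and takes values in \(\C_I\), which acts on the left of \(e_\ell\). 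The map \(\phi_I\) is a field isomorphism \(\C\to\C_I\), and it carries Laurent expansions to Laurent expansions. Since the \(e_\ell\) sit on the right and commute with the \(\C_I\)-scalar Laurent powers in the sense of the left module structure, termwise identification gives
\[
\Res^I_pf_I=\sum_\ell \phi_I\bigl(\Res_wF_\ell\bigr)e_\ell ,
\qquad
\Res^I_{\bar p}f_I=\sum_\ell \phi_I\bigl(\Res_{\bar w}F_\ell\bigr)e_\ell .
\]
A cleaner way to see this is through the contour formula \eqref{eq:slice-point-residue-integral}: pulling the small circle in \(\C_I\) back to a small circle in \(\C\) turns \((2\pi I)^{-1}\int\dd z\) into \(\phi_I\) applied to \((2\pi\ci)^{-1}\int\dd z\). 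Adding the two identities and invoking Proposition~\ref{prop:sres-paired-point-residues} gives \(\SRes_\Sigma f=\sum_\ell\phi_I(\Res_wF_\ell+\Res_{\bar w}F_\ell)e_\ell\).

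Second, the stem symmetry says \(F_\ell(\bar z)=\overline{F_\ell(z)}\), which is reflection invariance with \(c(v)=\bar v\). Corollary~\ref{cor:complex-reflection-symmetric} then gives \(\Res_{\bar w}F_\ell=\overline{\Res_wF_\ell}\). The sum is therefore \(2\req\Res_wF_\ell\in\R\), on which \(\phi_I\) acts as the identity. This yields both equalities in \eqref{eq:sres-componentwise}, and it shows once more that the result does not depend on \(I\).

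At a real pole \(a\), the same reflection lemma with \(p=\bar p=a\) shows that \(\Res_aF_\ell\) is real. The first step, combined with Proposition~\ref{prop:sres-real-differential}, then gives \eqref{eq:sres-componentwise-real}. None of the steps multiplies or inverts quaternionic stem values; only componentwise scalar residues are taken. Hence nothing uses \(Q(F)\), and the case \(Q(F)\equiv0\) is included automatically.

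The one delicate point, and the step I expect to be the main obstacle, is the first step. One must check carefully that \(\Phi_I(F_\ell e_\ell)=\phi_I(F_\ell)e_\ell\) with the complex factor placed on the left. This holds because \(\Phi_I\) is defined by \(a+\ci b\mapsto a+Ib\) with \(a,b\in\Hh\), so \(\ci\) becomes \(I\) multiplied on the left. One must also check that \(\phi_I\) intertwines the complex residue with the \(\C_I\)-residue, including the normalisations \(2\pi\ci\leftrightarrow2\pi I\).
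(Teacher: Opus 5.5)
Your proposal is correct and follows essentially the paper's route. Reflection of the scalar stem coordinates gives \(\Res_{\bar w}F_\ell=\overline{\Res_wF_\ell}\); complex linearity of \(\Phi_I\) carries the coordinate Laurent expansions to those of \(f_I\) at \(\alpha\pm I\beta\); the paired coefficients are real and hence unaffected by \(\Phi_I\); and Proposition~\ref{prop:sres-paired-point-residues}, or Proposition~\ref{prop:sres-real-differential} at a real pole, concludes. One wording slip: in your first step the \(e_\ell\) do not commute with \(\C_I\)-scalars, and what you actually use is only associativity of the left action, \(\bigl((z-p)^n\phi_I(c)\bigr)e_\ell=(z-p)^n\bigl(\phi_I(c)e_\ell\bigr)\).
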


\begin{proof}
Each coordinate satisfies \(F_\ell(\bar z)=\overline{F_\ell(z)}\), so
conjugating the Laurent expansion of \(F_\ell\) at \(w\) gives the expansion
at \(\bar w\) with conjugated coefficients; in particular
\begin{equation}\label{eq:sres-coordinate-reflection}
  \Res_{\bar w}F_\ell=\overline{\Res_wF_\ell} ,
\end{equation}
and the paired scalar residue \(\Res_wF_\ell+\Res_{\bar w}F_\ell\) equals
\(2\req(\Res_wF_\ell)\), a real number.

Fix now a slice \(\C_I\) and the two nodes \(\alpha\pm I\beta\). Since
\(\Phi_I\) is complex linear, it carries the Laurent expansion of \(F\) at
\(w\) to that of \(f_I\) at \(\alpha+I\beta\) coordinate by coordinate, and
similarly at the conjugate node; adding the two and using
\eqref{eq:sres-coordinate-reflection}, the coefficients of \(e_\ell\) in
the sum are real, hence unaffected by \(\Phi_I\). The sum of the two point
residues of \(f_I\) is therefore the right-hand side of
\eqref{eq:sres-componentwise}, which
Proposition~\ref{prop:sres-paired-point-residues} identifies with
\(\SRes_\Sigma f\). At a real centre \(a=\bar a\), conjugating the expansion
shows directly that every scalar coefficient is real, and the same
componentwise argument gives \eqref{eq:sres-componentwise-real}.
\end{proof}
Nothing in the previous proof involved \(Q(F)\), an inverse of it, or a symmetrization, so
the formulas remain valid on the null quadric \(Q(F)=0\), including the
zero-divisor behaviour of
Example~\ref{ex:slice-zero-divisor-preliminary}. The reading of a slice
regular function as an intrinsic holomorphic curve in \(\C^4\) is developed
in several papers (see \cite{Mongodi,AltavillaMongodiRoots,AltavillaMongodiExponential,
BisiWinkelmannPicard,AltavillaDeFabritiisEquivalence}).

It is worth closing the section by comparing the spherical residue with the
ordinary residue at one point, which is simply the vector-valued form of the
familiar complex formula.

\begin{proposition}[Differential formula at one point of a slice]
\label{prop:sres-single-point-formula}
Fix \(I\in\Sphere\) and let \(f_I\) have a pole of order \(m\geq1\) at
\(p\in\C_I\). Then
\begin{equation}\label{eq:sres-single-point-formula}
  \Res_p^If_I
  =\frac{1}{(m-1)!}\,
   \partial_z^{m-1}\bigl((z-p)^mf_I(z)\bigr)\Big|_{z=p} ,
\end{equation}
where the scalar factor \((z-p)^m\) acts from the left and \(\partial_z\)
is the restriction of \(\partial_c\) to \(\C_I\).
\end{proposition}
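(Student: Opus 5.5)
The plan is to reduce the statement to the vector-valued complex formula of Corollary~\ref{cor:complex-higher-order-residue}. By Proposition~\ref{prop:slice-meromorphic-restriction}, \(f_I\) is a meromorphic map on the full slice with values in the two-dimensional complex vector space \(V=(\Hh,L_I)\). Here complex scalars act from the left through \eqref{eq:slice-complex-scalar-action}, and the point residue \(\Res_p^If_I\) is, by Definition~\ref{def:slice-point-residue}, exactly the coefficient \(a_{-1}\) of the \(V\)-valued Laurent expansion \eqref{eq:slice-point-laurent-expansion}. So the quaternionic statement is literally the complex one for this choice of \(V\). The only things to check are that left multiplication by \((z-p)^m\in\C_I\) is the scalar multiplication of \(V\), and that the restriction of \(\partial_c\) to \(\C_I\) is the complex derivative of \(V\)-valued holomorphic maps.

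The steps are as follows. First, observe that \(g(z):=(z-p)^mf_I(z)\) extends holomorphically across \(p\) as a \(V\)-valued map. This holds because the Laurent expansion starts at index \(-m\), and multiplying it on the left by \((z-p)^m\) just shifts exponents, the coefficients staying on the right. Second, apply Corollary~\ref{cor:complex-higher-order-residue} to \(h=f_I\) and this \(g\), which gives \(\Res_pf_I=g^{(m-1)}(p)/(m-1)!\). Third, identify \(g^{(m-1)}\) with \(\partial_z^{m-1}g\), where \(\partial_z=\tfrac12(\partial_x-I\partial_r)\) as in \eqref{eq:slice-cullen-as-wirtinger}. For a holomorphic \(V\)-valued map, the Cauchy--Riemann equation \((\partial_x+I\partial_r)g=0\) gives \(\partial_zg=\partial_xg\), and this equals the complex derivative computed componentwise in any \(\C_I\)-basis, say \(\{1,J\}\) with \(J\perp I\) as in \eqref{eq:slice-meromorphic-splitting}. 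Since the derivative of a holomorphic map is again holomorphic, the identification persists under iteration.

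The main point needing care, though hardly an obstacle, is the meaning of \(\partial_c\) when it is applied to \((z-p)^mf_I\). For non-real \(p\) this product is not the restriction of a slice function, because \((q-p)^m\) is not slice preserving. The formula must therefore be read as the operator \eqref{eq:slice-derivative-definition} acting on a \(\C_I\)-valued-coordinate function on the slice, which is how the statement phrases it: "the restriction of \(\partial_c\) to \(\C_I\)". I would make this explicit. I would also note, as in the remark after Corollary~\ref{cor:complex-higher-order-residue}, that \(m\) may exceed the exact order at an exceptional point without changing the value.
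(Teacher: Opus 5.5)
Your reduction is correct and is essentially the paper's proof. The paper repeats the argument of Corollary~\ref{cor:complex-higher-order-residue} inline in $(\Hh,L_I)$ (Taylor-expand $(z-p)^mf_I$ at $p$, divide by $(z-p)^m$, read off the term of index $m-1$) and then uses \eqref{eq:slice-cullen-as-wirtinger} to identify $\partial_z$ with $\partial_c$ on the slice.

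One claim in your last paragraph is inaccurate, though it does not affect the argument. The function $(z-p)^mf_I$ \emph{is} the restriction to $\C_I$ of a slice function, namely $(q-p)^{*m}*f$, because on $\C_I$ a slice product whose left factor maps $\C_I$ into itself reduces to the pointwise product. What is true is that, for non-real $p$, this slice function may still have the sphere through $p$ as a polar set, with $p$ an exceptional point. So reading $\partial_z$ as the Wirtinger operator on the slice, as you propose, is indeed the right interpretation.
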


\begin{proof}
The function \((z-p)^mf_I(z)\) is holomorphic at \(p\), with Taylor
expansion \(\sum_{n\geq0}(z-p)^n\partial_z^n\bigl((z-p)^mf_I\bigr)(p)/n!\)
in \((\Hh,L_I)\). Dividing by \((z-p)^m\), the coefficient of
\((z-p)^{-1}\) is the term of index \(n=m-1\), which by
Definition~\ref{def:slice-point-residue} is \(\Res_p^If_I\); and
\eqref{eq:slice-cullen-as-wirtinger} identifies \(\partial_z\) with
\(\partial_c\) on the slice.
\end{proof}

\begin{remark}[Two different coefficient functionals]
\label{rem:sres-point-versus-spherical}
The two formulas differ in every ingredient, and it is worth reading them
side by side. Where \eqref{eq:sres-single-point-formula} fixes a single node
\(p\) on a single slice, Theorem~\ref{thm:sres-differential} works with the
conjugate pair, hence with the whole sphere. Where the point formula clears
the singularity with the one-centre factor \((z-p)^m\), which lives on that
slice alone, the spherical one uses the intrinsic \(\DeltaSphere{\Sigma}^M\),
which is defined on all of \(\Hh\). Where the point formula differentiates
with \(\partial_c\) alone, the spherical one alternates \(\partial_c\) with
\(\partial_s\), the second operator being precisely what reaches across the
sphere. And where the point formula returns a quantity that changes with the
slice, the spherical one returns the invariant
\(u_{-1}=\Res_p^If_I+\Res_{\bar p}^If_I\).

The two are therefore not interchangeable, and it is the second that enters
the four-dimensional residue theorem: what a polar sphere contributes there
is the whole quaternion \(u_{-1}\), never one of its two slice-dependent
halves.
\end{remark}

\section{The weighted flux on circular sets}
\label{sec:weighted-flux}

So far the spherical residue has been a local object, read off from an
expansion or from a jet. It remains to produce the global boundary integral
that adds up such coefficients, and the planar identity
\[
  \frac{1}{2\pi}\int_{\partial D}\nu(z)h(z)\dd s(z)
  =\frac{1}{2\pi\ci}\int_{\partial D}\dd z\,h(z)
\]
of \eqref{eq:complex-flux-contour} tells us what to look for. Passing to
four dimensions changes the geometry in one respect: the boundary of a
circular set has, besides the direction inherited from the planar profile,
the two directions tangent to the conjugacy spheres. Both peculiarities of
the quaternionic flux come from them. Their area contributes a second factor
\(2\pi\), which is why the normalization is \(4\pi^2\); and they collapse as
the spheres shrink to real points, at a rate that the weight
\(|\imq q|^{-2}\) exactly compensates.

 We say first what a quaternion-valued flux is, then describe the boundary of
a circular set, and then prove that the four-dimensional integral reduces
exactly to the planar flux on any one full slice — a reduction that needs no
slice regularity, the boundary datum having only to be a continuous slice
function. Two direct computations follow, on a ball and on a tube around a
non-real sphere, which confirm the normalizing constant independently and
show that the flux detects \(u_{-1}\) rather than \(v_{-1}\), so justifying
the choice made in Definition~\ref{def:sres-residues}. A divergence
computation at the end identifies the correction term that appears on
boundaries that are not circular.

Let \(U\subset\R^4\) be a bounded open set with smooth boundary. The flux of
a vector field \(X\colon\overline U\to\R^4\) across \(\partial U\) is the
real number
\begin{equation}\label{eq:weighted-ordinary-flux}
  \int_{\partial U}X(q)\mathbin{\boldsymbol{\cdot}}n(q)\dsigma(q) ,
\end{equation}
where \(\mathbin{\boldsymbol{\cdot}}\) is the Euclidean scalar product of
\(\R^4\), \(n\) the outward Euclidean unit normal and \(\dsigma\) the
positive three-dimensional surface measure, so that the orientation of the
boundary is recorded by the normal and not by the measure; the divergence
theorem then reads
\begin{equation}\label{eq:weighted-ordinary-divergence-theorem}
  \int_{\partial U}X\mathbin{\boldsymbol{\cdot}}n\dsigma
  =\int_U\operatorname{div}X\dd V_4 .
\end{equation}

The boundary integral used below is also built from the outward normal, but
it is not this scalar flux. Identifying vectors of \(\R^4\) with
quaternions, we replace the scalar product by quaternionic multiplication,
which is neither symmetric nor real valued: writing
\[
  n(q)f(q)=\sum_{\mu=0}^3\lambda_\mu(q)e_\mu ,
  \qquad (e_0,e_1,e_2,e_3)=(1,\qi,\qj,\qk) ,
\]
the four coefficients \(\lambda_\mu\) are real valued and bilinear in the
components of \(n\) and \(f\), so that integrating \(nf\) means integrating
each \(\lambda_\mu\) against \(\dsigma\) in the ordinary sense and
reassembling the four numbers into a quaternion. This is what makes the
flux defined below quaternion-valued, and the order of the two factors,
discussed in Remark~\ref{rem:weighted-left-normal}, is part of its
definition.

\subsection{Admissible circular sets and their boundaries}
\label{subsec:weighted-admissible-geometry}
Rather than describing the four-dimensional boundaries directly, we specify
the planar profiles and let circularization produce them.

\begin{definition}[Admissible planar profile]
\label{def:weighted-admissible-profile}
A bounded open set \(D\subset\C\) is an \emph{admissible planar profile} if
\begin{enumerate}[label=\textup{(\roman*)}]
  \item \(D\) is invariant under complex conjugation;
  \item \(\partial D\) is a finite disjoint union of embedded \(C^2\)
        Jordan curves;
  \item \(D\) lies locally on one side of its boundary, in the sense of
        Definition~\ref{def:admissible-domain}(ii);
  \item every intersection of \(\partial D\) with \(\R\) is transverse.
\end{enumerate}
Each boundary component carries the orientation positive relative to \(D\),
so that exterior boundaries run counterclockwise and boundaries of holes
clockwise, while \(\dd s\) always denotes positive arclength, regardless of
that orientation.
\end{definition}

Condition~(iii) is the one already imposed in
Subsection~\ref{subsec:complex-normal-flux}, and it excludes in particular a
curve merely drawn inside the open set. Condition~(iv) says that
\(\partial D\) is nowhere tangent to \(\R\); combined with~(i) it forces
more, namely that the tangent at a real boundary point be vertical, since
the tangent line is preserved by the reflection across \(\R\). This last
condition is what makes the real points of the boundary harmless. Vertical
tangency lets the profile be written near such a point as a graph over the
vertical direction, whose circularization turns out to be \(C^2\) across the
real axis even though the radial coordinate \(|v|\) is not differentiable
there; and transversality keeps the real intersections isolated, hence
finite in number on a compact boundary. Both facts are established in
Proposition~\ref{prop:weighted-boundary-geometry}, and without~(iv) the
four-dimensional boundary could fail to be a hypersurface at the points
where it meets \(\R\).

\begin{figure}[htbp]
\centering
\begin{tikzpicture}[scale=0.95]
  \def\outerpath{
    plot[smooth cycle,tension=0.8] coordinates
      {(-3.2,0) (-3.05,-0.95) (-2.2,-1.75) (-0.9,-2.05) (0.6,-1.95)
       (1.9,-1.5) (2.75,-0.85) (2.95,0) (2.75,0.85) (1.9,1.5)
       (0.6,1.95) (-0.9,2.05) (-2.2,1.75) (-3.05,0.95)}}

  \begin{scope}[even odd rule]
    \fill[black!8] \outerpath
      (1.55,0.95) circle (0.42)
      (1.55,-0.95) circle (0.42)
      (-1.55,0) ellipse (0.75 and 0.50);
  \end{scope}
  \draw[thick,orient] \outerpath;
  \draw[thick,orient] (1.55,0.95) ++(90:0.42) arc(90:-270:0.42);
  \draw[thick,orient] (1.55,-0.95) ++(90:0.42) arc(90:-270:0.42);
  \draw[thick,orient] (-1.55,0) ++(0:0.75) arc(0:-360:0.75 and 0.50);

  \def\secondup{
    plot[smooth cycle,tension=0.75] coordinates
      {(4.15,1.35) (5.05,0.85) (5.75,1.15) (5.55,1.75) (4.75,1.95)}}
  \def\seconddn{
    plot[smooth cycle,tension=0.75] coordinates
      {(4.15,-1.35) (4.75,-1.95) (5.55,-1.75) (5.75,-1.15) (5.05,-0.85)}}
  \fill[black!8] \secondup;
  \fill[black!8] \seconddn;
  \draw[thick,orient] \secondup;
  \draw[thick,orient] \seconddn;

  \draw[->,gray!70] (-3.9,0) -- (6.5,0) node[below right] {$\R$};
  \draw[->,gray!70] (0,-2.5) -- (0,2.6) node[above left] {$\ci\R$};

  \fill (-3.2,0) circle (1.4pt);
  \fill (2.95,0) circle (1.4pt);
  \fill (-2.3,0) circle (1.4pt);
  \fill (-0.8,0) circle (1.4pt);

  \node at (0.0,1.0) {$D$};
  \node at (4.95,1.40) {$D^+$};
  \node at (4.95,-1.40) {$D^-$};
\end{tikzpicture}
\caption{An admissible planar profile: conjugation invariant, with $C^2$
boundary meeting $\R$ transversally at the four marked
points. The left-hand hole crosses the real axis and is itself symmetric;
the two round holes on the right are exchanged by conjugation and meet the
axis nowhere; the second connected component, disjoint from $\R$, is a
product domain with conjugate profile halves $D^+$ and $D^-$. Every boundary
component is positively oriented relative to the profile.}
\label{fig:admissible-profile}
\end{figure}

\begin{definition}[Admissible circular open set]
\label{def:weighted-admissible-set}
The circularization
\begin{equation}\label{eq:weighted-admissible-circularization}
  U=\Omega_D=\{x+Ir:x+\ci r\in D,\ I\in\Sphere\}
\end{equation}
of an admissible planar profile \(D\) is called an \emph{admissible circular
open set}. 
\end{definition}
Notice that, in the previous definition
neither \(D\) nor \(U\) need be connected, so that the class
comprises circular sets meeting the real axis, product domains disjoint from
it, and sets with holes.
\begin{definition}[Normalized weighted flux]
\label{def:weighted-flux}
Let \(U\) be an admissible circular open set and let
\(f\colon\partial U\to\Hh\) be continuous. Its \emph{normalized weighted
flux} is
\begin{equation}\label{eq:weighted-flux-definition}
  \boxed{\;
  \Flux{U}{f}
  :=\frac{1}{4\pi^2}
    \int_{\partial U\setminus\R}
    \frac{n(q)f(q)}{|\imq q|^2}\,\dsigma(q).
  \;}
\end{equation}
The integral is taken componentwise, and \(n(q)\) is the outward quaternionic
unit normal.
\end{definition}

\begin{remark}[Why the normal acts on the left]
\label{rem:weighted-left-normal}
The order \(n(q)f(q)\) in
\eqref{eq:weighted-flux-definition} is the four-dimensional counterpart
of the convention in Remark~\ref{rem:complex-left-differential}.
Through the Euclidean metric, the outward normal represents the
outward conormal. In the Green formula for the left Cauchy--Fueter
operator
\[
  D_\ell=\sum_{\mu=0}^3 e_\mu\partial_{x_\mu},
\]
the coefficients \(e_\mu\) act on quaternion-valued functions from the
left, and the corresponding boundary term is
\[
  \int_{\partial W}n(q)h(q)\,\dsigma(q).
\]

The same choice is also forced by the slice reduction. On a slice
\(\C_I\), the identity
\[
  \dd z=I n_I\,\dd s
\]
turns \(n_I f_I\,\dd s\), after normalization, into
\((2\pi I)^{-1}\dd z\,f_I\). Thus the boundary flux reduces to the
left vector-valued contour integral considered in
Section~\ref{sec:complex-model}. Replacing \(n(q)f(q)\) by
\(f(q)n(q)\) would lead instead to a right boundary pairing and to the
right Cauchy--Fueter operator; it would define a legitimate but
different theory.

This convention agrees with the four-dimensional Cauchy formulas for left
slice functions. In the volume formula of
\cite[Theorem~2.3]{GhiloniPerottiVolume}, the boundary integrand has the
order
\[
  C_S(x,q)\,n(q)f(q)\,\dsigma(q).
\]
Likewise, the local formula of
\cite[Theorem~4]{PerottiLocalCauchy} is written using the
quaternion-valued three-form \(Dq\), whose restriction to an outward
oriented hypersurface is \(n(q)\dsigma(q)\), and \(Dq\) acts on the
function from the left. Our weighted flux retains precisely this
normal--function order after the Cauchy kernel is removed.
\end{remark}
The symbol \(\mathcal F\) is the one already used in
\eqref{eq:complex-flux-definition} for the planar normalized flux, however Theorem~\ref{thm:weighted-slice-reduction} will show
that, on a slice function, the four-dimensional quantity
\eqref{eq:weighted-flux-definition} coincides with the planar flux of its
restriction to any full slice. Until that theorem, \(\Flux{U}{f}\) with
\(U\subset\Hh\) is understood as \eqref{eq:weighted-flux-definition} and
\(\Flux{D}{h}\) with \(D\subset\C\) as \eqref{eq:complex-flux-definition}.

The deletion of \(\partial U\cap\R\) in
\eqref{eq:weighted-flux-definition} is only an almost-everywhere convention.
At this stage the factor \(|\imq q|^{-2}\) looks singular, but
Theorem~\ref{thm:weighted-disintegration} will show that the integral is
absolutely convergent even when the boundary meets the real axis. It is an
ordinary Lebesgue integral (and not, for instance, a Cauchy principal value).
The next proposition justifies defining admissibility in terms of \(D\). It
also supplies the boundary regularity needed for the Euclidean normal and
surface measure in Definition~\ref{def:weighted-flux}.

\begin{proposition}[Boundary of an admissible circular set]
\label{prop:weighted-boundary-geometry}
Let \(D\) be an admissible planar profile and \(U=\Omega_D\). Then
\begin{equation}\label{eq:weighted-boundary-circularization}
  \boxed{\;\partial U=\Omega_{\partial D} ,\;}
\end{equation}
and \(\partial U\) is a compact embedded \(C^2\) hypersurface of \(\R^4\). The set \(\partial U\cap\R\) is finite.
\end{proposition}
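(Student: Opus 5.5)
The plan is to work with the continuous open map \(\pi(q)=\req q+\ci|\imq q|\) from \(\Hh\) onto the closed upper half-plane. Since \(D\) is conjugation invariant, \(U=\pi^{-1}(D)\). This was already used in Proposition~\ref{prop:slice-circularization-properties}.

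\emph{Step 1: the set identity \(\partial U=\Omega_{\partial D}\).} Because \(D\) is conjugation invariant, so are \(\overline D\) and \(\partial D\), and therefore \(\Omega_{\overline D}=\pi^{-1}(\overline D)\) and \(\Omega_{\partial D}=\pi^{-1}(\partial D)\). Continuity of \(\pi\) gives \(\overline U\subset\pi^{-1}(\overline D)\). For the reverse inclusion, approximate a point \(x+\ci r\in\overline D\) by points of \(D\) and lift the approximation along a fixed \(I\). Hence \(\overline U=\pi^{-1}(\overline D)\), and removing \(U=\pi^{-1}(D)\) leaves \(\pi^{-1}(\partial D)\). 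Compactness then follows from boundedness of \(D\) and closedness of the preimage.

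\emph{Step 2: finiteness of \(\partial U\cap\R\).} This set equals \(\partial D\cap\R\). Each real point of \(\partial D\) is a transverse intersection, by condition~(iv), and is therefore isolated in \(\partial D\cap\R\). A compact set of isolated points is finite.

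\emph{Step 3: the hypersurface property at non-real points.} Let \(q_0=x_0+I_0r_0\) with \(r_0>0\). Near \(\pi(q_0)\) the curve \(\partial D\) is the zero set of a \(C^2\) function \(\phi(x,r)\) with \(\nabla\phi\neq0\). Near \(q_0\) the map \(\pi\) is smooth, given by \(q\mapsto(\req q,|\imq q|)\), and it is a submersion there. So \(\partial U\) is locally the zero set of the \(C^2\) function \(\phi\circ\pi\), whose gradient is nonzero. This gives a \(C^2\) embedded hypersurface away from \(\R\).

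\emph{Step 4: real boundary points (the main obstacle).} Here \(|\imq q|\) fails to be differentiable. Take \(a\in\partial D\cap\R\). By conjugation symmetry and transversality the tangent at \(a\) is vertical, so near \(a\) the curve \(\partial D\) is a \(C^2\) graph \(x=\psi(r)\) with \(\psi(0)=a\). Conjugation invariance of \(\partial D\) forces \(\psi(-r)=\psi(r)\), so \(\psi\) is even.

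I would then prove that an even \(C^2\) function has the form \(\psi(r)=\chi(r^2)\), with \(\chi\) such that \(v\mapsto\chi(|v|^2)\) is \(C^2\) on \(\R^3\). The cleanest route is to write \(\psi(r)=\psi(0)+\int_0^1(1-t)\,\psi''(tr)\,\dd t\,r^2\) and set \(\Psi(v):=\psi(|v|)\). One then checks directly that \(\Psi\) is \(C^2\) at \(v=0\):
\begin{itemize}
\item its Hessian tends to \(\psi''(0)\,\mathrm{Id}\), using that \(\psi'(r)/r\to\psi''(0)\), which holds because \(\psi'\) is odd and \(C^1\);
\item the radial-tangential terms \((\psi''(r)-\psi'(r)/r)\,vv^T/r^2\) tend to zero.
\end{itemize}
With this, \(\partial U\) near \(a\) is the graph \(\req q=\Psi(\imq q)\) of a \(C^2\) function over a neighbourhood of \(0\) in \(\imq\Hh\simeq\R^3\). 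Hence it is a \(C^2\) embedded hypersurface there too.

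Finally, local one-sidedness, condition~(iii), transfers through \(\pi\) to \(U\). It ensures \(U\) lies locally on one side of \(\partial U\), so the outward normal is well defined. Embeddedness is global because \(\partial U\) is a closed subset that is locally a graph at every point.
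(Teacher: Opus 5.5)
Your proposal is correct. Its regularity and finiteness parts follow the paper's proof step for step:
\begin{itemize}
\item away from $\R$, a $C^2$ defining function composed with the submersion $q\mapsto(\req q,|\imq q|)$;
\item at a real point, vertical tangency from reflection symmetry and transversality, an even $C^2$ graph, and the limits $\psi'(r)/r\to\psi''(0)$ and $\psi''(r)-\psi'(r)/r\to0$ for the radial Hessian;
\item isolation of transverse real intersections on the compact curve $\partial D$.
\end{itemize}

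The genuine difference is the set identity. The paper proves only $\partial U\subseteq\pi^{-1}(\partial D)$ topologically. It then obtains the reverse inclusion piecewise, off the axis and at real points, from the local defining functions combined with the one-sidedness condition~(iii). You instead prove $\overline U=\pi^{-1}(\overline D)$ by lifting approximating sequences along a fixed unit, and then remove $U=\pi^{-1}(D)$. Your route is shorter and uses only that $D$ is open and conjugation invariant, so condition~(iii) is not needed for this part of the statement. It also gives the regularity analysis the exact local description $\partial U\cap\pi^{-1}(W)=\pi^{-1}(\partial D\cap W)$, for a small neighbourhood $W$ of $a$ in $\C$. That description is what justifies reading $\partial U$ near $a$ as the graph $\req q=\psi(|\imq q|)$.

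A minor remark on your Step~4. The integral-remainder formula and the factorization $\psi(r)=\chi(r^2)$ are not needed. Moreover, $\chi(s)=\psi(\sqrt s)$ is in general only $C^1$: take $\psi(r)=|r|^{5/2}$. So the $C^2$ regularity of $\Psi$ comes from the direct Hessian check, together with the observation that $\nabla\Psi(v)=\psi'(|v|)\,v/|v|$ extends continuously by $0$. This is exactly the paper's Step~4.
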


\begin{proof}
\emph{Step 1: one inclusion.} By
Proposition~\ref{prop:slice-circularization-properties}, applied to the
conjugation-invariant sets \(D\) and \(\C\setminus\overline D\), membership
in \(U\) is decided by the continuous map
\(\pi(q)=\req q+\ci|\imq q|\):
\begin{equation}\label{eq:weighted-U-as-preimage}
  q\in U\iff\pi(q)\in D ,
  \qquad
  q\notin\overline U\iff\pi(q)\notin\overline D .
\end{equation}
A boundary point of \(U\) can therefore satisfy neither alternative, whence
\begin{equation}\label{eq:weighted-first-boundary-inclusion}
  \partial U\subseteq\pi^{-1}(\partial D)=\Omega_{\partial D} .
\end{equation}
The reverse inclusion is proved in Steps~2 and~3, first away from \(\R\) and
then at the real points, where it comes with the regularity statement.

\emph{Step 2: non-real boundary points.} Let \(q_0=x_0+I_0r_0\) with
\(r_0>0\) and \(x_0+\ci r_0\in\partial D\), and let \(\rho(x,r)\) be a local
\(C^2\) defining function for the planar boundary arc through that point,
normalized so that \(D=\{\rho<0\}\) and \(\nabla\rho\neq0\) there. Since
the map \((x,v)\mapsto(x,|v|)\) is smooth of max rank on
\(\R\times(\R^3\setminus\{0\})\),
\begin{equation}\label{eq:weighted-nonreal-defining-function}
  \varrho(x,v):=\rho(x,|v|)
\end{equation}
is a \(C^2\) defining function for \(\partial U\) near \(q_0\), with
non-vanishing gradient; so \(\partial U\) is a \(C^2\) hypersurface away
from \(\R\), and local one-sidedness makes every point of \(\{\varrho=0\}\)
a genuine boundary point, which is the reverse inclusion off the real axis.

\emph{Step 3: real boundary points.} Let \(a\in\partial D\cap\R\). The reflection across \(\R\) fixes \(a\) and
preserves \(D\), so it preserves the tangent to \(\partial D\) at \(a\); of
the lines through \(a\) only the horizontal and the vertical one are
invariant, and transversality rules out the first. The tangent is therefore
vertical. The implicit function theorem then presents the profile
near \(a\) as a graph over the vertical direction,
\begin{equation}\label{eq:weighted-real-profile-graph}
  x=a+\psi(r),\qquad |r|<\varepsilon ,
\end{equation}
with \(\psi\) of class \(C^2\), \(\psi(0)=\psi'(0)=0\) and, by uniqueness of
the graph together with reflection invariance,
\begin{equation}\label{eq:weighted-profile-evenness}
  \psi(-r)=\psi(r) .
\end{equation}
Circularizing turns \eqref{eq:weighted-real-profile-graph} into the radial
graph
\begin{equation}\label{eq:weighted-real-radial-graph}
  x=a+\psi(|v|),\qquad v\in\R^3,\ |v|<\varepsilon ,
\end{equation}
and since \(D\) lies locally on one side of
\eqref{eq:weighted-real-profile-graph}, moving \(x\) across
\eqref{eq:weighted-real-radial-graph} produces points of \(U\) and of its
complement arbitrarily close to \(a\); hence \(a\in\partial U\), and
\eqref{eq:weighted-boundary-circularization} is proved.

\emph{Step 4: regularity at the real points.} It remains to see that the
radial graph \eqref{eq:weighted-real-radial-graph} is of class \(C^2\) at \(v=0\),
where \(|v|\) is not differentiable. For \(r=|v|>0\),
\[
  \nabla_v\bigl(\psi(|v|)\bigr)=\frac{\psi'(r)}{r}\,v ,
  \qquad
  D_v^2\bigl(\psi(|v|)\bigr)
  =\Bigl(\psi''(r)-\frac{\psi'(r)}{r}\Bigr)\frac{vv^T}{r^2}
   +\frac{\psi'(r)}{r}\operatorname{Id}_3 ,
\]
and the evenness \eqref{eq:weighted-profile-evenness} of the \(C^2\)
function \(\psi\) gives \(\psi'(0)=0\), hence
\[
  \frac{\psi'(r)}{r}\longrightarrow\psi''(0) ,
  \qquad
  \psi''(r)-\frac{\psi'(r)}{r}\longrightarrow0
  \qquad(r\to0) .
\]
The gradient therefore extends by \(0\) and the Hessian by
\(\psi''(0)\operatorname{Id}_3\), so the radial graph is \(C^2\) at the
origin; its defining equation having derivative \(1\) in \(x\), it is
embedded there.

Finally, transversality makes the real intersections isolated, and an
infinite family of them on the compact set \(\partial D\) would accumulate
somewhere, against the same local graph description. So \(\partial D\cap\R\)
is finite, and with it \(\partial U\cap\R\).
\end{proof}

The planar outward unit normal of \(\partial D\) will be written
\begin{equation}\label{eq:weighted-planar-normal-components}
  \nu(z)=\nu_1(z)+\ci\nu_2(z) ,
\end{equation}
the letter \(n\) being reserved for the quaternionic normal of
\(\partial U\). Conjugation is an isometry that preserves \(D\), hence
maps \(\partial D\) to itself and the outward side to the outward side, so
it carries the normal at \(z\) to the normal at \(\bar z\); being the
reflection across \(\R\), it does so by changing the sign of the vertical
component alone (see Figure~\ref{fig:normal-parity}). Thus
\begin{equation}\label{eq:weighted-normal-parity}
  \nu_1(\bar z)=\nu_1(z),
  \qquad
  \nu_2(\bar z)=-\nu_2(z) ,
\end{equation}
which is precisely the stem symmetry of
Definition~\ref{def:slice-stem-function}: \(\nu\) is a stem function, and
the outward normal of \(\partial U\) will turn out to be the slice function
it induces.

\begin{figure}[htbp]
\centering
\begin{tikzpicture}[scale=1.5]
  \fill[black!8]
    (-2.2,0.95) .. controls (-1.1,1.35) and (0.2,1.15) ..
    (1.15,0.55) .. controls (1.75,0.18) and (1.75,-0.18) ..
    (1.15,-0.55) .. controls (0.2,-1.15) and (-1.1,-1.35) ..
    (-2.2,-0.95) -- (-2.2,0.95);

  \draw[thick]
    (-2.2,0.95) .. controls (-1.1,1.35) and (0.2,1.15) ..
    coordinate[pos=0.45] (zup) (1.15,0.55);
  \draw[thick]
    (-2.2,-0.95) .. controls (-1.1,-1.35) and (0.2,-1.15) ..
    coordinate[pos=0.45] (zdn) (1.15,-0.55);
  \draw[thick]
    (1.15,0.55) .. controls (1.75,0.18) and (1.75,-0.18) ..
    coordinate[pos=0.5] (zre) (1.15,-0.55);

  \draw[->,gray!70] (-2.7,0) -- (2.4,0) node[below right] {$\R$};
  \draw[->,gray!70] (0,-1.75) -- (0,1.8) node[above left] {$\ci\R$};

  \fill (zup) circle (1.3pt);
  \fill (zdn) circle (1.3pt);
  \draw[-{Stealth},thick] (zup) -- ++(0.22,0.70);
  \draw[-{Stealth},thick] (zdn) -- ++(0.22,-0.70);
  \node[below left,inner sep=1pt] at (zup) {$z$};
  \node[above left,inner sep=1pt] at (zdn) {$\bar z$};
  \node[right,font=\small] at ($(zup)+(-0.5,0.70)$) {$\nu(z)$};
  \node[right,font=\small] at ($(zdn)+(-0.5,-0.70)$) {$\nu(\bar z)$};

  \draw[dashed,gray!70] (zup) -- ++(0.22,0);
  \draw[dashed,gray!70] (zdn) -- ++(0.22,0);

  \fill (zre) circle (1.3pt);
  \draw[-{Stealth},thick] (zre) -- ++(0.62,0);
  \node[above,font=\small] at ($(zre)+(0.38,0.04)$) {$\nu=\pm1$};
\end{tikzpicture}
\caption{Conjugation maps $\partial D$ to itself preserving the outward
side, so the outward normals at $z$ and at $\bar z$ are reflections of one
another: the horizontal components agree and the vertical ones are
opposite, which is \eqref{eq:weighted-normal-parity}. At a real boundary
point the normal is its own reflection, hence horizontal, in accordance
with Lemma~\ref{lem:weighted-circular-normal}.}
\label{fig:normal-parity}
\end{figure}

\begin{lemma}[Outward normal after circularization]
\label{lem:weighted-circular-normal}
At a non-real boundary point \(q=x+Ir\) of \(U\), written with a signed
coordinate \(r\neq0\) and with \(z=x+\ci r\in\partial D\), the outward unit
normal is
\begin{equation}\label{eq:weighted-circular-normal}
  \boxed{\;n(q)=\mathcal I(\nu)(q)=\nu_1(z)+I\nu_2(z) .\;}
\end{equation}
At a real boundary point it extends continuously to \(n(a)=\nu_1(a)=\pm1\).
\end{lemma}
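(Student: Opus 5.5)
The plan is to compute the gradient of the defining function built in the proof of Proposition~\ref{prop:weighted-boundary-geometry} and normalize it. Near a non-real boundary point \(q_0=x_0+I_0r_0\) with \(r_0>0\), take the local planar defining function \(\rho\) with \(D=\{\rho<0\}\) and \(\nabla\rho\neq0\). Normalize it so that \(|\nabla\rho|=1\) on \(\partial D\); the signed distance to \(\partial D\), which is \(C^2\) near a \(C^2\) curve, will do. Then \(\nabla\rho=(\partial_x\rho,\partial_r\rho)=(\nu_1,\nu_2)\) along \(\partial D\), because \(\rho\) increases outward. The hypersurface is \(\{\varrho=0\}\) with \(\varrho(x,v)=\rho(x,|v|)\), and \(U=\{\varrho<0\}\) near \(q_0\) by \eqref{eq:weighted-U-as-preimage}. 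The outward unit normal is therefore \(\nabla\varrho/|\nabla\varrho|\).

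The chain rule gives \(\partial_x\varrho=\partial_x\rho(x,|v|)\) and \(\nabla_v\varrho=\partial_r\rho(x,|v|)\,v/|v|\). Identify \(\R^4\) with \(\Hh\) through \(q=x+v\), so that \(v/|v|=\imq q/|\imq q|\). Then \(\nabla\varrho\) becomes the quaternion \(\nu_1(z_+)+J\nu_2(z_+)\), where \(J=\imq q/|\imq q|\) and \(z_+=x+\ci|v|\). This vector already has unit length, since \(|J|=1\) and \(\nu_1^2+\nu_2^2=1\). This gives \eqref{eq:weighted-circular-normal} in the positive representation \(r=|v|\), \(I=J\).

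Next, allow a signed coordinate: \(q=x+Ir\) with \(r<0\), so that \(I=-J\) and \(z=x+\ci r=\bar z_+\). The parity relations \eqref{eq:weighted-normal-parity} give \(\nu_1(z)+I\nu_2(z)=\nu_1(z_+)+(-J)(-\nu_2(z_+))\), which is the same quaternion. This is exactly the stem compatibility, so \(n=\mathcal I(\nu)\) is well defined as a slice function induced by the stem \(\nu\).

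At a real point \(a\in\partial D\cap\R\), Step~3 of Proposition~\ref{prop:weighted-boundary-geometry} shows that the tangent to \(\partial D\) at \(a\) is vertical. Hence \(\nu(a)\) is horizontal, that is \(\nu(a)=\pm1\), with sign fixed by the side on which \(D\) lies. Continuity of \(\nu\) along the \(C^2\) curve gives \(\nu_1(z)\to\nu_1(a)\) and \(\nu_2(z)\to0\) as \(z\to a\). Since \(|I\nu_2|=|\nu_2|\), this yields \(n(q)\to\nu_1(a)\).

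The extension should also agree with the normal of the \(C^2\) radial graph \eqref{eq:weighted-real-radial-graph}. Its defining function \(x-a-\psi(|v|)\) has gradient \((1,0)\) at \(v=0\), by Step~4 of that proposition. So the hypersurface normal at \(a\) is \(\pm1\), with the same sign. The main care point is the normalization and outward sign of \(\rho\) together with the signed-coordinate bookkeeping; the rest is the chain rule.
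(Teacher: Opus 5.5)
Your argument is correct. It differs from the paper's in how the normal is identified.

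\emph{The paper's route.} It describes \(T_q\partial U\) as the span of \(\alpha'(s)+I\beta'(s)\) and \(\beta(s)E\), with \(E\in T_I\Sphere\). It then checks that \(\nu_1+I\nu_2\) is a unit vector orthogonal to both, and appeals to the defining function \eqref{eq:weighted-nonreal-defining-function} only to fix the outward sign.

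\emph{Your route.} You differentiate \(\varrho(x,v)=\rho(x,|v|)\) directly. Because \(U=\{\varrho<0\}\) near \(q_0\), normality, unit length and outward orientation all come out of one chain-rule computation.

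\emph{What each buys.} The paper's version works with exactly the tangent frame that reappears in the Jacobian computation \eqref{eq:weighted-parametrization-derivatives} of Theorem~\ref{thm:weighted-disintegration}. Yours makes the outward direction fully explicit, whereas the paper settles it in half a sentence.

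\emph{Two minor points.} First, normalizing \(|\nabla\rho|=1\) via the signed distance is harmless but unnecessary. Since \(|\nabla\varrho|=|\nabla\rho|\), one has \(\nabla\varrho/|\nabla\varrho|=(\partial_x\rho+J\partial_r\rho)/|\nabla\rho|\) for any defining function. Second, at a real point you go slightly further than the paper, which only records the limit. Your comparison with the gradient of \(\pm(x-a-\psi(|v|))\) from Step~4 shows that the continuous extension is the actual normal of the \(C^2\) hypersurface at \(a\). The signs agree because the same choice of \(\pm\) describes \(D\) near \(a\) as \(\{\pm(x-a-\psi(r))<0\}\), so \(\nu_1(a)\) carries that same sign.
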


\begin{proof}
That the right-hand side of \eqref{eq:weighted-circular-normal} does not
depend on which of the two signed representations of \(q\) is used is the
well-posedness of \eqref{eq:slice-induction}, granted by
\eqref{eq:weighted-normal-parity}.

Let \(s\mapsto z(s)=\alpha(s)+\ci\beta(s)\) parametrize \(\partial D\) by
arclength and let \(E\in T_I\Sphere\). The tangent space of \(\partial U\)
at \(q\) is spanned by
\begin{equation}\label{eq:weighted-boundary-tangent-vectors}
  \alpha'(s)+I\beta'(s)
  \qquad\text{and}\qquad
  \beta(s)E ,
\end{equation}
the second direction being two-dimensional. The quaternion
\(N=\nu_1+I\nu_2\) is a unit vector orthogonal to both: to the first because
\(\nu_1\alpha'+\nu_2\beta'=0\), \(\nu\) being normal to the profile; to the
second because \(E\perp I\) and \(E\perp1\). It points outward, being the
circularization of the outward planar normal, as the local defining function
\eqref{eq:weighted-nonreal-defining-function} also shows.

At a real point \(a\) the planar normal is horizontal by Step~3 of
Proposition~\ref{prop:weighted-boundary-geometry}, so \(\nu_2(a)=0\) and the
stated limit follows.
\end{proof}

The boundary of an admissible circular set carries one direction inherited
from the planar profile and two tangent to the conjugacy spheres. Computing
the corresponding surface measure explains at once the singular weight and
the normalizing constant of Definition~\ref{def:weighted-flux}.

Fix an open hemisphere \(H\subset\Sphere\), of area \(2\pi\), whose
complement is \(-H\) up to their common equator, and parametrize the
boundary by the \emph{full signed profile}:
for each arclength-parametrized component \(z(s)=\alpha(s)+\ci\beta(s)\) of
\(\partial D\), set
\begin{equation}\label{eq:weighted-boundary-parametrization}
  \Xi(s,I)=\alpha(s)+I\beta(s),\qquad I\in H .
\end{equation}
A non-real quaternion has exactly two signed representations, \((I,\beta)\)
and \((-I,-\beta)\), and unless \(I\) lies on the equator exactly one of
\(\pm I\) belongs to \(H\); so \eqref{eq:weighted-boundary-parametrization},
taken over all components of \(\partial D\), covers
\(\partial U\setminus\R\) once, up to a surface-null set.

\begin{theorem}[The weighted boundary measure]
\label{thm:weighted-disintegration}
Let \(U=\Omega_D\) be an admissible circular open set, let
\(H\subset\Sphere\) be an open hemisphere and let
\(G\colon\partial U\to\Hh\) be continuous. Then the integral
\begin{equation}\label{eq:weighted-disintegration}
  \boxed{\;
  \int_{\partial U\setminus\R}\frac{G(q)}{|\imq q|^2}\dsigma(q)
  =\int_H\int_{\partial D}G(\alpha+I\beta)\dd s\,\darea(I) ,
  \qquad z=\alpha+\ci\beta ,
  \;}
\end{equation}
converges absolutely, with
\begin{equation}\label{eq:weighted-integrability-bound}
  \int_{\partial U\setminus\R}\frac{|G(q)|}{|\imq q|^2}\dsigma(q)
  \leq2\pi\operatorname{Length}(\partial D)\,
        \|G\|_{L^\infty(\partial U)} ,
\end{equation}
and near the real axis
\begin{equation}\label{eq:weighted-real-axis-estimate}
  \int_{\partial U\cap\{0<|\imq q|\leq\varepsilon\}}
    \frac{|G(q)|}{|\imq q|^2}\dsigma(q)
  =O(\varepsilon)\,\|G\|_{L^\infty(\partial U)}
  \qquad(\varepsilon\downarrow0) .
\end{equation}
\end{theorem}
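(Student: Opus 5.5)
The plan is to compute the three-dimensional surface measure of \(\partial U\setminus\R\) in the parametrization \(\Xi(s,I)=\alpha(s)+I\beta(s)\) of \eqref{eq:weighted-boundary-parametrization}. The expected identity is
\[
  \dsigma=\beta(s)^2\,\dd s\,\darea(I),
\]
after which the weight \(|\imq q|^{-2}=\beta^{-2}\) cancels the Jacobian exactly. Formula \eqref{eq:weighted-disintegration} then becomes the change-of-variables formula. The absolute bounds follow by applying the same computation to \(|G|\) and using Tonelli.

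\emph{Step 1: the Jacobian.} Work on a component of \(\partial D\) and away from the finitely many real points, which is legitimate by Proposition~\ref{prop:weighted-boundary-geometry}; there \(\beta(s)\neq0\), and \(\Xi\) is a \(C^2\) immersion. At \((s,I)\) I will use the orthonormal frame of \(T_I\Sphere\) given by \(E_1,E_2\perp 1,I\). The partial derivatives are then \(\partial_s\Xi=\alpha'+I\beta'\), which is a unit vector because \(s\) is arclength and \(1\perp I\), together with \(\partial_{E_k}\Xi=\beta E_k\) for \(k=1,2\). These are exactly the vectors of \eqref{eq:weighted-boundary-tangent-vectors}. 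They are mutually orthogonal, since \(\alpha'+I\beta'\in\C_I\) and \(E_k\perp\C_I\). The Gram determinant is therefore \(\beta^4\), which gives \(\dsigma=\beta^2\,\dd s\,\darea(I)\), and \(|\imq\Xi|=|\beta|\).

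\emph{Step 2: covering.} The map \((s,I)\mapsto\Xi(s,I)\) from \(\partial D\times H\) to \(\partial U\setminus\R\) is injective off the null set where \(\beta=0\) or \(I\) lies on the equator. This holds because each non-real point has exactly the two signed representations \((I,\beta)\) and \((-I,-\beta)\), and only one of \(\pm I\) lies in \(H\). It is surjective up to a null set by \eqref{eq:weighted-boundary-circularization}. The area formula, applied to nonnegative functions, then gives \(\int_{\partial U\setminus\R}|G|\,|\imq q|^{-2}\dsigma=\int_H\int_{\partial D}|G(\Xi)|\,\dd s\,\darea\). This is at most \(2\pi\operatorname{Length}(\partial D)\|G\|_\infty\), which is \eqref{eq:weighted-integrability-bound}. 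Absolute convergence is thereby established, and applying the area formula componentwise to \(G\) yields \eqref{eq:weighted-disintegration}.

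\emph{Step 3: near the axis.} The region \(\{0<|\imq q|\le\varepsilon\}\) corresponds to \(\{s:|\beta(s)|\le\varepsilon\}\times H\). By Step~2 its weighted measure is at most \(2\pi\|G\|_\infty\) times the arclength of \(\{|\beta|\le\varepsilon\}\subset\partial D\). Near each of the finitely many real points \(a\), the profile is the vertical graph \(x=a+\psi(r)\) of \eqref{eq:weighted-real-profile-graph}, and its arclength over \(|r|\le\varepsilon\) is \(\int_{-\varepsilon}^{\varepsilon}\sqrt{1+\psi'^2}\,\dd r=O(\varepsilon)\). Away from these neighbourhoods, compactness gives \(|\beta|\ge c>0\), so for small \(\varepsilon\) nothing else contributes. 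This proves \eqref{eq:weighted-real-axis-estimate}.

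\emph{Main obstacle.} The delicate point is making Step~2 rigorous. One has to check that the exceptional sets, namely the real points, the equator, and the crossing points where \(\beta\) changes sign, are null for both measures, and that a single hemisphere together with the \emph{signed} full profile covers each point exactly once, including for components of \(\partial D\) that cross \(\R\). My first approach would be to split \(\partial D\) into its open arcs with \(\beta>0\) and \(\beta<0\) and treat each by the area formula separately.
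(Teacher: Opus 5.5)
Your proposal is correct and follows the paper's proof. You use the same orthogonal frame to get $\dsigma=\beta^2\,\dd s\,\darea(I)$, the same area-formula disintegration over the full signed profile and one open hemisphere, and the same $O(\varepsilon)$ arclength bound near the finitely many real crossings. The differences are cosmetic: the paper applies the area formula on $\{|\beta|>\varepsilon\}$ and then lets $\varepsilon\downarrow0$ by monotone and dominated convergence, and it takes $\beta$ as a local parameter via the inverse function theorem, which is what your vertical graph $x=a+\psi(r)$ amounts to.
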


\begin{proof}
\emph{The surface Jacobian.} Fix \(s\) with \(\beta(s)\neq0\) and an
orthonormal basis \(E_1,E_2\) of \(T_I\Sphere\). Differentiating
\eqref{eq:weighted-boundary-parametrization},
\begin{equation}\label{eq:weighted-parametrization-derivatives}
  \partial_s\Xi=\alpha'+I\beta' ,
  \qquad
  \dd\Xi(E_1)=\beta E_1 ,
  \qquad
  \dd\Xi(E_2)=\beta E_2 ,
\end{equation}
three mutually orthogonal vectors, since \(E_a\perp I\) and \(E_a\perp1\),
the first of norm \(1\) because \(s\) is arclength and the other two of norm
\(|\beta|\). The Gram determinant is therefore \(\beta^4\) and the surface
Jacobian \(\beta^2\), so that
\begin{equation}\label{eq:weighted-surface-element}
  \boxed{\;
  \dsigma=\beta^2\dd s\,\darea(I) ,
  \qquad
  |\imq\Xi(s,I)|^2=\beta^2 .
  \;}
\end{equation}
This is the geometric cancellation the whole construction rests on: the
weight \(|\imq q|^{-2}\) is the exact inverse of the area scaling of the two
spherical directions.

\emph{Away from the axis.} Restricted to \(|\beta|>\varepsilon\) the
parametrization is non-singular, so the area formula and
\eqref{eq:weighted-surface-element} give
\begin{equation}\label{eq:weighted-cutoff-disintegration}
  \int_{\partial U\cap\{|\imq q|>\varepsilon\}}
    \frac{G(q)}{|\imq q|^2}\dsigma(q)
  =\int_H\int_{\partial D\cap\{|\beta|>\varepsilon\}}
    G(\alpha+I\beta)\dd s\,\darea(I) .
\end{equation}
Applied to \(|G|\), the right-hand side is at most
\(\Area(H)\operatorname{Length}(\partial D)\|G\|_\infty\), so monotone
convergence as \(\varepsilon\downarrow0\) gives both the absolute
convergence and \eqref{eq:weighted-integrability-bound}; dominated
convergence, applied to the four real components of \(G\), then gives
\eqref{eq:weighted-disintegration}.

\emph{Near the axis.} At a point of \(\partial D\cap\R\) the tangent is
vertical, so an arclength parametrization has \(\alpha'(s_0)=0\) and
\(|\beta'(s_0)|=1\); by the inverse function theorem \(\beta\) may be used
as a local parameter, with \(\dd s/\dd\beta\) bounded nearby. Summing over
the finitely many real intersections,
\[
  \operatorname{Length}
  \bigl(\partial D\cap\{|\beta|\leq\varepsilon\}\bigr)\leq C\varepsilon
\]
for small \(\varepsilon\), and \eqref{eq:weighted-cutoff-disintegration}
applied to that thin strip gives \eqref{eq:weighted-real-axis-estimate}.
The analytic meaning is then clear: shrinking a neighbourhood of the real
points removes an amount tending to zero, with no symmetric cutoff and no
principal value.
\end{proof}

\begin{remark}[Why the weight has exponent two]
\label{rem:weighted-exponent-two}
The conjugacy class \(\alpha+\Sphere\beta\) is a two-sphere of radius
\(|\beta|\), so areas in its two tangent directions scale like
\(\beta^2\); as \(\beta\to0\) the sphere collapses to a real point and the
surface element of \(\partial U\) vanishes quadratically with it. The factor
\(|\imq q|^{-2}\) compensates that collapse exactly — neither more nor less,
which is why \eqref{eq:weighted-disintegration} has a bounded integrand on
the right and \eqref{eq:weighted-real-axis-estimate} is \(O(\varepsilon)\)
rather than \(O(1)\) or \(O(\varepsilon^2)\).
\end{remark}

The formula just proved holds for arbitrary continuous quaternionic data on
the boundary. To turn it into a planar integral, free of the angular
variable, we now use the affine dependence on \(I\) peculiar to slice
functions.

By a continuous slice function on \(\partial U\) we mean a boundary map
induced by continuous stem data \(A+\ci B\) on \(\partial D\) — equivalently,
the restriction to \(\partial U\) of a continuous slice function defined on
a circular neighbourhood of it. In signed profile coordinates, if
\(f=\mathcal I(A+\ci B)\) then
\begin{equation}\label{eq:weighted-slice-boundary-values}
  f(\alpha+I\beta)=A(z)+IB(z),
  \qquad z=\alpha+\ci\beta ,
\end{equation}
with the usual stem parities
\begin{equation}\label{eq:weighted-stem-parity}
  A(\bar z)=A(z),\qquad B(\bar z)=-B(z) .
\end{equation}
Here neither differentiability nor slice regularity is assumed.

\begin{theorem}[Reduction of the weighted flux to a full slice]
\label{thm:weighted-slice-reduction}
Let \(U=\Omega_D\) be an admissible circular open set, let
\(f\colon\partial U\to\Hh\) be a continuous slice function, and for
\(I\in\Sphere\) let \(D_I:=U\cap\C_I\), with planar outward unit normal
\(n_I\). Then
\begin{equation}\label{eq:weighted-slice-reduction}
  \boxed{\;
  \Flux{U}{f}
  =\frac{1}{2\pi}\int_{\partial D_I}n_I(z)f_I(z)\dd s
  =(2\pi I)^{-1}\int_{\partial D_I}\dd z\,f_I(z) ,
  \;}
\end{equation}
so that in particular the planar expression does not depend on the slice.
\end{theorem}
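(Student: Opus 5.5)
The plan is to start from the disintegration formula of Theorem~\ref{thm:weighted-disintegration}, applied to \(G(q)=n(q)f(q)\). This datum is continuous on \(\partial U\) by Lemma~\ref{lem:weighted-circular-normal}, including at the finitely many real points. Inserting \(n(\alpha+J\beta)=\nu_1(z)+J\nu_2(z)\) and \(f(\alpha+J\beta)=A(z)+JB(z)\) for \(J\in H\), and multiplying out with \(J^2=-1\), gives
\[
  n f=\bigl(\nu_1A-\nu_2B\bigr)+J\bigl(\nu_2A+\nu_1B\bigr),
\]
with the real scalars \(\nu_1,\nu_2\) commuting with everything. This is exactly \(\Phi_J(\nu(z)F(z))\), where \(F=A+\ci B\). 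Integrating in \(s\) first, we set
\[
  \mathcal A:=\int_{\partial D}(\nu_1A-\nu_2B)\dd s,
  \qquad
  \mathcal B:=\int_{\partial D}(\nu_2A+\nu_1B)\dd s .
\]
The inner integral then equals \(\mathcal A+J\mathcal B\), which is affine in \(J\).

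The key step is to show \(\mathcal A=0\). This is where the parities enter. Under \(z\mapsto\bar z\), which maps \(\partial D\) to itself and preserves arclength, \eqref{eq:weighted-normal-parity} and \eqref{eq:weighted-stem-parity} make \(\nu_1A-\nu_2B\) even. So that argument alone gives no cancellation. I therefore need to redo the bookkeeping: the parametrization \eqref{eq:weighted-boundary-parametrization} runs over the full signed profile \(\partial D\), and I must be careful that \(J\) ranges over \(H\) only. Integrating \(\mathcal A+J\mathcal B\) over \(H\) gives \(2\pi\mathcal A+\bigl(\int_HJ\darea\bigr)\mathcal B\), and \(\int_HJ\darea=\pi J_H\), where \(J_H\) is the pole of \(H\). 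This depends on \(H\), so consistency (the left side is independent of \(H\)) should force \(\mathcal B=0\) and leave \(2\pi\mathcal A\).

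Indeed \(\nu_2A+\nu_1B\) is odd under conjugation, so \(\mathcal B=0\) by the reflection \(z\mapsto\bar z\). Hence \(\Flux{U}{f}=\frac{1}{4\pi^2}\cdot2\pi\mathcal A=\frac{1}{2\pi}\mathcal A\).

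It remains to identify \(\frac{1}{2\pi}\int_{\partial D_I}n_If_I\dd s\) with \(\frac{1}{2\pi}\mathcal A\) for a fixed \(I\in\Sphere\). Parametrize \(\partial D_I\) by \(z\mapsto\alpha+I\beta\) with \(z\in\partial D\). This is an isometry onto \(\partial D_I\) which carries the outward normal \(\nu\) to \(n_I=\nu_1+I\nu_2\), by Proposition~\ref{prop:slice-circularization-properties} and the identification \(\C\cong\C_I\). The integrand becomes \((\nu_1A-\nu_2B)+I(\nu_2A+\nu_1B)\), whose integral is \(\mathcal A+I\mathcal B=\mathcal A\). The second equality in \eqref{eq:weighted-slice-reduction} is then Lemma~\ref{lem:complex-tangent-normal} in \(\C_I\), i.e.\ \(\dd z=In_I\dd s\), transported verbatim with \(V=(\Hh,L_I)\), exactly as in \eqref{eq:complex-flux-contour}. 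Independence of the slice follows because \(\mathcal A\) involves no \(I\).

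The main obstacle I expect is the parity bookkeeping in the two middle steps. In particular I must check that the reflection \(z\mapsto\bar z\) of \(\partial D\) preserves \(\dd s\), with positive arclength regardless of orientation, so that it really kills the odd integrand \(\nu_2A+\nu_1B\). I must also check that the hemisphere integral of \(J\) does not survive. Absolute convergence and the harmlessness of the real points are already covered by Theorem~\ref{thm:weighted-disintegration}.
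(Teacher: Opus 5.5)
Your proposal is correct and follows essentially the paper's own proof: the same splitting $nf=\Pi+J\Theta$, with $\Pi=\nu_1A-\nu_2B$ even and $\Theta=\nu_1B+\nu_2A$ odd, is used, and the odd part $\mathcal B$ is killed at each fixed $J$ by the reflection $z\mapsto\bar z$ of the full signed profile. It is not killed by angular integration, since $\int_HJ\darea(J)=\pi J_H\neq0$, as you rightly note; what remains is $\Area(H)\,\mathcal A=2\pi\mathcal A$, followed by the same identification on $\partial D_I$ and the relation $\dd z=In_I\dd s$. The only blemish is your opening claim that the key step is $\mathcal A=0$, which is backwards ($\mathcal A$ is the answer and $\mathcal B$ is what vanishes), but you correct it yourself, and your $H$ versus $-H$ consistency argument is a valid, if redundant, second way to see $\mathcal B=0$.
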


In the previous statement,
When \(U\) is a product domain, \(\partial D_I\) comprises the boundaries of
both conjugate profile components.
\begin{proof}
Fix the hemisphere \(H\) of Theorem~\ref{thm:weighted-disintegration} and
write \(J\in H\) for the angular variable. By
Lemma~\ref{lem:weighted-circular-normal} the quaternionic normal is
\(n=\nu_1+J\nu_2\), and multiplying it on the left by
\eqref{eq:weighted-slice-boundary-values} gives
\begin{equation}\label{eq:weighted-normal-times-slice-function}
  nf=(\nu_1+J\nu_2)(A+JB)=\Pi+J\Theta ,
\end{equation}
where
\begin{equation}\label{eq:weighted-even-odd-parts}
  \Pi:=\nu_1A-\nu_2B ,
  \qquad
  \Theta:=\nu_1B+\nu_2A .
\end{equation}
The components \(\nu_1,\nu_2\) are real, hence central.
Comparing \eqref{eq:weighted-normal-parity} with
\eqref{eq:weighted-stem-parity},
\begin{equation}\label{eq:weighted-PQ-parity}
  \Pi(\bar z)=\Pi(z),
  \qquad
  \Theta(\bar z)=-\Theta(z) ,
\end{equation}
and since conjugation preserves arclength and permutes the components of
\(\partial D\), the odd part integrates to zero over the full signed
profile:
\begin{equation}\label{eq:weighted-odd-profile-cancellation}
  \int_{\partial D}\Theta(z)\dd s=0 .
\end{equation}

Applying \eqref{eq:weighted-disintegration} to \(G=nf\) and using
\eqref{eq:weighted-normal-times-slice-function} and
\eqref{eq:weighted-odd-profile-cancellation},
\begin{equation}\label{eq:weighted-angular-reduction}
  \int_{\partial U}\frac{n(q)f(q)}{|\imq q|^2}\dsigma(q)
  =\int_H\int_{\partial D}\bigl(\Pi+J\Theta\bigr)\dd s\,\darea(J)
  =\Area(H)\int_{\partial D}\Pi\dd s
  =2\pi\int_{\partial D}\Pi\dd s .
\end{equation}
On the slice \(\C_I\) of the statement, the same two formulas give
\(n_If_I=\Pi+I\Theta\), whose odd part again integrates to zero, so that
\begin{equation}\label{eq:weighted-planar-P-integral}
  \int_{\partial D_I}n_If_I\dd s=\int_{\partial D}\Pi\dd s .
\end{equation}
Dividing \eqref{eq:weighted-angular-reduction} by \(4\pi^2\) and comparing
with \eqref{eq:weighted-planar-P-integral} proves the first equality in
\eqref{eq:weighted-slice-reduction}.

For the second, the orientation convention of
Lemma~\ref{lem:complex-tangent-normal} makes \(In_I\) the positively
oriented unit tangent of \(\partial D_I\), so that
\begin{equation}\label{eq:weighted-normal-to-dz}
  \dd z=In_I\dd s ,
  \qquad\text{that is}\qquad
  n_I\dd s=-I\dd z ,
\end{equation}
and \((2\pi I)^{-1}=-I/(2\pi)\) turns
\(\frac1{2\pi}\int_{\partial D_I}n_If_I\dd s\) into
\((2\pi I)^{-1}\int_{\partial D_I}\dd z\,f_I\), all scalar factors having
stayed on the left.
\end{proof}

One point of the proof deserves emphasis: the odd part \(\Theta\) was
killed by the integration over the full signed profile, at each fixed
\(J\), and not by the angular integration: the identity
\(\int_HJ\darea(J)=0\) that one might invoke instead is false, the integral
of \(J\) over a hemisphere being in general non-zero, and it is nowhere
used. This is also why \eqref{eq:weighted-boundary-parametrization} was set
up over the whole of \(\partial D\) rather than over its upper half.

\begin{corollary}[Boundedness and right linearity]
\label{cor:weighted-boundedness}
The weighted flux is a bounded right \(\Hh\)-linear functional of the
boundary datum: for continuous \(f,g\colon\partial U\to\Hh\) and
\(a,b\in\Hh\),
\begin{equation}\label{eq:weighted-right-linearity}
  \Flux{U}{fa+gb}=\Flux{U}{f}a+\Flux{U}{g}b ,
\end{equation}
and
\begin{equation}\label{eq:weighted-flux-bound}
  |\Flux{U}{f}|
  \leq\frac{\operatorname{Length}(\partial D)}{2\pi}
      \|f\|_{L^\infty(\partial U)} .
\end{equation}
\end{corollary}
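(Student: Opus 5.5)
Both assertions follow directly from the disintegration formula of Theorem~\ref{thm:weighted-disintegration}, applied with \(G=nf\) and \(G=n(fa+gb)\). No slice structure and no appeal to Theorem~\ref{thm:weighted-slice-reduction} is needed, since the statement is for arbitrary continuous data on \(\partial U\).

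For right linearity, observe first that \(n(q)(f(q)a+g(q)b)=(n(q)f(q))a+(n(q)g(q))b\) pointwise, by associativity and distributivity of quaternionic multiplication. The weight \(|\imq q|^{-2}\) is a real scalar and commutes with everything. The integral in \eqref{eq:weighted-flux-definition} is defined componentwise. For a fixed quaternion \(a\), the map \(w\mapsto wa\) is real linear on \(\Hh\cong\R^4\), and a real-linear map commutes with componentwise Lebesgue integration of an absolutely integrable \(\Hh\)-valued function. Absolute integrability of \(nf/|\imq q|^2\) and \(ng/|\imq q|^2\) is guaranteed by \eqref{eq:weighted-integrability-bound}, because \(nf\) and \(ng\) are continuous on the compact set \(\partial U\). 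Hence \(\int (nf)a=(\int nf)a\), and additivity gives \eqref{eq:weighted-right-linearity}. Continuity of \(n\) on \(\partial U\), including at the real points, is provided by Lemma~\ref{lem:weighted-circular-normal}.

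For the bound, I take norms under the integral sign:
\[
  |\Flux{U}{f}|\le\frac{1}{4\pi^2}\int_{\partial U\setminus\R}\frac{|n||f|}{|\imq q|^2}\dsigma .
\]
Here \(|n|=1\), and the norm of a quaternion is multiplicative, so \(|nf|=|f|\). The right-hand side is then estimated by \eqref{eq:weighted-integrability-bound} with \(G=f\), which gives at most \((4\pi^2)^{-1}\cdot2\pi\operatorname{Length}(\partial D)\|f\|_\infty\). This is exactly \eqref{eq:weighted-flux-bound}.

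The only point requiring a word is the inequality \(|\int G|\le\int|G|\) for \(\Hh\)-valued integrals taken componentwise. I would justify it in the standard way. Set \(w=\int G\) and assume \(w\neq0\). Then \(|w|^2=\langle w,\int G\rangle=\int\langle w,G\rangle\le|w|\int|G|\), by Cauchy--Schwarz in \(\R^4\) and the fact that a real-linear functional commutes with the integral. Dividing by \(|w|\) gives the inequality. Nothing else is an obstacle.
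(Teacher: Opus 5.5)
Your proof is correct and follows the paper's own argument. Right linearity comes from associativity $n(fa)=(nf)a$ together with the componentwise definition of the integral, and the bound comes from $|n|=1$ and the estimate \eqref{eq:weighted-integrability-bound} of Theorem~\ref{thm:weighted-disintegration}, divided by $4\pi^2$; the details you add (moving right multiplication past the integral, and $|\int G|\le\int|G|$ for $\Hh$-valued integrals) are exactly what the paper leaves implicit.
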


\begin{proof}
Right linearity is due to associativity, \(n(fa)=(nf)a\). The estimate follows from
\(|n|=1\) and \eqref{eq:weighted-integrability-bound} applied to \(G=nf\),
divided by \(4\pi^2\).
\end{proof}

\begin{remark}[Where the factor \(4\pi^2\) comes from]
\label{rem:weighted-normalization}
Equation \eqref{eq:weighted-angular-reduction} exhibits the constant as
\(4\pi^2=(2\pi)\Area(H)\): the first factor is the normalization of the
planar flux \eqref{eq:complex-flux-definition}, the second the area of the
hemisphere, used in tandem with the full signed profile. The geometry behind
this is close to that of the volume Cauchy formulas of
\cite{GhiloniPerottiVolume}; the weighted density is not being proposed here
as a new Cauchy kernel.
\end{remark}
\subsection{Direct checks on balls and tubes}
\label{subsec:weighted-direct-checks}

The reduction theorem already fixes the normalization, but two direct
boundary computations are worth making, for different reasons. The first, on
a ball centred at a real point, shows the constant \(4\pi^2\) emerging from
the geometry rather than from a convention. The second, on a tube around a
non-real sphere, settles a question the first cannot even pose: which of the
two paired coefficients the flux detects. The answer, \(u_{-1}\) and not
\(v_{-1}\), is what makes Definition~\ref{def:sres-residues} the right one.

\begin{proposition}
\label{prop:weighted-ball-extraction}
Let \(a\in\R\), \(R>0\), and let \(f\) admit a normally convergent Laurent
expansion
\begin{equation}\label{eq:weighted-ball-Laurent}
  f(q)=\sum_{m\in\mathbb Z}(q-a)^mc_m ,
  \qquad c_m\in\Hh ,
\end{equation}
on an annulus containing \(\partial B(a,R)\). Then
\begin{equation}\label{eq:weighted-ball-selector}
  \boxed{\;\Flux{B(a,R)}{f}=c_{-1} .\;}
\end{equation}
\end{proposition}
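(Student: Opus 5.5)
The identity will follow from the slice reduction of Theorem~\ref{thm:weighted-slice-reduction} combined with the planar selector of Remark~\ref{rem:complex-circle-selector}. The paper explicitly announces a \emph{direct} check, though, so after the slice argument I will add a short independent computation on the three-sphere itself. First, \(B(a,R)\) is the circularization of the planar disc \(D=B(a,R)\subset\C\). This disc is conjugation invariant, has \(C^2\) boundary, and meets \(\R\) transversally at \(a\pm R\). It is therefore an admissible planar profile, so \(B(a,R)\) is an admissible circular open set and \(\Flux{B(a,R)}{f}\) is defined. Each summand \((q-a)^mc_m\) is a slice function, because \(a\) is real. Normal convergence on a neighbourhood of the boundary then lets us exchange the sum with the flux, using the bound \eqref{eq:weighted-flux-bound} together with right linearity (Corollary~\ref{cor:weighted-boundedness}). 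It therefore suffices to show that \(\Flux{B(a,R)}{(q-a)^m c}=\delta_{m,-1}c\).

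For a single term, Theorem~\ref{thm:weighted-slice-reduction} on any slice \(\C_I\) gives
\[
  \Flux{B(a,R)}{(q-a)^mc}=\frac1{2\pi}\int_{\partial B_I(a,R)}n_I(z)(z-a)^m c\dd s .
\]
With \(z=a+Re^{I\theta}\) we have \(n_I=e^{I\theta}\) and \(\dd s=R\dd\theta\). The integrand becomes \(R^{m+1}e^{I(m+1)\theta}c\), with the scalar factors on the left. Its integral over \([0,2\pi]\) equals \(2\pi c\) when \(m=-1\) and \(0\) otherwise, exactly as in \eqref{eq:complex-circle-selector}.

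For the direct check, which is meant to display the constant \(4\pi^2\), I will parametrize \(\partial B(a,R)\setminus\R\) by \(q=a+R(\cos\phi+J\sin\phi)\) with \(\phi\in(0,\pi)\) and \(J\in\Sphere\). Then \(n(q)=(q-a)/R=e^{J\phi}\), \(|\imq q|=R\sin\phi\), and \(\dsigma=R^3\sin^2\phi\dd\phi\,\darea(J)\). The weighted integrand is \(R^{m+1}e^{J(m+1)\phi}c\,\dd\phi\,\darea(J)\). Over the full range \(\phi\in(0,\pi)\), \(J\in\Sphere\), the angular integral can be rearranged using \(J\mapsto-J\): this pairs \(e^{J(m+1)\phi}\) with \(e^{-J(m+1)\phi}\), and the \(J\)-odd sine part cancels. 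What remains is \(4\pi\int_0^\pi\cos((m+1)\phi)\dd\phi\,R^{m+1}c\), which equals \(4\pi^2c\) for \(m=-1\) and \(0\) otherwise. This recovers \eqref{eq:weighted-ball-selector}. Note that the cancellation uses the symmetry \(J\mapsto -J\) over the whole sphere, not an integral of \(J\) over a hemisphere.

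The only step needing care is the termwise exchange. It must be phrased for convergence on the compact set \(\partial B(a,R)\), using \eqref{eq:weighted-flux-bound}, so that the weight \(|\imq q|^{-2}\) near the two real points \(a\pm R\) causes no trouble. Theorem~\ref{thm:weighted-disintegration} already guarantees this.
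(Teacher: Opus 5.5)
Your proof is correct. Your third paragraph is exactly the paper's proof: the same positive-radius parametrization of \(\partial B(a,R)\setminus\R\) by \((\theta,I)\in(0,\pi)\times\Sphere\), the same collapse of \(n\dsigma/|\imq q|^2\) acting on \((q-a)^mc\) to \(R^{m+1}e^{I(m+1)\theta}c\dd\theta\darea(I)\), the same antipodal cancellation \(\int_\Sphere I\darea(I)=0\), and termwise integration justified by uniform convergence against a finite weighted measure.

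Your primary route through Theorem~\ref{thm:weighted-slice-reduction} is genuinely different, and shorter. Once the reduction theorem is available, the proposition is just Remark~\ref{rem:complex-circle-selector} read on \(\C_I\) with \(V=(\Hh,L_I)\). Your exchange of sum and flux via Corollary~\ref{cor:weighted-boundedness} is legitimate, because \eqref{eq:weighted-flux-bound} holds for arbitrary continuous boundary data. The slice-function hypothesis of the reduction theorem is then needed only termwise, where it clearly holds since \(a\) is real.

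What the slice route does not give is independence. The paper places this proposition after the reduction theorem precisely as a check that does not pass through it: it confirms the constant \(4\pi^2\) and the position of the coefficient on the right directly from the boundary geometry. The two routes also dispose of the odd part by different symmetries. The reduction theorem kills \(\Theta\) through the profile reflection \(z\mapsto\bar z\), using a hemisphere parametrization. The direct computation uses the antipodal map on the full \(\Sphere\), which is available only because the ball admits a parametrization with positive radial coordinate. So only your third paragraph serves the proposition's stated purpose.
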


\begin{proof}
Away from the two real points \(a\pm R\), parametrize the boundary
three-sphere by
\begin{equation}\label{eq:weighted-ball-parametrization}
  q=a+R(\cos\theta+I\sin\theta),
  \qquad 0<\theta<\pi,\quad I\in\Sphere ,
\end{equation}
which covers it once, this time with a positive radial coordinate
\(R\sin\theta\) and the whole of \(\Sphere\) (see Figure \ref{fig:weighted-ball-parametrization}). The outward unit normal is
\(n(q)=\cos\theta+I\sin\theta\), the surface element is
\(\dsigma=R^3\sin^2\theta\dd\theta\,\darea(I)\) and
\(|\imq q|^2=R^2\sin^2\theta\), so that the weighted element is simply
\begin{equation}\label{eq:weighted-ball-measure-cancellation}
  \frac{\dsigma}{|\imq q|^2}=R\dd\theta\,\darea(I) .
\end{equation}

\begin{figure}[htbp]
\centering
\begin{tikzpicture}[scale=1.6]
  \def\aa{0.9}      
  \def\RR{1.25}     
  \pgfmathsetmacro{\th}{58}
  \pgfmathsetmacro{\px}{\aa+\RR*cos(\th)}
  \pgfmathsetmacro{\py}{\RR*sin(\th)}

  \fill[black!7] (\aa,0) ++(0:\RR) arc(0:180:\RR) -- cycle;

  \draw[->,gray!70] (-0.75,0) -- (2.85,0) node[below right] {$\R$};
  \draw[->,gray!70] (0,-0.85) -- (0,1.85) node[above left] {$I\R$};

  \draw[thick] (\aa,0) ++(0:\RR) arc(0:180:\RR);
  \draw[thick,gray!45,dashed] (\aa,0) ++(180:\RR) arc(180:360:\RR);

  \fill (\aa,0) circle (1.2pt);
  \node[below] at (\aa,-0.05) {$a$};
  \draw[gray!70] (\aa,0) -- (\px,\py);
  \node[gray!70,font=\small] at ({\aa+0.42*\RR*cos(\th)-0.16},
                                 {0.42*\RR*sin(\th)+0.10}) {$R$};

  \draw[gray!70] (\aa,0) ++(0:0.34) arc(0:\th:0.34);
  \node[gray!70,font=\small] at ({\aa+0.48},{0.16}) {$\theta$};

  \fill (\px,\py) circle (1.3pt);
  \draw[-{Stealth},thick] (\px,\py) -- ++({0.62*cos(\th)},{0.62*sin(\th)});
  \node[right,font=\small] at ({\px+0.62*cos(\th)},{\py+0.62*sin(\th)+0.04})
    {$n=\cos\theta+I\sin\theta$};
  \node[left,font=\small] at (\px,\py+0.2) {$q$};

  \draw[dotted,thick] (\px,\py) -- (\px,0);
  \draw[|<->|,gray!75] ({\px+0.22},0) -- ({\px+0.22},\py)
    node[midway,right,font=\small] {$R\sin\theta$};

  \fill[white] ({\aa-\RR},0) circle (1.6pt);
  \draw ({\aa-\RR},0) circle (1.6pt);
  \fill[white] ({\aa+\RR},0) circle (1.6pt);
  \draw ({\aa+\RR},0) circle (1.6pt);
  \node[below,font=\small] at ({\aa-\RR},-0.08) {$a-R$};
  \node[below,font=\small] at ({\aa+\RR},-0.08) {$a+R$};
\end{tikzpicture}
\caption{The section by a slice $\C_I$ of the boundary three-sphere of
$B(a,R)$, parametrized as in \eqref{eq:weighted-ball-parametrization}. The
angle $\theta$ runs over the upper half-circle alone, the whole of
$\Sphere$ being swept by $I$; the two real points $a\pm R$, drawn hollow,
correspond to $\theta=0$ and $\theta=\pi$ and are excluded. The outward
normal at $q$ is radial, and $|\imq q|=R\sin\theta$ is the distance from
the real axis, which is what the weight of
Definition~\ref{def:weighted-flux} divides by.}
\label{fig:weighted-ball-parametrization}
\end{figure}

All powers being computed in the commutative plane \(\C_I\),
\[
  n(q)(q-a)^m=R^m(\cos\theta+I\sin\theta)^{m+1}
             =R^m\bigl(\cos((m+1)\theta)+I\sin((m+1)\theta)\bigr) ,
\]
so the term of index \(m\) contributes
\begin{equation}\label{eq:weighted-ball-mode-integral}
  R^{m+1}\int_0^\pi\!\!\int_\Sphere
    \bigl(\cos((m+1)\theta)+I\sin((m+1)\theta)\bigr)c_m
    \,\darea(I)\dd\theta
\end{equation}
to the unnormalized integral. The summand carrying \(I\) vanishes under the
angular integration, by the antipodal symmetry that gives
\begin{equation}\label{eq:weighted-sphere-first-moment}
  \int_\Sphere I\darea(I)=0 ,
  \qquad
  \Area(\Sphere)=4\pi ,
\end{equation}
and the other leaves \(4\pi R^{m+1}c_m\int_0^\pi\cos((m+1)\theta)\dd\theta\),
which vanishes unless \(m=-1\) and equals \(4\pi^2c_{-1}\) when \(m=-1\).
Normal convergence against a finite weighted measure justifies integrating
\eqref{eq:weighted-ball-Laurent} term by term, and division by \(4\pi^2\)
gives \eqref{eq:weighted-ball-selector}.
\end{proof}

Taking a single term, this says that
\begin{equation}\label{eq:weighted-ball-basic-normalization}
  \int_{\partial B(a,R)}
    \frac{n(q)(q-a)^{-1}c}{|\imq q|^2}\dsigma(q)=4\pi^2c
  \qquad(c\in\Hh) ,
\end{equation}
a direct check of the constant and of the position of the coefficient on the
right. Note also that only convergence near the integration sphere was used,
so the proposition applies just as well to an exterior expansion.

The second computation uses a tubular neighbourhood of a non-real conjugacy
sphere.

\begin{definition}[A circular tube]
\label{def:weighted-circular-tube}
For \(\alpha\in\R\), \(\beta>0\) and \(0<\rho<\beta\), set
\begin{equation}\label{eq:weighted-circular-tube}
  T_{\alpha,\beta,\rho}
  :=\bigl\{q\in\Hh:
     (\req q-\alpha)^2+(|\imq q|-\beta)^2<\rho^2\bigr\} .
\end{equation}
\end{definition}

Its planar profile is the pair of conjugate discs of radius \(\rho\) centred
at \(\alpha\pm\ci\beta\), disjoint from \(\R\) precisely because
\(\rho<\beta\); so \(T_{\alpha,\beta,\rho}\) is an admissible circular open
set, it misses the real axis, and it contains the sphere
\(\Sigma=\alpha+\Sphere\beta\). Being circular and disjoint from \(\R\), it
is a product domain, and its trace on any slice is
the pair of discs itself.

\begin{figure}[htbp]
\centering
\begin{tikzpicture}[scale=1.8]
  \def\rh{0.30}                 
  \pgfmathsetmacro{\jx}{-0.42}  
  \pgfmathsetmacro{\jy}{-0.30}

  \draw[->,gray!70] (-0.70,0) -- (2.70,0) node[below right] {$\R$};
  \draw[->,gray!70] (0,-1.60) -- (0,1.70) node[above left] {$\qi\R$};
  \draw[->,gray!70] (0,0) -- ({1.55*\jx},{1.55*\jy}) node[below left] {$\qj\R$};

  \foreach \ph in {180,190,...,350}{
    \draw[gray!30,line width=0.35pt]
      plot[domain=0:360,samples=70,variable=\t]
        ({1+\rh*cos(\t)+(1+\rh*sin(\t))*sin(\ph)*\jx},
         {(1+\rh*sin(\t))*cos(\ph)+(1+\rh*sin(\t))*sin(\ph)*\jy});}

  \draw[very thick]
    plot[domain=180:360,samples=90,variable=\ph]
      ({1+sin(\ph)*\jx},{cos(\ph)+sin(\ph)*\jy});

  \foreach \ph in {0,10,...,170}{
    \draw[gray!55,line width=0.4pt]
      plot[domain=0:360,samples=70,variable=\t]
        ({1+\rh*cos(\t)+(1+\rh*sin(\t))*sin(\ph)*\jx},
         {(1+\rh*sin(\t))*cos(\ph)+(1+\rh*sin(\t))*sin(\ph)*\jy});}

  \fill[black!12] (1,1) circle (\rh);
  \fill[black!12] (1,-1) circle (\rh);

  \draw[very thick]
    plot[domain=0:180,samples=90,variable=\ph]
      ({1+sin(\ph)*\jx},{cos(\ph)+sin(\ph)*\jy});
  \node at (0.30,-0.78) {$\Sigma$};

  \draw[thick] (1,1) circle (\rh);
  \draw[thick] (1,-1) circle (\rh);
  \fill (1,1) circle (1.3pt);
  \fill (1,-1) circle (1.3pt);
  \node[right] at ({1+\rh+0.07},1) {$1+\qi$};
  \node[right] at ({1+\rh+0.07},-1) {$1-\qi$};

  \fill (1,0) circle (1.2pt);
  \node[below right,font=\small] at (1.02,-0.04) {$1$};
  \draw[|<->|,gray!80] (1,1) -- ++(-40:\rh)
    node[midway,below left,font=\small,inner sep=1pt] {$\rho$};
\end{tikzpicture}
\caption{The circular tube $T_{1,1,\rho}$ in $\R+\qi\R+\qj\R$. The polar
sphere $\Sigma=1+\Sphere$ appears as the heavy circle of radius $1$ in the
plane through $1$ orthogonal to $\R$; the tube is the solid of revolution
obtained by turning a disc of radius $\rho$ about the real axis, so that its
core is $\Sigma$ and its axis is $\R$. The plane of the page is the slice
$\C_\qi$, meeting the tube in the two conjugate discs about $1\pm\qi$. The
condition $\rho<\beta$ keeps the torus clear of the real axis.}
\label{fig:circular-tube}
\end{figure}

\begin{lemma}
\label{lem:weighted-tube-parametrization}
Let \(G\colon\partial T_{\alpha,\beta,\rho}\to\Hh\) be continuous and set
\begin{equation}\label{eq:weighted-tube-w}
  w(t,I):=\rho(\cos t+I\sin t) .
\end{equation}
Then
\begin{equation}\label{eq:weighted-tube-flux}
  \boxed{\;
  \Flux{T_{\alpha,\beta,\rho}}{G}
  =\frac1{4\pi^2}\int_\Sphere\int_0^{2\pi}
     w(t,I)\,G\bigl(\alpha+I\beta+w(t,I)\bigr)\dd t\,\darea(I) ,
  \;}
\end{equation}
with no slice assumption on \(G\).
\end{lemma}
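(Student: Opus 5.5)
We prove \eqref{eq:weighted-tube-flux} by a direct parametrization of \(\partial T_{\alpha,\beta,\rho}\), in the spirit of the ball computation of Proposition~\ref{prop:weighted-ball-extraction}, rather than through the hemisphere disintegration of Theorem~\ref{thm:weighted-disintegration}. No slice structure of \(G\) can be used here, so everything must come from the geometry of the torus-like boundary.

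The parametrization is
\[
  \Psi(t,I)=\alpha+I\beta+w(t,I)=(\alpha+\rho\cos t)+I(\beta+\rho\sin t),
  \qquad t\in[0,2\pi),\ I\in\Sphere .
\]
Since \(\rho<\beta\), the radial coordinate \(\beta+\rho\sin t\) is strictly positive. By the uniqueness of the positive coordinates \eqref{eq:slice-positive-coordinates}, \(\Psi\) is therefore a bijection onto \(\partial T_{\alpha,\beta,\rho}\), which is the circularization of one circle of radius \(\rho\) about \(\alpha+\ci\beta\). This covering uses the whole of \(\Sphere\) and only the upper circle, unlike the full signed profile over a hemisphere used earlier; both descriptions cover the boundary once. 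The boundary does not meet \(\R\), so the weight is bounded and there are no convergence issues.

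Next come the three geometric ingredients.

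\emph{Normal.} By Lemma~\ref{lem:weighted-circular-normal}, the planar outward normal of the disc at \(\alpha+\ci\beta+\rho e^{\ci t}\) is \(e^{\ci t}\). Hence
\[
  n(\Psi(t,I))=\cos t+I\sin t=w(t,I)/\rho .
\]

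\emph{Surface element.} The derivatives \(\partial_t\Psi=\rho(-\sin t+I\cos t)\) and \(\dd\Psi(E_a)=(\beta+\rho\sin t)E_a\), with \(E_1,E_2\) an orthonormal basis of \(T_I\Sphere\), are mutually orthogonal, as in \eqref{eq:weighted-parametrization-derivatives}. Their norms are \(\rho\) and \(\beta+\rho\sin t\), so
\[
  \dsigma=\rho\,(\beta+\rho\sin t)^2\,\dd t\,\darea(I).
\]

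\emph{Weight.} We have \(|\imq\Psi|^2=(\beta+\rho\sin t)^2\), which cancels the spherical factor exactly, as in \eqref{eq:weighted-surface-element}. The weighted element is therefore \(\rho\,\dd t\,\darea(I)\).

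Combining these, the integrand becomes \(n\,G\,\rho\,\dd t\,\darea(I)=w(t,I)\,G(\Psi(t,I))\,\dd t\,\darea(I)\). The real factor \(\rho\) is central, so it can be absorbed into \(w\) with the normal kept on the left of \(G\). Dividing by \(4\pi^2\) gives \eqref{eq:weighted-tube-flux}.

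The main point to check carefully is that \(\Psi\) is a \(C^1\) embedding whose image is the whole boundary, so that the area formula applies. This rests on Proposition~\ref{prop:weighted-boundary-geometry}, which gives \(\partial T=\Omega_{\partial D}\). The rest of the argument consists of the routine Gram-determinant computation above.
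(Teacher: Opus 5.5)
Your proposal is correct and follows the paper's proof essentially step for step. Both use the same parametrization \(\alpha+I\beta+w(t,I)\) over \([0,2\pi)\times\Sphere\) with positive radial coordinate \(\beta+\rho\sin t\), the normal \(w/\rho\), and the surface element \(\rho(\beta+\rho\sin t)^2\dd t\,\darea(I)\), whose cancellation against \(|\imq q|^{-2}\) leaves \(w(t,I)\) on the left of \(G\).
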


\begin{proof}
Parametrize the boundary by
\begin{equation}\label{eq:weighted-tube-boundary-parametrization}
  q(t,I)=\alpha+\rho\cos t+I(\beta+\rho\sin t)=\alpha+I\beta+w(t,I) ,
\end{equation}
which uses a positive radial coordinate, \(\beta+\rho\sin t>0\), and the
whole of \(\Sphere\), covering the boundary once up to the set of measure
zero where \(t=0\) and \(t=2\pi\) are identified. Its outward unit normal is
\begin{equation}\label{eq:weighted-tube-normal}
  n(q(t,I))=\cos t+I\sin t=\frac{w(t,I)}{\rho} ,
\end{equation}
and the three tangent directions are orthogonal, the one along \(t\) of
length \(\rho\) and the two angular ones scaled by \(\beta+\rho\sin t\), so
that
\begin{equation}\label{eq:weighted-tube-surface-element}
  \dsigma=\rho(\beta+\rho\sin t)^2\dd t\,\darea(I) .
\end{equation}
Since \(|\imq q|^2=(\beta+\rho\sin t)^2\), the two factors
\(\rho(\beta+\rho\sin t)^2\) and \(|\imq q|^{-2}\) collapse against
\eqref{eq:weighted-tube-normal} into
\[
  n(q)\frac{\dsigma}{|\imq q|^2}=w(t,I)\dd t\,\darea(I) ,
\]
the factor \(w\) remaining on the left of \(G\); substituting into
Definition~\ref{def:weighted-flux} gives \eqref{eq:weighted-tube-flux}.
\end{proof}

\begin{proposition}[Direct extraction of a spherical residue]
\label{prop:weighted-tube-extraction}
Let \(u,v\in\Hh\), let \(\Sigma=\alpha+\Sphere\beta\), and set
\begin{equation}\label{eq:weighted-elementary-spherical-pole}
  f_{u,v}(q):=\DeltaSphere{\Sigma}(q)^{-1}(qu+v) .
\end{equation}
Then, for every \(0<\rho<\beta\),
\begin{equation}\label{eq:weighted-tube-selector}
  \boxed{\;
  \Flux{T_{\alpha,\beta,\rho}}{f_{u,v}}=u=\SRes_\Sigma f_{u,v} .
  \;}
\end{equation}
\end{proposition}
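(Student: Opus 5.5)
The plan is a direct computation with the tube parametrization of Lemma~\ref{lem:weighted-tube-parametrization}. The paper presents this proposition as an independent check on the normalization, so the argument will not go through Theorem~\ref{thm:weighted-slice-reduction}. The second equality in \eqref{eq:weighted-tube-selector} is immediate: \(f_{u,v}\) is already in the paired form \eqref{eq:sres-paired-laurent} with the single term \(k=-1\). By uniqueness in Theorem~\ref{thm:sres-paired-laurent} and Definition~\ref{def:sres-residues}, \(\SRes_\Sigma f_{u,v}=u_{-1}=u\). It remains to compute the flux.

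First, fix \(I\in\Sphere\) and set \(p=\alpha+I\beta\) and \(\bar p=\alpha-I\beta\). Every quantity along the circle \(t\mapsto q(t,I)=p+w(t,I)\) lies in the commutative plane \(\C_I\), apart from the right coefficients \(u,v\). There \(\DeltaSphere{\Sigma}(q)=(q-p)(q-\bar p)=w\,(w+2I\beta)\). Hence
\[
  w\,f_{u,v}(q)=(w+2I\beta)^{-1}\bigl((p+w)u+v\bigr).
\]
The factor \(w\) cancels the simple zero \(q-p\). Since \(\rho<2\beta\), the denominator \(w+2I\beta\) does not vanish on the circle \(|w|=\rho\).

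Next comes the inner \(t\)-integral. Write \(\zeta=\cos t+I\sin t\), so that \(w=\rho\zeta\) and \(\dd t=(I\zeta)^{-1}\dd\zeta\) on the unit circle of \(\C_I\). The integral \(\int_0^{2\pi}(\rho\zeta+2I\beta)^{-1}\bigl((p+\rho\zeta)u+v\bigr)\dd t\) becomes a contour integral of a rational \(\C_I\)-valued function, with the coefficients kept on the right. Its integrand \(\zeta^{-1}(\rho\zeta+2I\beta)^{-1}(\ldots)\) has poles at \(\zeta=0\) and at \(\zeta=-2I\beta/\rho\). The second pole lies outside the unit circle because \(\rho<\beta\). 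The residue at \(0\) is \((2I\beta)^{-1}(pu+v)\), so the inner integral equals
\[
  2\pi\,(2I\beta)^{-1}(pu+v)=\frac{\pi}{\beta}\bigl(-I(\alpha+I\beta)u-Iv\bigr)=\frac{\pi}{\beta}\bigl(\beta u-I\alpha u-Iv\bigr).
\]
This value could also be obtained by expanding \((1+\rho\zeta/(2I\beta))^{-1}\) in a geometric series and keeping the constant term.

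Finally, integrate over \(I\in\Sphere\). The terms linear in \(I\) drop out by \eqref{eq:weighted-sphere-first-moment}, which gives \(\int_\Sphere I\,\darea(I)=0\), and the constant term contributes \(\Area(\Sphere)\,\pi u=4\pi^2u\). Dividing by \(4\pi^2\) in \eqref{eq:weighted-tube-flux} gives \(\Flux{T_{\alpha,\beta,\rho}}{f_{u,v}}=u\). This also shows why \(v\) is invisible: it enters only through the odd term \(Iv\), which is annihilated by the angular integration. The only delicate step is the bookkeeping of left and right multiplication in the residue at \(\zeta=0\), in particular keeping \((2I\beta)^{-1}\) on the left of \(pu+v\). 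Everything else is routine, and the result is independent of \(\rho\), as required.
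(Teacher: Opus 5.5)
Your proposal is correct and follows the paper's own proof. Both use the tube parametrization of Lemma~\ref{lem:weighted-tube-parametrization}, evaluate the inner meridian integral as a residue at the node $\alpha+I\beta$, and then kill the $I(\alpha u+v)$ term with $\int_\Sphere I\darea(I)=0$; the paper writes $w\dd t=-I\dd z$ to obtain $2\pi\Res^I_y(f_{u,v})_I$, and your substitution $\zeta=\cos t+I\sin t$ is the same computation.

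One cosmetic point: after the substitution the integrand carries the factor $I^{-1}$ coming from $\dd t=(I\zeta)^{-1}\dd\zeta$. You drop it when writing the integrand, but you correctly absorb it when you multiply the residue by $2\pi$ rather than $2\pi I$.
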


\begin{proof}
By Lemma~\ref{lem:weighted-tube-parametrization} the flux is an iterated
integral: one integration around each meridian circle, one over the sphere
of imaginary units. We evaluate the inner integral
first, identifying it with a point residue, and then average that residue
over the sphere.

Fix \(I\in\Sphere\) and set \(y=\alpha+I\beta\), \(\bar y=\alpha-I\beta\).
The curve \(z(t)=y+w(t,I)\) traverses the positively oriented circle of
radius \(\rho\) about \(y\) in \(\C_I\), a circle that leaves the second
pole outside, since \(|y-\bar y|=2\beta>\rho\). Differentiating gives
\(\dd z/\dd t=Iw(t,I)\), hence \(w(t,I)\dd t=-I\dd z\), and the one-variable
residue theorem together with \(-I(2\pi I)=2\pi\) yields
\begin{equation}\label{eq:weighted-tube-inner-residue}
  \int_0^{2\pi}w(t,I)f_{u,v}(z(t))\dd t
  =-I\int_{|z-y|=\rho}\dd z\,f_{u,v}(z)
  =2\pi\Res_y^I(f_{u,v})_I .
\end{equation}

The pole at \(y\) is simple, so that
\begin{equation}\label{eq:weighted-tube-point-residue}
  \Res_y^I(f_{u,v})_I
  =(y-\bar y)^{-1}(yu+v)
  =(2I\beta)^{-1}\bigl((\alpha+I\beta)u+v\bigr)
  =\frac12u-\frac{I}{2\beta}(\alpha u+v) .
\end{equation}
Its two terms behave differently under the angular integration: the first is
independent of \(I\), while the second has \(I\) standing to the left of the
fixed quaternion \((\alpha u+v)/2\beta\) and is therefore annihilated by
\(\int_\Sphere I\darea(I)=0\). Hence
\begin{equation}\label{eq:weighted-average-point-residue}
  \int_\Sphere\Res_y^I(f_{u,v})_I\darea(I)
  =\Area(\Sphere)\cdot\tfrac12u=2\pi u .
\end{equation}

Combining the two steps, the unnormalized integral of
\eqref{eq:weighted-tube-flux} equals \(2\pi\cdot2\pi u=4\pi^2u\), and
dividing by \(4\pi^2\) gives the first equality of
\eqref{eq:weighted-tube-selector}. The second is immediate from the paired
expansion of \(f_{u,v}\), which consists of the single term
\(\DeltaSphere{\Sigma}^{-1}(qu+v)\) and so has \(u_{-1}=u\).
\end{proof}
The computation isolates \(u\) and leaves no trace of \(v\), whatever the
radius of the tube. This is a concrete reason why
Definition~\ref{def:sres-residues} takes \(u_{-1}\), and not \(v_{-1}\) or
some combination of the two, as the spherical residue.

\subsection{The divergence identity and the role of circularity}
\label{subsec:weighted-divergence}

Everything said so far has been geometric: the weighted flux was reduced to
a planar integral without any four-dimensional differential operator, and
that reduction is all the residue theorem will need. It is worth pausing,
nevertheless, on the differential identity behind the word ``flux'', for two
reasons. It connects the density \(nf|\imq q|^{-2}\) to the Cauchy--Fueter
operator, hence to the classical function theory of \cite{Sudbery}; and it shows precisely where circularity enters, as the last assertion of
Proposition~\ref{prop:weighted-divergence} makes explicit.

On \(\Hh\simeq\R^4\) write \(q=x_0+x_1\qi+x_2\qj+x_3\qk\) and
\(r=|\imq q|\), and let
\begin{equation}\label{eq:weighted-left-CF-operator}
  D_\ell:=\partial_{x_0}+\qi\partial_{x_1}+\qj\partial_{x_2}
          +\qk\partial_{x_3}
\end{equation}
be the left Cauchy--Fueter operator. Its interaction with slice regularity
is known: a slice regular function is not Fueter regular, and the failure is
measured exactly by the spherical derivative,
\begin{equation}\label{eq:weighted-perotti-identity}
  D_\ell f=-2\partial_sf ,
\end{equation}
an identity due to Perotti \cite[Proposition~6.1 and
Corollary~6.2]{PerottiHarmonicity}, where it is derived from a decomposition
of \(D_\ell\) into a radial part and the spherical Dirac operator. The same work shows that \(\partial_sf\) is harmonic on \(\Omega\) when
\(f\) is slice regular \cite[Theorem~6.3]{PerottiHarmonicity}, and that
\(f\) itself is therefore biharmonic; see also \cite{PerottiAlmansi} for
the resulting Almansi decomposition. Related manifestations of this
harmonic and biharmonic structure appear in the quaternionic Jensen
formulas of \cite{AltavillaBisiJensen,PerottiJensen}. The next proposition
is \eqref{eq:weighted-perotti-identity} in the weighted form we need,
together with its integral consequence.

\begin{proposition}[Weighted Cauchy--Fueter divergence identity]
\label{prop:weighted-divergence}
Let \(f=\mathcal I(A+\ci B)\) be slice regular on a circular open set
\(\Omega\), written in positive radial coordinates as
\begin{equation}\label{eq:weighted-slice-AB-positive-radius}
  f(x+rI)=A(x,r)+IB(x,r),
  \qquad r>0,\ I\in\Sphere .
\end{equation}
Then, away from \(\R\),
\begin{equation}\label{eq:weighted-pointwise-divergence}
  \boxed{\;
  D_\ell\left(\frac{f}{r^2}\right)=-\frac{2IA}{r^3}
  =-\frac{2}{r^3}\,I(q)\,f_s^\circ(q) .
  \;}
\end{equation}
Consequently, for every bounded open \(W\) with \(C^2\) boundary and
\(\overline W\subset\Omega\setminus\R\).
\begin{equation}\label{eq:weighted-boundary-divergence}
  \boxed{\;
  \int_{\partial W}\frac{n(q)f(q)}{r^2}\dsigma(q)
  =-2\int_W\frac{I(q)\,f_s^\circ(q)}{r^3}\dd V_4(q) ,
  \qquad I(q)=\frac{\imq q}{r} ,
  \;}
\end{equation}
and the right-hand side vanishes when \(W\) is circular. 
\end{proposition}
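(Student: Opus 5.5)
The plan is to obtain \eqref{eq:weighted-pointwise-divergence} from Perotti's identity \eqref{eq:weighted-perotti-identity} by a Leibniz rule for \(D_\ell\). The boundary identity then follows from the componentwise divergence theorem, and the vanishing on circular \(W\) from an antipodal symmetry.

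\emph{Pointwise identity.} Since \(r^{-2}\) is real valued, hence central, the Leibniz rule reads
\[
  D_\ell\bigl(r^{-2}f\bigr)=\bigl(D_\ell r^{-2}\bigr)f+r^{-2}D_\ell f ,
\]
where \(D_\ell r^{-2}=\sum_\mu e_\mu\partial_{x_\mu}(r^{-2})\) is the gradient of \(r^{-2}\) read as a quaternion. The function \(r=|\imq q|\) does not depend on \(x_0\), and its gradient in the imaginary variables is \(\imq q/r=I(q)\). Hence
\[
  D_\ell r^{-2}=-2r^{-3}I .
\]
Combining this with \(f=A+IB\) and \(I^2=-1\) gives
\[
  \bigl(D_\ell r^{-2}\bigr)f=-2r^{-3}(IA-B) .
\]
For the second term, Perotti's identity \eqref{eq:weighted-perotti-identity} and the stem description \eqref{eq:slice-spherical-stem-coordinates}, \(\partial_sf=B/r\), give
\[
  r^{-2}D_\ell f=-2B/r^3 .
\]
Adding the two terms, the \(B\)-contributions cancel and leave \(-2IA/r^3\). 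Finally, \(A=f_s^\circ\) by \eqref{eq:slice-spherical-stem-coordinates}, which gives the second form of \eqref{eq:weighted-pointwise-divergence}. The only point to watch is the order of the factors: \(I\) stands to the left of \(A\), because \(D_\ell\) acts from the left.

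\emph{Integral identity.} Put \(h=f/r^2\). Since \(\overline W\subset\Omega\setminus\R\), \(h\) is \(C^1\) on a neighbourhood of \(\overline W\). For each \(\mu\), apply the scalar divergence theorem \eqref{eq:weighted-ordinary-divergence-theorem} to the vector field \(h_\ell\,\epsilon_\mu\), where \(h_\ell\) is a real component of \(h\) and \(\epsilon_\mu\) the \(\mu\)-th standard basis vector of \(\R^4\). This yields
\[
  \int_{\partial W}n_\mu h\dsigma=\int_W\partial_{x_\mu}h\dd V_4 .
\]
Multiplying on the left by \(e_\mu\) and summing over \(\mu\) gives
\[
  \int_{\partial W}n h\dsigma=\int_W D_\ell h\dd V_4 ,
\]
exactly as in the planar Proposition~\ref{prop:complex-green-cauchy}. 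Substituting the pointwise identity proves \eqref{eq:weighted-boundary-divergence}.

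\emph{Vanishing on circular sets.} Consider the map \(\sigma(x+rI)=x-rI\) with \(r>0\), that is, \(q\mapsto\bar q\). It is a linear isometry of \(\R^4\), so it preserves \(\dd V_4\). It maps a circular \(W\) onto itself, it fixes \(x\) and \(r\), and it sends \(I(q)\) to \(-I(q)\). The function \(A(x,r)\) is therefore invariant under \(\sigma\), while the integrand \(I A/r^3\) changes sign. The change of variables \(q\mapsto\sigma(q)\) then shows that the integral equals its own negative, hence vanishes. None of the steps is a real obstacle; the one requiring care is the gradient \(D_\ell r^{-2}=-2r^{-3}I\) together with the left ordering in the product rule.
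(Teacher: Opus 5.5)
Your proof is correct and follows the paper's route: the Leibniz rule with \(D_\ell r^{-2}=-2r^{-3}I\) and \(D_\ell f=-2\partial_sf=-2B/r\), then the componentwise divergence theorem, then oddness in \(I\). The differences are minor: the paper rederives \(D_\ell f=-2B/r\) from the chain rule and the Cauchy--Riemann equations \eqref{eq:slice-stem-CR} instead of citing Perotti, which also pins down the normalization of \(D_\ell\); and it kills the volume term using polar coordinates and \(\int_\Sphere I\darea(I)=0\), whereas your reflection \(q\mapsto\bar q\) avoids Fubini and shows the term vanishes for every \(W\) invariant under quaternionic conjugation, not only for circular ones.
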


\begin{proof}
Write \(\underline D:=\qi\partial_{x_1}+\qj\partial_{x_2}+\qk\partial_{x_3}\)
and \(I=\underline x/r\) with
\(\underline x=x_1\qi+x_2\qj+x_3\qk\). For a function \(A=A(x_0,r)\)
independent of the angular variable the chain rule gives
\(\underline DA=IA_r\), while
\begin{equation}\label{eq:weighted-D-I}
  \underline DI
  =\frac1r\sum_{m=1}^3e_m^2-\frac1{r^3}\sum_{m=1}^3e_m\underline x\,x_m
  =-\frac3r-\frac{\underline x^2}{r^3}=-\frac2r ,
\end{equation}
using \(\sum_me_m^2=-3\) and \(\underline x^2=-r^2\), with
\((e_1,e_2,e_3)=(\qi,\qj,\qk)\). For \(B=B(x_0,r)\) the product rule and
\(\sum_me_mI(x_m/r)=I^2=-1\), the coefficients \(x_m/r\) being real, give
\begin{equation}\label{eq:weighted-D-IB}
  \underline D(IB)=(\underline DI)B+I^2B_r=-\frac{2B}{r}-B_r ,
\end{equation}
so that, adding the \(x_0\)-derivative,
\begin{equation}\label{eq:weighted-D-f-before-CR}
  D_\ell f=(A_{x_0}-B_r)+I(B_{x_0}+A_r)-\frac{2B}{r} ,
\end{equation}
which the Cauchy--Riemann equations \eqref{eq:slice-stem-CR} reduce to
\begin{equation}\label{eq:weighted-D-slice-regular}
  D_\ell f=-\frac{2B}{r}=-2\partial_sf ,
\end{equation}
by \eqref{eq:slice-spherical-stem-coordinates}. Since \(r^{-2}\) is real
valued and \(D_\ell r^{-2}=-2I/r^3\), the product rule now gives
\[
  D_\ell(r^{-2}f)=-\frac{2I}{r^3}(A+IB)-\frac{2B}{r^3}
  =-\frac{2IA}{r^3}+\frac{2B}{r^3}-\frac{2B}{r^3}
  =-\frac{2IA}{r^3} ,
\]
which is \eqref{eq:weighted-pointwise-divergence}, the second form following
from \(A=f_s^\circ\).

Formula \eqref{eq:weighted-boundary-divergence} is the real divergence
theorem applied componentwise to \(h=r^{-2}f\): with
\(D_\ell h=\sum_\mu e_\mu\partial_{x_\mu}h\) and
\(n=\sum_\mu e_\mu n_\mu\), \(e_0=1\), one has
\(\int_WD_\ell h\dd V_4=\int_{\partial W}nh\dsigma\), and
\eqref{eq:weighted-pointwise-divergence} evaluates the left-hand side. If \(W\) is circular, the four-dimensional volume splits in positive radial
coordinates as
\begin{equation}\label{eq:weighted-volume-polar-coordinates}
  \dd V_4=r^2\dd x_0\dd r\,\darea(I) ,
\end{equation}
and \(f_s^\circ=A\) does not depend on \(I\); integrating in the angular
variable first, with that factor kept on the right, annihilates the volume
integral by \(\int_\Sphere I\darea(I)=0\).
\end{proof}

Its placement here, after
Theorem~\ref{thm:weighted-slice-reduction}, is deliberate: the identity
explains the geometry of the flux, but the reduction of the boundary
integral, and with it the residue theorem, does not depend on it.  On a
noncircular set the right-hand side need not vanish; in
Theorem~\ref{thm:4d-noncircular-residue} it will become the explicit volume
term that corrects the boundary flux.

\section{The residue theorem in four dimensions}
\label{sec:residue-theorem}

The two constructions of the preceding sections now meet. On one side
Section~\ref{sec:weighted-flux} produced a boundary integral in four
dimensions and showed that it collapses onto a single full slice, where it
becomes an ordinary contour integral; on the other,
Section~\ref{sec:spherical-residues} showed that the two point residues a
conjugate pair carries on that slice add up to a quantity attached to the
sphere itself. Between them stands the classical residue theorem, which
turns the contour integral into the sum of the point residues. Composing
the three steps takes the weighted flux to a sum indexed by the real poles
and the polar spheres of the function, with nothing left of the slice on
which the computation was performed.

The result is more than a bookkeeping of coefficients. Each term on the
right is evaluated by finitely many derivatives of a function made regular
by an intrinsic polynomial factor, so that the theorem is as explicit at a
polar sphere as the classical one is at a pole.

\subsection{Statement and proof}
\label{subsec:4d-statement}

Recall that a semiregular function is induced by a meromorphic stem in the
sense of Definition~\ref{def:slice-semiregular}. Its polar set consists of
isolated real points and nonreal conjugacy spheres. If the function is
semiregular on a circular neighbourhood of the compact set \(\overline U\),
only finitely many of these polar components can meet \(\overline U\). Thus
the finiteness appearing in the next theorem is a consequence of the
hypotheses.

\begin{theorem}[Four-dimensional residue theorem with differential evaluation]
\label{thm:4d-residue}
Let \(U=\Omega_D\) be an admissible circular open set, and let
\(f\) be semiregular on a circular open neighbourhood
\(\Omega\) of \(\overline U\). Assume that no polar component of \(f\)
meets \(\partial U\). Denote the real poles contained in \(U\) by
\[
  a_1,\ldots,a_\ell
\]
and the nonreal polar spheres contained in \(U\) by
\begin{equation}\label{eq:4d-polar-spheres}
  \Sigma_j=\alpha_j+\Sphere\beta_j,
  \qquad \beta_j>0,
  \qquad 1\leq j\leq S.
\end{equation}
The two lists are finite, and either one is allowed to be empty.

For every real pole, choose any integer \(L_i\geq1\) such that
\begin{equation}\label{eq:4d-real-regularizations}
  h_i(q):=(q-a_i)^{L_i}f(q)
\end{equation}
extends slice regularly across \(a_i\). For every nonreal polar sphere,
choose any integer \(M_j\geq1\) such that
\begin{equation}\label{eq:4d-spherical-regularizations}
  g_j(q):=\DeltaSphere{\Sigma_j}(q)^{M_j}f(q)
\end{equation}
extends slice regularly across the whole sphere \(\Sigma_j\) (such exponents
exist by Proposition~\ref{prop:slice-regularization} and need not be minimal).
Finally, choose an arbitrary representative
\[
  y_j=\alpha_j+I_j\beta_j\in\Sigma_j,
  \qquad I_j\in\Sphere,
\]
and put \(\bar y_j=\alpha_j-I_j\beta_j\).

Then the weighted boundary integral is absolutely convergent and
\begin{equation}\label{eq:4d-residue-identity}
  \boxed{\;
  \Flux{U}{f}
  :=\frac{1}{4\pi^2}
    \int_{\partial U\setminus\R}
      \frac{n(q)f(q)}{|\imq q|^2}\,\dsigma(q)
  =\sum_{i=1}^{\ell}\Res_{a_i}f
    +\sum_{j=1}^{S}\SRes_{\Sigma_j}f.
  \;}
\end{equation}
Here \(n\) is the outward Euclidean unit normal, regarded as a quaternion
and multiplied on the left of \(f\). Empty sums in
\eqref{eq:4d-residue-identity} are understood to be zero.
The local residues in this formula admit the evaluations
\begin{align}
  \Res_{a_i}f
  &=\frac{(\partial_c^{L_i-1}h_i)(a_i)}{(L_i-1)!},
  \label{eq:4d-real-differential}\\
  \SRes_{\Sigma_j}f
  &=(g_j)_{I_j}
    [y_j^{(M_j)},\bar y_j^{(M_j)}]
  \notag\\
  &=\frac{
      [\partial_s(\partial_c\partial_s)^{M_j-1}g_j](y_j)
    }{(M_j-1)!}.
  \label{eq:4d-spherical-differential}
\end{align}
Consequently, the residue theorem can be written entirely in differential
form as
\begin{equation}\label{eq:4d-residue-differential-form}
  \boxed{
  \Flux{U}{f}
  =\sum_{i=1}^{\ell}
      \frac{(\partial_c^{L_i-1}h_i)(a_i)}{(L_i-1)!}
    +\sum_{j=1}^{S}
      \frac{
        [\partial_s(\partial_c\partial_s)^{M_j-1}g_j](y_j)
      }{(M_j-1)!}.
}
\end{equation}
The right-hand side is independent of every regularizing exponent
\(L_i,M_j\), every representative \(y_j\), and every slice used in its
calculation. All derivatives are evaluated on the indicated regular
extensions. When \(M_j=1\), the last numerator in
\eqref{eq:4d-spherical-differential} is simply \(\partial_sg_j(y_j)\).
\end{theorem}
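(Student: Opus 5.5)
The proof composes three results already established: the slice reduction of the flux, the classical residue theorem on a full slice, and the grouping of conjugate point residues into spherical residues.

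\emph{Step 1: reduction to one slice.} Fix any \(I\in\Sphere\). Since no polar component meets \(\partial U\) and \(\Omega\) is a circular neighbourhood of \(\overline U\), the restriction of \(f\) to \(\partial U\) is a continuous slice function, induced by the stem on \(\partial D\). Theorem~\ref{thm:weighted-disintegration} then gives absolute convergence of the weighted integral. Theorem~\ref{thm:weighted-slice-reduction} gives
\[
  \Flux{U}{f}=(2\pi I)^{-1}\int_{\partial D_I}\dd z\,f_I(z),
\]
with \(D_I=U\cap\C_I\) an admissible domain in the sense of Definition~\ref{def:admissible-domain}. On a product domain \(D_I\) has two conjugate components.

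\emph{Step 2: the planar residue theorem.} By Proposition~\ref{prop:slice-meromorphic-restriction}, \(f_I\) is meromorphic on the full slice \(E_I\), with values in \((\Hh,L_I)\). Its poles in \(\overline{D_I}\) lie among the points \(a_i\) and the pairs \(\alpha_j\pm I\beta_j\), and none of them lies on \(\partial D_I\). Finiteness follows from compactness of \(\overline D\) and local finiteness of the stem poles \(P_F\). Theorem~\ref{thm:complex-residue-flux}, applied with \(V=(\Hh,L_I)\) and the normalization \eqref{eq:slice-point-residue-integral}, with \((2\pi I)^{-1}\) acting on the left, gives
\[
  \Flux{U}{f}=\sum_i\Res^I_{a_i}f_I+\sum_j\bigl(\Res^I_{\alpha_j+I\beta_j}f_I+\Res^I_{\alpha_j-I\beta_j}f_I\bigr).
\]
A point that is removable on this slice, such as an exceptional point, contributes zero. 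This matches the convention of Proposition~\ref{prop:sres-paired-point-residues}, so no case split is needed.

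\emph{Step 3: identification of the local terms.} Proposition~\ref{prop:sres-real-differential} identifies \(\Res^I_{a_i}f_I\) with \(\Res_{a_i}f\) and gives \eqref{eq:4d-real-differential} for any admissible \(L_i\). Proposition~\ref{prop:sres-paired-point-residues} identifies each bracket with \(\SRes_{\Sigma_j}f\). For a chosen representative \(y_j\), Theorem~\ref{thm:sres-differential}, applied on the slice \(\C_{I_j}\), gives \eqref{eq:4d-spherical-differential}; combining these yields \eqref{eq:4d-residue-differential-form}. The independence assertions reduce to Propositions~\ref{prop:sres-real-differential} and~\ref{prop:sres-three-independences}, together with the slice independence built into Theorem~\ref{thm:weighted-slice-reduction}.

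\emph{Expected obstacle.} The only delicate point is the bookkeeping in Step 2. The slice \(I\) used for the flux need not be any of the \(I_j\) chosen for the representatives, but this causes no difficulty. The sum of the two point residues equals \(u_{-1}\) on every slice, by Theorem~\ref{thm:sres-paired-laurent}, so the flux may be computed on an arbitrary \(I\) and each sphere term evaluated on its own \(I_j\).

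One must also check that a stem pole \(p\) with \(\pi^{-1}(p)\) contained in \(U\) really appears as a pole of \(f_I\) inside \(D_I\), possibly a removable one. This holds because \(\Omega_D\cap\C_I=D_I\) is the full slice section by Proposition~\ref{prop:slice-circularization-properties}. Conversely, no pole of \(f_I\) arises other than the traces of polar components. Finally, the hypotheses exclude real poles on \(\partial U\), which guarantees continuity of the boundary datum at the finitely many real boundary points.
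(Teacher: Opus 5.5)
Your proposal is correct and follows the paper's proof. You reduce the flux to a full slice by Theorem~\ref{thm:weighted-slice-reduction} and evaluate the contour integral by the vector-valued planar residue theorem; the paper redoes the excision explicitly, noting that it is exactly the argument of Theorem~\ref{thm:complex-residue-flux}. You then group the conjugate point residues via Proposition~\ref{prop:sres-paired-point-residues}, and obtain the differential forms and independences from Theorem~\ref{thm:sres-differential} and Proposition~\ref{prop:sres-three-independences}, which is also the paper's route.
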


\begin{proof}
We give the reduction in detail, including the orientation of all boundary
components.

\smallskip
\noindent\emph{Step 1: finiteness and regularity near the boundary.}
Write \(\Omega=\Omega_E\), and let \(F\) be the meromorphic stem inducing
\(f\). We call \emph{stem pole} a point of \(E\) at which at least one of
the four scalar coordinates of \(F\) has a pole; by
Definition~\ref{def:slice-semiregular} the set \(P_F\) of stem poles is
locally finite in \(E\), and by
Subsection~\ref{subsec:slice-semiregular} its circularization is the polar
set of \(f\), the real stem poles giving the points \(a_i\) and the
non-real conjugate pairs the spheres \(\Sigma_j\).

At profile level, \(\overline D\) is a compact subset of \(E\). Indeed, if
\(z=x+\ci t\in\overline D\), then \(x+It\in\overline U\) for any fixed
\(I\in\Sphere\); the inclusion \(\overline U\subset\Omega_E\) and the
full-section identity \eqref{eq:slice-full-section} imply \(z\in E\). A
locally finite set meets a compact set in finitely many points, since
otherwise compactness would produce an accumulation point; so
\(\overline D\) contains only finitely many stem poles, which is the
finiteness asserted in the statement.

Because no polar component meets \(\partial U\), no stem pole lies on
\(\partial D\). Around each point of this compact boundary one can choose a
small disk avoiding \(P_F\); a finite subcover produces a neighbourhood of
\(\partial D\) on which the stem is holomorphic, so that \(f\) is slice
regular on a circular neighbourhood of \(\partial U\) and its boundary
trace is a continuous slice function. The boundary measure theorem,
Theorem~\ref{thm:weighted-disintegration}, applied to \(G=nf\), now gives
\[
  \int_{\partial U\setminus\R}
  \frac{|n(q)f(q)|}{|\imq q|^2}\,\dsigma(q)<\infty ,
\]
since \(|n|=1\). This is the claimed absolute convergence, and it also
shows that the real boundary points require no principal-value
prescription.

\smallskip
\noindent\emph{Step 2: all possible singularities on one full slice.}
Fix \(I\in\Sphere\), and identify the planar profile \(D\) with the full
slice section
\[
  D_I=U\cap\C_I
  =\{x+It:x+\ci t\in D\}.
\]
By Proposition~\ref{prop:slice-meromorphic-restriction}, the restriction
\(f_I\) is a meromorphic map with values in the complex vector space
\((\Hh,L_I)\). Its possible singular points in \(D_I\) form the finite set
\begin{equation}\label{eq:4d-slice-candidate-poles}
   \mathcal Q_I
  :=\{a_1,\ldots,a_\ell\}
    \cup
    \{p_j^+,p_j^-:1\leq j\leq S\},
  \qquad
  p_j^\pm:=\alpha_j\pm I\beta_j.
\end{equation}
The adjective ``possible'' is important. At an exceptional point of a polar
sphere, cancellation under slice evaluation can lower the pole order or make
the singularity removable. We keep that point in \(\mathcal Q_I\); its
residue will then simply be zero.

If \(D\) is a product domain, its upper and lower components are disjoint.
Nevertheless, \(D_I\) is the \emph{full} slice section, so it contains both
points \(p_j^+\) and \(p_j^-\).

\smallskip
\noindent\emph{Step 3: excision of small disks.}
For every \(c\in\mathcal Q_I\), choose a radius \(\varepsilon_c>0\) such
that the closed planar disks
\[
  \overline{B_I(c,\varepsilon_c)}
  :=\{z\in\C_I:|z-c|\leq\varepsilon_c\}
\]
are pairwise disjoint, contained in \(D_I\), and contain no point of
\(\mathcal Q_I\) other than their centres. This is possible because
\(\mathcal Q_I\) is finite and lies in the open set \(D_I\). Let
\begin{equation}\label{eq:4d-excised-slice}
  D_{I,\varepsilon}
  :=D_I\setminus
    \bigcup_{c\in\mathcal Q_I}
      \overline{B_I(c,\varepsilon_c)},
\end{equation}
and let
\(\gamma_c=\partial B_I(c,\varepsilon_c)\) be oriented
counterclockwise.

Every component of \(\partial D_I\) is already oriented positively relative
to \(D_I\): an exterior boundary is counterclockwise, while the boundary of
a pre-existing hole is clockwise. The new circles \(\gamma_c\), however,
bound holes of the excised set. They must therefore be traversed clockwise
when regarded as components of \(\partial D_{I,\varepsilon}\). In the
notation of oriented one-chains,
\begin{equation}\label{eq:4d-oriented-excised-boundary}
  \partial D_{I,\varepsilon}
  =\partial D_I-
   \sum_{c\in\mathcal Q_I}\gamma_c.
\end{equation}
The previous formula remains valid when \(D_I\) is disconnected and when its
components have holes.

After filling in any removable candidate singularity, \(f_I\) is holomorphic
on a neighbourhood of \(\overline{D_{I,\varepsilon}}\). Choosing
\(J\in\Sphere\) orthogonal to \(I\), the splitting
\eqref{eq:slice-meromorphic-splitting} writes \(f_I=h_1+h_2J\) with
\(h_1,h_2\) holomorphic and \(\C_I\) valued there; applying Cauchy's theorem
to \(h_1\) and to \(h_2\) on every connected component and recombining the
two coordinates gives
\begin{equation}  \label{eq:4d-Cauchy-excision}
  0
  =\int_{\partial D_{I,\varepsilon}}\dd z\,f_I(z)
  =\int_{\partial D_I}\dd z\,f_I(z)
    -\sum_{c\in\mathcal Q_I}
      \int_{\gamma_c}\dd z\,f_I(z).
\end{equation}
This is precisely the vector-valued excision argument underlying
Theorem~\ref{thm:complex-residue-flux}.

By the point-residue normalization
\eqref{eq:slice-point-residue-integral},
\[
  (2\pi I)^{-1}
  \int_{\gamma_c}\dd z\,f_I(z)
  =\Res_c^I f_I.
\]
The equality also holds at a removable candidate point, since both sides are
zero. Multiplying \eqref{eq:4d-Cauchy-excision} on the left by
\((2\pi I)^{-1}\), and grouping the candidate points according to
\eqref{eq:4d-slice-candidate-poles}, yields
\begin{equation}
  \label{eq:4d-slice-residue-sum}
  (2\pi I)^{-1}
  \int_{\partial D_I}\dd z\,f_I(z)=\sum_{i=1}^{\ell}\Res_{a_i}^I f_I
+\sum_{j=1}^{S}
    \left(
      \Res_{p_j^+}^I f_I+
      \Res_{p_j^-}^I f_I
    \right).
\end{equation}
All complex scalars in this calculation act from the left.

\smallskip
\noindent\emph{Step 4: passage from point residues to intrinsic local data.}
At a real pole, the point residue is independent of the slice, and
Proposition~\ref{prop:sres-real-differential} gives
\begin{equation}\label{eq:4d-real-identification}
  \Res_{a_i}^I f_I
  =\Res_{a_i}f
  =\frac{(\partial_c^{L_i-1}h_i)(a_i)}{(L_i-1)!}.
\end{equation}
At a nonreal polar sphere,
Proposition~\ref{prop:sres-paired-point-residues} gives
\begin{equation}\label{eq:4d-spherical-identification}
  \Res_{p_j^+}^I f_I+
  \Res_{p_j^-}^I f_I
  =\SRes_{\Sigma_j}f.
\end{equation}
Thus \eqref{eq:4d-slice-residue-sum} becomes
\begin{equation}\label{eq:4d-grouped-residue-sum}
  (2\pi I)^{-1}
  \int_{\partial D_I}\dd z\,f_I(z)
  =\sum_{i=1}^{\ell}\Res_{a_i}f
   +\sum_{j=1}^{S}\SRes_{\Sigma_j}f.
\end{equation}
Notice that each nonreal contribution is the \emph{sum} of the two point
residues on the full slice, not either one of them separately.

\smallskip
\noindent\emph{Step 5: return to the four-dimensional boundary.}
The reduction theorem for the weighted flux,
Theorem~\ref{thm:weighted-slice-reduction}, states that
\begin{equation}\label{eq:4d-use-flux-reduction}
  \Flux{U}{f}
  =(2\pi I)^{-1}
   \int_{\partial D_I}\dd z\,f_I(z).
\end{equation}
Combining \eqref{eq:4d-grouped-residue-sum} and
\eqref{eq:4d-use-flux-reduction} proves
\eqref{eq:4d-residue-identity}. This also explains the normalization:
the unnormalized four-dimensional boundary integral equals
\(4\pi^2\) times the residue sum.

If a polar sphere lies in a hole of \(U\), its two slice points lie outside
\(D_I\). No disks are excised around them, and they make no contribution.

\smallskip
\noindent\emph{Step 6: differential evaluation and independence of choices.}
Formula \eqref{eq:4d-real-differential} is
Proposition~\ref{prop:sres-real-differential}, while for the non-real
spheres Theorem~\ref{thm:sres-differential} gives
\[
  \SRes_{\Sigma_j}f
  =(g_j)_{I_j}[y_j^{(M_j)},\bar y_j^{(M_j)}]
  =\frac{[\partial_s(\partial_c\partial_s)^{M_j-1}g_j](y_j)}{(M_j-1)!} ,
\]
which is \eqref{eq:4d-spherical-differential}; substituting into
\eqref{eq:4d-residue-identity} proves
\eqref{eq:4d-residue-differential-form}.

The independence of the right-hand side from every auxiliary choice is
Proposition~\ref{prop:sres-three-independences} for the spherical terms,
and for the real ones the observation that every admissible \(L_i\) and
every slice extract the same coefficient of \((q-a_i)^{-1}\); nonminimal
regularizations, which may vanish identically on \(\Sigma_j\), are covered.
\end{proof}

\begin{remark}[The degenerate cases are not exceptions]
\label{rem:4d-degenerate-cases}
Nothing in the theorem needs to be weakened on the domains and for the
functions that the slice algebra makes available. Product domains and
domains disjoint from \(\R\) are covered, as are functions with
\(f\not\equiv0\) and \(f^s\equiv0\), polar spheres carrying an exceptional
point at which the restriction is removable, and regularizations with
\(g_j(y_j)=0\).

The reason is visible in the proof. It used restriction to a slice,
componentwise contour integration, addition of point residues, and
multiplication by the intrinsic polynomials \((q-a_i)^{L_i}\) and
\(\DeltaSphere{\Sigma_j}^{M_j}\); at no stage did it divide by a value of
\(f\), of \(h_i\), of \(g_j\) or of a symmetrization, and at no stage did it
need the two point residues of a conjugate pair to be separately non-zero.
The degenerate configurations are therefore not exceptions to be treated
apart, but cases in which every step reads exactly as before.
\end{remark}

\begin{remark}[What is being integrated]
\label{rem:4d-residue-scope}
The left-hand side of \eqref{eq:4d-residue-identity} is an integral over the
three-dimensional boundary of a set in \(\R^4\), neither a volume integral
nor an unweighted Cauchy--Fueter one. It is not a homological invariant of
an arbitrary three-cycle either: the flux of a function as simple as \(1\)
is generally non-zero on a non-circular boundary, and
Theorem~\ref{thm:4d-noncircular-residue} below measures the discrepancy
exactly. The \(C^2\) hypotheses of
Definition~\ref{def:weighted-admissible-profile} are convenient sufficient
conditions for the boundary geometry and the measure formula; no claim of
optimal regularity is made. Finally, a zero residue does not imply
removability, as Remark~\ref{rem:sres-zero-does-not-remove} records:
different residues may cancel in the global sum, and a single spherical
residue can vanish at a genuine polar sphere.
\end{remark}

\subsection{Local fluxes and domain independence}
\label{subsec:4d-local-fluxes}

The main theorem has a useful local interpretation. A small admissible
circular boundary surrounding exactly one polar component extracts precisely
the residue attached to that component. 

\begin{corollary}[Local extraction at a sphere or a real point]
\label{cor:4d-local-extraction}
Let \(f\) be semiregular on a circular open set \(\Omega\).

\begin{enumerate}[label=\textup{(\roman*)}]
  \item Let \(\Sigma=\alpha+\Sphere\beta\), with \(\beta>0\), be a polar
        sphere. Suppose that \(T_\Sigma\) is an admissible circular open set
        such that
        \[
          \Sigma\subset T_\Sigma,
          \qquad
          \overline{T_\Sigma}\subset\Omega,
        \]
        \(T_\Sigma\) contains no other polar component of \(f\), and no
        polar component meets \(\partial T_\Sigma\). If \(M\geq1\)
        regularizes \(\Sigma\), set
        \(g_M=\DeltaSphere{\Sigma}^Mf\). Then, for every
        \(y\in\Sigma\),
        \begin{equation}\label{eq:4d-local-spherical-extraction}
          \boxed{\;
          \Flux{T_\Sigma}{f}
          =\SRes_\Sigma f
          =\frac{
             [\partial_s(\partial_c\partial_s)^{M-1}g_M](y)
           }{(M-1)!}.
          \;}
        \end{equation}
        In particular, for every sufficiently small
        \(0<\rho<\beta\), one may take the circular tube
        \(T_{\alpha,\beta,\rho}\) from
        Definition~\ref{def:weighted-circular-tube}.

  \item Let \(a\in\R\) be a real pole, and let \(V_a\) be an admissible
        circular open set whose closure lies in \(\Omega\), which contains
        \(a\) and no other polar component, and such that no polar component
        meets \(\partial V_a\). Then
        \begin{equation}\label{eq:4d-local-real-extraction}
          \boxed{\;
          \Flux{V_a}{f}
          =\Res_a f
          =\frac{(\partial_c^{L-1}h_L)(a)}{(L-1)!},
          \qquad
          h_L=(q-a)^Lf,
          \;}
        \end{equation}
        for every admissible regularizing exponent \(L\geq1\).
        In particular, one may take
        \(V_a=B(a,\varepsilon)\) for every sufficiently small
        \(\varepsilon>0\).
\end{enumerate}
\end{corollary}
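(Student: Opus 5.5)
The corollary will be a direct specialization of Theorem~\ref{thm:4d-residue}, with the ambient neighbourhood taken to be \(\Omega\) itself. For part~(i), I apply the theorem with \(U=T_\Sigma\). The hypotheses are satisfied as follows. \(T_\Sigma\) is admissible circular by assumption. The function \(f\) is semiregular on the circular neighbourhood \(\Omega\) of \(\overline{T_\Sigma}\). No polar component meets \(\partial T_\Sigma\). The list of polar components contained in \(T_\Sigma\) therefore consists of \(\Sigma\) alone: there are no real poles, and \(S=1\). Formula \eqref{eq:4d-residue-identity} then reduces to \(\Flux{T_\Sigma}{f}=\SRes_\Sigma f\). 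The differential expression follows from \eqref{eq:4d-spherical-differential}, applied with \(M_1=M\) and \(y_1=y\). The independence from the representative \(y\) and from the exponent \(M\) is Proposition~\ref{prop:sres-three-independences}.

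For the tube assertion I have to check that \(T_{\alpha,\beta,\rho}\) satisfies all the requirements once \(\rho\) is small. Its profile is a pair of conjugate closed discs about \(\alpha\pm\ci\beta\), which avoid \(\R\) when \(\rho<\beta\). These discs are admissible with \(C^2\) boundary, and condition~(iv) is vacuous for them. By Step~1 of the proof of Theorem~\ref{thm:4d-residue}, the stem poles form a locally finite set in the profile \(E\) of \(\Omega\). I can therefore choose \(\rho\) so small that the closed discs lie in \(E\) and contain no stem pole other than \(\alpha\pm\ci\beta\). Then \(\overline{T}\subset\Omega\) by the full-section identity \eqref{eq:slice-full-section}. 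The only polar component meeting \(\overline T\) is \(\Sigma\), which sits at distance \(\rho\) from the boundary. One could alternatively cite Proposition~\ref{prop:weighted-tube-extraction} as a consistency check, but this is not needed.

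Part~(ii) is identical, with \(U=V_a\) and \(\ell=1\), \(S=0\). Formula \eqref{eq:4d-residue-identity} gives \(\Flux{V_a}{f}=\Res_af\). The differential evaluation is \eqref{eq:4d-real-differential}, which holds for every admissible \(L\) by Proposition~\ref{prop:sres-real-differential}.

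For the ball \(B(a,\varepsilon)\), the profile is the disc \(B(a,\varepsilon)\subset\C\). It is conjugation invariant, and its boundary circle crosses \(\R\) transversally at \(a\pm\varepsilon\), where the tangent is vertical. So the profile is admissible, and the ball is the circularization \(\Omega_{B(a,\varepsilon)}\). Local finiteness of the stem poles again lets me take \(\varepsilon\) small enough that the closed disc lies in \(E\) and contains no stem pole other than \(a\). The only delicate point in the whole corollary is this verification that small tubes and balls satisfy the admissibility and pole-avoidance hypotheses, and it is handled by local finiteness of the stem poles in \(E\).
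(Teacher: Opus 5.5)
Your proposal is correct and follows the paper's own proof: both specialize Theorem~\ref{thm:4d-residue} to a domain enclosing a single polar component, take the differential evaluation from \eqref{eq:4d-spherical-differential} (that is, Theorem~\ref{thm:sres-differential}) and Proposition~\ref{prop:sres-real-differential}, and obtain the small tubes and balls from local finiteness of the stem poles. The only slip is cosmetic: the tube's profile is the pair of open discs, and their closures are the closed discs you use to get \(\overline{T}\subset\Omega\).
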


\begin{proof}
We first justify the existence of the standard local sets. At stem level, the
nonreal sphere \(\Sigma\) corresponds to two isolated conjugate poles
\(p=\alpha+\ci\beta\) and \(\bar p\). Choose \(\rho>0\) so small that the
closed disks of radius \(\rho\) around these two points lie in the profile of
\(\Omega\), are disjoint, and contain no other stem pole. Requiring also
\(\rho<\beta\), their circularization is precisely
\(T_{\alpha,\beta,\rho}\). It is disjoint from the real axis and has
smooth boundary. Local finiteness of the stem poles guarantees that all these
requirements can be satisfied simultaneously. The same argument at a real
stem pole gives a sufficiently small real-centred ball.

For an arbitrary \(T_\Sigma\) satisfying part~\textup{(i)}, the only polar
component counted by Theorem~\ref{thm:4d-residue} is \(\Sigma\). The residue
sum therefore consists of the single term \(\SRes_\Sigma f\), and the
differential equality follows from
Theorem~\ref{thm:sres-differential}. The real case is identical, using
Proposition~\ref{prop:sres-real-differential}. The independence of the chosen
small radius is already contained in these equalities.
\end{proof}

The next statement makes the local-to-global nature of the theorem explicit.
The equality is additivity of residue contributions; it is not obtained by
geometrically decomposing \(\partial U\) into the smaller boundaries.

\begin{corollary}[Local-to-global decomposition]
\label{cor:4d-local-to-global}
Under the hypotheses of Theorem~\ref{thm:4d-residue}, choose pairwise
disjoint admissible circular neighbourhoods
\[
  V_i\Subset U
  \quad(1\leq i\leq\ell),
  \qquad
  T_j\Subset U
  \quad(1\leq j\leq S),
\]
so that \(V_i\) contains only \(a_i\) and \(T_j\) contains only
\(\Sigma_j\). Then
\begin{equation}\label{eq:4d-local-to-global}
  \boxed{\;
  \Flux{U}{f}
  =\sum_{i=1}^{\ell}\Flux{V_i}{f}
   +\sum_{j=1}^{S}\Flux{T_j}{f}.
  \;}
\end{equation}
The sets \(V_i\) may be chosen as real-centred balls and the sets \(T_j\)
as circular tubes.
\end{corollary}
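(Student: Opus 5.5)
The plan is to compute each term on the right of \eqref{eq:4d-local-to-global} separately and show that their sum equals the sum in Theorem~\ref{thm:4d-residue}. We do not decompose \(\partial U\) geometrically. First we check that every small set is an admissible circular open set to which Corollary~\ref{cor:4d-local-extraction} applies. Each \(V_i\) and \(T_j\) is admissible by hypothesis, and \(\overline{V_i},\overline{T_j}\subset U\subset\Omega\) because they are compactly contained in \(U\). By assumption \(V_i\) contains the single polar component \(a_i\), and \(T_j\) contains only \(\Sigma_j\).

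It remains to see that no polar component meets \(\partial V_i\) or \(\partial T_j\). We read the hypothesis ``contains only'' as saying that the closure meets no other polar component, which is part of the admissible choice; we also require \(a_i\notin\partial V_i\) and \(\Sigma_j\cap\partial T_j=\emptyset\). If necessary we shrink the sets to the standard ones. Local finiteness of the stem poles (Step~1 of the proof of Theorem~\ref{thm:4d-residue}) shows that such choices exist, namely real-centred balls \(B(a_i,\varepsilon)\) and tubes \(T_{\alpha_j,\beta_j,\rho}\) with small radii. This yields the final sentence of the statement.

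With the hypotheses verified, Corollary~\ref{cor:4d-local-extraction}(ii) gives \(\Flux{V_i}{f}=\Res_{a_i}f\), and part~(i) gives \(\Flux{T_j}{f}=\SRes_{\Sigma_j}f\). Summing these identities and comparing with \eqref{eq:4d-residue-identity} for \(U\) proves \eqref{eq:4d-local-to-global}. Disjointness of the sets is not actually used in this argument; it only guarantees that no component is counted twice, and that is already ensured by each set containing exactly one component.

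The only delicate point is the bookkeeping of the polar components, namely that those inside \(U\) are exactly the \(a_i\) and \(\Sigma_j\), each enclosed by exactly one small set. This is immediate from the hypotheses, so no real obstacle is expected.
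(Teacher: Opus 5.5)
Your proof is correct and follows the paper's argument: apply Corollary~\ref{cor:4d-local-extraction} to each $V_i$ and $T_j$, sum the resulting identities, and compare with \eqref{eq:4d-residue-identity} for $U$, with no geometric decomposition of $\partial U$ and no real use of disjointness. Your reading of ``contains only'' is the right one: the boundaries $\partial V_i,\partial T_j$ must avoid the other polar components, which the paper secures by shrinking to small balls and tubes. By contrast, $a_i\notin\partial V_i$ and $\Sigma_j\cap\partial T_j=\emptyset$ need not be required separately, since each set is open and contains its own component.
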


\begin{proof}
The polar components form a finite family of pairwise disjoint compact sets
contained in the open set \(U\). Hence the balls and tubes can be chosen
small enough that their closures lie in \(U\), avoid every other polar
component, and are pairwise disjoint. Corollary~\ref{cor:4d-local-extraction}
gives
\[
  \Flux{V_i}{f}=\Res_{a_i}f,
  \qquad
  \Flux{T_j}{f}=\SRes_{\Sigma_j}f.
\]
Summing these identities and applying
Theorem~\ref{thm:4d-residue} to \(U\) proves
\eqref{eq:4d-local-to-global}.
\end{proof}

\begin{corollary}[Regular functions and domain independence]
\label{cor:4d-domain-independence}
If \(f\) is slice regular on a circular open neighbourhood of
\(\overline U\), then
\begin{equation}\label{eq:4d-regular-zero-flux}
  \Flux{U}{f}=0.
\end{equation}

More generally, let \(f\) be semiregular on a circular open set
\(\Omega\), and let \(U_1,U_2\) be admissible circular open sets with
\[
  \overline{U_1}\cup\overline{U_2}\subset\Omega.
\]
Assume that no polar component meets \(\partial U_1\) or
\(\partial U_2\). If \(U_1\) and \(U_2\) contain exactly the same real
poles and nonreal polar spheres of \(f\), then
\begin{equation}\label{eq:4d-domain-independence}
  \boxed{\;\Flux{U_1}{f}=\Flux{U_2}{f}.\;}
\end{equation}
No inclusion or deformation relation between \(U_1\) and \(U_2\) is
required.
\end{corollary}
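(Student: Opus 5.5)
Both assertions follow directly from Theorem~\ref{thm:4d-residue}, and the plan is to apply it once to each domain and compare the resulting residue sums. For the first assertion, let \(f\) be slice regular on a circular open neighbourhood \(\Omega\) of \(\overline U\). Then \(f\) is in particular semiregular on \(\Omega\): its stem is holomorphic, hence meromorphic with empty pole set \(P_F\). So there are no real poles and no polar spheres at all. The hypothesis that no polar component meets \(\partial U\) is therefore vacuous. Theorem~\ref{thm:4d-residue} then applies with \(\ell=S=0\), and both sums in \eqref{eq:4d-residue-identity} are empty, which gives \(\Flux{U}{f}=0\). As a cross-check, and not as a needed step, the same conclusion follows from Theorem~\ref{thm:weighted-slice-reduction} together with Cauchy's theorem on the full slice \(D_I\), applied componentwise through the splitting \eqref{eq:slice-meromorphic-splitting}.

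For the second assertion, apply Theorem~\ref{thm:4d-residue} separately to \(U_1\) and to \(U_2\). The only point to verify is that its hypotheses hold for each \(U_k\) with the same ambient \(\Omega\). Each \(U_k\) is admissible circular by assumption. The function \(f\) is semiregular on the circular open set \(\Omega\), which contains \(\overline{U_k}\) and is therefore a circular open neighbourhood of it. Finally, no polar component meets \(\partial U_k\). Step~1 of that proof then shows that only finitely many real poles and polar spheres lie in each \(U_k\).

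Theorem~\ref{thm:4d-residue} expresses each \(\Flux{U_k}{f}\) as \(\sum\Res_{a_i}f+\sum\SRes_{\Sigma_j}f\), the sums running over the polar components contained in \(U_k\). The key observation is that these local quantities are intrinsic to \(f\): \(\Res_{a}f\) is the Laurent coefficient of Definition~\ref{def:sres-residues}(i), and \(\SRes_\Sigma f=u_{-1}\) is the unique paired coefficient of Theorem~\ref{thm:sres-paired-laurent}. Neither refers to the integration domain, by Proposition~\ref{prop:sres-three-independences} and the remarks after Theorem~\ref{thm:4d-residue}.

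By hypothesis the two index sets of polar components coincide, so the two finite sums are literally the same, and \eqref{eq:4d-domain-independence} follows. No deformation argument is needed, since each flux is compared only with the intrinsic residue sum.

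The step requiring the most care is this last identification of the index sets. A polar sphere is connected and disjoint from \(\partial U_k\), so it lies either entirely inside \(U_k\) or entirely outside \(\overline{U_k}\); the same dichotomy holds for a real pole. Hence ``the polar components contained in \(U_k\)'' is exactly the list over which Theorem~\ref{thm:4d-residue} sums, with no partially enclosed sphere. A component lying in a hole of \(U_k\) is correctly excluded, as noted in Step~5 of that proof.
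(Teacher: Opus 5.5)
Your proposal is correct and follows the paper's own proof. In both arguments the first assertion is Theorem~\ref{thm:4d-residue} with empty residue lists, and the second applies that theorem separately to \(U_1\) and \(U_2\), whose residue sums agree term by term because \(\Res_a f\) and \(\SRes_\Sigma f\) are intrinsic Laurent coefficients of \(f\); your remarks on the containment dichotomy and on components lying in holes match the paper's closing comment.
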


\begin{proof}
If \(f\) is slice regular near \(\overline U\), the two residue lists in
Theorem~\ref{thm:4d-residue} are empty. The empty sum is zero, proving
\eqref{eq:4d-regular-zero-flux}.

In the second assertion, apply the theorem separately to \(U_1\) and
\(U_2\). By hypothesis, the two right-hand sides contain the same local
residues, each with coefficient one. They are therefore equal. A polar
component lying in a hole of one of the domains is not contained in that
domain; the assumption requires the containment lists, rather than merely a
visual notion of being surrounded, to agree.
\end{proof}

\subsection{A corrected formula on noncircular domains}
\label{subsec:4d-noncircular}

The term noncircular in this subsection refers to the domain of integration,
not to the domain on which the function is defined.  Throughout, \(f\) is
the restriction of a semiregular slice function on a circular open set
\(\Omega\); only the integration set \(W\Subset\Omega\) is allowed to be
noncircular.  This is the same ambient-domain convention used in Perotti's
local Cauchy formula, where the function is slice regular on an axially
symmetric open set while the integration domain need not be axially
symmetric; see \cite[Theorem~4]{PerottiLocalCauchy}.

Circularity is precisely what makes the volume term in
Proposition~\ref{prop:weighted-divergence} vanish.  Away from the real axis,
the same identity gives a residue formula on an arbitrary smooth integration
domain, provided that the resulting volume correction is retained.  The
polar spheres can then be excised by the local circular tubes used above.

\begin{theorem}[Residue formula with a Cauchy--Fueter volume correction]
\label{thm:4d-noncircular-residue}
Let \(f\) be semiregular on a circular open set \(\Omega\), and let \(W\) be
a bounded open set with \(C^2\) boundary such that
\[
  \overline W\subset\Omega\setminus\R
\]
and no polar sphere of \(f\) meets \(\partial W\).  Let
\(\Sigma_1,\ldots,\Sigma_S\) be the polar spheres that meet \(W\).  Under the
preceding hypothesis, each of them is entirely contained in \(W\).
Choose pairwise disjoint circular tubes \(T_j\) such that
\[
  \Sigma_j\subset T_j,
  \qquad
  \overline{T_j}\subset W,
\]
and \(T_j\) contains no other polar sphere, and set
\begin{equation}\label{eq:4d-noncircular-excised-domain}
  W_{\mathrm{exc}}:=W\setminus\bigcup_{j=1}^{S}\overline{T_j}.
\end{equation}
Then
\begin{equation}\label{eq:4d-noncircular-residue}
  \boxed{
  \frac{1}{4\pi^2}
  \int_{\partial W}
    \frac{n(q)f(q)}{|\imq q|^2}\,\dsigma(q)
  +\frac{1}{2\pi^2}
  \int_{W_{\mathrm{exc}}}
    \frac{I(q)f_s^\circ(q)}{|\imq q|^3}\,\dd V_4(q)
  =\sum_{j=1}^{S}\SRes_{\Sigma_j}f.\;}
\end{equation}
Here \(I(q)=\imq(q)/|\imq q|\).  Both integrals are absolutely convergent,
and their displayed combination is independent of the chosen tubes.

If \(W\) is circular, the tubes may be chosen circular as above and the
volume term vanishes.  Formula \eqref{eq:4d-noncircular-residue} then reduces
to the four-dimensional residue theorem.  For a general \(W\), the volume
integral is the precise correction to the uncorrected boundary flux.
\end{theorem}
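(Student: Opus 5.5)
The plan is to apply the weighted divergence identity of Proposition~\ref{prop:weighted-divergence} on the excised domain \(W_{\mathrm{exc}}\), where \(f\) is slice regular, and then to evaluate the boundary contributions of the tubes with Corollary~\ref{cor:4d-local-extraction}. The preliminary geometric facts come first. Each polar sphere \(\Sigma\) is connected and disjoint from \(\partial W\), so it lies entirely in \(W\) or in its complement. By local finiteness of the stem poles only finitely many polar spheres meet the compact set \(\overline W\). Since \(\overline W\) stays at positive distance from \(\R\), tubes \(T_j=T_{\alpha_j,\beta_j,\rho_j}\) with small \(\rho_j\) exist, are pairwise disjoint, satisfy \(\overline{T_j}\subset W\), and contain no other polar component. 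Then \(f\) is slice regular on a circular neighbourhood of the compact set \(\overline{W_{\mathrm{exc}}}\), and the Euclidean boundary of \(W_{\mathrm{exc}}\) is \(C^2\), being \(\partial W\) disjoint from the tori \(\partial T_j\).

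Next I will apply \eqref{eq:weighted-boundary-divergence} to \(W_{\mathrm{exc}}\). Strictly, that identity requires \(f\) to be slice regular on a circular open set containing \(\overline{W_{\mathrm{exc}}}\). I will take \(\Omega\) minus the polar set inside, which is open and circular, still avoids \(\R\) near \(\overline W\), and on which \(f\) is slice regular. The outward normal of \(W_{\mathrm{exc}}\) equals that of \(W\) on \(\partial W\) and is the negative of the outward normal of \(T_j\) on \(\partial T_j\). Hence
\[
  \int_{\partial W}\frac{nf}{r^2}\dsigma-\sum_{j}\int_{\partial T_j}\frac{nf}{r^2}\dsigma
  =-2\int_{W_{\mathrm{exc}}}\frac{I f_s^\circ}{r^3}\dd V_4 .
\]
Corollary~\ref{cor:4d-local-extraction}(i) turns each tube integral into \(4\pi^2\SRes_{\Sigma_j}f\). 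Dividing by \(4\pi^2\) and rearranging gives \eqref{eq:4d-noncircular-residue}, because \(2/(4\pi^2)=1/(2\pi^2)\).

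Absolute convergence is straightforward. Both integrands are continuous on compact sets at positive distance from \(\R\): \(f\) is continuous on \(\partial W\), and \(f_s^\circ=A\) is continuous on \(\overline{W_{\mathrm{exc}}}\).

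The main obstacle is the claim that the combination is independent of the tubes. The boundary term does not involve the tubes at all, so only the volume term needs attention. If \(T_j\) and \(T_j'\) are two admissible tubes around \(\Sigma_j\), the two volume integrals differ by the integral of \(If_s^\circ/r^3\) over the set between them, up to sign. To handle this I plan to compare each choice with a third tube contained in both, which is possible since both contain a small tube around \(\Sigma_j\). The difference region is then a circular shell \(T\setminus\overline{T''}\), and the circular-vanishing part of Proposition~\ref{prop:weighted-divergence} kills its volume integral: integrate first in \(I\), keeping \(A\) on the right, and use \(\int_\Sphere I\darea(I)=0\). This works provided the polar-coordinate splitting is applied on a region where \(f_s^\circ\) is continuous, namely away from \(\Sigma_j\). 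Alternatively, the identity with the tubes' own fluxes already shows the combination equals \(\sum_j\SRes_{\Sigma_j}f\) for every choice.

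In the circular case, \(W\) itself is circular and the same angular argument applies to \(W_{\mathrm{exc}}\). The volume term then vanishes, and the formula reduces to Theorem~\ref{thm:4d-residue} for a set disjoint from \(\R\).
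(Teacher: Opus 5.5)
Your proposal is correct and follows the paper's proof essentially step by step. You apply Proposition~\ref{prop:weighted-divergence} on \(W_{\mathrm{exc}}\) with the reversed orientation on the tori, use Corollary~\ref{cor:4d-local-extraction} for the tube fluxes, and use angular cancellation in the circular case.

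Tube independence is not really an obstacle. The paper gets it exactly as in your closing ``alternatively'' sentence: the combination equals the fixed residue sum for every admissible choice. Your shell comparison is also valid but unnecessary.
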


\begin{proof}
First note that a polar sphere \(\Sigma\) is connected.  If
\(\Sigma\cap\partial W=\varnothing\), then
\[
  \Sigma=(\Sigma\cap W)
  \mathbin{\dot\cup}
  \bigl(\Sigma\cap(\R^4\setminus\overline W)\bigr)
\]
is a decomposition into relatively open sets.  Hence one of the two sets is
empty: either \(\Sigma\subset W\), or
\(\Sigma\cap\overline W=\varnothing\).  This proves, in particular, the
containment assertion in the statement.

The polar spheres contained in \(W\) form a finite family.  Since none meets
\(\partial W\), a sufficiently thin circular tube around each of them has
closure in \(W\), and the tubes can be chosen pairwise disjoint.  The
function \(f\) is slice regular on a neighbourhood of
\(\overline{W_{\mathrm{exc}}}\).  Applying
Proposition~\ref{prop:weighted-divergence} to \(W_{\mathrm{exc}}\) and
keeping track of the inward orientation of each new boundary component gives
\begin{equation}\label{eq:4d-noncircular-divergence-step}
  \int_{\partial W}
      \frac{n(q)f(q)}{|\imq q|^2}\,\dsigma(q)
   -\sum_{j=1}^{S}
     \int_{\partial T_j}
      \frac{n(q)f(q)}{|\imq q|^2}\,\dsigma(q)
   =-2\int_{W_{\mathrm{exc}}}
      \frac{I(q)f_s^\circ(q)}{|\imq q|^3}\,\dd V_4(q).
\end{equation}
Each tube integral equals \(4\pi^2\SRes_{\Sigma_j}f\) by
Corollary~\ref{cor:4d-local-extraction}.  Dividing
\eqref{eq:4d-noncircular-divergence-step} by \(4\pi^2\) and rearranging
proves \eqref{eq:4d-noncircular-residue}.  Absolute convergence follows
because the excised closure is compact and has positive distance from the
real axis and from the polar set.  Independence of the tubes follows at
once because the resulting expression equals the fixed sum of spherical
residues.

If \(W\) is circular, then \(W_{\mathrm{exc}}\) is circular.  The angular
cancellation in the last part of Proposition~\ref{prop:weighted-divergence}
makes the volume term zero.
\end{proof}

\begin{remark}[Why a polar sphere may not cross the boundary]
\label{rem:4d-noncircular-cut-sphere}
The hypothesis \(\Sigma\cap\partial W=\varnothing\) is not merely a
convenient way to list the poles.  If a polar sphere crosses \(\partial W\),
then \(f\) is singular on the integration surface and the ordinary boundary
integral in \eqref{eq:4d-noncircular-residue} is not defined.  Moreover,
excising only the portion of the sphere lying in \(W\) creates caps whose
limiting contribution depends on the local geometry of the cut.  One could
seek a principal-value or indentation formula after prescribing a
regularization, but there is then no unqualified coefficient-one
contribution of the form \(\SRes_\Sigma f\).  The present theorem therefore
treats exactly the noncircular domains whose boundaries avoid the polar set.
\end{remark}

\begin{remark}[No intrinsic theory on \(W\) is being assumed]
\label{rem:4d-noncircular-regularity-notion}
The theorem makes no assertion about a function given only on the
noncircular set \(W\).  It uses the standard theory on the circular ambient
set \(\Omega\), and then integrates its restriction over \(W\).  There are
more local notions of slice regularity on non-axially symmetric domains,
including the slice-topological approach of
\cite{DouJinRenSabadini} and the theory beyond slice domains developed in
\cite{GhiloniStoppatoBeyond}.  Extending the present residue formula to those
settings would require, in particular, an intrinsic replacement for the
complete spherical Laurent data used in the excision argument; it is not
needed for, and is not claimed by, the result above.
\end{remark}

\begin{remark}[Why the real axis is excluded]
\label{rem:4d-noncircular-real-axis}
The hypothesis \(\overline W\cap\R=\varnothing\) is substantive.  The defect density contains
\(|\imq q|^{-3}\), and angular cancellation is no longer available on a
general noncircular domain approaching the real axis.  Formula
\eqref{eq:4d-noncircular-residue} therefore does not assert a principal-value
extension across \(\R\).  Circular domains meeting \(\R\) remain covered by
Theorem~\ref{thm:4d-residue}, where the boundary measure theorem proves
absolute convergence of the boundary integral directly.
\end{remark}

\subsection{Flux moments and reconstruction of principal parts}
\label{subsec:4d-principal-parts}

The residue is only the first member of a finite system of flux moments.
Multiplication by powers of the local characteristic polynomial shifts each
principal coefficient into residue position.  A second intrinsic weight
separates the two coefficient families carried by a nonreal sphere.  Thus
the same boundary functional recovers the complete principal part.

\begin{theorem}[Recovery of the complete principal part]
\label{thm:4d-principal-part-recovery}
Let \(f\) be semiregular on a circular open set.

\begin{enumerate}[label=\textup{(\roman*)}]
  \item Let \(a\in\R\) be a real pole, let \(V_a\) be a local integration
        set as in Corollary~\ref{cor:4d-local-extraction}, and choose
        \(L\geq1\) such that
        \[
          h_L(q):=(q-a)^Lf(q)
        \]
        extends slice regularly through \(a\).  Write
        \begin{equation}\label{eq:4d-real-local-laurent}
          f(q)=\sum_{n=-L}^{\infty}(q-a)^nc_n
        \end{equation}
        near \(a\), allowing initial zero coefficients when \(L\) is not
        minimal.  Then, for \(0\leq k<L\),
        \begin{equation}\label{eq:4d-real-principal-recovery}
          \boxed{\;
          c_{-k-1}
          =\Flux{V_a}{(q-a)^kf}
          =\frac{(\partial_c^{\,L-k-1}h_L)(a)}
                 {(L-k-1)!}.\;}
        \end{equation}

  \item Let \(\Sigma=\alpha+\Sphere\beta\), \(\beta>0\), be a polar sphere,
        let \(T_\Sigma\) be a local integration set, and choose \(M\geq1\)
        such that
        \[
          g_M(q):=\DeltaSphere{\Sigma}(q)^Mf(q)
        \]
        extends slice regularly through \(\Sigma\).  Write the paired
        Laurent expansion
        \begin{equation}\label{eq:4d-spherical-local-laurent}
          f(q)=\sum_{n=-M}^{\infty}
          \DeltaSphere{\Sigma}(q)^n(qu_n+v_n).
        \end{equation}
        Then, for every \(y\in\Sigma\) and \(0\leq k<M\),
        \begin{align}
          u_{-k-1}
          &=\Flux{T_\Sigma}{\DeltaSphere{\Sigma}^{\,k}f}
          =\frac{
             [\partial_s(\partial_c\partial_s)^{M-k-1}g_M](y)}
             {(M-k-1)!},
          \label{eq:4d-u-principal-recovery}\\
          v_{-k-1}
          &=\Flux{T_\Sigma}{
              (q-2\alpha)\DeltaSphere{\Sigma}^{\,k}f}
          =\frac{
             [(\partial_c\partial_s)^{M-k-1}g_M](y)}
             {(M-k-1)!}
            -y\,u_{-k-1}.
          \label{eq:4d-v-principal-recovery}
        \end{align}
\end{enumerate}
Hence \(L\) flux moments recover the real principal part, while \(2M\)
flux moments recover the paired principal part at a nonreal sphere.  The
formulas are valid for nonminimal regularizing exponents and at exceptional
points of a polar sphere.
\end{theorem}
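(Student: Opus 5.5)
The plan is to reduce every moment to a residue of a weighted function, using Corollary~\ref{cor:4d-local-extraction}, and then to read off the resulting coefficient from the local expansions already established.

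\emph{Real pole.} The weight \((q-a)^k\) is slice preserving, so \((q-a)^kf\) is semiregular with the same polar set. On \(V_a\) it has at most the single pole \(a\), and its Laurent expansion is \(\sum_n(q-a)^{n+k}c_n\). Corollary~\ref{cor:4d-local-extraction}(ii) therefore gives
\[
\Flux{V_a}{(q-a)^kf}=\Res_a\bigl((q-a)^kf\bigr)=c_{-k-1}.
\]
If \(k\geq L\), the product is regular and the flux vanishes by Corollary~\ref{cor:4d-domain-independence}; this causes no difficulty. For the differential formula, the Taylor expansion \eqref{eq:sres-real-Taylor} of \(h_L\) gives \(c_{n-L}=\partial_c^nh_L(a)/n!\). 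Taking \(n=L-k-1\) yields the second equality. Nonminimal \(L\) only introduces leading zeros, as in Proposition~\ref{prop:sres-real-differential}.

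\emph{Spherical coefficients \(u\).} Multiplying the paired expansion \eqref{eq:4d-spherical-local-laurent} by the slice-preserving \(\DeltaSphere{\Sigma}^k\) gives \(\sum_n\DeltaSphere{\Sigma}^{n+k}(qu_n+v_n)\). This is again a paired expansion, and its \(u\)-coefficient at index \(-1\) is \(u_{-k-1}\). By uniqueness in Theorem~\ref{thm:sres-paired-laurent}, \(\SRes_\Sigma(\DeltaSphere{\Sigma}^kf)=u_{-k-1}\). Corollary~\ref{cor:4d-local-extraction}(i) then gives the flux identity. The function \(\DeltaSphere{\Sigma}^kf\) has \(\Sigma\) as its only possible polar component in \(T_\Sigma\); if it is regular, both sides vanish. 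The differential formula is Corollary~\ref{cor:sres-all-coefficients} with index \(-k-1\), that is, \(n=M-k-1\).

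\emph{Spherical coefficients \(v\).} This is the one step with genuine content. The weight \(q-2\alpha\) does not preserve the paired form termwise, so I would redistribute the factor using the identity
\[
(q-2\alpha)(qu+v)=q^2u+q(v-2\alpha u)=\DeltaSphere{\Sigma}(q)\,u+\bigl(q v-(\alpha^2+\beta^2)u\bigr),
\]
which holds because \(q^2=\DeltaSphere{\Sigma}+2\alpha q-(\alpha^2+\beta^2)\) and \(q\) commutes with the real coefficients. Consequently
\[
(q-2\alpha)\DeltaSphere{\Sigma}^n(qu_n+v_n)=\DeltaSphere{\Sigma}^{n+1}u_n+\DeltaSphere{\Sigma}^n\bigl(qv_n-(\alpha^2+\beta^2)u_n\bigr).
\]
The first term contributes to the \(q\)-coefficient only through the \(q\)-free part at index \(n+1\). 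So in the weighted expansion the \(u\)-coefficient at index \(m\) is exactly \(v_m\). After the further multiplication by \(\DeltaSphere{\Sigma}^k\), the coefficient of \(q\DeltaSphere{\Sigma}^{-1}\) is \(v_{-k-1}\).

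Normal convergence of the expansion lets me regroup termwise, and uniqueness then identifies \(\SRes_\Sigma\bigl((q-2\alpha)\DeltaSphere{\Sigma}^kf\bigr)=v_{-k-1}\); this is consistent with Lemma~\ref{lem:sres-two-moments}. Corollary~\ref{cor:4d-local-extraction} then gives the flux. The differential expression for \(v_{-k-1}\) is formula \eqref{eq:sres-all-v} of Corollary~\ref{cor:sres-all-coefficients}, again with \(n=M-k-1\).

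Counting the moments gives the final assertion: \(L\) moments recover the real principal part and \(2M\) recover the paired one. Validity at exceptional points and for nonminimal exponents is inherited from Proposition~\ref{prop:sres-three-independences} and Corollary~\ref{cor:sres-all-coefficients}, since no division by values of \(f\) or \(g_M\) occurs anywhere in the argument.
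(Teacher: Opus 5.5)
Your proposal follows the paper's proof essentially step by step: you shift the target coefficient into residue position with the intrinsic weights \((q-a)^k\), \(\DeltaSphere{\Sigma}^{\,k}\) and \((q-2\alpha)\DeltaSphere{\Sigma}^{\,k}\), apply Corollary~\ref{cor:4d-local-extraction}, and read off the differential forms from the Taylor formula for \(h_L\) and from Corollary~\ref{cor:sres-all-coefficients}. One algebraic slip needs fixing: the correct identity is \((q-2\alpha)(qu+v)=q^2u+q(v-2\alpha u)-2\alpha v=\DeltaSphere{\Sigma}(q)u+qv-(\alpha^2+\beta^2)u-2\alpha v\), so your version drops the term \(-2\alpha v\); that term is \(q\)-free, so it changes only the companion coefficient \(\widetilde v_n\) and not \(\widetilde u_n=v_n\), and your conclusion stands.
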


\begin{proof}
Multiplication of \eqref{eq:4d-real-local-laurent} by \((q-a)^k\) moves
\(c_{-k-1}\) to the coefficient of \((q-a)^{-1}\).  Local extraction gives
the first equality in \eqref{eq:4d-real-principal-recovery}; the second is
the ordinary Taylor formula applied to the regular extension \(h_L\).

For the spherical expansion, multiplication by
\(\DeltaSphere{\Sigma}^{\,k}\) moves \(u_{-k-1}\) to the coefficient of
\(q\DeltaSphere{\Sigma}^{-1}\).  Its spherical residue is therefore
\(u_{-k-1}\), which proves the first equality in
\eqref{eq:4d-u-principal-recovery}.  To select the companion coefficient,
use
\begin{equation}\label{eq:4d-modified-paired-block}
  (q-2\alpha)(qu+v)
  =\DeltaSphere{\Sigma}(q)u+qv
   -(\alpha^2+\beta^2)u-2\alpha v.
\end{equation}
If \(\widetilde u_n,\widetilde v_n\) are the paired coefficients of
\((q-2\alpha)f\), comparison of the coefficients of
\(q\DeltaSphere{\Sigma}^n\) gives
\begin{equation}\label{eq:4d-coefficient-shift}
  \widetilde u_n=v_n,
  \qquad
  \widetilde v_n
  =u_{n-1}-(\alpha^2+\beta^2)u_n-2\alpha v_n.
\end{equation}
Applying this observation after the shift by
\(\DeltaSphere{\Sigma}^{\,k}\) shows that the spherical residue of
\((q-2\alpha)\DeltaSphere{\Sigma}^{\,k}f\) is \(v_{-k-1}\).  Local
extraction proves the first equality in
\eqref{eq:4d-v-principal-recovery}.

Finally, apply Corollary~\ref{cor:sres-all-coefficients} to \(g_M\) with
index \(-k-1\), so that \(M+(-k-1)=M-k-1\).  It gives the differential
expressions in \eqref{eq:4d-u-principal-recovery} and
\eqref{eq:4d-v-principal-recovery}.  None of the arguments requires the
chosen exponent to be minimal or the regularization to be non-zero at the
evaluation point.
\end{proof}

The case \(k=0\) says that the residue theorem detects \(u_{-1}\), while
the degree-one intrinsic weight \(q-2\alpha\) detects its companion
\(v_{-1}\).  The weight \(q-2\alpha\) is the one introduced for the first paired moment
\eqref{eq:complex-first-moment-definition} in
Section~\ref{sec:complex-model}, now read on the sphere rather than on a
conjugate pair.  Comparison with ordinary point residues
also explains why, at higher-order point poles, \(v_{-1}\) is not determined
by the two point residues alone.

\begin{corollary}[Comparison with ordinary point residues]
\label{cor:4d-second-paired-coefficient}
Let \(\Sigma=\alpha+\Sphere\beta\), \(\beta>0\), be a polar sphere of \(f\),
fix \(I\in\Sphere\), put \(p_\pm=\alpha\pm I\beta\), and write the Laurent
expansions on \(\C_I\) as
\begin{equation}\label{eq:4d-point-Laurent-for-v}
  f_I(z)=\sum_{n\geq-m_\pm}(z-p_\pm)^nc_n^\pm
  \qquad\text{near }p_\pm ,
\end{equation}
with \(m_\pm\geq0\), taking \(m_\pm=0\) at a removable point and
\(c_n^\pm=0\) for any absent term. Set
\(r_\pm:=c_{-1}^\pm=\Res_{p_\pm}^If_I\). Then
\begin{equation}\label{eq:4d-v-point-residues}
  \boxed{\;
  v_{-1}=-p_-r_+-p_+r_-+c_{-2}^++c_{-2}^- .
  \;}
\end{equation}
If both point singularities are at most simple then \(c_{-2}^\pm=0\), and
\(v_{-1}\) is determined by the two point residues; at higher-order poles
the coefficients of index \(-2\) cannot in general be omitted.
\end{corollary}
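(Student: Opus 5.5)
The plan is to identify \(v_{-1}\) with a sum of two point residues of a weighted function on the slice \(\C_I\), and then to read off that sum from the complex model of Section~\ref{sec:complex-model}. Theorem~\ref{thm:sres-paired-laurent} gives, for \(k=-1\), the contour formula \eqref{eq:sres-v-contour}:
\[
  v_{-1}=(2\pi I)^{-1}\int_{\Gamma_{I,\rho}}\dd z\,(z-2\alpha)f_I(z).
\]
Here \(\Gamma_{I,\rho}\) is the two-component Cassini contour, with one positively oriented curve around each of \(p_+\) and \(p_-\) and no other singularity inside. Since \(z-2\alpha\) is a scalar of \(\C_I\) acting from the left, \((z-2\alpha)f_I\) is again \((\Hh,L_I)\)-valued meromorphic near the two nodes. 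The point-residue normalization \eqref{eq:slice-point-residue-integral}, applied separately on each component (exactly as in Proposition~\ref{prop:sres-paired-point-residues}), therefore yields
\[
  v_{-1}=\Res^I_{p_+}\bigl((z-2\alpha)f_I\bigr)+\Res^I_{p_-}\bigl((z-2\alpha)f_I\bigr).
\]
A removable node contributes zero, which is consistent with the convention \(m_\pm=0\).

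The remaining step is Proposition~\ref{prop:complex-first-moment}, applied with \(V=(\Hh,L_I)\), \(p=p_+\) and \(\bar p=p_-\). Its proof uses only the identities \eqref{eq:complex-weight-at-nodes}, namely \(z-2\alpha=(z-p_+)-p_-=(z-p_-)-p_+\), which hold in the commutative plane \(\C_I\) because \(p_++p_-=2\alpha\). It also uses a reindexing of the Laurent series. Near \(p_+\) we get \((z-2\alpha)f_I=\sum_k(z-p_+)^k\bigl(c^+_{k-1}-p_-c^+_k\bigr)\), so the coefficient of index \(-1\) is \(c^+_{-2}-p_-r_+\). Symmetrically, near \(p_-\) it is \(c^-_{-2}-p_+r_-\). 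Adding the two gives \eqref{eq:4d-v-point-residues}.

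The main point needing care is the left/right bookkeeping. The scalars \(p_\mp\in\C_I\) multiply the quaternionic coefficients from the left, matching the left \(\C_I\)-structure under which the Laurent expansion \eqref{eq:slice-point-laurent-expansion} is written. Provided every complex factor is kept on the left, the vector-valued computation is verbatim the one in Proposition~\ref{prop:complex-first-moment}.

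For the final assertion, if both singularities are at most simple then \(c^\pm_{-2}=0\) by convention, and \(v_{-1}\) reduces to the residue term. To see that the correction is genuinely needed at higher order, Example~\ref{ex:complex-unequal-orders} serves as a witness: read on a slice, it has \(c^+_{-2}=\ci/2\neq0\). One can realize it quaternionically as a slice-preserving function.
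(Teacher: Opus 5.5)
Your derivation of \eqref{eq:4d-v-point-residues} is correct, and it is essentially the paper's proof. The paper arrives at the sum of the two point residues of \((z-2\alpha)f_I\) in two steps: it reads \(v_{-1}\) as \(\SRes_\Sigma\bigl((q-2\alpha)f\bigr)\) through \eqref{eq:4d-v-principal-recovery} with \(k=0\), and then invokes Proposition~\ref{prop:sres-paired-point-residues}. You reach the same sum directly from \eqref{eq:sres-v-contour} with \(k=-1\). The Laurent computation afterwards is identical, and the paper itself notes that the result is \eqref{eq:cx-first-moment} read on \(\C_I\).

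The step that fails is your witness for the last clause. The function of Example~\ref{ex:complex-unequal-orders} is not the slice restriction of any slice-preserving function. A slice-preserving function has a \(\C\)-valued stem with \(F(\bar z)=\overline{F(z)}\). Its restriction to \(\C_I\) is therefore reflection invariant, and by Corollary~\ref{cor:complex-reflection-symmetric} its pole orders at \(p_+\) and \(p_-\) coincide. That example has orders two and one, and it is deliberately not reflection invariant (Remark~\ref{rem:complex-reflection-unequal-orders}).

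It is instead realized by the non-slice-preserving function \(f_{\mathrm h}=\DeltaSphere{\Sphere}^{-2}(q^3\qj+q^2\qk)\) of Example~\ref{ex:higher}. Its restriction to \(\C_{\qi}\) is that function, with \(\ci\) replaced by \(\qi\), multiplied on the right by \(\qj\). There the residue terms alone return \(\qk/2\) instead of \(v_{-1}=\qk\).

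If you want a genuinely slice-preserving witness, take \(f=(q^2+1)^{-2}\). On any \(\C_I\) one has \(r_\pm=\mp I/4\) and \(c_{-2}^\pm=-\tfrac14\). The residue terms then give \(\tfrac12\), whereas \(v_{-1}=0\).
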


\begin{proof}
By \eqref{eq:4d-v-principal-recovery} with \(k=0\), the coefficient
\(v_{-1}\) is the spherical residue of \(\widetilde f:=(q-2\alpha)f\), a
function with the same polar components as \(f\), the weight being slice
preserving. Near \(p_\pm\), write
\[
  z-2\alpha=(p_\pm-2\alpha)+(z-p_\pm) ,
\]
and multiply by \eqref{eq:4d-point-Laurent-for-v}: the coefficient of
\((z-p_\pm)^{-1}\) receives the constant term \(p_\pm-2\alpha\) against
\(c_{-1}^\pm=r_\pm\), and the term \(z-p_\pm\) against \(c_{-2}^\pm\), so
that
\begin{equation}\label{eq:4d-modified-point-residue}
  \Res_{p_\pm}^I\widetilde f_I=(p_\pm-2\alpha)r_\pm+c_{-2}^\pm .
\end{equation}
Since \(p_++p_-=2\alpha\), one has \(p_+-2\alpha=-p_-\) and
\(p_--2\alpha=-p_+\); adding the two instances of
\eqref{eq:4d-modified-point-residue} and applying
Proposition~\ref{prop:sres-paired-point-residues} to \(\widetilde f\) gives
\eqref{eq:4d-v-point-residues}. The calculation uses only left
multiplication by complex slice scalars, and never commutes \(p_\pm\) past
a quaternionic Laurent coefficient.

Formula \eqref{eq:4d-v-point-residues} is \eqref{eq:cx-first-moment} read on
the slice \(\C_I\), and the identity \(\widetilde u_n=v_n\) of
\eqref{eq:4d-coefficient-shift} shows why the weight of
\eqref{eq:complex-first-moment-definition} was taken to be \(z-2\alpha\):
any other degree-one intrinsic weight would shift the paired coefficients
only up to a multiple of \(u_n\).
\end{proof}

\begin{remark}[The simple spherical-pole case]
\label{rem:4d-simple-spherical-pole}
If \(M=1\) regularizes the sphere, put \(g_1=\DeltaSphere{\Sigma} f\). The two
principal paired coefficients take the particularly transparent form
\[
  u_{-1}=\SRes_\Sigma f=\partial_sg_1(y),
  \qquad
  v_{-1}=g_1(y)-y\partial_sg_1(y).
\]
Thus the restriction of \(g_1\) to the sphere is the affine slice function
\(q\mapsto qu_{-1}+v_{-1}\). For \(M>1\), or for a nonminimal
regularizing exponent, the value of \(g_M\) on \(\Sigma\) may vanish and
the higher alternating derivatives in
\eqref{eq:4d-spherical-differential} and
\eqref{eq:4d-v-principal-recovery} are essential.
\end{remark}

\subsection{Global residue duality}
\label{subsec:4d-residue-duality}

The preceding subsection reconstructs a principal part by weights adapted
to one polar component at a time.  We now solve the corresponding global
problem: a single finite-dimensional space of real polynomial weights
separates all principal coefficients at all the polar components in \(U\).
The result identifies the space of formal principal parts with the full
real dual, with values in \(\Hh\), of a quotient polynomial algebra.  Thus it
contains more information than the unweighted residue sum: the latter is
only the value of the resulting functional at the constant polynomial.

Assume throughout this subsection the hypotheses of
Theorem~\ref{thm:4d-residue}.  Retain the notation \(a_i\) for the real
poles, \(\Sigma_j=\alpha_j+\Sphere\beta_j\) for the nonreal polar spheres,
and \(L_i,M_j\) for admissible order bounds.  The formulas below involve
many characteristic polynomials at once, so in this subsection alone we
abbreviate
\[
  \Delta_j(X):=\DeltaSphere{\Sigma_j}(X)
  =X^2-2\alpha_jX+\alpha_j^2+\beta_j^2 .
\]

\begin{definition}[The weight algebra and the principal-part space]
\label{def:4d-weight-algebra}
Define the real pole polynomial and its quotient algebra by
\begin{equation}\label{eq:4d-global-pole-polynomial}
  Q(X):=
  \prod_{i=1}^{\ell}(X-a_i)^{L_i}
  \prod_{j=1}^{S}\Delta_j(X)^{M_j},
  \qquad
  \mathcal A_Q:=\R[X]/(Q).
\end{equation}
Therefore, two polynomials define the same class precisely when their
difference is divisible by \(Q\).  Every class has a unique representative
of degree strictly smaller than \(\deg Q\); in particular,
\begin{equation}\label{eq:4d-weight-algebra-dimension}
  \dim_{\R}\mathcal A_Q=\deg Q
  =\sum_{i=1}^{\ell}L_i+2\sum_{j=1}^{S}M_j.
\end{equation}

Let \(\mathcal P_Q\) be the right quaternionic vector space of formal
principal parts
\begin{equation}\label{eq:4d-formal-principal-parts}
  \mathfrak p=
  \left(
    (c_{i,-m})_{\substack{1\leq i\leq\ell\\1\leq m\leq L_i}},
    (u_{j,-m},v_{j,-m})_{\substack{1\leq j\leq S\\1\leq m\leq M_j}}
  \right).
\end{equation}
Associate with \(\mathfrak p\) the rational slice function
\begin{equation}\label{eq:4d-principal-part-model}
  R_{\mathfrak p}(q):=
  \sum_{i=1}^{\ell}\sum_{m=1}^{L_i}
    (q-a_i)^{-m}c_{i,-m}
  +\sum_{j=1}^{S}\sum_{m=1}^{M_j}
    \Delta_j(q)^{-m}(qu_{j,-m}+v_{j,-m}).
\end{equation}
It has exactly the prescribed formal principal part at each listed
component and no other singularity.
\end{definition}

\begin{remark}[What \(\mathcal A_Q\) records]
\label{rem:4d-weight-algebra-meaning}
The quotient retains exactly the finite jets of a polynomial that can
interact with the prescribed principal parts.  For example, if
\(Q=(X-a)^2\), then \([P]\) is determined by the two real numbers
\(P(a)\) and \(P'(a)\).  If \(Q=\DeltaSphere{\Sigma}\) for one nonreal
sphere, then each class has a unique affine representative and
\[
  \mathcal A_Q
  =\operatorname{span}_{\R}\{[1],[X-2\alpha]\}.
\]
These two directions match the two quaternionic coefficients \(u_{-1}\)
and \(v_{-1}\) in a simple spherical principal part.  Higher powers of the
local factors retain the corresponding higher jets.
\end{remark}

The factors in \(Q\) are pairwise coprime.  The Chinese remainder theorem
therefore gives
\begin{equation}\label{eq:4d-crt-decomposition}
  \mathcal A_Q\simeq
  \bigoplus_{i=1}^{\ell}
    \R[X]\big/\bigl((X-a_i)^{L_i}\bigr)
  \oplus
  \bigoplus_{j=1}^{S}
    \R[X]\big/\bigl(\Delta_j^{M_j}\bigr);
\end{equation}
see, for example, \cite[Proposition~1.10]{AtiyahMacdonald}.  We record the
particular Hermite--Chinese-remainder selectors that will be used below.

\begin{proposition}[Global polynomial selectors]
\label{prop:4d-hermite-crt-selectors}
For \(1\leq m\leq L_i\), there is a unique class
\([S_{i,m}]\in\mathcal A_Q\) satisfying
\begin{equation}\label{eq:4d-real-selector-congruences}
  S_{i,m}(X)\equiv(X-a_i)^{m-1}
       \pmod{(X-a_i)^{L_i}},
\end{equation}
and congruent to zero modulo every other primary factor of \(Q\).
For \(1\leq m\leq M_j\), there are unique classes
\([\Lambda_{j,m}],[\mathrm M_{j,m}]\in\mathcal A_Q\), zero modulo all the
other primary factors, such that
\begin{equation}\label{eq:4d-spherical-selector-congruences}
  \Lambda_{j,m}(X)\equiv\Delta_j(X)^{m-1},
  \qquad
  \mathrm M_{j,m}(X)\equiv
    (X-2\alpha_j)\Delta_j(X)^{m-1}
  \pmod{\Delta_j(X)^{M_j}}.
\end{equation}
The classes of all the \(S_{i,m},\Lambda_{j,m},\mathrm M_{j,m}\) form a real
basis of \(\mathcal A_Q\).
\end{proposition}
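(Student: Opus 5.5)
The plan is to combine the Chinese remainder decomposition \eqref{eq:4d-crt-decomposition} with an explicit basis of each local factor, and then count dimensions. First fix a single primary factor, say \(P=(X-a_i)^{L_i}\) or \(P=\Delta_j^{M_j}\), and write \(\mathcal B_P:=\R[X]/(P)\).

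Since the primary factors of \(Q\) are pairwise coprime (distinct real points, distinct spheres, and \(\Delta_j\) is irreducible over \(\R\) with no real root), \eqref{eq:4d-crt-decomposition} is a ring isomorphism \(\mathcal A_Q\to\bigoplus_P\mathcal B_P\), given by reduction modulo each factor. Hence, for every \(P\) and every prescribed class \(c\in\mathcal B_P\), there is exactly one class in \(\mathcal A_Q\) that reduces to \(c\) modulo \(P\) and to \(0\) modulo every other factor. Applied to the right-hand sides in \eqref{eq:4d-real-selector-congruences} and \eqref{eq:4d-spherical-selector-congruences}, this gives existence and uniqueness of \([S_{i,m}]\), \([\Lambda_{j,m}]\) and \([\mathrm M_{j,m}]\) at once. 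For concreteness one can exhibit a representative as \(S=c\,N\,e\), where \(N=Q/P\) and \(e\) is a Bezout inverse of \(N\) modulo \(P\); equivalently, \(S\) is the Hermite interpolant that matches the local jet at one component and vanishes to full order at the others, as in Corollary~\ref{cor:dd-hermite-jets}. This is only needed if one wants explicit formulas.

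For the basis claim, the CRT isomorphism reduces the task to showing that, inside each local factor, the prescribed residues form a real basis of \(\mathcal B_P\).

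\emph{Real factor.} The classes \((X-a_i)^{m-1}\) for \(1\le m\le L_i\) form the standard basis of \(\R[X]/((X-a_i)^{L_i})\), by Taylor expansion at \(a_i\). That space has dimension \(L_i\).

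\emph{Spherical factor.} The claim is that the \(2M_j\) classes \(\Delta_j^{m-1}\) and \((X-2\alpha_j)\Delta_j^{m-1}\), for \(1\le m\le M_j\), form a basis of \(\R[X]/(\Delta_j^{M_j})\), which has dimension \(2M_j\). I expect this to be the only real obstacle, and I would handle it by a degree argument. The polynomial \(\Delta_j^{m-1}\) has degree \(2m-2\), and \((X-2\alpha_j)\Delta_j^{m-1}\) has degree \(2m-1\); both are monic. So together they are \(2M_j\) polynomials whose degrees are exactly \(0,1,\dots,2M_j-1\). Such a family is triangular with respect to the monomial basis, hence a basis of the polynomials of degree \(<2M_j\). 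Those polynomials are precisely the canonical representatives of \(\mathcal B_{\Delta_j^{M_j}}\).

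Finally, transport the local bases back through the isomorphism and take their union. This yields \(\sum_iL_i+2\sum_jM_j=\deg Q\) linearly independent classes, which matches \eqref{eq:4d-weight-algebra-dimension}. They therefore form a real basis of \(\mathcal A_Q\), and the proof would end there.
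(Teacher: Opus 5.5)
Your proposal is correct and follows essentially the paper's route: the Chinese remainder decomposition \eqref{eq:4d-crt-decomposition} yields existence and uniqueness of the selectors, and the basis claim reduces to bases of the individual primary factors. The paper realizes the isomorphism through explicit B\'ezout idempotents $e_\nu=[B_\nu C_\nu]$, the construction you mention as optional; your triangularity argument (monic polynomials of degrees $0,1,\dots,2M_j-1$) is a concrete form of its ``repeated Euclidean division by $\Delta_j$'', and your appeal to bijectivity of the CRT map makes explicit the uniqueness that the paper leaves implicit.
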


\begin{proof}
If \(Q=1\), then both \(\mathcal A_Q\) and the asserted family of
selectors are zero, so there is nothing to prove. Assume henceforth that
\(\deg Q>0\), and denote the primary factors of \(Q\) by
\[
  Q_i^{\mathrm r}(X):=(X-a_i)^{L_i},
  \qquad
  Q_j^{\mathrm s}(X):=\Delta_j(X)^{M_j}.
\]
For any one of these factors, denoted generically by \(Q_\nu\), put
\[
  B_\nu(X):=\frac{Q(X)}{Q_\nu(X)}.
\]
Since \(B_\nu\) and \(Q_\nu\) are coprime, B\'ezout's identity gives
polynomials \(C_\nu,D_\nu\in\R[X]\) such that
\[
  B_\nu C_\nu+Q_\nu D_\nu=1.
\]
Consequently,
\[
  e_\nu:=[B_\nu C_\nu]\in\mathcal A_Q
\]
satisfies
\[
  e_\nu\equiv1\pmod{Q_\nu},
  \qquad
  e_\nu\equiv0\pmod{Q_\mu}
  \quad\text{for every }\mu\neq\nu.
\]
Thus the \(e_\nu\) are the standard mutually orthogonal
Chinese-remainder idempotents. They may be computed explicitly by the
extended Euclidean algorithm.

Let \(e_i^{\mathrm r}\) and \(e_j^{\mathrm s}\) denote the idempotents
corresponding respectively to \(Q_i^{\mathrm r}\) and \(Q_j^{\mathrm s}\).
The selector classes in the statement are then given explicitly by
\begin{align}
  [S_{i,m}]
  &=e_i^{\mathrm r}[(X-a_i)^{m-1}],
  \label{eq:4d-explicit-real-selectors}\\
  [\Lambda_{j,m}]
  &=e_j^{\mathrm s}[\Delta_j^{m-1}],
  \qquad
  [\mathrm M_{j,m}]
  =e_j^{\mathrm s}
    [(X-2\alpha_j)\Delta_j^{m-1}].
  \label{eq:4d-explicit-spherical-selectors}
\end{align}
The required congruences follow immediately from those satisfied by the
idempotents.

Finally, in the real local factor
\(\R[X]/((X-a_i)^{L_i})\), the classes
\[
  1,(X-a_i),\ldots,(X-a_i)^{L_i-1}
\]
form a basis. Repeated Euclidean division by the monic quadratic
\(\Delta_j\) shows that in
\(\R[X]/(\Delta_j^{M_j})\) the classes
\[
  \bigl\{
    [\Delta_j^{m-1}],
    [(X-2\alpha_j)\Delta_j^{m-1}]
    :1\leq m\leq M_j
  \bigr\}
\]
form a basis. Multiplication by the corresponding idempotents transports
these local bases through the Chinese-remainder isomorphism
\eqref{eq:4d-crt-decomposition}, proving both the asserted formulas and
the global basis statement.
\end{proof}
For a polynomial \(P\in\R[X]\) and a formal principal part
\(\mathfrak p\in\mathcal P_Q\), define provisionally
\begin{equation}\label{eq:4d-residue-pairing}
  \bigl\langle[P],\mathfrak p\bigr\rangle_{\mathrm{res}}
  :=\Flux{U}{P(q)R_{\mathfrak p}(q)}.
\end{equation}
The order of multiplication is important: \(P\) has real coefficients and
is an intrinsic left multiplier, while the coefficients of
\(R_{\mathfrak p}\) remain on the right.

\begin{proposition}[Well-definedness of the residue pairing]
\label{prop:4d-residue-pairing-well-defined}
Formula \eqref{eq:4d-residue-pairing} depends only on the class
\([P]\in\mathcal A_Q\).  It is real linear in \([P]\), right quaternionic
linear in \(\mathfrak p\), and independent of the admissible circular
integration domain \(U\) as long as the enclosed polar components are kept
fixed.
\end{proposition}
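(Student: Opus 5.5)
The plan is to reduce everything to Theorem~\ref{thm:4d-residue} applied to the semiregular function \(PR_{\mathfrak p}\). Since \(P\) has real coefficients, it is slice preserving. By Proposition~\ref{prop:slice-preserving-factor}, \(PR_{\mathfrak p}\) is then induced by the stem \(P(z)R(z)\), which is again meromorphic with poles only at the listed components. It is slice regular near \(\partial U\), so the flux is defined and absolutely convergent. Theorem~\ref{thm:4d-residue} then gives
\[
  \bigl\langle[P],\mathfrak p\bigr\rangle_{\mathrm{res}}
  =\sum_{i}\Res_{a_i}(PR_{\mathfrak p})
   +\sum_{j}\SRes_{\Sigma_j}(PR_{\mathfrak p}),
\]
and the right-hand side depends only on which components \(U\) encloses. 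This settles independence of \(U\), exactly as in Corollary~\ref{cor:4d-domain-independence}.

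Next I establish linearity. Real linearity in \(P\) follows from linearity of the flux, since real scalars are central. Right \(\Hh\)-linearity in \(\mathfrak p\) holds because \(\mathfrak p\mapsto R_{\mathfrak p}\) is right linear, the coefficients sitting on the right in \eqref{eq:4d-principal-part-model}. It then passes through the flux by Corollary~\ref{cor:weighted-boundedness}, using that left multiplication by the real-valued \(P(q)\) commutes with right multiplication by constants.

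The main step is well-definedness on classes. By linearity it suffices to show that the flux of \(P R_{\mathfrak p}\) vanishes when \(P=QP_1\). Each local term of \(R_{\mathfrak p}\) has the form \((q-a_i)^{-m}c\) with \(m\le L_i\), or \(\Delta_j^{-m}(qu+v)\) with \(m\le M_j\). Multiplying by \(Q\) cancels the denominator and leaves a polynomial slice function with coefficients on the right. The product of the slice-preserving factor with the stem commutes, so \(QR_{\mathfrak p}=N_{\mathfrak p}\) is a quaternionic polynomial \(\sum_n q^n b_n\), hence slice regular on all of \(\Hh\). Therefore \(P_1 Q R_{\mathfrak p}\) is slice regular near \(\overline U\), and its flux vanishes by \eqref{eq:4d-regular-zero-flux}.

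The only delicate point is the claim that \(QR_{\mathfrak p}\) is genuinely polynomial, with no division hidden in the slice product. I verify it at stem level, where \(Q(z)\) is a central scalar polynomial and each summand becomes \(Q(z)(z-a_i)^{-m}c\) or \(Q(z)\Delta_j(z)^{-m}(zu+v)\). Each of these is visibly a polynomial in the central variable \(z\), because the order bounds \(m\le L_i\) and \(m\le M_j\) ensure \((z-a_i)^m\mid Q\) and \(\Delta_j^m\mid Q\).
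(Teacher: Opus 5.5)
Your proposal is correct and follows essentially the same route as the paper. Multiplying by $Q$ clears every term of $R_{\mathfrak p}$ to its prescribed order, so $(P-P')R_{\mathfrak p}$ extends slice regularly and has zero flux by Corollary~\ref{cor:4d-domain-independence}; linearity comes from the order of multiplication, domain independence from the same corollary, and your stem-level identification of $QR_{\mathfrak p}$ with the polynomial $N_{\mathfrak p}$ is just a more global phrasing of that cancellation.
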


\begin{proof}
If \(P-P'=QS\), then
\[
  (P(q)-P'(q))R_{\mathfrak p}(q)
  =Q(q)S(q)R_{\mathfrak p}(q)
\]
extends slice regularly across every \(a_i\) and every \(\Sigma_j\), because
the corresponding factor of \(Q\) cancels each term of
\eqref{eq:4d-principal-part-model} to its prescribed order.  Its flux is
therefore zero by Corollary~\ref{cor:4d-domain-independence}.  The linearity
statements follow from the displayed order of multiplication, and domain
independence follows from the same corollary.
\end{proof}

\begin{theorem}[Residue duality for principal parts]
\label{thm:4d-residue-duality}
The map
\begin{equation}\label{eq:4d-residue-duality-isomorphism}
  \boxed{\;
  \mathcal P_Q
  \longrightarrow
  \operatorname{Hom}_{\R}(\mathcal A_Q,\Hh),
  \qquad
  \mathfrak p\longmapsto
  \bigl([P]\mapsto
        \langle[P],\mathfrak p\rangle_{\mathrm{res}}\bigr)
  \;}
\end{equation}
is an isomorphism of right quaternionic vector spaces.  In particular, the
pairing is nondegenerate in both variables.

If \(\mathfrak p_f\) is the collection of principal coefficients of the
given function \(f\), then
\begin{equation}\label{eq:4d-weighted-residue-functional}
  \boxed{\;
  \Flux{U}{P(q)f(q)}
  =\bigl\langle[P],\mathfrak p_f\bigr\rangle_{\mathrm{res}},
  \qquad [P]\in\mathcal A_Q.\;}
\end{equation}
Thus the finitely many values of the weighted flux functional on any basis
of \(\mathcal A_Q\) determine, and are determined by, the complete principal
parts of \(f\) inside \(U\).
\end{theorem}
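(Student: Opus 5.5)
We will prove the two displayed assertions in reverse order: first the identity \eqref{eq:4d-weighted-residue-functional}, and then the isomorphism. For \eqref{eq:4d-weighted-residue-functional}, write \(f=R_{\mathfrak p_f}+\phi\) near \(\overline U\), where \(\phi\) is slice regular on a circular neighbourhood of \(\overline U\). This decomposition holds because subtracting the model \(R_{\mathfrak p_f}\) removes the whole principal part at each \(a_i\) and each \(\Sigma_j\), by uniqueness in Definition~\ref{def:sres-residues}(i) and Theorem~\ref{thm:sres-paired-laurent}. Here we use that the order bounds \(L_i,M_j\) are admissible, so the principal parts of \(f\) lie in \(\mathcal P_Q\). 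Multiplying by the slice-preserving polynomial \(P\) keeps \(P\phi\) regular. Corollary~\ref{cor:4d-domain-independence} then gives \(\Flux{U}{P\phi}=0\), and right linearity (Corollary~\ref{cor:weighted-boundedness}) yields \eqref{eq:4d-weighted-residue-functional}.

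For the isomorphism, the map is right \(\Hh\)-linear by Proposition~\ref{prop:4d-residue-pairing-well-defined}. Both sides have the same quaternionic dimension. Indeed, \(\operatorname{Hom}_\R(\mathcal A_Q,\Hh)\) is a right \(\Hh\)-space of dimension \(\dim_\R\mathcal A_Q=\deg Q\), and \(\mathcal P_Q\) has \(\sum L_i+2\sum M_j\) quaternionic coordinates, which equals \(\deg Q\) by \eqref{eq:4d-weight-algebra-dimension}. It therefore suffices to prove injectivity, or alternatively to exhibit an explicitly triangular (in fact diagonal) matrix.

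We evaluate the pairing on the selector basis of Proposition~\ref{prop:4d-hermite-crt-selectors}. Take \(P=S_{i,m}\) with representative \(S_{i,m}\). Then \(P R_{\mathfrak p}\) is regular at every other component, since the congruence modulo the other primary factors cancels their principal parts. By Theorem~\ref{thm:4d-residue} only the residue at \(a_i\) survives. Near \(a_i\) we write \(S_{i,m}=(X-a_i)^{m-1}+(X-a_i)^{L_i}T\). The second term kills the local principal part, and Theorem~\ref{thm:4d-principal-part-recovery}(i) then gives \(\langle[S_{i,m}],\mathfrak p\rangle=c_{i,-m}\).

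The spherical selectors are handled the same way, and this is the step requiring most care. For \(\Lambda_{j,m}=\Delta_j^{m-1}+\Delta_j^{M_j}T\), Theorem~\ref{thm:4d-principal-part-recovery}(ii) gives \(u_{j,-m}\). For \(\mathrm M_{j,m}=(X-2\alpha_j)\Delta_j^{m-1}+\Delta_j^{M_j}T'\), the same theorem gives \(v_{j,-m}\). The key point is that the remainder term \(\Delta_j^{M_j}T'R_{\mathfrak p}\) is regular near \(\Sigma_j\), because \(R_{\mathfrak p}\) has principal order at most \(M_j\) there. The global residue theorem then localises the flux to a tube, via Corollary~\ref{cor:4d-local-to-global}. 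Consequently the matrix of the map, in the selector basis and in the coordinates of \(\mathfrak p\), is the identity. If all pairings of \(\mathfrak p\) vanish, then every coordinate of \(\mathfrak p\) vanishes, which gives injectivity. Surjectivity can be seen either from the dimension count or directly: a functional \(\lambda\) is realised by the \(\mathfrak p\) whose coordinates are the values of \(\lambda\) on the selector basis.

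Nondegeneracy in \([P]\) follows from the same diagonal picture. If \(\langle[P],\mathfrak p\rangle=0\) for all \(\mathfrak p\), expand \([P]\) in the selector basis with real coefficients. Pairing with the \(\mathfrak p\) that has a single coordinate equal to \(1\) extracts each coefficient, so all of them are zero. The final sentence of the theorem then follows from \eqref{eq:4d-weighted-residue-functional} together with the isomorphism.
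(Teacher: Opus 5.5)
Your proposal is correct and follows the paper's proof essentially step by step. The selectors evaluate to the coordinates of \(\mathfrak p\), which gives injectivity, and the count \(\dim_{\Hh}\mathcal P_Q=\deg Q=\dim_{\R}\mathcal A_Q\) then gives the isomorphism and nondegeneracy; the identity \eqref{eq:4d-weighted-residue-functional} follows from the regularity of \(P\,(f-R_{\mathfrak p_f})\) together with Corollary~\ref{cor:4d-domain-independence}. You reverse the order of these steps, and you make explicit, via the decompositions \(S_{i,m}=(X-a_i)^{m-1}+(X-a_i)^{L_i}T\) and their spherical analogues, why each selector kills the other components and moves the chosen coefficient into residue position, a point the paper covers only by citing the local recovery formulas.
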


\begin{proof}
By Proposition~\ref{prop:4d-residue-pairing-well-defined}, the displayed map
is defined and right quaternionic linear.  The global selectors of
Proposition~\ref{prop:4d-hermite-crt-selectors}, the local recovery formulas
\eqref{eq:4d-real-principal-recovery}--\eqref{eq:4d-v-principal-recovery},
and the local-to-global residue decomposition give
\begin{equation}\label{eq:4d-real-selector}
  \langle[S_{i,m}],\mathfrak p\rangle_{\mathrm{res}}
  =c_{i,-m}
  \qquad
  (1\leq i\leq\ell,\ 1\leq m\leq L_i)
\end{equation}
and
\begin{equation}\label{eq:4d-spherical-selectors}
  \langle[\Lambda_{j,m}],\mathfrak p\rangle_{\mathrm{res}}
  =u_{j,-m},
  \qquad
  \langle[\mathrm M_{j,m}],\mathfrak p\rangle_{\mathrm{res}}
  =v_{j,-m}
  \quad
  (1\leq j\leq S,\ 1\leq m\leq M_j).
\end{equation}
Thus the map in \eqref{eq:4d-residue-duality-isomorphism} is injective.

Moreover,
\[
  \dim_{\Hh}\mathcal P_Q
  =\sum_iL_i+2\sum_jM_j
  =\deg Q
  =\dim_{\R}\mathcal A_Q.
\]
With the right action \((\Lambda c)([P])=\Lambda([P])c\),
\(\operatorname{Hom}_{\R}(\mathcal A_Q,\Hh)\) is a right quaternionic vector
space of the same quaternionic dimension.  The injective map is therefore an
isomorphism.  Since the selector classes form a basis and
\eqref{eq:4d-real-selector}--\eqref{eq:4d-spherical-selectors} recover every
coordinate, the pairing is nondegenerate in both variables.

Finally, \(f-R_{\mathfrak p_f}\) has zero principal part at every polar
component contained in \(U\), so it extends slice regularly across them and
is slice regular on a circular neighbourhood of \(\overline U\) after that
extension.  The same is true after multiplication by \(P\).  Its flux
vanishes by Corollary~\ref{cor:4d-domain-independence}, proving
\eqref{eq:4d-weighted-residue-functional}.
\end{proof}

\begin{remark}[Algebraic form of the finite residue transform]
\label{rem:4d-algebraic-residue-pairing}
Assume that \(d:=\deg Q>0\). Every formal principal part
\(\mathfrak p\in\mathcal P_Q\) determines a quaternionic polynomial
\(N_{\mathfrak p}\in\Hh[X]\), with coefficients displayed on the right,
by
\begin{equation}
  \label{eq:4d-principal-part-numerator}
  N_{\mathfrak p}(X):=
  \sum_{i=1}^{\ell}\sum_{m=1}^{L_i}
    \frac{Q(X)}{(X-a_i)^m}\,c_{i,-m}+
  \sum_{j=1}^{S}\sum_{m=1}^{M_j}
    \frac{Q(X)}{\Delta_j(X)^m}
       \bigl(Xu_{j,-m}+v_{j,-m}\bigr).
\end{equation}
All the quotients appearing here are real polynomials, and
\(\deg N_{\mathfrak p}<d\). By construction,
\begin{equation}\label{eq:4d-principal-part-quotient}
  R_{\mathfrak p}(q)=Q(q)^{-1}N_{\mathfrak p}(q).
\end{equation}
Conversely, the real partial-fraction decomposition associated with
\eqref{eq:4d-crt-decomposition} shows that every quaternionic polynomial
of degree smaller than \(d\) arises from a unique formal principal part.
Thus
\[
  \mathfrak p\longmapsto[N_{\mathfrak p}]
\]
identifies \(\mathcal P_Q\) with
\(\mathcal A_Q\otimes_{\R}\Hh\).

Define
\[
  \lambda_Q\colon\mathcal A_Q\longrightarrow\R,
  \qquad
  \lambda_Q([H])
  :=
  \bigl[X^{d-1}\bigr]\operatorname{rem}_Q(H),
\]
where \(\operatorname{rem}_Q(H)\) is the unique remainder of \(H\) upon
division by the monic polynomial \(Q\), and
\(\bigl[X^{d-1}\bigr]\) denotes extraction of the indicated coefficient.
Extend \(\lambda_Q\) right \(\Hh\)-linearly to
\(\mathcal A_Q\otimes_{\R}\Hh\). Then the residue pairing has the purely
algebraic form
\begin{equation}\label{eq:4d-algebraic-residue-pairing}
  \boxed{\;
  \bigl\langle[P],\mathfrak p\bigr\rangle_{\mathrm{res}}
  =
  \lambda_Q\bigl([P N_{\mathfrak p}]\bigr).
  \;}
\end{equation}
Indeed, on any full complex slice the flux on the left is the sum of the
finite residues of the rational function
\[
  \frac{P(z)N_{\mathfrak p}(z)}{Q(z)}.
\]
This sum is the coefficient of \(z^{-1}\) in its Laurent expansion at
infinity. Polynomial division by \(Q\) does not change that coefficient,
and, because \(Q\) is monic of degree \(d\), the coefficient is precisely
the coefficient of \(X^{d-1}\) in the remainder of
\(PN_{\mathfrak p}\) modulo \(Q\). Notice that the residue at infinity
itself is the negative of this coefficient.

Formula \eqref{eq:4d-algebraic-residue-pairing} also makes
nondegeneracy transparent. If
\[
  N(X)=\sum_{r=0}^{j}X^r n_r,
  \qquad j<d,\qquad n_j\neq0,
\]
then, for \(P=X^{d-1-j}\),
\[
  \lambda_Q([PN])=n_j\neq0.
\]
The same argument, with the roles of the two factors interchanged, detects
every nonzero class \([P]\in\mathcal A_Q\). Hence
\[
  ([P],[N])\longmapsto\lambda_Q([PN])
\]
is the perfect Frobenius pairing of the Artinian Gorenstein algebra
\(\mathcal A_Q=\R[X]/(Q)\), extended to quaternionic coefficients. In this
one-variable and finite-dimensional sense,
Theorem~\ref{thm:4d-residue-duality} gives a concrete instance of the
algebraic residue formalism underlying Grothendieck duality; see
\cite{Hartshorne}.

Accordingly, the functional
\[
  [P]\longmapsto\Flux{U}{P(q)f(q)}
\]
may be viewed as a finite residue transform of \(f\). Once the pole
locations and order bounds are fixed, this transform loses no
principal-part information: ordinary residue summation is its value at
\([1]\), while the remaining classes recover all the higher local
coefficients.
\end{remark}

\begin{example}[One simple spherical pole]
\label{ex:4d-duality-one-sphere}
Let \(\Sigma=\alpha+\Sphere\beta\), put
\(\Delta(X)=\DeltaSphere{\Sigma}(X)\), and suppose that the only principal
part is
\[
  R_{\mathfrak p}(q)=\Delta(q)^{-1}(qu+v).
\]
Here \(Q=\Delta\), and
\(\{[1],[X-2\alpha]\}\) is a real basis of \(\mathcal A_Q\).  The two values
of the residue pairing are
\[
  \langle[1],\mathfrak p\rangle_{\mathrm{res}}=u,
  \qquad
  \langle[X-2\alpha],\mathfrak p\rangle_{\mathrm{res}}=v.
\]
Thus the ordinary spherical residue detects only \(u\), whereas the single
additional weight \(X-2\alpha\) recovers the companion coefficient \(v\).
For a semiregular function with this principal part, both coefficients are
therefore obtained from boundary integrals over any admissible \(U\).
\end{example}

\begin{example}[Separating a real pole from a spherical pole]
\label{ex:4d-duality-mixed-simple-poles}
Suppose that \(U\) contains one simple real pole \(a\) and one simple polar
sphere \(\Sigma=\alpha+\Sphere\beta\).  Write
\[
  \Delta(X)=\DeltaSphere{\Sigma}(X),
  \qquad
  d:=\Delta(a)=(a-\alpha)^2+\beta^2>0,
  \qquad
  Q(X)=(X-a)\Delta(X).
\]
The three selector classes can be represented explicitly by
\begin{equation}\label{eq:4d-mixed-simple-selectors}
  \begin{aligned}
    S_a(X)&:=\frac{\Delta(X)}{d},\\
    \Lambda_\Sigma(X)&:=1-S_a(X)
      =-\frac{(X-a)(X+a-2\alpha)}{d},\\
    \mathrm M_\Sigma(X)&:=(X-2\alpha)\Lambda_\Sigma(X).
  \end{aligned}
\end{equation}
Indeed, \(S_a\equiv1\pmod{X-a}\) and
\(S_a\equiv0\pmod{\Delta}\), while
\[
  \begin{aligned}
    \Lambda_\Sigma&\equiv0\pmod{X-a},
    &\Lambda_\Sigma&\equiv1\pmod{\Delta},\\
    \mathrm M_\Sigma&\equiv0\pmod{X-a},
    &\mathrm M_\Sigma&\equiv X-2\alpha\pmod{\Delta}.
  \end{aligned}
\]
If the formal principal part has coefficients
\(\mathfrak p=(c;u,v)\), then
\[
  \langle[S_a],\mathfrak p\rangle_{\mathrm{res}}=c,
  \qquad
  \langle[\Lambda_\Sigma],\mathfrak p\rangle_{\mathrm{res}}=u,
  \qquad
  \langle[\mathrm M_\Sigma],\mathfrak p\rangle_{\mathrm{res}}=v.
\]
This example shows concretely how global weights cancel all unwanted polar
components before the flux extracts the selected coefficient.
\end{example}
\section{Examples}\label{sec:examples}

The three examples below test, in turn, additivity in the presence of
non-commuting coefficients, higher spherical order with an exceptional
point, and a non-zero function whose symmetrization vanishes identically.
In each of them the boundary integral is computed directly, by
Proposition~\ref{prop:weighted-ball-extraction} or
Lemma~\ref{lem:weighted-tube-parametrization} and independently of the
residue theorem, and only then compared with what the theorem predicts.
Shorter checks follow, on a real pole of higher order, on a polar sphere
whose residue vanishes, and on a function whose deeper paired coefficients
are recovered only by the weighted moments of
Theorem~\ref{thm:4d-principal-part-recovery}.

\begin{example}[Two polar spheres with non-commuting coefficients]
\label{ex:two-spheres}
Let \(\Sigma_1=\Sphere\) and \(\Sigma_2=2+2\Sphere\), with characteristic
polynomials
\[
  \DeltaSphere{\Sigma_1}(q)=q^2+1 ,
  \qquad
  \DeltaSphere{\Sigma_2}(q)=q^2-4q+8 ,
\]
fix the coefficients
\[
  u_1=\qi+\qj,\quad v_1=\qk,
  \qquad
  u_2=1+\qk,\quad v_2=\qj ,
\]
and consider
\begin{equation}\label{eq:two-sphere-example}
  f_2(q)=\DeltaSphere{\Sigma_1}(q)^{-1}(qu_1+v_1)
        +\DeltaSphere{\Sigma_2}(q)^{-1}(qu_2+v_2) .
\end{equation}
Both spheres lie in \(B(0,4)\). The example is designed to be genuinely
non-commutative and genuinely non-real: \(u_1u_2-u_2u_1=2(\qi-\qj)\neq0\),
and \(f_2(0)=\qk+\qj/8\notin\R\), so \(f_2\) is not slice preserving.

Each summand is regular near the other sphere, so the paired principal
coefficients can be read off at once: \((u_1,v_1)\) at \(\Sigma_1\) and
\((u_2,v_2)\) at \(\Sigma_2\). Recovering them differentially from the full
function requires the regularizations
\[
  \gamma_1=\DeltaSphere{\Sigma_1}f_2
     =qu_1+v_1
      +\DeltaSphere{\Sigma_1}\DeltaSphere{\Sigma_2}^{-1}(qu_2+v_2) ,
  \qquad
  \gamma_2=\DeltaSphere{\Sigma_2}f_2 ,
\]
one for each sphere, the subscript numbering the spheres and not, as
elsewhere, the regularizing exponent, which is \(1\) in both cases. Their
remainder terms vanish at both nodes of the respective sphere, leaving
\(\partial_s\gamma_1=u_1\) on \(\Sigma_1\) and
\(\partial_s\gamma_2=u_2\) on \(\Sigma_2\). One can also see this on
explicit values: at \(y_1=\qi\) and \(y_2=2+2\qi\),
\begin{align*}
  \gamma_1(\qi)&=-1+2\qk, & \gamma_1(-\qi)&=1,\\
  \gamma_2(2+2\qi)&=2+2\qi-\qj+2\qk, &
  \gamma_2(2-2\qi)&=2-2\qi+3\qj+2\qk ,
\end{align*}
and the spherical derivative \eqref{eq:slice-spherical-derivative}, which
divides the difference of these values on the left by \(2\qi\) and by
\(4\qi\) respectively, returns \(u_1\) and \(u_2\); the even formula of
Remark~\ref{rem:4d-simple-spherical-pole} then gives
\(\gamma_1(y_1)-y_1u_1=v_1\) and \(\gamma_2(y_2)-y_2u_2=v_2\).

The comparison with ordinary point data is instructive. On any fixed slice,
a single summand \(\DeltaSphere{\Sigma}^{-1}(qu+v)\) has the two point
residues
\begin{equation}\label{eq:ex-simple-pair}
  r_\pm^I=(p_\pm-p_\mp)^{-1}(p_\pm u+v)
        =\frac u2\mp\frac{I}{2\beta}(\alpha u+v) ,
  \qquad p_\pm=\alpha\pm I\beta ,
\end{equation}
which depend on the slice, whereas their sum is \(u\) for every \(I\) — and
this without ever commuting \(I\) past a coefficient. Applying the
computation to both summands reproduces the differential values above.

Finally, the boundary integral. All poles have modulus at most
\(\sqrt8<4\), so on \(|q|=4\) the expansions at infinity converge normally,
with leading terms \(q^{-1}u_1\) and \(q^{-1}u_2\) respectively, and
Proposition~\ref{prop:weighted-ball-extraction} gives
\begin{equation}\label{eq:two-sphere-integral}
  \Flux{B(0,4)}{f_2}=u_1+u_2=1+\qi+\qj+\qk ,
\end{equation}
an unnormalized integral of \(4\pi^2(1+\qi+\qj+\qk)\). This agrees with
Theorem~\ref{thm:4d-residue} and checks two things at once: the constant
\(4\pi^2\), and the additivity of contributions from distinct spheres.
\end{example}

The next example raises the spherical order and places an exceptional point
on the polar sphere, so that the alternating derivatives are genuinely
needed.

\begin{example}[Spherical order four and an exceptional point]
\label{ex:higher}
Let \(\Sigma=\Sphere\), with \(\DeltaSphere{\Sigma}(q)=q^2+1\), and set
\begin{equation}\label{eq:ex-higher}
  f_{\mathrm h}(q)=\DeltaSphere{\Sigma}(q)^{-2}(q^3\qj+q^2\qk) .
\end{equation}
Substituting \(q^2=\DeltaSphere{\Sigma}-1\) in the numerator produces the
paired expansion in closed form,
\[
  f_{\mathrm h}
  =\DeltaSphere{\Sigma}^{-1}(q\qj+\qk)
   -\DeltaSphere{\Sigma}^{-2}(q\qj+\qk) ,
\]
so that \((u_{-1},v_{-1})=(\qj,\qk)\) and
\((u_{-2},v_{-2})=(-\qj,-\qk)\). The least admissible exponent is \(M=2\)
and the spherical order is four; the extension
\(g_2=\DeltaSphere{\Sigma}^2f_{\mathrm h}=q^3\qj+q^2\qk\) vanishes at
\(-\qi\), while \(g_2(\qi)=-2\qk\neq0\), so the sphere carries an
exceptional point.

In signed slice coordinates \(q=x+Ir\), differentiating directly gives
\begin{align*}
  \partial_sg_2(q)&=(3x^2-r^2)\qj+2x\qk ,\\
  (\partial_c\partial_s)g_2(q)&=(3x+Ir)\qj+\qk ,\\
  \partial_s(\partial_c\partial_s)g_2(q)&=\qj ,
\end{align*}
the last identically in \(q\). Formula \eqref{eq:sres-main-differential}
therefore returns \(\SRes_\Sigma f_{\mathrm h}=\qj\) at every
representative, the exceptional point \(-\qi\) included, where the value of
\(g_2\) would have told us nothing; and at \(y=I\in\Sphere\) the even
formula gives \(v_{-1}=(I\qj+\qk)-I\qj=\qk\).

Nothing changes with a nonminimal exponent. For \(M=3\),
\[
  g_3=\DeltaSphere{\Sigma}g_2=q^5\qj+q^4\qk+q^3\qj+q^2\qk ,
  \qquad
  (\partial_c\partial_s)^2g_3(q)=(10x+2Ir)\qj+2\qk ,
\]
whence
\[
  \frac{\partial_s(\partial_c\partial_s)^2g_3(y)}{2!}=\qj ,
  \qquad
  \frac{[(\partial_c\partial_s)^2g_3](y)}{2!}-y\qj=\qk
  \qquad(y\in\Sphere) ,
\]
which checks both the factorial and the independence of the regularizing
exponent, for each of the two coefficients.

On the exceptional slice the picture becomes planar. Since
\(\qk=\qi\qj\),
\[
  (f_{\mathrm h})_{\qi}(z)=\frac{z^2}{(z-\qi)^2(z+\qi)}\,\qj ,
\]
which is the function of Example~\ref{ex:complex-unequal-orders} with
\(\ci\) replaced by \(\qi\) and a factor \(\qj\) on the right: a double pole
at \(\qi\), a simple one at \(-\qi\). The exceptional point has reduced
order without being removable here. Its point residues and coefficients of
index \(-2\) are those computed there,
\[
  r_+=\tfrac34\qj,\quad r_-=\tfrac14\qj,
  \qquad
  c_{-2}^+=\tfrac12\qk,\quad c_{-2}^-=0 ,
\]
and the first-moment formula \eqref{eq:4d-v-point-residues} gives
\[
  v_{-1}=-(-\qi)r_+-\qi r_-+c_{-2}^++c_{-2}^-
        =\tfrac12\qk+\tfrac12\qk=\qk ,
\]
where the correction of index \(-2\) accounts for exactly half the answer:
the two point residues alone would return \(\qk/2\).

The boundary integral confirms both coefficients. The expansions at infinity
begin with
\[
  f_{\mathrm h}(q)=q^{-1}\qj+q^{-2}\qk+O(|q|^{-3}) ,
  \qquad
  qf_{\mathrm h}(q)=\qj+q^{-1}\qk+O(|q|^{-2}) ,
\]
so that Proposition~\ref{prop:weighted-ball-extraction} gives
\begin{equation}\label{eq:ex-higher-flux}
  \Flux{B(0,4)}{f_{\mathrm h}}=\qj ,
  \qquad
  \Flux{B(0,4)}{qf_{\mathrm h}}=\qk ,
\end{equation}
independently of everything above.
\end{example}

The last example is the one the whole apparatus was designed for: a function
that is not zero, whose symmetrization is, on a domain with no real points.

\begin{example}[A zero divisor on a product domain]\label{ex:null}
Fix \(K\in\Sphere\), let \(\Sigma=\Sphere\), and on \(\Hh\setminus\R\) set
\[
  \mathcal J(q)=\frac{\imq q}{|\imq q|} ,
  \qquad
  \eta(q)=1+\mathcal J(q)K ,
  \qquad
  f_{\mathrm n}(q)=\DeltaSphere{\Sigma}(q)^{-1}q\,\eta(q) .
\]
The integration set will be the tube \(T_\rho:=T_{0,1,\rho}\) of
Definition~\ref{def:weighted-circular-tube}, for \(0<\rho<1\): a connected
circular set containing \(\Sphere\), meeting no real point, and with a pair
of disjoint conjugate discs as profile.

The factor \(\eta\) is slice regular, being induced by the locally constant
stem
\[
  H(z)=
  \begin{cases}
    1+\ci K,& \imq z>0,\\
    1-\ci K,& \imq z<0 ,
  \end{cases}
\]
and the identity \((\ci K)^2=1\) governs its whole behaviour: it gives
\(HH^c=0\) on both components, so that \(\eta^s\equiv0\) although
\(\eta\not\equiv0\), and \(H^2=2+2\ci K=2H\), so that \(\eta/2\) is a
non-trivial idempotent. Multiplying by the intrinsic meromorphic factor
\(q/(q^2+1)\) preserves both features, leaving
\(f_{\mathrm n}\not\equiv0\) with \(f_{\mathrm n}^s\equiv0\): a genuine zero
divisor of the slice algebra rather than the zero function, whose only polar
sphere is \(\Sigma\).

\emph{The local spherical expansion.} Write
\(\Delta=\DeltaSphere{\Sigma}\) and, for \(|\Delta(q)|<1\), let
\(\varsigma(q)=\sqrt{1-\Delta(q)}\) be the branch given by the power series
at \(\Delta=0\), of value \(1\) on \(\Sigma\). That branch is
\(-\mathcal J(q)q\): writing \(q=x+Ir\) with \(r>0\), so that
\(\mathcal J(q)=I\), one has \(-Iq=r-Ix\), of positive real part, and
\((-Iq)^2=-q^2=1-\Delta(q)\). Since \(q\) commutes with \(\mathcal J(q)\),
the regularization is therefore
\[
  g(q)=\Delta(q)f_{\mathrm n}(q)=q-\varsigma(q)K ,
\]
and expanding the square root gives, normally on compact subsets of
\(0<|\Delta(q)|<1\),
\begin{equation}\label{eq:ex-null-series}
  f_{\mathrm n}(q)
  =\Delta^{-1}(q-K)+\tfrac12K+\tfrac18\Delta K+\tfrac1{16}\Delta^2K+\cdots ,
\end{equation}
whence \(u_{-1}=1\) and \(v_{-1}=-K\). Note that the auxiliary square root
is intrinsic, and that no inverse of \(\eta\) or of \(f_{\mathrm n}\) has
been taken.

\emph{Differential and slice checks.} For \(I\in\Sphere\) the two values
\(g(I)=I-K\) and \(g(-I)=-I-K\) differ by \(2I\), so that
\[
  \partial_sg(I)=(2I)^{-1}\bigl(g(I)-g(-I)\bigr)=(2I)^{-1}(2I)=1 ,
  \qquad
  g(I)-I\,\partial_sg(I)=-K ,
\]
in agreement with \eqref{eq:ex-null-series}; and the evaluation goes through
at \(I=K\) as well, where \(g(K)=0\). On a full slice \(\C_I\) the
restriction of \(f_{\mathrm n}\) is \(z(z^2+1)^{-1}(1+IK)\) on the upper
component and \(z(z^2+1)^{-1}(1-IK)\) on the lower one, with point residues
\begin{equation}\label{eq:ex-null-pair}
  r_+^I=\frac{1+IK}{2},
  \qquad
  r_-^I=\frac{1-IK}{2},
  \qquad
  r_+^I+r_-^I=1 .
\end{equation}
The slice \(\C_K\) shows why both components are needed: there the upper
restriction vanishes identically, while the lower one has residue \(1\) at
\(-K\), so that the upper half-slice alone would return \(0\) in place of
the spherical residue.

\emph{Direct integration on the tube.} Parametrize the boundary by
\(q=I+w\), \(w=\rho(\cos t+I\sin t)\), as in
Lemma~\ref{lem:weighted-tube-parametrization}. Since \(1+\rho\sin t>0\) one
has \(\mathcal J(q)=I\) throughout, and
\[
  \Delta(q)=w(2I+w) ,
  \qquad
  wf_{\mathrm n}(I+w)=(2I+w)^{-1}(I+w)(1+IK) ,
\]
where every factor preceding \(1+IK\) lies in \(\C_I\). Expanding
\((2I+w)^{-1}\) geometrically, uniformly in \(t\),
\[
  (2I+w)^{-1}(I+w)=\frac12+\frac12w(2I+w)^{-1} ,
\]
and the second term, carrying only positive powers of \(w\), integrates to
zero over a period; hence
\(\int_0^{2\pi}wf_{\mathrm n}(I+w)\dd t=\pi(1+IK)\). Integrating over
\(\Sphere\), where \(\int_\Sphere I\darea(I)=0\) and
\(\Area(\Sphere)=4\pi\), gives
\begin{equation}\label{eq:ex-null-flux}
  \Flux{T_\rho}{f_{\mathrm n}}=1 ,
  \qquad\text{that is}\qquad
  \int_{\partial T_\rho}
    \frac{n(q)f_{\mathrm n}(q)}{|\imq q|^2}\dsigma(q)=4\pi^2 .
\end{equation}
The first moment goes the same way: the constant term of
\((2I+w)^{-1}(I+w)^2\) is \(I/2\), so that
\(\int_0^{2\pi}w(qf_{\mathrm n})(I+w)\dd t=\pi(I-K)\) and
\(\Flux{T_\rho}{qf_{\mathrm n}}=-K=v_{-1}\).

Both computations hold for every \(0<\rho<1\), and neither requires the
local series \eqref{eq:ex-null-series} to converge on the whole boundary of
the tube. A vanishing symmetrization, then, does not force the residue
functional to vanish, and computing it needs neither real points nor any
invertibility.
\end{example}

We close with a few shorter checks.

\begin{remark}[Further checks]\label{rem:ex-further}
Four shorter verifications complete the picture.

At a real point \(a\), the function
\(f_{\mathrm r}(q)=(q-a)^{-3}(\qi+\qj)+(q-a)^{-1}(1+\qk)\) has a pole of
order three, and only the coefficient of index \(-1\) survives:
\(\Flux{B(a,R)}{f_{\mathrm r}}=1+\qk\) for every \(R>0\).

A residue may vanish at a genuine polar sphere. With
\(\Delta=\DeltaSphere{\Sphere}\), the function
\(f_0(q)=\Delta(q)^{-1}\qk\) has \((u_{-1},v_{-1})=(0,\qk)\), hence zero
flux on \(B(0,4)\), although its polar sphere is not removable; the first
moment detects it, \(\Flux{B(0,4)}{qf_0}=\qk\).

Both paired coefficients of index \(-1\) may vanish at once, and then the
deeper moments are needed. For \(\widetilde f_0(q)=\Delta(q)^{-2}\qk\) one
has \((u_{-1},v_{-1})=(0,0)\) while \(v_{-2}=\qk\neq0\), so that
\(\Flux{B(0,4)}{\widetilde f_0}\) and
\(\Flux{B(0,4)}{q\widetilde f_0}\) both vanish in the presence of a pole:
neither the unweighted residue nor its first paired moment sees the
coefficient that is actually there. Weighting by the characteristic
polynomial does see it,
\[
  \Flux{B(0,4)}{\Delta\widetilde f_0}=0,
  \qquad
  \Flux{B(0,4)}{q\Delta\widetilde f_0}=\qk=v_{-2},
\]
which is \eqref{eq:4d-u-principal-recovery}--%
\eqref{eq:4d-v-principal-recovery} with \(k=1\).

Finally, the three kinds of contribution add up as expected. Taking \(a=0\)
in \(f_{\mathrm r}\) and setting
\[
  f_{\mathrm{mix}}(q)=f_{\mathrm r}(q)+f_{\mathrm h}(q)
    +\DeltaSphere{\Sigma_2}(q)^{-1}\bigl(q(\qi-\qk)+\qj\bigr) ,
\]
the local residues at \(0\), at \(\Sphere\) and at \(2+2\Sphere\) are
\(1+\qk\), \(\qj\) and \(\qi-\qk\), whose sum gives
\(\Flux{B(0,4)}{f_{\mathrm{mix}}}=1+\qi+\qj\), the terms in \(\qk\)
cancelling between the real pole and the second sphere.
\end{remark}
\section{Integrals over the imaginary hyperplane}\label{sec:hyperplane}

The residue theorem applies to absolutely convergent integrals over
\(\imq\Hh\simeq\R^3\), once the integrand is known to extend to a slice
function. Not every rational datum does: the admissible ones are exactly
those whose dependence on the imaginary unit is affine, and
Subsection~\ref{subsec:hyp-data} characterizes them and constructs their
canonical extension. That extension is then used twice, first to evaluate
the integral itself and then, after multiplying on the left by the phase
\(e^{-tq}\) with \(t\geq0\), to evaluate a family of oscillatory integrals
whose value turns out to be a finite exponential--polynomial expression in
\(t\). Throughout, \(\dd V_3\) is ordinary Euclidean volume measure on the
imaginary hyperplane.

\subsection{Rational data, extension and decay}\label{subsec:hyp-data}

The datum on the hyperplane and the slice function extending it are objects
of different kinds, and it is convenient to name both at once.

\begin{definition}[Rational data and rational slice functions]
\label{def:rat-data}
An \emph{ordinary rational datum} is a map \(\varphi:\imq\Hh\to\Hh\) whose
four real components are quotients of real polynomials, with denominators
nowhere vanishing on \(\R^3\). It is \emph{spherically compatible} if its
dependence on the imaginary unit is affine, that is, if for every \(r>0\)
there are \(a(r),b(r)\in\Hh\), independent of \(I\), with
\begin{equation}\label{eq:rat-compatibility}
 \varphi(rI)=a(r)+Ib(r)\qquad(I\in\Sphere).
\end{equation}
Separately, a \emph{rational slice function} is one of the form
\(d(q)^{-1}\sum_nq^nc_n\), with \(d\in\R[q]\setminus\{0\}\) and finitely
many \(c_n\in\Hh\), understood semiregularly at its finite singularities.
\end{definition}

In one complex variable, extending the integrand off the real line is not a
step one dwells on: a rational function of \(x\) is already a rational
function of \(z\), and closing the contour is a matter of choosing a
half-plane. In four dimensions the corresponding step has content, because
the datum lives on a three-dimensional hyperplane and a slice function is
determined by two real parameters on each sphere. The next proposition
identifies exactly which rational data extend, and
Remark~\ref{rem:rat-obstruction} exhibits one that does not.

\begin{proposition}[Rational trace and extension]\label{prop:rat-extension}
For an ordinary rational datum \(\varphi\), the following are equivalent:
\begin{enumerate}[label=\textup{(\roman*)}]
 \item \(\varphi\) is the restriction of a stem-induced slice regular
 function on a circular open neighbourhood of \(\imq\Hh\);
 \item \(\varphi\) is spherically compatible;
 \item there are unique quaternion-valued rational functions
 \(\mathsf a(s),\mathsf b(s)\), with real rational components and no poles
 on \([0,\infty)\), such that
 \begin{equation}\label{eq:rat-data-AB}
  \varphi(v)=\mathsf a(|v|^2)+v\,\mathsf b(|v|^2).
 \end{equation}
\end{enumerate}
When these hold, the \emph{canonical extension} of \(\varphi\) is the
rational slice-semiregular function
\begin{equation}\label{eq:rat-extension}
 f_\varphi(q)=\mathsf a(-q^2)+q\,\mathsf b(-q^2) ,
\end{equation}
which has no pole on \(\imq\Hh\) and, elsewhere, only finitely many real
poles and non-real polar spheres. It is unique both as a slice regular germ
along \(\imq\Hh\) and among global rational slice functions.
\end{proposition}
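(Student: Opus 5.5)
I will prove the cycle (i)\(\Rightarrow\)(ii)\(\Rightarrow\)(iii)\(\Rightarrow\)(i), and then treat the canonical extension and its uniqueness.

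For (i)\(\Rightarrow\)(ii), let \(f=\mathcal I(A+\ci B)\) be slice regular near \(\imq\Hh\). The point \(rI\) has slice coordinates \(x=0\) and radius \(r\), so \eqref{eq:slice-induction} gives \(\varphi(rI)=A(\ci r)+IB(\ci r)\). This is \eqref{eq:rat-compatibility} with \(a(r)=A(\ci r)\) and \(b(r)=B(\ci r)\).

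For (ii)\(\Rightarrow\)(iii), I recover \(a\) and \(b\) from the datum itself, by averaging as in Proposition~\ref{prop:slice-reconstruction}:
\[
  a(r)=\tfrac12\bigl(\varphi(rI)+\varphi(-rI)\bigr),
  \qquad
  b(r)=-\tfrac I2\bigl(\varphi(rI)-\varphi(-rI)\bigr).
\]
Fix \(I=\qi\) and set \(v=r\qi=(r,0,0)\). Then \(a(r)\) is the even part of the real rational function \(t\mapsto\varphi(t,0,0)\), and \(r\,b(r)\) is \(-\qi\) times its odd part.

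An even real rational function of \(t\) is a rational function of \(t^2\). This follows by symmetrizing numerator and denominator: write the function as \(N(t)D(-t)/\bigl(D(t)D(-t)\bigr)\) and take even parts. Hence \(a(r)=\mathsf a(r^2)\). Likewise the odd part, divided by \(t\), is even, so \(b(r)=\mathsf b(r^2)\).

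Poles on \([0,\infty)\) are excluded because the denominators of \(\varphi\) do not vanish on \(\R^3\). The one delicate point is \(s=0\), where the odd part divided by \(t\) extends continuously, hence without a pole.

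Identity \eqref{eq:rat-data-AB} then holds for every \(v=rI\), since \(v=Ir\). Uniqueness comes from evaluating at \(v=\pm r\qi\) and separating the even and odd parts.

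For (iii)\(\Rightarrow\)(i), define \(f_\varphi\) by \eqref{eq:rat-extension}. Its stem is \(F(z)=\mathsf a(-z^2)+z\,\mathsf b(-z^2)\), where the scalars are real rational functions composed with the central variable \(z\), and the quaternionic coefficients stay on the right. This stem satisfies \(F(\bar z)=\overline{F(z)}\) and is meromorphic, so it defines a rational slice function.

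On \(\imq\Hh\) we have \(q=v\) and \(-v^2=|v|^2\), so \(f_\varphi(v)=\varphi(v)\). The poles of \(F\) are the \(z\) with \(-z^2\) a pole of \(\mathsf a\) or \(\mathsf b\). Such a point never lies on \(\ci\R\), because there \(-z^2\in[0,\infty)\). Finitely many poles of the denominators give finitely many such \(z\); their circularizations are the finitely many real poles and non-real polar spheres. Around the imaginary axis, which is a closed set avoiding these finitely many points, there is a circular open neighbourhood on which \(f_\varphi\) is regular, giving (i).

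For uniqueness, take two slice regular germs agreeing on \(\imq\Hh\). On each slice they agree on the imaginary axis \(I\R\). By the identity principle for the \((\Hh,L_I)\)-valued holomorphic restrictions, they agree on the connected component of the neighbourhood slice containing \(I\R\), and hence on a circular neighbourhood.

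For two global rational slice functions, the difference is rational. It vanishes near \(\imq\Hh\), so on each slice its scalar components vanish near \(I\R\). Being rational in \(z\), they vanish identically away from poles.

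The step I expect to be the main obstacle is the even/odd reduction in (ii)\(\Rightarrow\)(iii), together with the verification that no pole appears at \(s=0\) and that the uniqueness claim is genuinely independent of the choice of \(I\). Everything else is routine stem bookkeeping.
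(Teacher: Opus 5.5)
Your proposal is correct and follows the paper's proof essentially step for step: the affine dependence on $I$ of the stem formula for (i)$\Rightarrow$(ii), the even/odd splitting of $r\mapsto\varphi(rI_0)$ together with the fact that even real rational functions are rational in $r^2$ for (ii)$\Rightarrow$(iii), the explicit stem $\mathsf a(-z^2)+z\,\mathsf b(-z^2)$ for (iii)$\Rightarrow$(i), and the identity theorem followed by rationality for the two uniqueness claims. One slip needs fixing: with $b$ as you defined it, $b(r)=-\qi\,\tfrac12\bigl(\varphi(r\qi)-\varphi(-r\qi)\bigr)=r\,\mathsf b(r^2)$, not $r\,b(r)$ equal to $-\qi$ times the odd part, and not $b(r)=\mathsf b(r^2)$; it is this factor $r$ that gives $\varphi(rI)=\mathsf a(r^2)+Ir\,\mathsf b(r^2)=\mathsf a(|v|^2)+v\,\mathsf b(|v|^2)$.
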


\begin{proof}
\emph{(i)\(\Rightarrow\)(ii)} is the stem representation
\eqref{eq:slice-induction}, whose dependence on \(I\) is affine by
construction.

\emph{(ii)\(\Rightarrow\)(iii).} Fix \(I_0\in\Sphere\) and set
\(u(r)=\varphi(rI_0)\). Componentwise, the two functions
\[
 \frac{u(r)+u(-r)}2 ,
 \qquad
 (rI_0)^{-1}\frac{u(r)-u(-r)}2
\]
are even rational functions of \(r\) with removable singularities at
\(r=0\); and an even real rational function is a rational function of
\(r^2\), as one sees by multiplying numerator and denominator by the
reflection of the denominator. Calling the resulting functions
\(\mathsf a(r^2)\) and \(\mathsf b(r^2)\), neither has a pole on
\([0,\infty)\), and spherical compatibility identifies
\(a(r)=\mathsf a(r^2)\) and \(b(r)=r\,\mathsf b(r^2)\) independently of the
chosen \(I_0\). The same computation gives the uniqueness of the pair.

\emph{(iii)\(\Rightarrow\)(i).} Choose a real denominator \(d(s)\) common to
\(\mathsf a\) and \(\mathsf b\) and non-vanishing on \([0,\infty)\), and
write \(\mathsf a=d^{-1}p\), \(\mathsf b=d^{-1}\tilde p\) with right
quaternionic polynomial coefficients, so that
\[
 f_\varphi(q)=d(-q^2)^{-1}\bigl(p(-q^2)+q\,\tilde p(-q^2)\bigr)
\]
is a rational slice function. Its possible poles are the finitely many real
points and spheres determined by the zeros of \(d(-q^2)\), and none of them
meets \(\imq\Hh\), where \(-v^2=|v|^2\) and \(d\) was assumed non-zero;
its trace on the hyperplane is \(\varphi\).

As for uniqueness, the values at \(rI_0\) and at \(-rI_0\) together
determine the stem on the imaginary profile, so that the complex identity
theorem gives uniqueness of the germ, and rationality upgrades this to
global uniqueness. Note that no uniqueness statement on a single half-slice
has been used, in keeping with
Remark~\ref{rem:slice-full-slice-needed}.
\end{proof}

\begin{remark}[A genuine restriction on the integrand]
\label{rem:rat-obstruction}
The real-valued datum
\[
 \varphi(x_1\qi+x_2\qj+x_3\qk)=(1+x_1^2+2x_2^2+3x_3^2)^{-2}
\]
is rational, real analytic and absolutely integrable, yet not spherically
compatible: being even, it would force \(b(r)=0\) in
\eqref{eq:rat-compatibility} and hence be constant on each sphere, whereas
\(\varphi(\qi)=1/4\) and \(\varphi(\qj)=1/9\). This specific construction, in which the imaginary part of quaternions has a priviledged role, does not
apply to arbitrary rational functions of three real variables.
\end{remark}


Two conditions have to be reconciled before the residue theorem can be
applied: the datum must be integrable on the hyperplane, and its extension
must decay fast enough for the tangent balls to close. The next lemma shows
that the two are the same condition, read on \(\mathsf a\) and
\(\mathsf b\), and records in passing the radial form of the integral.

\begin{lemma}[Decay and the two angular averages]\label{lem:rat-L1}
For a compatible ordinary rational datum written as in
\eqref{eq:rat-data-AB}, absolute integrability is equivalent to
\begin{equation}\label{eq:rat-decay-AB}
 \mathsf a(s)=O(s^{-2}),
 \qquad
 \mathsf b(s)=O(s^{-3})
 \qquad(s\to+\infty) ,
\end{equation}
and these conditions imply the uniform estimate
\begin{equation}\label{eq:rat-decay-f}
 |f_\varphi(q)|\leq C|q|^{-4}\qquad(|q|\geq R_0)
\end{equation}
for any \(R_0\) whose ball contains every pole. Moreover
\begin{equation}\label{eq:rat-radial-reduction}
 \int_{\imq\Hh}\varphi(v)\dd V_3(v)
 =4\pi\int_0^\infty r^2\mathsf a(r^2)\dd r .
\end{equation}
\end{lemma}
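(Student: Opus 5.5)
The plan is to pass to spherical coordinates on \(\imq\Hh\simeq\R^3\), writing \(v=rI\) with \(r>0\) and \(I\in\Sphere\), so that \(\dd V_3=r^2\dd r\,\darea(I)\). By \eqref{eq:rat-data-AB}, \(\varphi(rI)=\mathsf a(r^2)+rI\,\mathsf b(r^2)\). Everything will be read off from two angular averages. The first is \(\int_\Sphere\dd\!\area(I)=4\pi\). The second is \(\int_\Sphere I\,\darea(I)=0\), with the constant \(\mathsf b(r^2)\) kept on the right.

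\emph{Radial formula.} Once absolute integrability is known, Fubini applies. The \(I\)-term then vanishes and the constant term contributes \(4\pi\), which gives \eqref{eq:rat-radial-reduction} directly.

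\emph{Integrability is equivalent to \eqref{eq:rat-decay-AB}.} Near \(r=0\) there is nothing to check, since \(\mathsf a,\mathsf b\) have no poles on \([0,\infty)\) and \(\varphi\) is continuous. The issue is therefore the tail. I would use that \(|a+Ib|\) is comparable to \(|a|+|b|\) after averaging over \(I\). Concretely, the representation formula gives \(a=\tfrac12(\varphi(rI)+\varphi(-rI))\) and \(rb=-\tfrac I2(\varphi(rI)-\varphi(-rI))\). Hence for every \(I\),
\[
  |\mathsf a(r^2)|+r|\mathsf b(r^2)|\le |\varphi(rI)|+|\varphi(-rI)| \le 2\bigl(|\mathsf a(r^2)|+r|\mathsf b(r^2)|\bigr).
\]
Integrating over the sphere, integrability of \(\varphi\) becomes equivalent to
\[
  \int^\infty r^2\bigl(|\mathsf a(r^2)|+r|\mathsf b(r^2)|\bigr)\dd r<\infty .
\]
Since \(\mathsf a,\mathsf b\) are rational in \(s=r^2\), each behaves like \(c\,s^{k}\) at infinity with \(c\neq0\), unless it vanishes identically. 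The condition \(\int^\infty r^{2+2k}\dd r<\infty\) forces \(2k+2<-1\), that is \(k\le-2\) because \(k\) is an integer. Likewise \(\int^\infty r^{3+2k}\dd r<\infty\) forces \(k\le -3\) for \(\mathsf b\). This is exactly \eqref{eq:rat-decay-AB}, and the converse holds by the same exponent count.

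\emph{Decay of the extension.} For \(|q|\) large, \(|-q^2|=|q|^2\) is large. Since \(\mathsf a\) is rational with degree difference at least \(2\), we get \(|\mathsf a(-q^2)|\le C|q|^{-4}\). Similarly \(|q\,\mathsf b(-q^2)|\le C|q|\,|q|^{-6}\). This step needs some care: the bounds must hold for the complex argument \(-q^2\), not just along \(s\to+\infty\). I would handle it componentwise, writing \(\mathsf a=d^{-1}p\) with \(\deg d-\deg p\ge 2\). Then \(p(-q^2)\) and \(d(-q^2)\) are slice preserving polynomials in \(q\), evaluated in the commutative plane \(\C_I\) containing \(q\). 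There \(|d(-q^2)|\ge c|q|^{2\deg d}\) once \(|q|\ge R_0\), where \(R_0\) is beyond all zeros of \(d(-z^2)\), and \(|p(-q^2)|\le C|q|^{2\deg p}\). The constants are uniform in \(I\) because the bounds only involve \(|z|\) for complex polynomials with real coefficients. This is the main obstacle, though a mild one, and it yields \eqref{eq:rat-decay-f}.
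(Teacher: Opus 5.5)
Your proof is correct and follows the paper's route. Both pass to spherical coordinates and compare \(\int_\Sphere|\varphi(rI)|\darea(I)\) two-sidedly with \(|\mathsf a(r^2)|+r|\mathsf b(r^2)|\), which reduces integrability to an order count at infinity; both get the radial formula from Fubini and \(\int_\Sphere I\darea(I)=0\), and the decay estimate from leading-term bounds at \(-q^2\) uniform in the slice.

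The differences are cosmetic. You obtain the comparison pointwise from the antipodal pair \(\pm rI\) rather than from the paper's averages against \(1\) and \(-I\). You also spell out the uniform lower bound for \(d(-z^2)\) that the paper leaves implicit. Passing from your large \(R_0\) to an arbitrary \(R_0\) whose ball contains every pole is then just boundedness of \(|q|^4|f_\varphi(q)|\) on the compact pole-free shell in between.
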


\begin{proof}
Integrating \eqref{eq:rat-data-AB} over a sphere of radius \(r\), against
\(1\) and against \(-I\) respectively, identifies the two coefficients as
angular averages of \(\varphi\):
\[
 \mathsf a(r^2)=\frac1{4\pi}\int_\Sphere\varphi(rI)\darea(I) ,
 \qquad
 r\,\mathsf b(r^2)=\frac1{4\pi}\int_\Sphere(-I)\varphi(rI)\darea(I) ,
\]
which, writing \(m(r)=\int_\Sphere|\varphi(rI)|\darea(I)\), give the
two-sided bound
\[
 4\pi\max\{|\mathsf a(r^2)|,r|\mathsf b(r^2)|\}
 \leq m(r)
 \leq 4\pi\bigl(|\mathsf a(r^2)|+r|\mathsf b(r^2)|\bigr) .
\]
Absolute integrability is therefore equivalent to the two conditions
\[
  \int_0^\infty r^2|\mathsf a(r^2)|\dd r<\infty,
  \qquad
  \int_0^\infty r^3|\mathsf b(r^2)|\dd r<\infty.
\]
Neither integral is obstructed at the origin. Since every non-zero rational
component has an integer order at infinity, the two tests are equivalent
precisely to \eqref{eq:rat-decay-AB}.

For \eqref{eq:rat-decay-f}, the leading-term estimates for real polynomials
are uniform in the quaternionic directions, so evaluating the reduced
components at \(-q^2\) turns \eqref{eq:rat-decay-AB} into
\(\mathsf a(-q^2)=O(|q|^{-4})\) and \(q\,\mathsf b(-q^2)=O(|q|^{-5})\).
Finally, Fubini and the first of the two angular averages give
\eqref{eq:rat-radial-reduction}, the odd part integrating to zero over each
sphere.
\end{proof}

In terms of degrees, \eqref{eq:rat-decay-AB} says that in every non-zero
reduced component the denominator exceeds the numerator by at least two for
\(\mathsf a\) and by at least three for \(\mathsf b\), the gaps being
measured in the variable \(s=|v|^2\).

\subsection{Closing the hyperplane}\label{subsec:hyp-tangent}

The domains that close the hyperplane are the four-dimensional counterparts
of the tangent disks of Subsection~\ref{subsec:complex-tangent-disks}. With
\(H_+=\{q:\req q>0\}\), set
\begin{equation}\label{eq:rat-tangent-balls}
 U_R=B(R,R)=\{q:|q|^2<2R\req q\} ,
\end{equation}
whose profile is exactly the disk \(D_R\) of
\eqref{eq:complex-tangent-disk}: these are smooth circular balls increasing
to \(H_+\), tangent to \(\imq\Hh\) at the origin and meeting it nowhere
else, and admissible in the sense of
Definition~\ref{def:weighted-admissible-set} even at that real tangency
point. The next lemma is the analogue of
Lemma~\ref{lem:complex-tangent-disks}, and its proof has the same three
parts: a limit on a bounded range, an estimate on the far part of the
boundary, and an estimate on the tails.

One difference is worth noting at the outset. What the flux integrates here
is not \(f\) but \(G=q^2f\): the factor \(q^2\) is what turns the weight
\(|\imq q|^{-2}\) into the Euclidean volume element of the hyperplane, as
\eqref{eq:rat-graph-geometry} will show, and it is also what makes the
integrand vanish at the tangency point.

\begin{lemma}[Tangent-ball limit]\label{lem:rat-tangent-limit}
Let \(f\) be slice semiregular on a circular open neighbourhood of
\(\overline{H_+}\), regular near \(\imq\Hh\), with finitely many polar
components in \(H_+\), and with \(|f(q)|\leq C|q|^{-4}\) for
\(\req q\geq0\), \(|q|\geq R_0\). Set \(G(q)=q^2f(q)\). Then no pole lies on
\(\partial U_R\) for \(R\) large, and
\begin{equation}\label{eq:rat-tangent-limit}
 \lim_{R\to\infty}\int_{\partial U_R}
       \frac{n_R(q)G(q)}{|\imq q|^2}\dsigma(q)
 =\int_{\imq\Hh}f(v)\dd V_3(v) ,
\end{equation}
the right-hand integral converging absolutely. 
\end{lemma}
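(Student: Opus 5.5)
The plan is to follow the proof of Lemma~\ref{lem:complex-tangent-disks} step by step, after first using Theorem~\ref{thm:weighted-disintegration} to reduce the four-dimensional boundary integral to a planar one. Since \(G=q^2f\) is a slice function, slice regular near \(\partial U_R\) for \(R\) large, Theorem~\ref{thm:weighted-slice-reduction} and its proof give
\[
\int_{\partial U_R}\frac{n_RG}{|\imq q|^2}\dsigma
=2\pi\int_{\partial D_R}\Pi_R\dd s .
\]
Here \(\Pi_R\) is the even part of \(\nu G\) on the full signed profile. Equivalently, the quantity equals \(4\pi^2\) times the planar flux \(\Flux{D_R}{G_I}\) on any slice \(\C_I\). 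I would rather not work on a single slice, however, because the target is a three-dimensional integral. Instead I will keep the hemisphere disintegration of \eqref{eq:weighted-disintegration} and parametrize \(\partial U_R\) by near and far graphs over the imaginary hyperplane.

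For \(v\in\imq\Hh\) with \(|v|<R\), put \(s_R(v)=\sqrt{R^2-|v|^2}\) and \(q_R^\pm(v)=R\pm s_R(v)+v\). The outward normal is \(n_R^\pm=(q_R^\pm-R)/R\), and the surface element of a graph over \(\R^3\) is \(\dsigma=(R/s_R)\dd V_3\).

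The key pointwise identity, which is what \eqref{eq:rat-graph-geometry} presumably records, is that on the near sheet \(n_R^-\to-1\), \(R/s_R\to1\) and \(q_R^-\to v\), uniformly for \(|v|\le A\). Moreover \(v^2=-|v|^2=-|\imq v|^2\). Hence
\[
\frac{n_R^-\,q^2 f(q)}{|\imq q|^2}\,\frac{R}{s_R}\;\longrightarrow\;(-1)(-1)f(v)=f(v),
\]
where \(q^2\) is real-scalar-like on the slice of \(q\) and commutes past \(n_R^-\), both lying in the same \(\C_I\). The sign therefore comes out positive, consistent with \eqref{eq:rat-tangent-limit}. The factor \(|\imq q|^{-2}\) is harmless near \(v=0\) because \(q^2/|\imq q|^2\) stays bounded there. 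Indeed, near the tangency point \(|q|^2\approx|\imq q|^2\), since \(\req q_R^-=R-s_R=|v|^2/(R+s_R)\), so the integrand is bounded by \(\sup|f|\) on compact sets. Dominated convergence on \(\{|v|\le A\}\), where \(f\) is uniformly continuous near the compact ball \(\{v:|v|\le A\}\subset\imq\Hh\), then gives the bounded-range limit.

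For the tails of the near sheet on \(A<|v|<R\), I will use \(|q|\ge|v|\) and \(|q^2|/|\imq q|^2=|q|^2/|v|^2\le(1+|v|^2/R^2)\le 2\) once \(|q|\ge R_0\). This uses \(\req q_R^-\le |v|^2/R\le |v|\). Together with \(|f|\le C|q|^{-4}\), the integrand is bounded by
\[
C'\,|v|^{-4}R/s_R(v).
\]
Passing to polar coordinates in \(\R^3\), the bound becomes \(4\pi C'\int_A^R \rho^{-2}R(R^2-\rho^2)^{-1/2}\dd\rho\le 4\pi C'/A\) by \eqref{eq:complex-tail-elementary-integral}. This estimate is uniform in \(R\). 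The matching tail of \(\int|f|\dd V_3\) is \(O(1/A)\), which also gives absolute convergence of the right-hand side.

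On the far sheet, \(|q|^2=2R(R+s_R)\ge2R^2\). Since \(|\imq q|=|v|\), I expect this to be the main obstacle: the weight \(|v|^{-2}\) is large near \(v=0\) on the far sheet, close to the real point \(2R\). I would bound the integrand by \(|q|^2|f(q)|\,|v|^{-2}R/s_R\le C|q|^{-2}|v|^{-2}R/s_R\le \tfrac{C}{2R^2}|v|^{-2}R/s_R\). The spherical measure cancels \(|v|^{-2}\) (compare \eqref{eq:weighted-surface-element}), so the integral is \(\le \tfrac{C}{2R^2}\cdot4\pi\int_0^R R(R^2-\rho^2)^{-1/2}\dd\rho=\tfrac{C\pi^2}{R}\cdot 2/2\to0\). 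This is exactly the cancellation of Remark~\ref{rem:weighted-exponent-two}.

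Finally, poles in \(H_+\) eventually lie in \(U_R\) and avoid \(\partial U_R\), by \eqref{eq:complex-tangent-disk-cartesian} applied to the profile. This is finitely many conditions, one per polar component. Letting \(R\to\infty\) and then \(A\to\infty\) concludes the argument.
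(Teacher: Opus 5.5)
Your proposal is correct and follows essentially the paper's own proof. It uses the same near/far graph decomposition of \(\partial U_R\) over the imaginary hyperplane, written in \(v\in\imq\Hh\) with \(\dd V_3\) instead of \((r,I)\) with \(\dd r\,\darea(I)\); the same uniform limit \(n_R^-G(q_R^-)|v|^{-2}R/s_R\to f(v)\) on bounded ranges; the same \(\pi^2C/R\) bound on the far sheet; and \(O(1/A)\) tail bounds uniform in \(R\). Your opening appeal to Theorem~\ref{thm:weighted-slice-reduction} and to the hemisphere disintegration is never actually used and can simply be dropped.
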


In the previous theorem
neither a principal value nor
an extra contribution at the origin is involved.

\begin{proof}
A real pole \(a>0\) lies in \(U_R\) once \(R>a/2\), and a sphere
\(\alpha+\Sphere\beta\) with \(\alpha>0\) once
\(R>(\alpha^2+\beta^2)/(2\alpha)\), so that for large \(R\) every polar
component of \(H_+\) is strictly inside. For \(0\leq r<R\) write
\[
 s_R=\sqrt{R^2-r^2},
 \qquad x_\pm=R\pm s_R,
 \qquad q_\pm=x_\pm+rI ,
\]
so that the near and far parts of the boundary, in the terminology of
Subsection~\ref{subsec:complex-tangent-disks}, carry
\begin{equation}\label{eq:rat-graph-geometry}
 n_\pm=\pm\frac{s_R}{R}+I\frac rR ,
 \qquad
 \frac{\dsigma}{r^2}=\frac{R}{s_R}\dd r\,\darea(I) .
\end{equation}

\emph{The limit on a bounded range.} Uniformly in \(I\) and on every bounded
interval of \(r\),
\[
 n_-G(q_-)\frac{R}{s_R}\longrightarrow-(rI)^2f(rI)=r^2f(rI) ,
\]
the origin included, where \(G(0)=0\). 

\emph{The far part.} There \(|q_+|^2=2Rx_+\geq2R^2\), so the whole
contribution has norm at most
\begin{equation}\label{eq:rat-upper-tail}
 \frac{4\pi C}{2R^2}\int_0^R\frac{R\dd r}{\sqrt{R^2-r^2}}
 =\frac{\pi^2C}{R} ,
\end{equation}
which vanishes as \(R\to\infty\).

\emph{The tails.} For \(M\geq R_0\) and \(R>M\), the tail of the near part
has norm at most
\begin{equation}\label{eq:rat-lower-tail}
 4\pi C\int_M^R\frac{R\dd r}{r^2\sqrt{R^2-r^2}}
 =\frac{4\pi C}{M}\sqrt{1-\frac{M^2}{R^2}}
 \leq\frac{4\pi C}{M} ,
\end{equation}
since \(|q_-|\geq r\) and \(|G(q_-)|\leq C|q_-|^{-2}\); and the tail on the
hyperplane is bounded by \(4\pi C/M\) as well, local boundedness giving the
integrability near zero. Letting \(R\to\infty\) first, on \([0,M]\), and
then \(M\to\infty\), the three estimates prove the assertion. The apparent
singularity of the two graphs at \(r=R\) is integrable, the boundary itself
being smooth there.
\end{proof}

\subsection{Rational and oscillatory integrals}\label{subsec:hyp-rational}

Everything is now in place: the datum extends, the extension decays, and the
tangent balls close the hyperplane. What the residue theorem contributes is
the evaluation of the boundary integral, and the result is a finite sum of
local data at the poles of \(G=q^2f\) lying in the right half-space.

\begin{theorem}[Rational hyperplane integrals]\label{thm:rat-integral}
Let \(\varphi\) be a spherically compatible ordinary rational datum in
\(L^1(\imq\Hh,\Hh)\), let \(f=f_\varphi\) be its canonical extension and
\(G=q^2f\). List the real poles of \(G\) in \(H_+\) as \(a_1,\ldots,a_N>0\)
and its non-real polar spheres there as
\(\Sigma_j=\alpha_j+\Sphere\beta_j\), \(1\leq j\leq S\), with
\(\alpha_j,\beta_j>0\), and choose \(L_i,M_j\geq1\) for which
\[
 h_i(q)=(q-a_i)^{L_i}G(q) ,
 \qquad
 g_j(q)=\DeltaSphere{\Sigma_j}(q)^{M_j}G(q)
\]
extend slice regularly near \(a_i\) and across all of \(\Sigma_j\)
respectively. Then
\begin{equation}\label{eq:rat-integral-residues}
 \int_{\imq\Hh}\varphi(v)\dd V_3(v)
 =4\pi^2\left(\sum_{i=1}^N\Res_{a_i}G
              +\sum_{j=1}^S\SRes_{\Sigma_j}G\right) ,
\end{equation}
and, for any choice of \(y_j\in\Sigma_j\),
\begin{equation}\label{eq:rat-integral-differential}
 \int_{\imq\Hh}\varphi(v)\dd V_3(v)
 =4\pi^2\Bigg[\sum_{i=1}^N
       \frac{(\partial_c^{L_i-1}h_i)(a_i)}{(L_i-1)!}
 +\sum_{j=1}^S
\frac{[\partial_s(\partial_c\partial_s)^{M_j-1}g_j](y_j)}
            {(M_j-1)!}\Bigg] .
\end{equation}
The sums are finite and depend neither on the admissible exponents nor on
the representatives, exceptional evaluation points included.
\end{theorem}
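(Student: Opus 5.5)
The plan is to combine the tangent-ball limit of Lemma~\ref{lem:rat-tangent-limit} with the four-dimensional residue theorem, Theorem~\ref{thm:4d-residue}, applied to \(G=q^2f\) on each \(U_R\).

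\emph{Checking the hypotheses on \(f\).} By Proposition~\ref{prop:rat-extension}, \(f=f_\varphi\) is a rational slice function with no pole on \(\imq\Hh\) and only finitely many real poles and polar spheres. Hence \(f\) is semiregular on a circular open neighbourhood of \(\overline{H_+}\): I take the complement of the finitely many polar components with \(\req q\le 0\), all of which lie at positive distance from \(\imq\Hh\) when \(\req q=0\) is excluded. The same proposition shows that \(f\) is regular near \(\imq\Hh\) and has finitely many polar components in \(H_+\). Since \(\varphi\in L^1\), Lemma~\ref{lem:rat-L1} gives \(|f(q)|\le C|q|^{-4}\) for \(|q|\ge R_0\). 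So Lemma~\ref{lem:rat-tangent-limit} applies, and
\[
  \lim_{R\to\infty}\int_{\partial U_R}\frac{n_RG}{|\imq q|^2}\dsigma=\int_{\imq\Hh}f(v)\dd V_3(v)=\int_{\imq\Hh}\varphi\dd V_3,
\]
since \(f|_{\imq\Hh}=\varphi\).

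\emph{Evaluating the flux for fixed large \(R\).} Multiplication by the slice-preserving \(q^2\) keeps \(G\) semiregular on the same neighbourhood. Its polar components in \(H_+\) are contained in those of \(f\), possibly fewer, since \(q^2\) can only cancel a pole at \(0\), and \(0\notin H_+\). The listed \(a_i,\Sigma_j\) are exactly the polar components of \(G\) in \(H_+\). The set \(U_R\) is admissible: its profile \(D_R\) is a disc meeting \(\R\) transversally at \(0\) and \(2R\). Choose \(R\) so large that every \(a_i,\Sigma_j\) lies in \(U_R\) and no polar component meets \(\partial U_R\); this is possible by the first paragraph of the proof of Lemma~\ref{lem:rat-tangent-limit}. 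Polar components with \(\req\le0\) are never contained in \(U_R\subset H_+\), as in Remark~\ref{rem:complex-two-half-planes}. Theorem~\ref{thm:4d-residue} then gives
\[
  \Flux{U_R}{G}=\sum_i\Res_{a_i}G+\sum_j\SRes_{\Sigma_j}G ,
\]
which is independent of \(R\). Multiplying by \(4\pi^2\) and letting \(R\to\infty\) yields \eqref{eq:rat-integral-residues}. Substituting the differential evaluations \eqref{eq:4d-real-differential}--\eqref{eq:4d-spherical-differential} gives \eqref{eq:rat-integral-differential}. Independence from the exponents and the representatives, exceptional points included, is inherited from Theorem~\ref{thm:4d-residue} and Proposition~\ref{prop:sres-three-independences}.

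\emph{Main obstacle.} The delicate step is the semiregularity setup. We must ensure that the neighbourhood on which Lemma~\ref{lem:rat-tangent-limit} and Theorem~\ref{thm:4d-residue} are applied is a genuine circular open set containing \(\overline{U_R}\), including near the tangency point \(0\). We must also ensure that the finiteness and the pole bookkeeping for \(G\) versus \(f\) match the lists in the statement. The analytic limit itself is already supplied by Lemma~\ref{lem:rat-tangent-limit}.
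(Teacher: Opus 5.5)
Your proposal is correct and follows the paper's own proof: Proposition~\ref{prop:rat-extension} and Lemma~\ref{lem:rat-L1} supply the extension, the regularity near \(\imq\Hh\) and the \(|q|^{-4}\) decay, Theorem~\ref{thm:4d-residue} applied to \(G=q^2f\) on \(U_R\) gives a residue sum independent of \(R\), and Lemma~\ref{lem:rat-tangent-limit} identifies its limit with \(\int_{\imq\Hh}\varphi\dd V_3\). One small imprecision is harmless: since \(z^2\) vanishes only at \(0\), which is not a pole of \(f\), the polar components of \(G\) in \(H_+\) are exactly those of \(f\), not ``possibly fewer''.
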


\begin{proof}
Proposition~\ref{prop:rat-extension} and Lemma~\ref{lem:rat-L1} supply the
continuation, the regularity near the hyperplane and the uniform decay
\eqref{eq:rat-decay-f}. For every large \(R\),
Theorem~\ref{thm:4d-residue} applied to \(G\) on \(U_R\) returns a residue
sum that does not depend on \(R\), so that
Lemma~\ref{lem:rat-tangent-limit} gives
\eqref{eq:rat-integral-residues} in the limit. The differential evaluation
of the main theorem then gives
\eqref{eq:rat-integral-differential}, the regularizations being those of
\(G=q^2f\) and not of \(f\) alone.
\end{proof}

\begin{remark}\label{rem:rat-scope}
The application requires an extension regular near the whole imaginary
hyperplane, origin included, and therefore does not accommodate a function
given only on a product domain. The boundary theorem itself still covers
product domains and zero divisors, and no inverse of \(f\) or of a
regularization has been taken here; but in this class the zero-divisor
phenomenon cannot occur at all. Indeed a non-zero global rational slice
function regular near \(0\) has non-vanishing symmetrization, since
\(f^s(x)=|f(x)|^2\) on a real interval and vanishing there would force the
rational function to vanish identically. This is a property of the present
class of data, not an extra hypothesis on Theorem~\ref{thm:4d-residue}.

By \eqref{eq:rat-radial-reduction} these integrals also reduce
componentwise to one-dimensional rational integrals, whose values are of
course classical; what the theorem adds is their explicit expression through
differential spherical residues. Absolute convergence, rather than a
radially truncated principal value, is part of the statement.
\end{remark}

Before turning to the oscillatory version, it is worth separating the
mechanism from the class of data on which it has been exercised. What
Theorem~\ref{thm:rat-integral} really uses is that \(\varphi\) be the trace
on \(\imq\Hh\) of a function semiregular on a circular neighbourhood of
\(\overline{H_+}\), regular near the hyperplane, and decaying like
\(|q|^{-4}\) there; rationality entered only to make those three conditions
checkable, through \eqref{eq:rat-decay-AB}. Any other family of traces for
which they can be verified is covered by the same proof, and which
functions on an arbitrary affine hyperplane of \(\Hh\) arise in this way is
the open problem raised in the introduction. What is tied to \(\imq\Hh\)
here is only the exhausting family: the balls
\eqref{eq:rat-tangent-balls} are centred on the real axis, hence tangent to
the imaginary hyperplane and to no other, so a different hyperplane would
need an exhaustion of its own.

Inserting a phase costs nothing, because the intrinsic exponential is
bounded on the closed half-space where the balls \(U_R\) live: no
Jordan-type argument is required, exactly as in
Proposition~\ref{prop:complex-exponential-factor}. For the same integrable
data and \(t\geq0\), set
\begin{equation}\label{eq:osc-definition}
 \mathcal O_\varphi(t)=\int_{\imq\Hh}e^{-tv}\varphi(v)\dd V_3(v) ,
\end{equation}
the phase multiplying on the left. Its entire intrinsic extension satisfies
\begin{equation}\label{eq:osc-kernel}
 e^{-t(x+rI)}=e^{-tx}\bigl(\cos(tr)-I\sin(tr)\bigr) ,
 \qquad
 |e^{-tq}|=e^{-t\req q} ,
\end{equation}
and preserves both semiregularity and all finite pole orders. No commutation
with arbitrary right quaternionic coefficients is asserted.

\begin{proposition}[Absolute convergence and angular reduction]
\label{prop:osc-angular}
For every \(t\geq0\) the integral \eqref{eq:osc-definition} converges
absolutely, with \(|\mathcal O_\varphi(t)|\leq\|\varphi\|_{L^1}\), and
\begin{equation}\label{eq:osc-angular}
 \mathcal O_\varphi(t)
 =4\pi\int_0^\infty
   \bigl[r^2\cos(tr)\mathsf a(r^2)+r^3\sin(tr)\mathsf b(r^2)\bigr]\dd r .
\end{equation}
It is continuous on \([0,\infty)\), with
\(\mathcal O_\varphi(0)=\int\varphi\dd V_3\); and if
\(\int|v|^m|\varphi(v)|\dd V_3(v)<\infty\) for an integer \(m\geq1\), then
differentiation under the integral sign is valid through order \(m\), giving
\[
 \mathcal O_\varphi^{(m)}(t)
 =\int_{\imq\Hh}(-v)^me^{-tv}\varphi(v)\dd V_3(v) ,
\]
with right derivatives at zero.
\end{proposition}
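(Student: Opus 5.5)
The proof rests on two observations: the phase has modulus at most one on the hyperplane, and the angular averages of Lemma~\ref{lem:rat-L1} separate the integrand into a cosine part and a sine part.

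\emph{Absolute convergence and the bound.} For \(v=rI\in\imq\Hh\) we have \(\req v=0\), so \eqref{eq:osc-kernel} gives \(|e^{-tv}|=1\) for every \(t\geq0\). Multiplicativity of the quaternionic norm then yields \(|e^{-tv}\varphi(v)|=|\varphi(v)|\). Absolute convergence and \(|\mathcal O_\varphi(t)|\leq\|\varphi\|_{L^1}\) follow at once from \(\varphi\in L^1\), which holds by Lemma~\ref{lem:rat-L1} under \eqref{eq:rat-decay-AB}.

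\emph{Angular reduction.} Use polar coordinates \(v=rI\), \(\dd V_3=r^2\dd r\,\darea(I)\), and insert \eqref{eq:rat-data-AB} together with \eqref{eq:osc-kernel}. Both \(\cos(tr)-I\sin(tr)\) and \(rI\) lie in \(\C_I\), and all coefficients of \(\mathsf a,\mathsf b\) stay on the right. The product expands as
\[
\bigl(\cos(tr)-I\sin(tr)\bigr)\bigl(\mathsf a(r^2)+rI\,\mathsf b(r^2)\bigr)
=\cos(tr)\mathsf a+r\sin(tr)\mathsf b+I\bigl(r\cos(tr)\mathsf b-\sin(tr)\mathsf a\bigr),
\]
using \(-I\cdot I=1\). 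Fubini applies because the integral converges absolutely. The terms with \(I\) on the left of an \(I\)-independent quaternion vanish after integration over \(\Sphere\), since \(\int_\Sphere I\darea(I)=0\). The remaining terms pick up \(\Area(\Sphere)=4\pi\), and this gives \eqref{eq:osc-angular}. The one point to watch is that no coefficient is commuted past \(I\); the expansion above only moves real scalars and elements of \(\C_I\) on the left.

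\emph{Continuity and derivatives.} The integrand \(e^{-tv}\varphi(v)\) depends continuously on \(t\) and is dominated by \(|\varphi|\in L^1\), so dominated convergence gives continuity on \([0,\infty)\). At \(t=0\) the phase equals \(1\), which gives \(\mathcal O_\varphi(0)=\int\varphi\dd V_3\).

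For the derivatives, note that on the fixed slice \(\C_I\) the function \(t\mapsto e^{-tv}\) is a genuine complex exponential, so \(\partial_t^k e^{-tv}=(-v)^k e^{-tv}\). Its norm is \(|v|^k\), again because \(|e^{-tv}|=1\). For \(k\leq m\) the bound \(|v|^k\leq 1+|v|^m\) gives a \(t\)-independent integrable majorant, by the hypothesis \(\int|v|^m|\varphi|\dd V_3<\infty\) together with \(\varphi\in L^1\). Induction on \(k\), combined with the mean value theorem applied componentwise and dominated convergence, then justifies differentiating under the integral through order \(m\). At \(t=0\) only difference quotients with \(h>0\) are admissible, which yields the right derivatives.

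I expect no substantive obstacle. The only point requiring care is the algebraic bookkeeping in the angular reduction: the phase must stay on the left of \(\varphi\) and commute only with \(rI\).
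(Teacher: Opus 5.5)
Your proposal is correct and follows essentially the same route as the paper's proof. Both use \(|e^{-tv}|=1\) on \(\imq\Hh\) for the bound, expand \(e^{-trI}\varphi(rI)\) identically with the \(I\)-term annihilated by integration over each full sphere followed by Fubini, and apply dominated convergence with the majorant \((1+|v|^m)|\varphi(v)|\) for continuity and derivatives. You simply spell out a few steps the paper leaves implicit, namely the polar element \(r^2\dd r\,\darea(I)\), the identity \(\partial_t e^{-tv}=-v\,e^{-tv}\) inside \(\C_I\), and the componentwise mean value argument.
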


\begin{proof}
On the hyperplane \(|e^{-tv}|=1\), which gives the bound. At \(v=rI\),
multiplying in the prescribed order,
\[
 e^{-trI}\varphi(rI)
 =\cos(tr)\mathsf a+r\sin(tr)\mathsf b
  +I\bigl(r\cos(tr)\mathsf b-\sin(tr)\mathsf a\bigr) ,
\]
whose \(I\)-term integrates to zero over each full sphere; Fubini then gives
\eqref{eq:osc-angular}. Continuity and, under the moment assumption, the
derivative formula follow by dominated convergence, the lower moments being
integrable because \(|v|^k\leq1+|v|^m\) for \(k\leq m\).
\end{proof}

\begin{theorem}[Oscillatory integrals by differential spherical residues]
\label{thm:osc-integral}
Let \(\mathsf a(s),\mathsf b(s)\) have real rational components, no poles on
\([0,\infty)\), and satisfy \eqref{eq:rat-decay-AB}. Set
\(\varphi(v)=\mathsf a(|v|^2)+v\,\mathsf b(|v|^2)\),
\(f(q)=\mathsf a(-q^2)+q\,\mathsf b(-q^2)\) and \(G_t(q)=q^2e^{-tq}f(q)\),
and keep the polar components \(a_i,\Sigma_j\) and the regularizations
\(h_i,g_j\) of Theorem~\ref{thm:rat-integral}. Then, for every \(t\geq0\),
\begin{equation}\label{eq:osc-residue}
 \mathcal O_\varphi(t)
 =4\pi^2\left(\sum_{i=1}^N\Res_{a_i}G_t
              +\sum_{j=1}^S\SRes_{\Sigma_j}G_t\right) ,
\end{equation}
and, for any \(y_j\in\Sigma_j\),
\begin{equation}\label{eq:osc-differential}
 \mathcal O_\varphi(t)=4\pi^2\Bigg[
  \sum_{i=1}^N
  \frac{[\partial_c^{L_i-1}(e^{-tq}h_i)](a_i)}{(L_i-1)!}+\sum_{j=1}^S
\frac{[\partial_s(\partial_c\partial_s)^{M_j-1}
                  (e^{-tq}g_j)](y_j)}{(M_j-1)!}\Bigg] .
\end{equation}
The integral converges absolutely and the finite sums depend neither on the
admissible exponents nor on the representatives; exceptional evaluation
points are allowed, and no inverse of \(f\), of \(g_j\) or of a
symmetrization is taken.
\end{theorem}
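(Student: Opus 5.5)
The plan is to repeat the proof of Theorem~\ref{thm:rat-integral} with \(f\) replaced by \(f_t(q):=e^{-tq}f(q)\), whose trace on \(\imq\Hh\) is \(e^{-tv}\varphi(v)\), and so to reduce \eqref{eq:osc-residue} to Theorem~\ref{thm:4d-residue} on the tangent balls \(U_R\) together with Lemma~\ref{lem:rat-tangent-limit}. The first task is to check that \(f_t\) satisfies the hypotheses of that lemma. The function \(e^{-tq}\) is entire and slice preserving, so by Proposition~\ref{prop:slice-preserving-factor} the product \(e^{-tq}f\) is the slice product and is again induced by a meromorphic stem, namely \(e^{-tz}F(z)\). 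Its polar components are those of \(f\), with the same pole orders, because \(e^{-tz}\) never vanishes. By \eqref{eq:osc-kernel} we have \(|e^{-tq}|=e^{-t\req q}\leq1\) on \(\overline{H_+}\) for \(t\geq0\), so the decay bound \(|f_t(q)|\leq C|q|^{-4}\) there holds with the constant of \eqref{eq:rat-decay-f}, uniformly in \(t\). Also \(f_t\) is regular near \(\imq\Hh\), because \(f\) is by Proposition~\ref{prop:rat-extension}.

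Next I apply Lemma~\ref{lem:rat-tangent-limit} to \(f_t\), with \(G_t=q^2f_t\). This identifies \(\mathcal O_\varphi(t)\) with \(\lim_R\int_{\partial U_R}n_RG_t|\imq q|^{-2}\dsigma\), and the absolute convergence follows as in Proposition~\ref{prop:osc-angular}. For every large \(R\), Theorem~\ref{thm:4d-residue} applied to \(G_t\) on \(U_R\) equates that boundary integral with \(4\pi^2\) times the residue sum. That sum does not depend on \(R\), since every pole in \(H_+\) eventually lies in \(U_R\) and none on \(\partial U_R\), and this gives \eqref{eq:osc-residue}. The polar components of \(G_t\) in \(H_+\) coincide with those of \(G\), because multiplying by \(q^2\) and by \(e^{-tq}\) does not move poles at points with \(\req q>0\).

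For \eqref{eq:osc-differential}, the exponents \(L_i,M_j\) that regularize \(G\) also regularize \(G_t\), because \((q-a_i)^{L_i}G_t=e^{-tq}h_i\) and \(\DeltaSphere{\Sigma_j}^{M_j}G_t=e^{-tq}g_j\). Here the slice-preserving polynomials commute with \(e^{-tq}\), and the products are slice regular across \(a_i\) and \(\Sigma_j\) as slice products of regular functions. I then substitute these regularizations into \eqref{eq:4d-real-differential} and \eqref{eq:4d-spherical-differential}. Independence from the exponents and representatives, including at exceptional points, is inherited from Proposition~\ref{prop:sres-three-independences}, and the absence of any inversion is inherited from Remark~\ref{rem:4d-degenerate-cases}.

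The step I expect to be the main obstacle is making sure the order of multiplication is handled correctly. On the hyperplane, \(e^{-tv}\varphi(v)\) must be exactly the trace of the slice function \(e^{-tq}f\), with the phase on the left. This holds because the slice product with a slice-preserving factor is pointwise (Proposition~\ref{prop:slice-preserving-factor}). The alternating derivatives must then act on the full product \(e^{-tq}g_j\), and the exponential must not be commuted past the right quaternionic coefficients; the analogous warning in Remark~\ref{rem:complex-exponential-higher-order} shows why a shortcut here would fail.
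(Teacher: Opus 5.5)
Your proposal is correct and follows the paper's proof essentially step by step: you check that \(f_t=e^{-tq}f\) satisfies the hypotheses of Lemma~\ref{lem:rat-tangent-limit} via \(|e^{-tq}|\leq1\) on \(\overline{H_+}\), apply Theorem~\ref{thm:4d-residue} to \(G_t\) on the tangent balls \(U_R\) and let \(R\to\infty\), and then obtain the differential form from the regularizations \(e^{-tq}h_i\) and \(e^{-tq}g_j\). Your attention to keeping the phase on the left and to differentiating the full products is the same point the paper makes when it notes that only intrinsic factors are moved past one another.
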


\begin{proof}
The function \(f_t=e^{-tq}f\) satisfies the hypotheses of
Lemma~\ref{lem:rat-tangent-limit}, since \(|f_t(q)|\leq|f(q)|\) for
\(\req q\geq0\) and \(t\geq0\) by \eqref{eq:osc-kernel}, and its finite
polar components are those of \(f\). Applying
Theorem~\ref{thm:4d-residue} to \(G_t=q^2f_t\) on the same balls \(U_R\)
and letting \(R\to\infty\) proves \eqref{eq:osc-residue}. The
regularizations of \(G_t\) are \(e^{-tq}h_i\) and \(e^{-tq}g_j\), only
intrinsic factors having been moved past one another, and their differential
evaluations give \eqref{eq:osc-differential}.
\end{proof}

The residues that appear here are computable in closed form, and this makes
the whole family of integrals elementary.

\begin{corollary}[Finite exponential--polynomial form]\label{cor:osc-finite}
There are quaternion-valued polynomials \(\pi_i,\theta_j,\vartheta_j\), of
degrees at most \(L_i-1\) and \(M_j-1\) respectively, such that
\begin{equation}\label{eq:osc-exponential-polynomial}
 \mathcal O_\varphi(t)
 =\sum_{i=1}^Ne^{-a_it}\pi_i(t)
  +\sum_{j=1}^Se^{-\alpha_jt}
      \bigl(\theta_j(t)\cos(\beta_jt)
            +\vartheta_j(t)\sin(\beta_jt)\bigr) .
\end{equation}
In particular the integral decays exponentially, up to a polynomial factor,
as \(t\to+\infty\); an empty sum is identically zero.
\end{corollary}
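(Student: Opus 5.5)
The plan is to start from Theorem~\ref{thm:osc-integral}, which writes \(\mathcal O_\varphi(t)\) as \(4\pi^2\) times a finite sum of residues of \(G_t=q^2e^{-tq}f\). The task is then to compute each residue in closed form as a function of \(t\). The key device is the principal-part expansion, exactly as in Proposition~\ref{prop:complex-exponential-factor}. No differentiation is needed.

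At a real pole \(a_i\), I use the Laurent expansion \(G(q)=\sum_{n\geq-L_i}(q-a_i)^nc_n\) with right coefficients. The intrinsic factor \(e^{-tq}\) is slice preserving, so it expands as
\[
e^{-tq}=e^{-ta_i}\sum_{k\geq0}\frac{(-t)^k}{k!}(q-a_i)^k,
\]
with real coefficients. Multiplying on the left and reading off the coefficient of \((q-a_i)^{-1}\) gives
\[
\Res_{a_i}G_t=e^{-a_it}\sum_{k=0}^{L_i-1}\frac{(-t)^k}{k!}c_{-k-1}.
\]
This is \(e^{-a_it}\pi_i(t)\) with \(\deg\pi_i\leq L_i-1\).

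At a sphere \(\Sigma_j\), I pass to a full slice \(\C_I\). By Proposition~\ref{prop:sres-paired-point-residues}, \(\SRes_{\Sigma_j}G_t\) is the sum of the two point residues of \((G_t)_I\) at \(p_\pm=\alpha_j\pm I\beta_j\). The same one-variable computation applies at each node, since \(e^{-tz}\) is a \(\C_I\)-valued scalar acting from the left. With \(c_n^\pm\) the slice Laurent coefficients of \(G_I\) at \(p_\pm\), of orders at most \(M_j\), it gives
\[
\Res_{p_\pm}^I(G_t)_I=e^{-\alpha_jt}\bigl(\cos(\beta_jt)\mp I\sin(\beta_jt)\bigr)\sum_{k=0}^{M_j-1}\frac{(-t)^k}{k!}c^\pm_{-k-1}.
\]
Adding the two and collecting terms puts the sum in the form \(e^{-\alpha_jt}\bigl(\theta_j(t)\cos(\beta_jt)+\vartheta_j(t)\sin(\beta_jt)\bigr)\), where
\[
\theta_j(t)=\sum_k\frac{(-t)^k}{k!}\bigl(c^+_{-k-1}+c^-_{-k-1}\bigr),\qquad
\vartheta_j(t)=-I\sum_k\frac{(-t)^k}{k!}\bigl(c^+_{-k-1}-c^-_{-k-1}\bigr).
\]
Both are quaternion-valued polynomials of degree at most \(M_j-1\). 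The factor \(I\) only ever sits on the left of the coefficients, so no commutation with quaternionic coefficients is required.

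The main obstacle is that \(\theta_j\) and \(\vartheta_j\) appear to depend on \(I\), whereas the statement only asks for existence. Existence is fine: I fix one slice once and for all, since the left-hand side is slice independent by Theorem~\ref{thm:osc-integral}. For a cleaner intrinsic form, I would check independence via Corollary~\ref{cor:sres-componentwise}, which reduces to the scalar stem coordinates and real parts of the corresponding stem residues.

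Summing over \(i\) and \(j\) and multiplying by \(4\pi^2\) gives \eqref{eq:osc-exponential-polynomial}. The decay claim follows because every \(a_i\) and every \(\alpha_j\) is strictly positive, these being poles in \(H_+\). Each term is therefore a polynomial times \(e^{-ct}\) with \(c=\min\{a_i,\alpha_j\}>0\).
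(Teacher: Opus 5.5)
Your proof is correct and is essentially the paper's argument: apply the exponential residue formula \eqref{eq:complex-exponential-residue-principal-part} at each node, and pair the two conjugate nodes of each sphere into cosine--sine terms. The only difference is bookkeeping: you work with the \((\Hh,L_I)\)-valued restriction on one fixed slice, whereas the paper works with the scalar stem components.

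The slice dependence you worry about does not actually occur. If \(A_n+\ci B_n\) are the stem Laurent coefficients of \(G\) at \(\alpha_j+\ci\beta_j\), the stem symmetry gives \(c^\pm_n=A_n\pm IB_n\). Hence your \(\theta_j\) and \(\vartheta_j\) have coefficients \(2(-1)^kA_{-k-1}/k!\) and \(2(-1)^kB_{-k-1}/k!\), which are, up to the common factor \(4\pi^2\), exactly the polynomials the paper's componentwise computation produces.
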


\begin{proof}
Let \(\varrho\) be the complex representative of an intrinsic rational
component of \(q^2f\), with principal part
\(\sum_{k=1}^{\mu}c_{-k}(z-p)^{-k}\) at \(p\). By
\eqref{eq:complex-exponential-residue-principal-part},
\[
 \Res_p(e^{-tz}\varrho(z))
 =e^{-tp}\sum_{k=1}^{\mu}\frac{(-t)^{k-1}}{(k-1)!}c_{-k} ,
\]
and conjugate poles carry conjugate Laurent coefficients, so their
contributions combine into the stated cosine--sine form. Recombining the
right quaternionic coefficients and absorbing \(4\pi^2\) gives
\eqref{eq:osc-exponential-polynomial}; the regularizations bound the
ordinary pole orders by \(L_i\) or \(M_j\), and every centre occurring in
the finite sum has positive real part.
\end{proof}

\begin{remark}[What is, and what is not, being evaluated]\label{rem:osc-scope}
The imaginary unit \(I=v/|v|\) varies with the integration point, so
\(\mathcal O_\varphi\) is not the three-dimensional Fourier transform with a
fixed frequency vector: by \eqref{eq:osc-angular} it couples a cosine
integral of \(\mathsf a\) with a sine integral of \(\mathsf b\), and the odd
part \(v\,\mathsf b(|v|^2)\) of the datum can therefore contribute for
\(t>0\), as Example~\ref{ex:quartic} will show. A fixed-unit phase, or the
reversed multiplication order, is not covered by the formula. Nor does the
finite expression of Corollary~\ref{cor:osc-finite} justify by itself
differentiating the original integral at zero, which needs the moment
assumption of Proposition~\ref{prop:osc-angular}; and the restrictions on
the class of data recorded in Remark~\ref{rem:rat-scope} remain in force.
\end{remark}

\subsection{Examples}\label{subsec:hyp-examples}

The first example is a two-parameter family covering all the phenomena at
once: a sphere of spherical order four, an odd datum contributing only for
\(t>0\), an exceptional evaluation point at which the regularization
vanishes, and a degeneration to a real double pole.

\begin{example}[The quartic family]\label{ex:quartic}
Let \(a,b>0\), put \(\kappa=a^2+b^2\), and set
\begin{equation}\label{eq:rat-quartic}
 \begin{aligned}
 \Sigma_\pm&=\pm a+\Sphere b, &
 \DeltaSphere{\Sigma_\pm}(q)&=(q\mp a)^2+b^2,\\
 \Pi(q)&=\DeltaSphere{\Sigma_+}(q)\DeltaSphere{\Sigma_-}(q)
       =q^4+2(b^2-a^2)q^2+\kappa^2, &
 K_{a,b}(s)&=(s-\kappa)^2+4a^2s .
 \end{aligned}
\end{equation}
For arbitrary right coefficients \(C,D,E\in\Hh\), consider
\begin{equation}\label{eq:osc-higher-datum}
 \begin{aligned}
 \varphi(v)&=K_{a,b}(|v|^2)^{-1}C+K_{a,b}(|v|^2)^{-2}D
                         +vK_{a,b}(|v|^2)^{-2}E,\\
 f(q)&=\Pi(q)^{-1}C+\Pi(q)^{-2}D+q\,\Pi(q)^{-2}E ,
 \end{aligned}
\end{equation}
whose denominator is positive for \(s\geq0\), so that all the integrability
conditions hold. Of the two possible polar spheres \(\Sigma_\pm\), only
\(\Sigma_+\) lies in \(H_+\) and contributes.

For \(G=q^2f\), the exponent \(M=2\) always regularizes \(\Sigma_+\):
\begin{equation}\label{eq:rat-higher-reg}
 g_2=\DeltaSphere{\Sigma_+}^2G=\Psi_0C+\Psi_1D+\Psi_2E,
 \quad
 \Psi_0=\frac{\DeltaSphere{\Sigma_+}q^2}{\DeltaSphere{\Sigma_-}},\quad
 \Psi_1=\frac{q^2}{\DeltaSphere{\Sigma_-}^2},\quad
 \Psi_2=\frac{q^3}{\DeltaSphere{\Sigma_-}^2} ,
\end{equation}
each \(\Psi_\ell\) being intrinsic and regular near \(\Sigma_+\). The
alternating derivative of such a factor has a closed form: for \(r>0\) and
an intrinsic regular \(\Psi(x+Ir)=u(x,r)+Iw(x,r)\), the definitions and the
Cauchy--Riemann equations give
\begin{equation}\label{eq:osc-M2}
 [\partial_s\partial_c\partial_s\Psi](x+Ir)
 =\frac{w-r\,\req\Psi'(x+Ir)}{2r^3} ,
 \qquad \Psi'=\partial_c\Psi ,
\end{equation}
since \(\partial_s\Psi=w/r\) and \(w_r=\req\Psi'\). At \(y=a+Ib\), where
\(\DeltaSphere{\Sigma_-}(y)=4ay\), the three regular factors have jets
\begin{equation}\label{eq:osc-H-jets}
 \begin{aligned}
 \Psi_0(y)&=0,& \Psi_0'(y)&=-\frac{b^2}{2a}+I\frac b2,\\
 \Psi_1(y)&=\frac1{16a^2},&
 \Psi_1'(y)&=\frac{-b^2-Iab}{16a^3\kappa},\\
 \Psi_2(y)&=\frac{y}{16a^2},&
 \Psi_2'(y)&=\frac{a-Ib}{16a^3} ,
 \end{aligned}
\end{equation}
and, for \(\Psi_{\ell,t}=e^{-tq}\Psi_\ell\), inserting
\(\Psi_{\ell,t}(y)=e^{-ty}\Psi_\ell(y)\) and
\(\Psi_{\ell,t}'(y)=e^{-ty}(\Psi_\ell'(y)-t\Psi_\ell(y))\) into
\eqref{eq:osc-M2}, Theorem~\ref{thm:osc-integral} yields
\begin{equation}\label{eq:osc-higher-result}
 \mathcal O_\varphi(t)
 =\mathcal F_0(t)C+\mathcal F_1(t)D+\mathcal F_2(t)E
\end{equation}
with the real scalar coefficients
\begin{center}
\renewcommand{\arraystretch}{1.8}
\begin{tabular}{clc}
\toprule
\(\ell\) & \(\mathcal F_\ell(t)\) & \(\mathcal F_\ell(0)\)\\
\midrule
\(0\) & \(\displaystyle\frac{\pi^2e^{-at}}a
 \left(\cos(bt)-\frac ab\sin(bt)\right)\)
 & \(\displaystyle\frac{\pi^2}a\)\\
\(1\) & \(\displaystyle\frac{\pi^2e^{-at}}{8a^3\kappa}
 \left[\left(1+\frac{a\kappa}{b^2}t\right)\cos(bt)
              -\frac{a^3}{b^3}\sin(bt)\right]\)
 & \(\displaystyle\frac{\pi^2}{8a^3\kappa}\)\\
\(2\) & \(\displaystyle\frac{\pi^2e^{-at}}{8a^3b^3}
 \left[a^2bt\cos(bt)+(b^2-a^2+ab^2t)\sin(bt)\right]\)
 & \(0\)\\
\bottomrule
\end{tabular}
\end{center}
equivalently, \([\partial_s\partial_c\partial_s(e^{-tq}\Psi_\ell)](y)
=\mathcal F_\ell(t)/(4\pi^2)\) for \(\ell=0,1,2\). Two remarks on the
exponents: the \(C\)-summand would need only \(M=1\) when non-zero, so
computing it with \(M=2\) is itself a check of the nonminimal
regularization, while the \(D\)-summand genuinely has spherical order four.
At \(t=0\) the table gives
\begin{equation}\label{eq:rat-higher-integral}
 \int_{\imq\Hh}\varphi(v)\dd V_3(v)
 =\frac{\pi^2}{a}C+\frac{\pi^2}{8a^3\kappa}D .
\end{equation}

It is worth comparing \(\mathcal F_0\) with the scalar complex integral
without the radial weight,
\begin{equation}\label{eq:cx-osc-pair}
 \int_\R\frac{e^{\ci tx}}{K_{a,b}(x^2)}\dd x
 =\frac{\pi e^{-at}}{2a\kappa}
       \left(\cos(bt)+\frac ab\sin(bt)\right) ,
\end{equation}
which carries the opposite sign in front of the sine. The discrepancy is not
a matter of normalization: the hyperplane integral carries the weight
\(r^2\), and the two integrands, differing by that factor, pick up the two
conjugate poles with different relative phases.

\emph{The odd contribution.} Taking \(a=b=1\), \(C=D=0\) and \(E=\qk\), the
table gives
\begin{equation}\label{eq:osc-odd-example}
 \int_{\imq\Hh}e^{-tv}\frac{v\qk}{(|v|^4+4)^2}\dd V_3(v)
 =\frac{\pi^2}{8}te^{-t}(\cos t+\sin t)\qk ,
\end{equation}
an integral that vanishes at \(t=0\), as any odd datum must, but not for
\(t>0\): this is the phenomenon announced in
Remark~\ref{rem:osc-scope}.

\emph{An exceptional evaluation point.} Take now \(C=0\), \(D=\qj-\qk\) and
\(E=\qk\), still with \(a=b=1\). At \(y_0=1+\qi\) the regularization
vanishes,
\[
 (\DeltaSphere{\Sigma_+}^2G_t)(y_0)
 =\frac{e^{-ty_0}}{16}\bigl(D+y_0E\bigr)=0 ,
\]
and yet \(y_0\) is a perfectly legitimate evaluation point, the formula
returning
\begin{equation}\label{eq:osc-exceptional-result}
 \begin{aligned}
 \mathcal O_\varphi(t)=\frac{\pi^2e^{-t}}{16}\Bigl\{&
    [(1+2t)\cos t-\sin t]\qj\\
   &+[(1+2t)\sin t-\cos t]\qk\Bigr\} ,
 \end{aligned}
\end{equation}
of value \(\pi^2(\qj-\qk)/16\) at zero. At no stage is there a division by
the vanishing regularized value.

\emph{Coalescence to a real double pole.} Letting \(b\to0^+\) with
\(D=E=0\), the two conjugate spheres merge into the real point \(a\) and
\(K_{a,b}(s)\to(s+a^2)^2\). Since \(K_{a,b}(s)\geq(s+a^2)^2/4\) for
\(s\geq0\) and \(0\leq b\leq a/2\), dominated convergence gives
\begin{equation}\label{eq:osc-real}
 \int_{\imq\Hh}e^{-tv}\frac{C}{(|v|^2+a^2)^2}\dd V_3(v)
 =\frac{\pi^2}{a}e^{-at}(1-at)C ,
\end{equation}
and the limit is confirmed by the real formula: the extension is
\((a^2-q^2)^{-2}C\), whose regularization at the positive real pole \(a\) is
\(h(q)=q^2(q+a)^{-2}C\), with \(h(a)=C/4\) and \(h'(a)=C/(4a)\). For
\(C\neq0\) the right derivative at zero is \(-2\pi^2C\), although the
formally differentiated integrand, odd but not absolutely integrable, has
infinite first moment: differentiating under the integral sign would not
have been legitimate here, and the value comes from the closed form instead.
\end{example}

The second example puts two spheres inside the half-space, with coefficients
that do not commute.

\begin{example}[Two spheres and non-commuting coefficients]
\label{ex:two-inside}
Take
\[
 \varphi(v)=\frac{\qi}{|v|^4+4}+\frac{\qj}{|v|^4+6|v|^2+25} ,
 \qquad
 f(q)=(q^4+4)^{-1}\qi+(q^4-6q^2+25)^{-1}\qj ,
\]
whose polar spheres in \(H_+\) are \(1+\Sphere\) and \(2+\Sphere\). Each
summand being regular near the other sphere, the two local residues are
those of the quartic family with \((a,b)=(1,1)\) and \((2,1)\), and
Theorem~\ref{thm:osc-integral} gives
\begin{equation}\label{eq:osc-two-spheres}
 \begin{aligned}
 \mathcal O_\varphi(t)
 ={}&\pi^2e^{-t}(\cos t-\sin t)\qi\\
 &+\frac{\pi^2}{2}e^{-2t}(\cos t-2\sin t)\qj ,
 \end{aligned}
\end{equation}
so that in particular
\begin{equation}\label{eq:rat-two-inside}
 \int_{\imq\Hh}\varphi(v)\dd V_3(v)=\pi^2\qi+\frac{\pi^2}{2}\qj .
\end{equation}
The datum is not a slice-preserving function times a fixed quaternion: the
coefficients satisfy \([\qi,\qj]=2\qk\neq0\) and the two radial profiles are
distinct. Finally, every ball \(B(R,R)\) with \(R>5/4\) contains both
positive spheres and neither negative one, so the limit
\eqref{eq:rat-tangent-limit} is reached with a residue sum that is constant
in \(R\).
\end{example}

The three coefficients \(\mathcal F_0,\mathcal F_1,\mathcal F_2\) can also
be obtained without the residue theorem, by one-variable methods. We record
that computation, both as a check and because the two routes are
instructive to compare.

\begin{remark}[Independent verification of the three coefficients]
\label{rem:hyp-independent-check}
Regard \(a,\kappa>0\) as independent parameters, write
\(K(s)=(s-\kappa)^2+4a^2s\) and \(\lambda=4a^2/\kappa-2>-2\). Scaling
\(r=\sqrt\kappa\,x\), then using \(x\mapsto1/x\) and \(u=x-1/x\),
\[
 \begin{aligned}
 \mathcal N(a,\kappa)&=4\pi\int_0^\infty\frac{r^2\dd r}{K(r^2)}
       =\frac{4\pi}{\sqrt\kappa}
         \int_0^\infty\frac{x^2\dd x}{x^4+\lambda x^2+1},\\
 2\int_0^\infty\frac{x^2\dd x}{x^4+\lambda x^2+1}
 &=\int_\R\frac{\dd u}{u^2+\lambda+2}
   =\frac{\pi}{\sqrt{\lambda+2}},
 \qquad \mathcal N(a,\kappa)=\frac{\pi^2}{a} .
 \end{aligned}
\]
With \(J_j=\int_{\R^3}|v|^jK(|v|^2)^{-2}\dd V_3(v)\), differentiating in the
parameters under locally uniform integrable majorants gives
\[
 \partial_a\mathcal N=-8aJ_2=-\frac{\pi^2}{a^2},
 \qquad
 \partial_\kappa\mathcal N=2(J_2-\kappa J_0)=0,
 \qquad
 J_0=\frac{\pi^2}{8a^3\kappa} ,
\]
which are the two values of \(\mathcal F_0(0)\) and \(\mathcal F_1(0)\).

For \(\kappa=a^2+b^2\), factor
\(K(r^2)=((r-b)^2+a^2)((r+b)^2+a^2)\) and close
\(z^2e^{\ci tz}/K(z^2)\) in the upper half-plane: the poles there are
\(\zeta_\pm=\pm b+\ci a\), with residues
\(\pm\zeta_\pm e^{\ci t\zeta_\pm}/(8\ci ab)\), and the arc contributes
\(O(R^{-1})\) for \(t\geq0\), whence
\[
 4\pi\int_0^\infty\frac{r^2\cos(tr)}{K(r^2)}\dd r
 =4\pi^2\ci\left(
   \frac{\zeta_+e^{\ci t\zeta_+}}{8\ci ab}
  -\frac{\zeta_-e^{\ci t\zeta_-}}{8\ci ab}\right)
 =\mathcal F_0(t) .
\]
Finally put \(F(a,\kappa;t)=\mathcal F_0(t)\) with
\(b=\sqrt{\kappa-a^2}\), and
\(T_j=4\pi\int_0^\infty r^{2+j}\cos(tr)K(r^2)^{-2}\dd r\) for \(j=0,2\).
Holding \(\kappa\) fixed in \(\partial_a\) and \(a\) fixed in
\(\partial_\kappa\),
\[
 \partial_aF=-8aT_2,
 \qquad
 \partial_\kappa F=2(T_2-\kappa T_0),
 \qquad
 T_0=-\frac{\partial_aF}{8a\kappa}-\frac{\partial_\kappa F}{2\kappa}
    =\mathcal F_1(t) ,
\]
and, \(r^3/K(r^2)^2\) being integrable,
\[
 4\pi\int_0^\infty\frac{r^3\sin(tr)}{K(r^2)^2}\dd r
 =-\mathcal F_1'(t)=\mathcal F_2(t) .
\]
\end{remark}

\section*{Acknowledgements}

The author is a member of the Gruppo Nazionale per le Strutture
Algebriche, Geometriche e le loro Applicazioni (GNSAGA) of the
Istituto Nazionale di Alta Matematica ``Francesco Severi'' (INdAM)
and gratefully acknowledges its support.

\subsection*{Declaration on the use of generative artificial intelligence}

During the preparation of this manuscript, the author used OpenAI's
GPT-5.6 Sol for language editing and
grammatical revision, and as a computational aid in working out the
examples. All AI-assisted computations were subsequently checked
independently by the author. The author reviewed and edited all AI-assisted
material and takes full responsibility for the content of the manuscript.

\subsection*{Conflict of interest}

The author declares that there is no conflict of interest.

\end{document}